\theoremstyle{break}
\newtheorem{theo}{Theorem}[section]
\newtheorem{defi}{Definition}[section]
\newtheorem{prop}{Proposition}[section]
\newtheorem{lemm}{Lemma}[section]
\newtheorem{cor}{Corollary}[section]
\newtheorem{rem}{Remark}[section]
\newtheorem{conj}{Conjecture}[section]
\title{Voros coefficients of the third Painelv\'e equation 
and parametric Stokes phenomena}
\author{Kohei Iwaki (RIMS, Kyoto University)}
\date{\today}
\begin{document}
\maketitle


\begin{abstract}
We compute all ``Voros coefficients'' of 
the third Painlev\'e equation 
$(P_{\rm III'})_{D_{6}}$ of the type $D_{6}$ 
(in the sense of \cite{OKSO}) and discuss 
the ``parametric Stokes phenomena'' 
occurring to formal transseries solutions of 
$(P_{\rm III'})_{D_{6}}$. 
We derive connection formulas for 
parametric Stokes phenomena under 
an assumption for Borel summability 
of transseries solutions. 
Furthermore, we also compute the Voros coefficient 
of the degenerate third Painlev\'e equation 
of the type $D_{7}$ in 
Appendix \ref{Appendix:P3-D7}. 

\vspace{+.5em}

{\it Key Wards} : The third Painlev\'e equation, 
exact WKB analysis, Voros coeffieints, 
parametric Stokes phenomena.

\vspace{+.5em}

{\it 2010 Mathematics Subject Classification Numbers} : 
34M55, 34M60, 34M40.

\end{abstract}

\section{Introduction}
\label{section:Introduction}

In this article we study the third Painlev\'e equation 
with a large parameter $\eta > 0$
\[
(P_{\rm III'})_{D_{6}} : \frac{d^{2}\lambda}{dt^{2}} = 
\frac{1}{\lambda} \Bigl( \frac{d \lambda}{dt} \Bigr)^{2}
-
\frac{1}{t} \frac{d \lambda}{dt} 
+
\eta^{2} \Bigl( 
\frac{\lambda^{3}}{t^{2}} - \frac{c_{\infty} \lambda^{2}}{t^{2}}
+ \frac{c_{0}}{t} - \frac{1}{\lambda}
\Bigr)
\]
from the view point of the exact WKB analysis 
(cf.\ \cite{KT iwanami, Voros}); that is, 
an asymptotic analysis for large $\eta$
based on the Borel resummation method. 
(See \cite{KT iwanami} and series of papers 
\cite{KT WKB Painleve I, AKT Painleve WKB, 
KT WKB Painleve III} for WKB analysis of Painlev\'e equations)
Here $c_{\infty}$ and $c_{0}$ are non-zero complex parameters, 
and the equation is the type $D_{6}$ 
in the sense of \cite{OKSO}. 
We are interested in (one-parameter family of) 
formal transseries solutions of $(P_{\rm III'})_{D_{6}}$: 
\[
\lambda(t,\eta; \alpha) = \lambda^{(0)}(t,\eta) + 
\alpha \eta^{-1/2} \lambda^{(1)}(t,\eta) e^{\eta \phi(t)} + 
(\alpha \eta^{-1/2})^{2} \lambda^{(2)}(t,\eta) e^{2 \eta \phi(t)} + 
\cdots,
\]
where $\alpha$ is a free parameter, 
$\lambda^{(k)}(t,\eta) = \lambda_{0}^{(k)}(t) + 
\eta^{-1} \lambda_{1}^{(k)}(t) + 
\eta^{-2} \lambda_{2}^{(k)}(t) + \cdots$  
($k \ge 0$) is a formal 
power series in $\eta^{-1}$ and $\phi(t)$ is a certain 
function. 

What we discuss in this article is so-called 
``{\it parametric Stokes phenomena}'' occurring to the 
transseries solutions of $(P_{\rm III'})_{D_{6}}$. 
These are kinds of Stokes phenomena concerning with  
continuous variations of the complex parameters $c_{\infty}$ 
and $c_{0}$. In \cite{Iwaki, Iwaki-Bessatsu} we discussed 
the parametric Stokes phenomena occurring to 
transseries solutions of the second Painlev\'e
equation with the large parameter $\eta > 0$
\[
(P_{\rm II}) : \frac{d^{2}\lambda}{dt^{2}} = 
\eta^{2} (2 \lambda^{3} + t \lambda + c),
\] 
where $c$ is a complex parameter. 
As is shown in \cite{Iwaki, Iwaki-Bessatsu}, 
when $\arg c = \pi/2$, transseries solutions 
may not be Borel summable (as a formal series in $\eta^{-1}$), 
and the Borel sums defined when $\arg c = \pi/2 - \varepsilon$ 
and $\arg c = \pi/2 + \varepsilon$ give different 
analytic solutions (after analytic continuation with 
respect to the parameter $c$ across the imaginary axis 
$\arg c = \pi/2$) of $(P_{\rm II})$. 
This kind of phenomenon was firstly observed in \cite{SS} 
through the analysis of a linear ordinary differential equation, 
and the analysis in terms of the exact WKB analysis 
was given by \cite{Takei Sato conjecture}. 
(See also \cite{Koike-Takei, Aoki-Tanda}.) 

As discussed in \cite{Takei Sato conjecture, Iwaki}, 
the parametric Stokes phenomena are closely related to 
the ``{\it degeneration of the Stokes geometry}''
(i.e., two turning points are connected by a Stokes curve), 
and the ``{\it Voros coefficients}'' 
(cf.\ \cite{DDP, Takei Sato conjecture}) play a central 
role when we discuss the connection problem for parametric 
Stokes phenomena. That is, formal solutions may not 
be Borel summable when the Stokes geometry degenerates, 
and an explicit connection formula describing 
the parametric Stokes phenomena can be read off the 
``jump property'' of the Voros coefficients. 

  \begin{figure}[h]
  \begin{minipage}{0.5\hsize}
  \begin{center} 
  \includegraphics[width=50mm,clip]
  {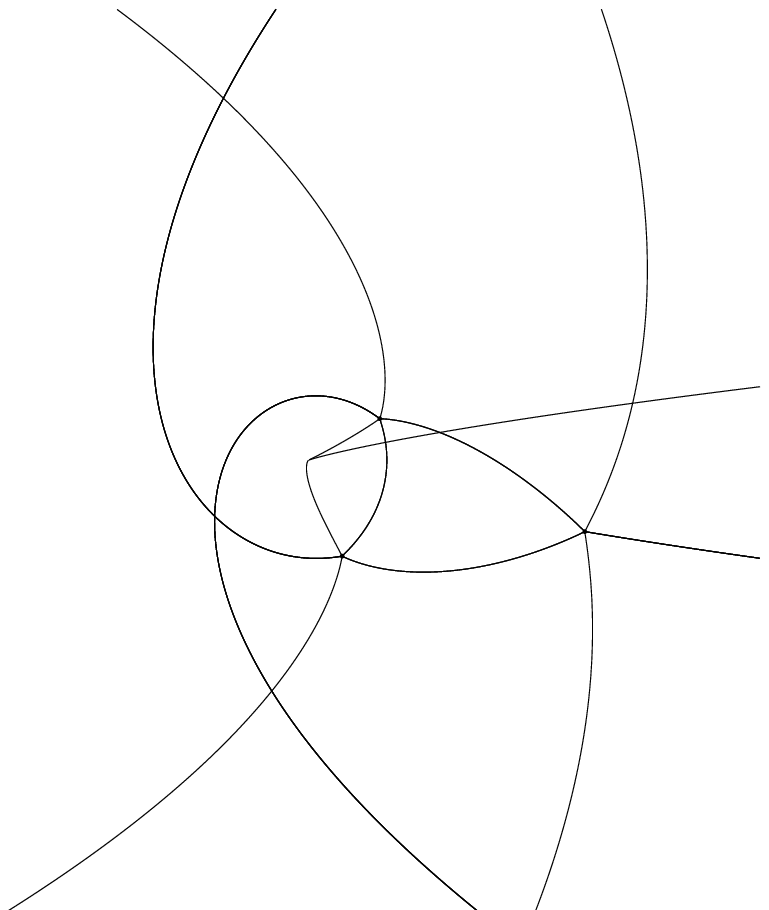}    
  \end{center} 
  \caption{\small{$(c_{\infty},c_{0}) = (3,3-i)$.}}
  \label{fig:PIII'-triangle-intro}
  \end{minipage} \hspace{-.3em}
  \begin{minipage}{0.5\hsize}
  \begin{center}
  \includegraphics[width=50mm, clip]{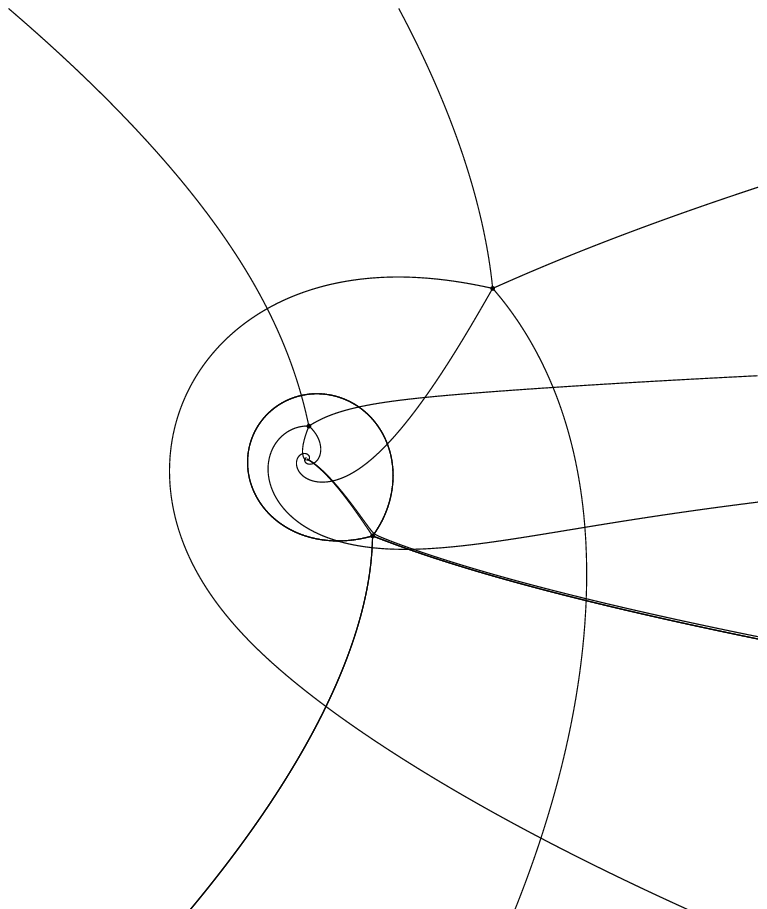}
  \end{center} 
  \caption{\small{$(c_{\infty},c_{0}) = (3i,1-2i)$}.}
  \label{fig:PIII'-loop-intro}
  \end{minipage}
  \end{figure}

The degeneration of the Stokes geometry is also 
observed for $(P_{\rm III'})_{D_{6}}$. For example, 
Figure \ref{fig:PIII'-triangle-intro} 
and \ref{fig:PIII'-loop-intro} 
describe some Stokes geometry of $(P_{\rm III'})_{D_{6}}$ 
for some values of parameters $(c_{\infty}, c_{0})$, and 
we can observe that the Stokes geometry degenerate. 
We observe the following two kinds of degenerations 
in the Stokes geometry of $(P_{\rm III'})_{D_{6}}$. 
The first one is ``{\it triangle-type degeneration}''; 
that is, there are three pairs of turning points 
connected by a Stokes curve simultaneously. 
Figure \ref{fig:PIII'-triangle-intro} shows 
an example of the triangle-type degeneration. 
The second one is ``{\it loop-type degeneration}''; 
that is, a Stokes curve form a loop around the singular 
point $t = 0$. An example the loop-type degeneration is shown 
in Figure \ref{fig:PIII'-loop-intro}. Intriguingly enough, 
as far as we checked, only these two kinds of degeneration 
can be observed for the Stokes geometry of $(P_{\rm III'})_{D_{6}}$. 
(See Appendix \ref{Appendix:examples of P-Stokes geometry}.)

As similar to \cite{SS, Takei Sato conjecture, Iwaki}, 
we can expect that parametric Stokes phenomena 
also occur for transseries solutions of $(P_{\rm III'})_{D_{6}}$ 
relevant to these degenerations of Stokes geometry. 
The purpose of this article is to compute the all 
Voros coefficients of $(P_{\rm III'})_{D_{6}}$, 
and to analyze the parametric Stokes phenomena 
occurring to the transseries solutions.
Voros coefficients of $(P_{\rm III'})_{D_{6}}$ 
(cf.\ \cite{Iwaki}) are formal power series defined 
by the integral 
\[
W(c_{\infty},c_{0},\eta) = 
\int_{\tau}^{*} \bigl( R_{\rm odd}(t,\eta) - 
\eta R_{-1}(t) \bigr) \hspace{+.1em} dt, 
\]
where 
$R_{\rm  odd}(t,\eta) = \sum_{n = 0}^{\infty} 
\eta^{1-2n} R_{2n-1}(t)$ given by \eqref{eq:R-odd} 
is the odd part of a formal solution of the Riccati equation 
\eqref{eq:Riccati equation} associated with 
the Fr\'echet derivative \eqref{eq:Frechet bibun} 
of $(P_{\rm III'})_{D_{6}}$, 
$\tau$ is a turning point and $*$ $(= 0$ or $\infty)$ 
is a singular point of $(P_{\rm III'})_{D_{6}}$. 
There are several Voros coefficients 
depending on the choice of the singular point $*$, 
and of the path of integration. 
One of Voros coefficients is represented 
as follows (Theorem \ref{Main Theorem}):
\begin{theo} 
The Voros coefficient for an appropriate path 
from a turning point to the singular point 
$* = \infty$ is given by 
\begin{equation}
W(c_{\infty},c_{0},\eta) = \int_{\tau}^{\infty} 
\bigl( R_{\rm odd}(t,\eta) - 
\eta R_{-1}(t) \bigr) \hspace{+.1em} dt = 
\sum_{n = 1}^{\infty} \frac{2^{1-2n} - 1}{2n(2n-1)} 
B_{2n} \biggl( \frac{c_{\infty} - c_{0}}{2} 
\hspace{+.1em} \eta \biggr)^{1-2n}.  
\label{eq:Voros coeff in introduction}
\end{equation}
Here $B_{2n}$ is the $2n$-th Bernoulli number 
defined by 
\begin{equation}
\frac{w}{e^{w}-1} = 1 - \frac{1}{2} w 
+ \sum_{n=1}^{\infty} \frac{B_{2n}}{(2n)!} w^{2n}. 
\label{eq:Bernoulli number}
\end{equation}
\end{theo}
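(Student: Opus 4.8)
The plan is to make the integrand completely explicit and then reduce the evaluation of $\int_\tau^\infty(R_{\rm odd}-\eta R_{-1})\,dt$ to a purely local computation at the two endpoints. First I would substitute the WKB ansatz into the Riccati equation \eqref{eq:Riccati equation} associated with the Fr\'echet derivative \eqref{eq:Frechet bibun} and separate the even and odd parts in $\eta$; this produces $R_{-1}=\sqrt{Q_0}$, where $Q_0$ is the principal symbol of the linearized operator evaluated along the $0$-parameter solution $\lambda^{(0)}$, together with the usual recursion expressing $R_{2n-1}$ through $R_{2n-3},\dots$ and their $t$-derivatives. Since $\lambda^{(0)}(t,\eta)$ is an explicit algebraic function of $t$, the potential $Q_0(t)$ and all its WKB corrections become explicit functions of $t$ carrying the parameters $c_\infty,c_0$. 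I would then locate the turning point $\tau$ and, crucially, record the local behaviour of the integrand at the singular point $t=\infty$, whose \emph{residue} I expect to be governed by the single combination $m:=(c_\infty-c_0)/2$.

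Second, I would evaluate $\int_\tau^\infty R_{2n-1}(t)\,dt$ for each $n\ge 1$ by showing that, in a suitably normalized coordinate, each $R_{2n-1}\,dt$ is cohomologous to an exact form, so the integral collapses to boundary values: explicitly, I would exhibit primitives $P_{2n-1}(t)$ with $R_{2n-1}=dP_{2n-1}/dt$ modulo terms that integrate to zero, and evaluate $[P_{2n-1}]_\tau^\infty$. At the simple turning point the subtraction of $\eta R_{-1}$ should guarantee that the endpoint contribution vanishes, while at $t=\infty$ the contribution is a Laurent coefficient that, by the local analysis of the first step, depends on $(c_\infty,c_0)$ only through $m^{1-2n}$. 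This already forces $W$ to be a series in $(m\eta)^{1-2n}$; what remains is to pin down the numerical prefactor of each power.

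Third --- and this is where the Bernoulli numbers enter --- I would identify those numerical coefficients. The structure of $Q_0$ at $t=\infty$ matches, after a M\"obius normalization of the independent variable, that of a confluent model equation whose Voros coefficient is classically the Binet--Stirling remainder of the Gamma function; the half-integer shift in the characteristic exponents at $t=\infty$ produces precisely the evaluation $B_{2n}(1/2)=(2^{1-2n}-1)B_{2n}$ of the Bernoulli polynomial, which is the source of the factor $2^{1-2n}-1$. Concretely I would set $\Xi(x)=\sum_{n\ge 1}\frac{(2^{1-2n}-1)B_{2n}}{2n(2n-1)}\,x^{1-2n}$, recognize it as the asymptotic correction series of $\log\Gamma(x+1/2)$, and verify that the coefficients $m^{2n-1}\int_\tau^\infty R_{2n-1}\,dt$ obey the same recursion, fixing the overall normalization by direct computation of the lowest ($n=1$) term. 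Summation then gives $W=\Xi(m\eta)$, which is exactly \eqref{eq:Voros coeff in introduction}.

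The main obstacle is the coefficient bookkeeping: proving that the turning-point endpoint truly contributes nothing and that the $t=\infty$ endpoint yields exactly the claimed rational multiple of $B_{2n}$, not merely the correct power of $m$. The cleanest route to bypass the delicate term-by-term primitives should be to exploit a B\"acklund-type transformation of $(P_{\rm III'})_{D_6}$ that shifts $(c_\infty,c_0)$, and hence $m$, by a lattice unit while transforming the Voros coefficient in a controlled way; this yields a difference equation $W(m+1,\eta)-W(m,\eta)=(\text{explicit elementary series in }\eta^{-1})$ whose right-hand side is the Stirling-remainder of the Gamma functional equation. Solving this difference equation within formal series in $\eta^{-1}$, together with the parity and the homogeneity in $m\eta$ established above, determines $W$ uniquely and reproduces the claimed series. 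I expect the reduction to the model equation and the verification of this recursion to carry essentially all of the technical weight.
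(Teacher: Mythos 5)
Your operative strategy --- deriving a difference equation for $W$ from a B\"acklund transformation that shifts $(c_{\infty},c_{0})$ by $\eta^{-1}$-units (hence $c_{m}\eta$ by $1$), recognizing its right-hand side as the Stirling remainder of the Gamma functional equation, and then solving it uniquely within formal power series using homogeneity and parity in $c_{m}\eta$ --- is exactly the paper's proof (Lemma \ref{key lemma}, Lemma \ref{Lemma:Backlund}, Lemma \ref{Lemma:difference eq for R} and Lemma \ref{Lemma:difference eq for W-Gamma}), and it is correct. Your preliminary sketch via term-by-term primitives and reduction to a confluent model equation is superseded by this route, and, as you yourself anticipate, the B\"acklund/difference-equation argument carries essentially all of the weight.
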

All other Voros coefficients can also be expressed 
in terms of the Bernoulli numbers. They are summarized in 
Theorem \ref{Main Theorem3} and Theorem \ref{Main Theorem2}. 
Moreover, from above explicit representation 
we can compute its Borel sum and check the 
following jump property of the Voros coefficients 
(Proposition \ref{Prop:Borel sum of F and G} and 
Corollary \ref{corollary:jump property}).
\begin{prop}
Let $W(c_{\infty}, c_{0},\eta)$ be the formal power series 
\eqref{eq:Voros coeff in introduction}. \vspace{+.2em}

\noindent
(i) $W(c_{\infty}, c_{0},\eta)$ is not Borel summable 
(as a formal power series in $\eta^{-1}$) when 
$c_{\infty} - c_{0} \in i {\mathbb R}$, 
and Borel summable otherwise. \vspace{+.2em}

\noindent
(ii) Let ${\cal S}_{\pm}[e^{W(c_{\infty}, c_{0},\eta)}]$ 
be the Borel sum of the formal power series 
$e^{W(c_{\infty}, c_{0},\eta)}$ when 
$\arg (c_{\infty}-c_{0}) = \pi/2 \pm \varepsilon$ 
for a sufficiently small $\varepsilon > 0$. 
After the analytic continuation across 
the axis $\{ \arg (c_{\infty}-c_{0}) = \pi/2 \}$, the following 
relation holds:
\begin{equation}
{\cal S}_{+}[e^{W(c_{\infty}, c_{0},\eta)}] = 
(1 + e^{\pi i (c_{\infty} - c_{0})\eta}) \hspace{+.1em}
{\cal S}_{-}[e^{W(c_{\infty}, c_{0},\eta)}].
\end{equation}
\end{prop}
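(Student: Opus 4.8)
The plan is to recognize the closed form \eqref{eq:Voros coeff in introduction} as a difference of Stirling‑type remainder (Binet) functions, compute its Borel transform explicitly, and then read off both assertions from the location and the residues of the singularities of that Borel transform. First I would absorb the constants into a single large variable by setting $z = \frac{c_\infty - c_0}{2}\,\eta$, so that the series becomes a series in $z^{-1}$. The only nonstandard ingredient is the factor $2^{1-2n}-1$; since $2^{1-2n}z^{1-2n} = (2z)^{1-2n}$, this factor splits the sum into two copies of the Stirling remainder (Binet) series $\mu(w) := \sum_{n\ge 1}\frac{B_{2n}}{2n(2n-1)}\,w^{1-2n}$, yielding the closed identity $W = \mu(2z) - \mu(z)$, i.e. $W = \mu\big((c_\infty-c_0)\eta\big) - \mu\big(\tfrac{c_\infty-c_0}{2}\eta\big)$.

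Next I would compute the Borel transform of $\mu$. Using the elementary identity $\frac{1}{2n(2n-1)(2n-2)!} = \frac{1}{(2n)!}$ together with the generating function \eqref{eq:Bernoulli number}, the Borel transform of $\mu(w)$ in the variable $s$ dual to $w$ is
\[
\hat\mu(s) \;=\; \sum_{n\ge 1}\frac{B_{2n}}{(2n)!}\,s^{2n-2}
\;=\; \frac{1}{s^{2}}\left(\frac{s}{e^{s}-1}-1+\frac{s}{2}\right)
\;=\; \frac{1}{s(e^{s}-1)}-\frac{1}{s^{2}}+\frac{1}{2s},
\]
which is meromorphic, regular at $s=0$, with simple poles exactly at $s = 2\pi i k$ $(k\in\mathbb{Z}\setminus\{0\})$, each of residue $\frac{1}{2\pi i k}$; equivalently $\mu$ is Borel summable and its Borel sum is Binet's integral. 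Rescaling $s = y/b$ shows that $\mu(b\eta)$ has Borel transform (in $y$, dual to $\eta$) equal to $\frac{1}{b}\hat\mu(y/b)$, with singularities at $y = 2\pi i k\, b$. Hence the Borel transform of $W$ has its singularities precisely at $y = 2\pi i k(c_\infty-c_0)$ and $y = \pi i k(c_\infty-c_0)$ for $k\in\mathbb{Z}\setminus\{0\}$, all lying on the line through the origin in the direction $i(c_\infty-c_0)$.

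For part (i) I would argue that Borel summability for $\eta>0$ means summability along the ray $\arg y = 0$, which is obstructed iff some singularity lies on the positive real axis; since every singularity is a real multiple of $i(c_\infty-c_0)$, this happens iff $i(c_\infty-c_0)\in\mathbb{R}$, i.e. $c_\infty - c_0\in i\mathbb{R}$, and off this line the ray is singularity‑free so the Binet integral delivers the sum. For part (ii) I would track what happens as $\arg(c_\infty-c_0)$ increases through $\pi/2$: the negatively indexed singularities sweep upward across the positive $y$‑axis, and by the residue theorem $\mathcal{S}_{+}[W]-\mathcal{S}_{-}[W]$ equals $2\pi i$ times the sum of the crossed residues weighted by $e^{-\eta y_0}$. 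The $\mu(2z)$ and $-\mu(z)$ terms each contribute a geometric series; summing them with $\sum_{m\ge1}\frac{x^m}{m} = -\log(1-x)$ and using $\log(1-u^{2})-\log(1-u) = \log(1+u)$ with $u = e^{\pi i(c_\infty-c_0)\eta}$ gives the discontinuity $\log\big(1+e^{\pi i(c_\infty-c_0)\eta}\big)$. Exponentiating, $\mathcal{S}_{+}[e^{W}] = (1+e^{\pi i(c_\infty-c_0)\eta})\,\mathcal{S}_{-}[e^{W}]$, as claimed.

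I expect the main obstacle to be the bookkeeping rather than the ideas: establishing the closed form of the Borel transform rigorously (not merely term by term) and confirming that the two superimposed families of singularities do not cancel, and above all pinning down the orientation and sign in the residue sum. The delicate point is that the infinite residue series $\sum_m \frac{1}{m}e^{2\pi i m b\eta}$ converges only on the appropriate side of the Stokes line, so the passage from $\mathcal{S}_{\pm}$ to a genuine identity requires exactly the \emph{analytic continuation across the axis} $\{\arg(c_\infty-c_0)=\pi/2\}$ invoked in the statement; getting that sign right is what distinguishes the factor $(1+e^{\pi i(c_\infty-c_0)\eta})$ from its reciprocal.
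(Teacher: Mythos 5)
Your proposal is correct, but it reaches the result by a genuinely different route from the paper. The paper never manipulates the Borel transform itself: it states (Proposition \ref{Prop:Borel sum of F and G}, quoting \cite{Takei Sato conjecture} and \cite{Koike-Takei}) the closed-form \emph{lateral Borel sums}, namely ${\cal S}_{-}[{\cal F}(c,\eta)] = \log\bigl(\Gamma(c\eta+1/2)/\sqrt{2\pi}\bigr) - c\eta(\log(c\eta)-1)$ and ${\cal S}_{+}[{\cal F}(c,\eta)] = -\log\bigl(\Gamma(-c\eta+1/2)/\sqrt{2\pi}\bigr) - c\eta(\log(c\eta)-1) + \pi i c\eta$, and then obtains Corollary \ref{corollary:jump property} from the reflection formula $\Gamma(1/2+z)\Gamma(1/2-z)=\pi/\cos(\pi z)$, which gives ${\cal S}_{+}[{\cal F}]-{\cal S}_{-}[{\cal F}] = \log\bigl(2\cos(\pi c\eta)e^{\pi i c\eta}\bigr)=\log(1+e^{2\pi i c\eta})$; specializing $c=c_{m}=(c_{\infty}-c_{0})/2$ yields the stated factor. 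You instead compute the Borel transform as an explicit meromorphic function, locate its poles at $y=\pi i k(c_{\infty}-c_{0})$, and evaluate the discontinuity as a residue sum; your splitting $W=\mu(2z)-\mu(z)$ is exactly the relation ${\cal F}(c,\eta)={\cal G}(2c,\eta)-{\cal G}(c,\eta)$ (the Borel-plane shadow of Legendre's duplication formula), which the paper does not exploit. Your approach is more self-contained — it proves part (i) transparently from the pole locations and never needs the closed form of the lateral sums — while the paper's approach additionally delivers those explicit Gamma-function formulas, which it uses elsewhere. On the one point you flagged as delicate, your answer is the right one: the contour for ${\cal S}_{+}$ passes below the swept poles and the continued ${\cal S}_{-}$ passes above them, so the difference is $+2\pi i$ times the residues (counterclockwise); the residues of the two families at the negatively-indexed poles are $-1/(2\pi i m)$ and $+1/(2\pi i m)$ respectively, the common poles at even multiples do not cancel (the combined residue is $1/(4\pi i k)\ne 0$), and the series sum $\log(1-u^{2})-\log(1-u)=\log(1+u)$ with $u=e^{\pi i (c_{\infty}-c_{0})\eta}$ reproduces the paper's factor $(1+e^{\pi i (c_{\infty}-c_{0})\eta})$, not its reciprocal.
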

From this jump property and a certain assumption for 
the Borel summability of the transseries solution 
(Conjecture \ref{conjecture:Kamimoto}), 
when the independent variable $t$ is fixed at a point, 
we derive an explicit connection formula 
describing the parametric Stokes phenomenon relevant to 
the triangle-type degeneration in Figure \ref{fig:PIII',0} 
(Section \ref{section:connection formulas}). \\[+.5em]
\textbf{Connection formula for the 
parametric Stokes phenomenon.} \hspace{+.5em}
\textit{Assume that Conjecture \ref{conjecture:Kamimoto}
holds. Let ${\cal S}_{II}[\lambda(t,\eta;\alpha)]$
(resp., ${\cal S}_{I}[\lambda(t,\eta;\tilde{\alpha})]$)
be the Borel sum of a transseries solution 
$\lambda(t,\eta;\alpha)$ when 
$(c_{\infty},c_{0}) = (2 - \varepsilon, 2 -i)$ 
(resp., $\lambda(t,\eta;\tilde{\alpha})$
when $(c_{\infty},c_{0}) = (2 + \varepsilon, 2 -i)$).
If they represent the same analytic solution of 
$(P_{\rm III'})_{D_{6}}$ after the analytic continuation 
with respect to the parameter $(c_{\infty},c_{0})$ 
across $\{ \arg (c_{\infty}-c_{0}) = \pi/2 \}$, 
then the parameters related as 
\begin{equation}
\tilde{\alpha} = (1 + e^{\pi i (c_{\infty} - c_{0}) \eta}) 
\hspace{+.1em} \alpha.
\end{equation}
That is, the following relation holds:
\begin{equation}
{\mathcal S}_{II}\bigl[\lambda_{\tau_{1}}
(t,\eta;\alpha) \bigr]
= {\mathcal S}_{I}\bigl[\lambda_{\tau_{1}}
(t,\eta;\tilde{\alpha}) 
\bigr]\Bigl|_{\tilde{\alpha} = 
(1 + e^{\pi i (c_{\infty} - c_{0}) \eta}) 
\hspace{+.1em} \alpha}. 
\label{eq:THE connection formula in introduction}
\end{equation}
}

This formula describes the parametric Stokes phenomenon; 
that is, \eqref{eq:THE connection formula in introduction}
gives the explicit relationship between the Borel sums of 
the transseries solutions of $(P_{\rm III'})_{D_{6}}$
in different region in the parameter space of 
$(c_{\infty},c_{0})$. We also discuss another type connection 
formula in Section \ref{section:connection formulas}.
These are our main results. 
However, we have succeeded to derive 
the explicit connection formulas only in certain cases; 
a crucial case (when the loop-type degeneration happens and 
$t$ lies inside the loop) remains to be analyzed. 
(See Section \ref{subsection:connection of loop-type}.) 

This article is organized as follows.  In Section 
\ref{section:transseries solutions and P-Stokes geometry} 
we recall the fundamental notions in the WKB theory of 
Painlev\'e equations. In Section 
\ref{section:P-Voros coefficients} 
we recall the definition of the Voros coefficients 
for non-linear differential equations, 
and state the main results about the explicit 
representations of the Voros coefficients 
(Theorem \ref{Main Theorem3} and 
Theorem \ref{Main Theorem2}). 
Section \ref{section:proof of the Main theorems}
is devoted to the proof of these main theorems.
Based on these results, we discuss the connection 
problems for parametric Stokes phenomena 
in Section \ref{section:connection formulas}.
Furthermore, we also compute the all Voros coefficients 
of the degenerate third Painlev\'e equation 
\[
(P_{\rm III'})_{D_{7}} : 
\frac{d^{2}\lambda}{dt^{2}} = 
\frac{1}{\lambda} \Bigl( \frac{d \lambda}{dt} \Bigr)^{2}
-
\frac{1}{t} \frac{d \lambda}{dt} 
+
\eta^{2} \Bigl( - \frac{2 \lambda^{2}}{t^{2}}
+ \frac{c}{t} - \frac{1}{\lambda}
\Bigr)
\]
of the type $D_{7}$ in Appendix \ref{Appendix:P3-D7}
(Theorem \ref{theorem:D7-Voros}). Note that a part of 
this result is announced in \cite{Iwaki-Bessatsu}. 

\subsection*{Acknowledgment}
The author is very grateful to 
Yoshitsugu Takei, Takahiro Kawai, Takashi Aoki, 
Tatsuya Koike, Yousuke Ohyama, Shingo Kamimoto, 
Shinji Sasaki and Mika Tanda for helpful advices.

\section{Transseries solutions and Stokes geometry of 
$(P_{\rm III'})_{D_{6}}$}
\label{section:transseries solutions and P-Stokes geometry}

In this section we briefly review a construction of 
{transseries solutions} of the third Painelv\'e equation 
of the type $D_{6}$ with a large parameter 
$\eta > 0$ of the following form:
\begin{equation}
(P_{\rm III'})_{D_{6}} : \frac{d^{2}\lambda}{dt^{2}} = 
\frac{1}{\lambda} \Bigl( \frac{d \lambda}{dt} \Bigr)^{2}
-
\frac{1}{t} \frac{d \lambda}{dt} 
+
\eta^{2} \Bigl( 
\frac{\lambda^{3}}{t^{2}} - \frac{c_{\infty} \lambda^{2}}{t^{2}}
+ \frac{c_{0}}{t} - \frac{1}{\lambda}
\Bigr). \label{eq:P3-D6}
\end{equation}
Here $c_{\infty}$ and $c_{0}$ are complex parameters.
In this paper we write ${\bf c} = (c_{\infty},c_{0})$ 
and always assume 
\begin{equation}
{\bf c} \in {\bf S} = 
\{(c_{\infty}, c_{0}) \in {\mathbb C}^{2}; 
c_{\infty}, c_{0}, c_{\infty}^{2} - c_{0}^{2}, 
c_{\infty}^{2} + c_{0}^{2} \ne 0 \}
\label{eq:THE condition}
\end{equation}
for genelicity. 
We will explain the meaning of the assumptions 
\eqref{eq:THE condition} in 
Remark \ref{Remark:R-turning points}.
Moreover, we also recall the definitions of 
{turning points} and {Stokes curves}
(\cite{KT WKB Painleve I}) in this section. 

Note that the equation $(P_{\rm III'})_{D_{6}}$ 
is obtained from the ``original'' one 
(cf.\cite{Okamoto Painleve, OKSO})
\[
\frac{d^{2}y}{dz^{2}} = 
\frac{1}{y} \Bigl( \frac{dy}{dz} \Bigr)^{2}
-
\frac{1}{z} \frac{dy}{dz} 
+
\frac{\gamma y^{3}}{4 z^{2}} 
+ \frac{\alpha y^{2}}{4 z^{2}}
+ \frac{\beta}{4z} + \frac{\delta}{4 y}
\]
throgh the following scalings; 
$y = \eta \hspace{+.1em} \lambda, 
 z = \eta^{2} \hspace{+.1em} t, 
 \alpha = -4 \eta \hspace{+.1em} c_{\infty},
 \beta = 4 \eta \hspace{+.1em} c_{0},
 \gamma = 4$ and  
$\delta = -4$.
Therefore, quantities appearing in this paper have a 
homogeneity. They are summarized in 
Appendix \ref{Appendix:Homogeneity}.

\subsection{Transseries solutions}
\label{section:1-parameter solutions}

Our main interest consists in the analysis of 
{\it transseries solutions}; that is, 
formal solutions of $(P_{\rm III'})_{D_{6}}$ of the form
\begin{equation}
\lambda(t,\textbf{c},\eta;\alpha) = 
\sum_{k \ge 0}(\alpha \eta^{-{1}/{2}})^{k} 
\lambda^{(k)}(t,\textbf{c},\eta) 
e^{k \eta \phi},
\label{eq:1-parameter solution}
\end{equation}
where $\alpha$ is a free parameter, 
$\lambda^{(k)}(t, {\bf c},\eta)$ 
($k \ge 0$)
is a formal power series in $\eta^{-1}$ 
of the form
\[
\lambda^{(k)}(t,\textbf{c},\eta) =  
\sum_{\ell \ge 0} \eta^{-\ell}
\lambda^{(k)}_{\ell}(t,\textbf{c}),
\]
and 
$\phi = \phi(t, {\bf c})$ is some function 
defined as follows.
Let $F(\lambda,t,{\bf c})$ be the coefficient of $\eta^{2}$
in the right-hand side of \eqref{eq:P3-D6}:
\begin{equation}
F(\lambda,t,{\bf c}) = 
\frac{{\lambda}^{3}}{t^{2}} - 
\frac{c_{\infty} {\lambda}^{2}}{t^{2}}
+ \frac{c_{0}}{t} - \frac{1}{{\lambda}},
\end{equation}
and $\lambda_{0} = \lambda_{0}(t, {\bf c})$ be 
an algebraic function defined by 
\begin{equation}
F(\lambda_{0},t,{\bf c}) = 
\frac{{\lambda_{0}}^{3}}{t^{2}} - 
\frac{c_{\infty} {\lambda_{0}}^{2}}{t^{2}}
+ \frac{c_{0}}{t} - \frac{1}{{\lambda_{0}}} = 0, 
\label{eq:lambda(0)0}
\end{equation}
which is nothing but the leading term 
$\lambda^{(0)}_{0}(t,{\bf c})$
of the formal power series $\lambda^{(0)}(t,{\bf c},\eta)$
in \eqref{eq:1-parameter solution}. Then the phase function 
$\phi(t,{\bf c})$ is defined by 
\begin{equation}
\phi(t,\textbf{c})=
\int^{t} \sqrt{ \Delta(t,\textbf{c}) } 
\hspace{+.2em} dt ,
\hspace{+1.em}
\Delta(t,\textbf{c}) 
= \frac{\partial F}{\partial \lambda}
\bigl( \lambda_{0}(t,{\bf c}),t,{\bf c} \bigr). 
\label{eq:phi-3}
\end{equation}
Transseries solutions are considered 
in a domain where the real part of $\phi$ is negative; 
i.e., ${\rm exp}(\eta \phi)$ is exponentially small when 
$\eta \rightarrow +\infty$. See \cite{Costin, Iwaki} 
for a construction of such formal solutions.
The (generalized) Borel resummation method 
for transsries are discussed in \cite{Costin}.

The formal expansion of the form 
\eqref{eq:1-parameter solution} 
is called ``{\it instanton-type expansion}'' 
(or ``{\it non-perturbative expansion}'') 
in physical literatures. 
We follow this terminology and call 
the $k$-th formal series $(\alpha \eta^{-1/2})^{k} 
\lambda^{(k)}(t,{\bf c},\eta) e^{k \eta \phi(t)}$ 
``{\it $k$-instanton part}'' of 
\eqref{eq:1-parameter solution} (for $k \ge 0$).  
We note that the $0$-instanton part
$\lambda^{(0)}(t,\textbf{c},\eta)$ is itself 
a formal power series solution of $(P_{\rm III'})_{D_{6}}$ 
called {\it a 0-parameter solution} (\cite{KT WKB Painleve I}).
The coefficients of 0-parameter solution are determined recursively 
once we fix the branch of the algebraic function $\lambda_{0}$. 
For example, the coefficients of first few terms 
are given as follows:
\begin{eqnarray*}
\lambda_{1}^{(0)}(t,\textbf{c}) & = & 0. \\[+.3em]
\lambda_{2}^{(0)}(t,\textbf{c}) & = & 
\frac{1}{\Delta} \hspace{+.1em}
\Bigl( \frac{d^{2}\lambda_{0}}{dt^{2}} 
- \frac{1}{\lambda_{0}} 
\Bigl( \frac{d \lambda_{0}}{dt} \Bigr)^{2}
+ \frac{1}{t} \frac{d \lambda_{0}}{dt} \Bigr). \\[+.2em]
\lambda_{3}^{(0)}(t,\textbf{c}) & = & 0. \\[+.3em]
\lambda_{4}^{(0)}(t,\textbf{c}) & = & 
\frac{1}{\Delta} \hspace{+.1em}
\biggl( \frac{d^{2}\lambda^{(0)}_{2}}{dt^{2}} 
- \frac{2}{\lambda_{0}}
\frac{d \lambda_{0}}{dt}
\frac{d \lambda^{(0)}_{2}}{dt} 
+ \frac{\lambda^{(0)}_{2}}{\lambda_{0}^{2}} 
\Bigl( \frac{d \lambda_{0}}{dt} \Bigr)^{2}
+ \frac{1}{t} \frac{d \lambda^{(0)}_{2}}{dt} \\[+.3em]
&  &  \hspace{+2.em}
- \frac{3 \lambda_{0} {\lambda^{(0)}_{2}}^{2}}{t^{2}}
+ \frac{c_{\infty} {\lambda^{(0)}_{2}}^{2}}{t^{2}}
+ \frac{{\lambda^{(0)}_{2}}^{2}}{\lambda_{0}^{3}}
\biggr). \\ 
 & \vdots & 
\end{eqnarray*}
We can verify that 
$\lambda_{\ell}^{(0)}(t,{\bf c}) = 0$ 
for each odd number $\ell$. The Borel summability 
of 0-parameter solution is discussed in 
\cite{Kamimoto-Koike}.

Next we discuss normalizations of transseries solutions. 
We can easily confirm that, if we denote 
the $1$-instanton part of 
\eqref{eq:1-parameter solution} by 
\[
\tilde{\lambda}^{(1)} = \alpha \eta^{-{1}/{2}} 
\lambda^{(1)}(t,\textbf{c},\eta) e^{\eta \phi}, 
\]
then $\tilde{\lambda}^{(1)}$ satisfies the following 
second order linear differential equation
\begin{equation}
\frac{d^{2}\tilde{\lambda}^{(1)}}{dt^{2}} = 
\Bigl( \frac{2}{\lambda^{(0)}} 
\frac{d \lambda^{(0)}}{dt} 
- \frac{1}{t} \Bigr) 
\frac{d \tilde{\lambda}^{(1)}}{dt} 
+ \eta^{2} \biggl\{ 
\frac{\partial F}{\partial \lambda}
\bigl( \lambda^{(0)},t,{\bf c} \bigr) 
- \eta^{-2}
\Bigl( \frac{1}{\lambda^{(0)}} 
\frac{d \lambda^{(0)}}{dt} 
\Bigr)^{2}
\biggr\} \hspace{+.2em} \tilde{\lambda}^{(1)}, 
\label{eq:Frechet bibun}
\end{equation}
which is a Fr\'echet derivative 
of $(P_{\rm III'})_{D_{6}}$ 
at $\lambda = \lambda^{(0)}$. 
Thus we can take a WKB solution 
\cite[$\S 2$]{KT iwanami} 
of \eqref{eq:Frechet bibun} for the 
$1$-instanton part $\tilde{\lambda}^{(1)}$; that is, 
\begin{equation}
\tilde{\lambda}^{(1)} = \alpha \hspace{+.1em} {\rm exp} 
\biggl( \int^{t} R(t,\textbf{c},\eta) dt \biggr),
\label{eq:tilde lambda (1)}
\end{equation}
where $\alpha$ is a free parameter, and 
\[
R(t,\textbf{c},\eta) = \sum_{\ell \ge -1}
\eta^{-\ell} R_{\ell}(t,\text{c}) = 
\eta R_{-1}(t,{\bf c}) + R_{0}(t,{\bf c}) + 
\eta^{-1} R_{1}(t,{\bf c}) + \cdots
\]
is a formal solution of the following Riccati 
equation associated with 
\eqref{eq:Frechet bibun}:
\begin{equation}
R^{2} + \frac{d R}{d t} = 
\Bigl( \frac{2}{\lambda^{(0)}} 
\frac{d \lambda^{(0)}}{dt} 
- \frac{1}{t} \Bigr) R 
+ \eta^{2} \biggl\{ 
\frac{\partial F}{\partial \lambda}
\bigl( \lambda^{(0)},t,{\bf c} \bigr) 
- \eta^{-2}
\Bigl( \frac{1}{\lambda^{(0)}} 
\frac{d \lambda^{(0)}}{dt} 
\Bigr)^{2}
\biggr\}. 
\label{eq:Riccati equation}
\end{equation}

\begin{rem} \normalfont
\label{Remark:odd part}
We can easily confirm that, 
once we fix the square root   
\begin{equation}
R_{-1}(t,\textbf{c}) = 
\sqrt{ \Delta (t,\textbf{c})},
\end{equation}
then coefficients $R_{\ell}(t,{\bf c})$ 
($\ell \ge 0$) are determined recursively. 
For example, the coefficients of first few terms 
are given by
\begin{eqnarray*}
R_{0}(t,{\bf c}) & = & -\frac{1}{2 R_{-1}}\frac{dR_{-1}}{dt} 
 + \frac{1}{\lambda_{0}} \frac{d\lambda_{0}}{dt} 
 - \frac{1}{2t}. \\[+.5em]
R_{1}(t,{\bf c}) & = & \frac{1}{2R_{-1}} 
 \biggl( - R_{0}^{2} - \frac{dR_{0}}{dt} 
 + \Bigl(\frac{2}{\lambda_{0}}\frac{d\lambda_{0}}{dt} 
 - \frac{1}{t} \Bigr) R_{0}   \\[+.3em]
&   & + \Bigl(  \frac{6\lambda_{0}}{t^{2}}
 - \frac{2c_{\infty}}{t^{2}} 
 - \frac{2}{\lambda_{0}^{3}} \Bigr) \lambda_{2}^{(0)} 
 - \Bigl(\frac{1}{\lambda_{0}} 
 \frac{d \lambda_{0}}{dt} \Bigr)^{2}
 \biggr). \\
& \vdots & 
\end{eqnarray*}
Thus we have two formal solutions $R_{+}$ and $R_{-}$ of 
\eqref{eq:Riccati equation} with 
$R_{\pm}(t,\textbf{c},\eta) = \pm \eta \hspace{+.1em} 
\sqrt{\Delta(t,\textbf{c})} + \cdots$.
We define $R_{\rm odd}$ and $R_{\rm even}$ by
\begin{eqnarray}
R_{\rm odd}(t,\textbf{c},\eta) & = & 
\frac{1}{2} \bigl( R_{+}(t,\textbf{c},\eta) 
- R_{-}(t,\textbf{c},\eta) \bigr), 
\label{eq:R-odd}
 \\[+.3em]
R_{\rm even}(t,\textbf{c},\eta) & = & 
\frac{1}{2} \bigl( R_{+}(t,\textbf{c},\eta) 
+ R_{-}(t,\textbf{c},\eta) \bigr). 
\label{eq:R-even}
\end{eqnarray}
We can easily confirm that all the coefficients of 
$\eta^{2n}$ in $R_{\rm odd}(t,{\bf c},\eta)$ 
vanish for $n \ge 0$, and 
$R_{\rm odd}(t,{\bf c},\eta)$ has the form
\begin{equation}
R_{\rm odd}(t,{\bf c},\eta) = \sum_{n \ge 0}
\eta^{1-2n} R_{2n-1}(t,{\bf c}) = 
\eta R_{-1}(t,{\bf c}) + \eta^{-1} R_{1}(t,{\bf c}) + 
\cdots . 
\label{eq:expression of Rodd}
\end{equation}
Moreover, since  
\begin{equation}
R_{\rm even} = 
- \frac{1}{2} \frac{1}{R_{\rm odd}} 
\frac{d R_{\rm odd}}{dt} + 
\Bigl( \frac{1}{\lambda^{(0)}} 
\frac{d \lambda^{(0)}}{dt} 
- \frac{1}{2t} \Bigr), 
\label{eq:relation of Rodd and Reven}
\end{equation}
holds by \eqref{eq:Riccati equation}, 
the WKB solution $\tilde{\lambda}^{(1)}$ 
of \eqref{eq:Frechet bibun} can be written as 
\begin{eqnarray}
\tilde{\lambda}^{(1)} & = & 
\alpha \hspace{+.1em} 
\frac{{\lambda^{(0)}(t,\textbf{c},\eta)}}
{\sqrt{ t \hspace{+.1em} R_{\rm odd}(t,\textbf{c},\eta)}} 
\hspace{+.1em} {\rm exp} 
\biggl( \int^{t} 
R_{\rm odd}(t,\textbf{c},\eta) dt \biggr) 
\label{eq:first part of 1-parameter solution} \\[+.5em]
& = &
\alpha \hspace{+.1em} \eta^{-{1}/{2}} \bigl( 
\lambda^{(1)}_{0}(t,\textbf{c}) + 
\eta^{-1} \lambda^{(1)}_{1}(t,\textbf{c}) + 
\eta^{-2} \lambda^{(1)}_{2}(t,\textbf{c}) + \cdots
\bigr) e^{\eta \phi}. 
\label{eq:tilde lambda (1) expansion}
\end{eqnarray}
We mainly use the expression 
\eqref{eq:first part of 1-parameter solution}
rather than \eqref{eq:tilde lambda (1) expansion}
for the convenience of descriptions of normalizations.
\end{rem}

We note that, once the normalization 
of the $1$-instanton part $\tilde{\lambda}^{(1)}$ 
(i.e.\ the end point and the path of the 
integral of $R_{\rm odd}$ 
in \eqref{eq:first part of 1-parameter solution}) 
is fixed, then the formal power series 
$\lambda^{(k)}(t,{\bf c},\eta)$ ($k \ge 2$) 
are determined uniquely in a recursive manner. 
Then the all coefficients $\lambda^{(k)}_{\ell}(t,{\bf c})$ 
appears in \eqref{eq:1-parameter solution} are holomorphic 
in $t$ on the universal covering of 
\begin{equation}
\Omega_{D_{6}} = 
{\mathbb C}^{\times}/\{ \text{turning points} \}. 
\label{eq:the domain of P3D6}
\end{equation}
Here turning points of $(P_{\rm III'})_{D_{6}}$ 
are defined in Definition \ref{def:P-Stokes geometry} below.
In Section \ref{section:P-Voros coefficients}
we will introduce some special normalizations of 
\eqref{eq:first part of 1-parameter solution}, 
and analyze parametric Stokes phenomena for those 
transseries solutions in 
Section \ref{section:connection formulas}. 

\subsection{Stokes geometry}
\label{section:P-Stokes geometry}

Now we recall the definitions of turning points and 
Stokes curves of $(P_{\rm III'})_{D_{6}}$. 

\begin{defi} \normalfont
{\cite[Definition 4.5]{KT iwanami}} 
\label{def:P-Stokes geometry} 
(i) A point $t = \tau$ is called 
{\it a turning point}
if $\tau \ne 0$ and it is a zero of the function 
$\Delta(t,\textbf{c})$.  \vspace{+.3em}

\noindent
(ii) For a turning point $t = \tau$, 
real one-dimensional curves defined by 
\begin{equation}
{\rm Im} \int_{\tau}^{t} 
\sqrt{\Delta(t,\textbf{c})} \hspace{+.2em} dt = 0
\end{equation}
are called {\it Stokes curves} (emanating from $\tau$).
\end{defi}

In Definition \ref{def:P-Stokes geometry} the branch of 
$\lambda_{0}$ is fixed. But later we consider all branches 
of $\lambda_{0}$ and lift turning points and 
Stokes curves onto the Riemann surface of $\lambda_{0}$, 
which is a four-fold covering of ${\mathbb C}^{\times}$ 
branching at turning points.
See Remark \ref{Remark:uniformization} for the lift.

\begin{rem} \label{Remark:R-turning points} \normalfont
Turning points are, by definition, 
zeros of the discriminant 
of the following algebraic equation for $\lambda$: 
\begin{equation}
(t^{2} \lambda F(\lambda,t,{\bf c}) = )\hspace{+.2em}
\lambda^{4} - c_{\infty} \lambda^{3}
+ c_{0} t \lambda - t^{2} = 0. 
\label{eq:lambda0}
\end{equation}
The discriminant ${Disc}(t,{\bf c})$ of 
the above algebraic equation is given by 
\begin{equation}
{Disc}(t,{\bf c}) = 
t^{3} ( -256 t^{3} + 192 t^{2} c_{\infty} c_{0}
+ 6 t c_{\infty}^{2} c_{0}^{2} - 27 t c_{\infty}^{4} 
- 27 t c_{0}^{4} + 4 c_{\infty}^{3} c_{0}^{3}).
\end{equation}
Moreover, the discriminant of the cubic equation
$Disc(t,{\bf c})/t^{3} = 0$ for $t$ is factorized as  
\begin{equation}
-20155392 (c_{\infty}^{2} - c_{0}^{2})^{4}
(c_{\infty}^{2} + c_{0}^{2})^{2}. 
\end{equation}
Therefore, under the assumptions \eqref{eq:THE condition}, 
there are three turning points on 
${\mathbb C}^{\times}$, and all of them are simple 
(in the sense of \cite[Definition 4.5]{KT iwanami}). 
We always assume \eqref{eq:THE condition} in this paper.
\end{rem}

Unfortunately, Definition \ref{def:P-Stokes geometry} 
is not enough to describe the 
``complete'' Stokes geometry 
because the singular point $t = 0$ of 
$(P_{\rm III'})_{D_{6}}$ 
may play the same role of turning point 
(\cite{Takei-Wakako, Takei turning point problem, Kamimoto-Koike}). 
We know that there are following four asymptotic 
behaviors of $\lambda_{0}$ as $t \rightarrow 0$:
\begin{eqnarray}
\lambda_{0}(t,{\bf c}) 
& = & 
c_{\infty} + O(t), 
\label{eq:double-pole type infty} \\[+.5em]
\displaystyle \lambda_{0}(t,{\bf c}) & = & 
{t}/{c_{0}} + O(t^{2}), 
\label{eq:double-pole type zero} \\[+.5em]
\displaystyle \lambda_{0}(t,{\bf c}) & = & 
\pm \sqrt{{c_{0}}/{c_{\infty}}} 
\hspace{+.2em} t^{1/2} (1 + O(t^{1/2})). 
\label{eq:simple-pole type}
\end{eqnarray}
Then, as shown in \cite{Takei-Wakako, Kamimoto-Koike}, 
for the branch \eqref{eq:simple-pole type} 
of $\lambda_{0}$, the singular point $t = 0$ 
plays the same role as turning points; 
that is, there exist a Stokes curve 
emanating from $t = 0$ on which transseries solutions 
may not be Borel summable. 
(A similar phenomenon occurs for WKB solutions of 
Schr\"odinger equations whose potential function 
has a simple-pole \cite{Koike}.)
Thus we define the following:

\begin{defi} \normalfont
{\cite{Takei-Wakako, Kamimoto-Koike}} 
\label{def:P-Stokes geometry 2} 
(i) The singular point $t=0$ for the branch 
\eqref{eq:simple-pole type} of $\lambda_{0}$ 
is called {\it the simple-pole} of $(P_{\rm III'})_{D_{6}}$. 
(See \eqref{eq:quadratic differential in u-plane} below 
for the reason why we call \eqref{eq:simple-pole type} 
simple-pole.) The simple-pole is denoted by $\tau_{sp}$.

\noindent
(ii) For the above branch of $\lambda_{0}$, 
the real one-dimensional curve defined by 
\begin{equation}
{\rm Im} \int_{\tau_{sp}}^{t} 
\sqrt{\Delta(t,\textbf{c})} \hspace{+.2em} dt = 0
\end{equation}
is also called {\it a Stokes curve} (emanating 
from the simple-pole $t = \tau_{sp}$).
\end{defi}

  \begin{figure}[h]
  \begin{minipage}{0.5\hsize}
  \begin{center}
  \includegraphics[width=50mm, clip]
  {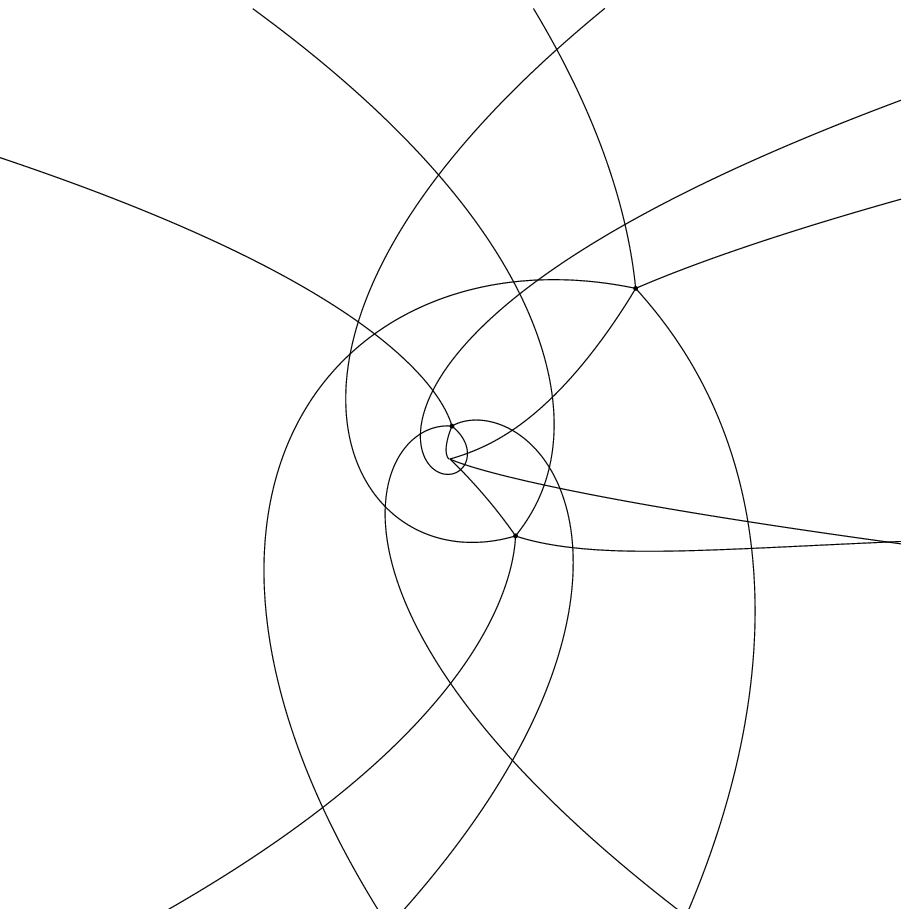}    
  \end{center} 
  \caption{\small{$\textbf{c} = (2+i,3)$.}}
  \label{fig:P3D6-generic}
  \end{minipage} \hspace{+.3em}
  \begin{minipage}{0.5\hsize}
  \begin{center}
  \includegraphics[width=50mm, clip]
  {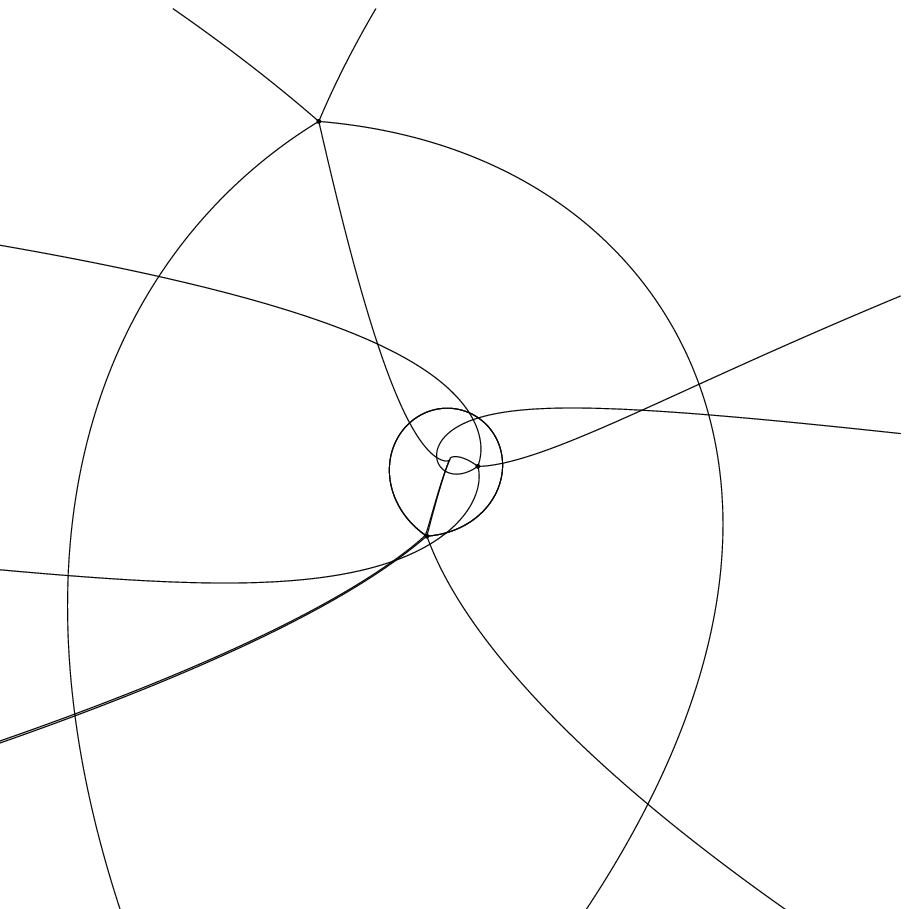}    
  \end{center} 
  \caption{\small{$\textbf{c} = (2+i,3i)$.}}
  \label{fig:P3D6-loop}
  \end{minipage} \hspace{+.3em}
  \end{figure}
  \begin{figure}[h]
  \begin{minipage}{0.31\hsize}
  \begin{center}
  \includegraphics[width=52mm, clip]
  {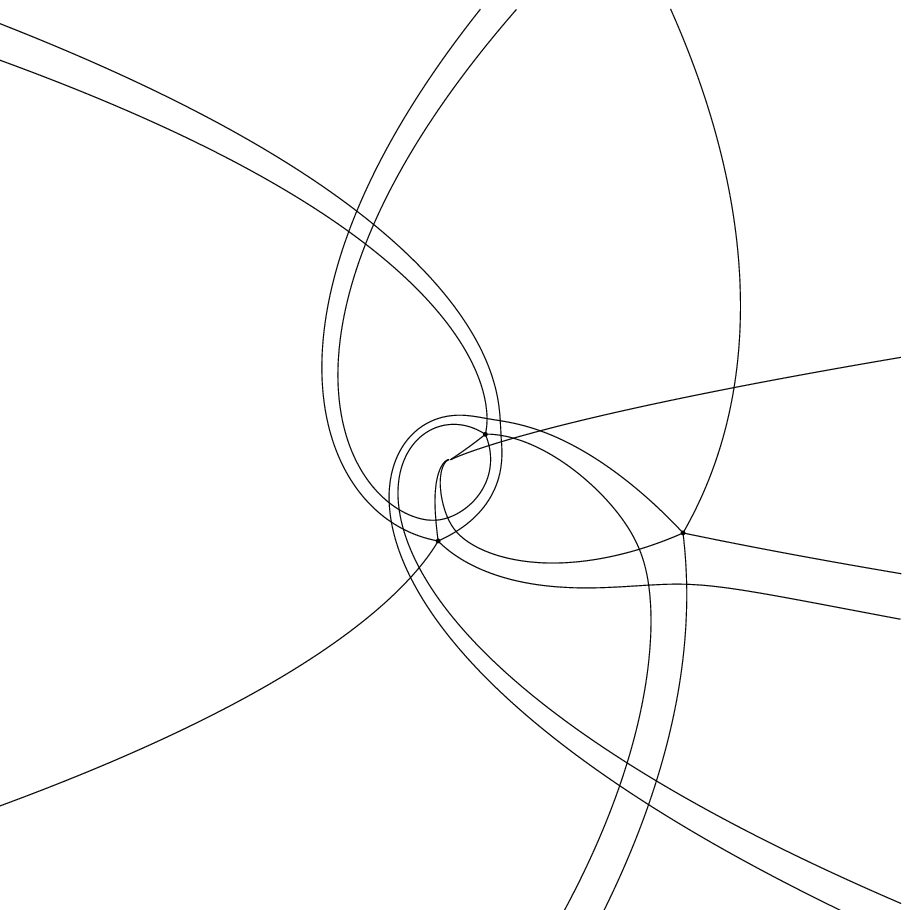}    
  \end{center} 
  \caption{\small{$\textbf{c} = (1.9,2-i)$.}}
  \label{fig:PIII',-epsilon}
  \end{minipage} \hspace{+.3em}
  \begin{minipage}{0.31\hsize}
  \begin{center}
  \includegraphics[width=52mm,  clip]
  {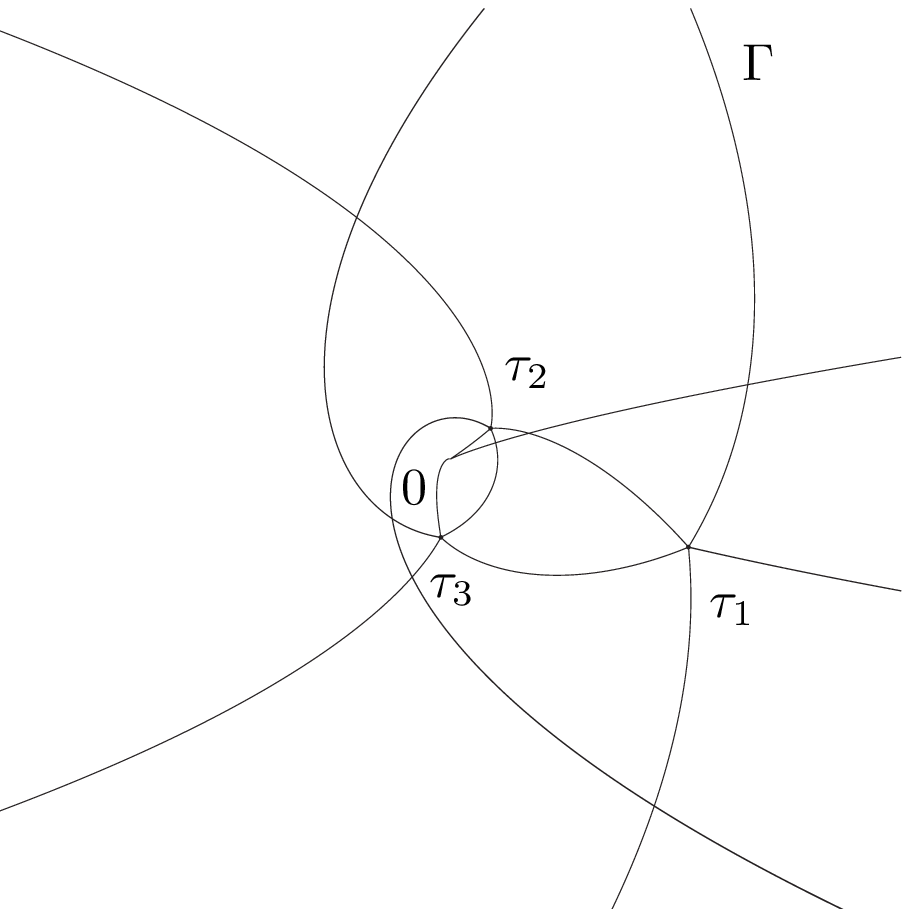}  
  \end{center} 
  \caption{\small{$\textbf{c} = (2,2-i)$.}}
  \label{fig:PIII',0}
  \end{minipage} \hspace{+.3em}
  \begin{minipage}{0.31\hsize}
  \begin{center}
  \includegraphics[width=52mm, clip]
  {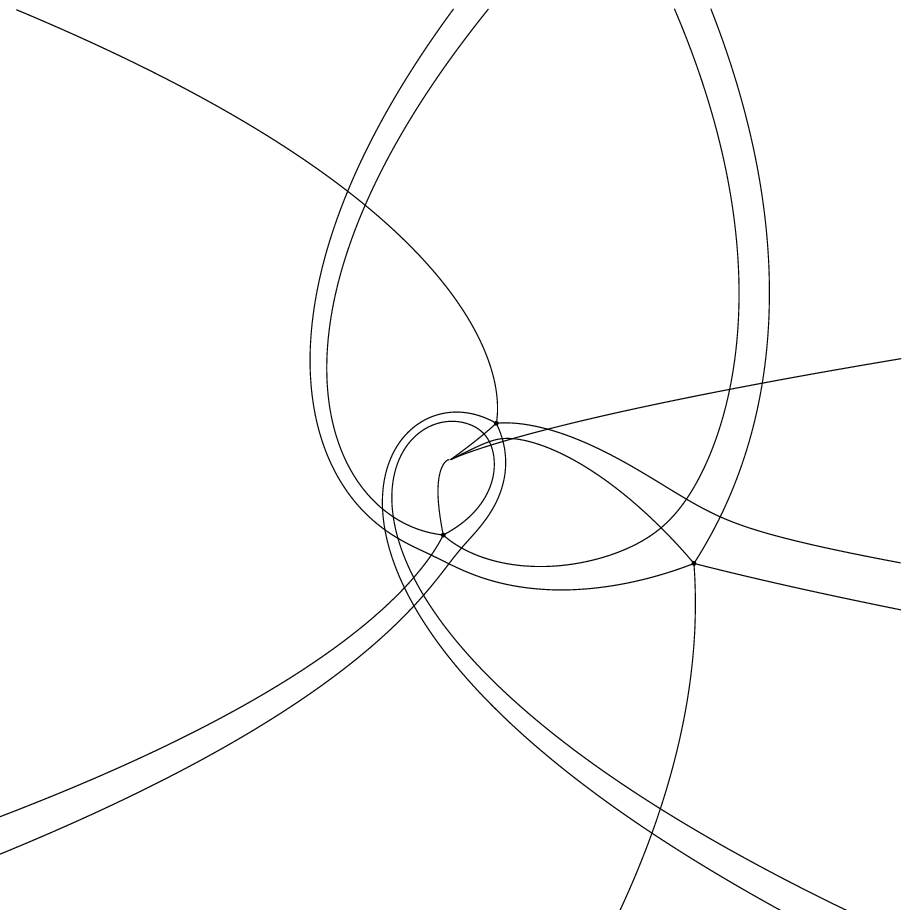}  
  \end{center} 
  \caption{\small{$\textbf{c} = (2.1,2-i)$}.}
  \label{fig:PIII',+epsilon}
  \end{minipage}
  \end{figure}

Figures \ref{fig:P3D6-generic} $\sim$ 
\ref{fig:PIII',+epsilon} describe the Stokes curves of 
$(P_{\rm III'})_{D_{6}}$ on the $t$-plane for several values 
of ${\bf c}$. Note that the quadratic differential 
$\Delta(t,{\bf c}) \hspace{+.1em} dt^{2}$ 
which defines the Stokes curves behaves as 
\begin{eqnarray}
\Delta(t,{\bf c}) \hspace{+.1em} dt^{2} 
& = & 
C_{\tau} \hspace{+.1em} (t-\tau)^{1/2} 
\bigl( 1 + O((t-\tau)^{1/2}) \bigr) dt^{2} 
\hspace{+1.em} \text{as $t \rightarrow \tau 
\hspace{+.2em} (\ne \tau_{sp}$)}, \\
\Delta(t,{\bf c}) \hspace{+.1em} dt^{2}
& = &
C_{{sp}} \hspace{+.1em} t^{-3/2}
\bigl( 1 + O(t^{1/2}) \bigr) dt^{2} 
\hspace{+1.em} \text{as $t \rightarrow \tau_{sp}$}, 
\end{eqnarray}
with some constants $C_{\tau}$ and $C_{{sp}}$ which are 
non-zero under the assumption \eqref{eq:THE condition}.
Hence, five Stokes curves emanate from 
each turning points, and one Stokes curve 
emanates from the simple-pole. 
Connection problems on Stokes curves of Painlev\'e equations 
in terms of the exact WKB analysis are discussed 
in \cite{KT iwanami, Takei Painleve, 
Takei turning point problem, Takei-Wakako}.

In Figure \ref{fig:PIII'-triangle-intro}, 
\ref{fig:PIII'-loop-intro}, 
\ref{fig:P3D6-loop} and 
\ref{fig:PIII',0} we can observe 
that there exists a ``{bounded Stokes curve}'', 
which connect two different turning points,
or form a loop around the singular point $t = 0$. 
We call such situation 
``{\it degeneration of the Stokes geometry}'', 
and it is closely related to parametric Stokes phenomena 
for transseries solutions, 
as we pointed in Section \ref{section:Introduction}. 
Connection problems for the parametric Stokes phenomena 
are discussed in the subsequent sections. 

  \begin{figure}[h]
  \begin{minipage}{0.5\hsize}
  \begin{center}
  \includegraphics[width=60mm]
  {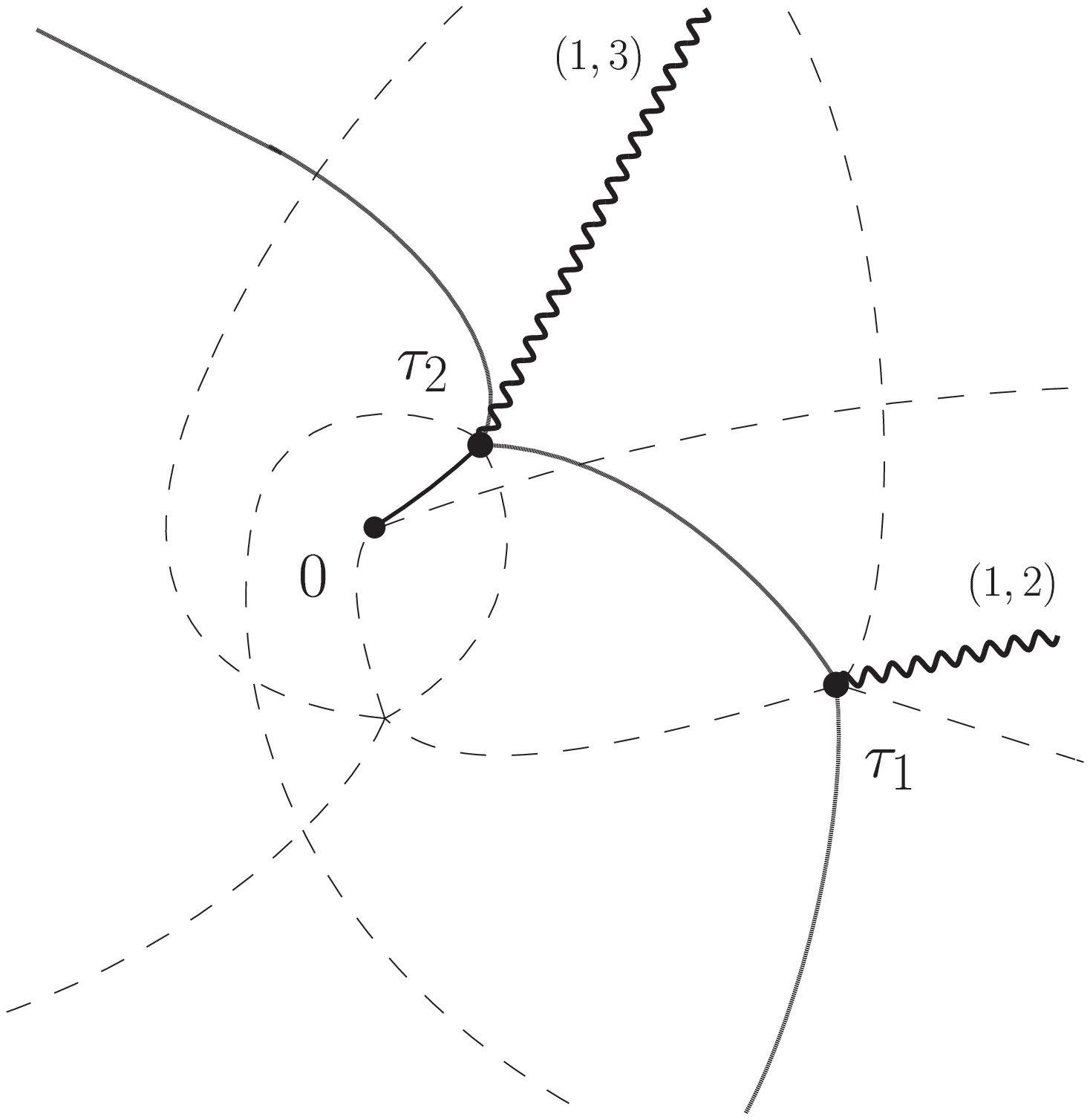}
  \end{center}
  \caption{\small{Sheet 1.}}
  \label{fig:P3D6Stokes-sheet-1}
  \end{minipage}
  \begin{minipage}{0.5\hsize}
  \begin{center}
  \includegraphics[width=60mm]
  {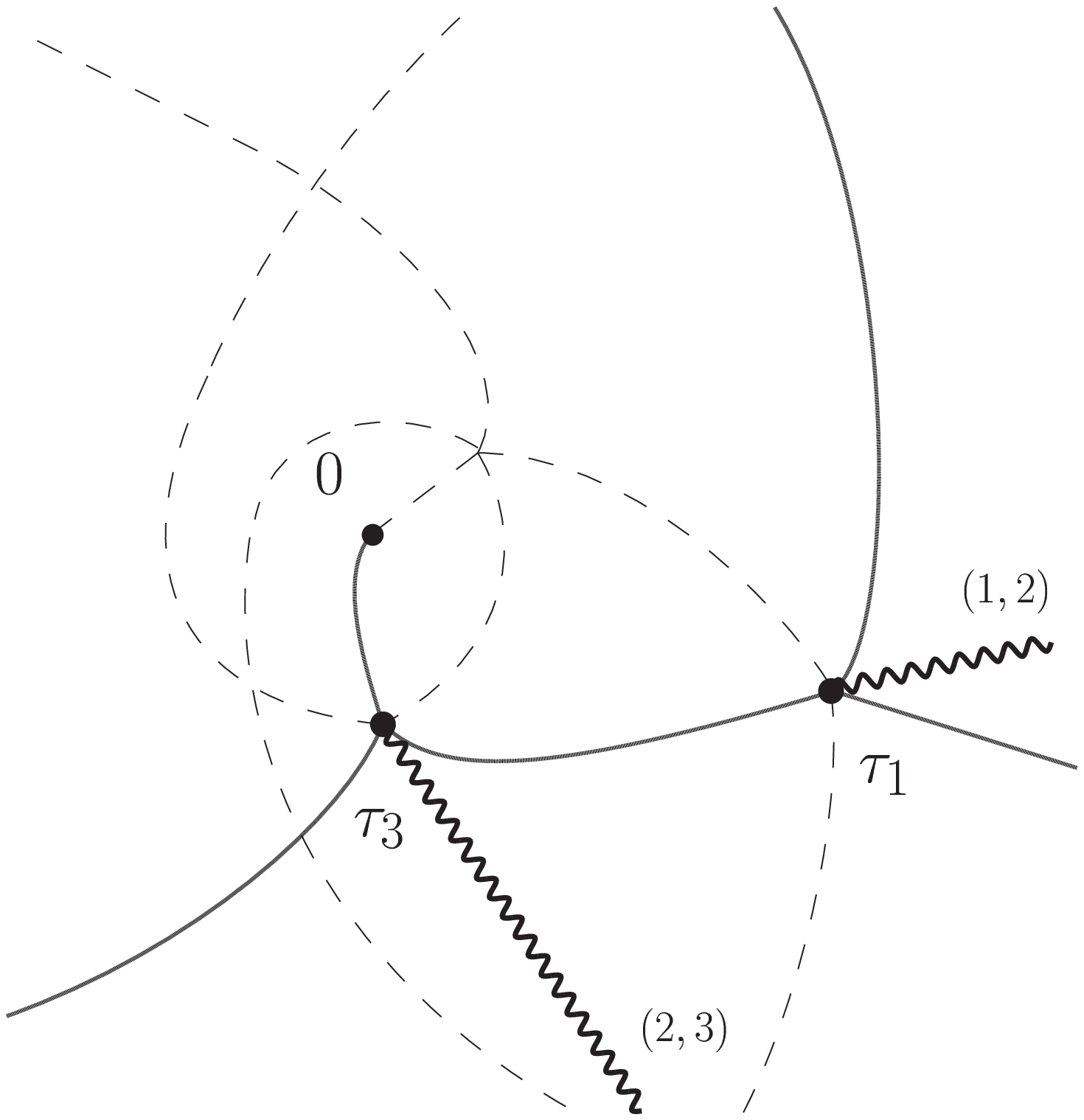}
  \end{center}
  \caption{\small{Sheet 2.}}
  \label{fig:P3D6Stokes-sheet-2}
  \end{minipage}
  \end{figure}
  \begin{figure}[h]
  \begin{minipage}{0.5\hsize}
  \begin{center}
  \includegraphics[width=60mm]
  {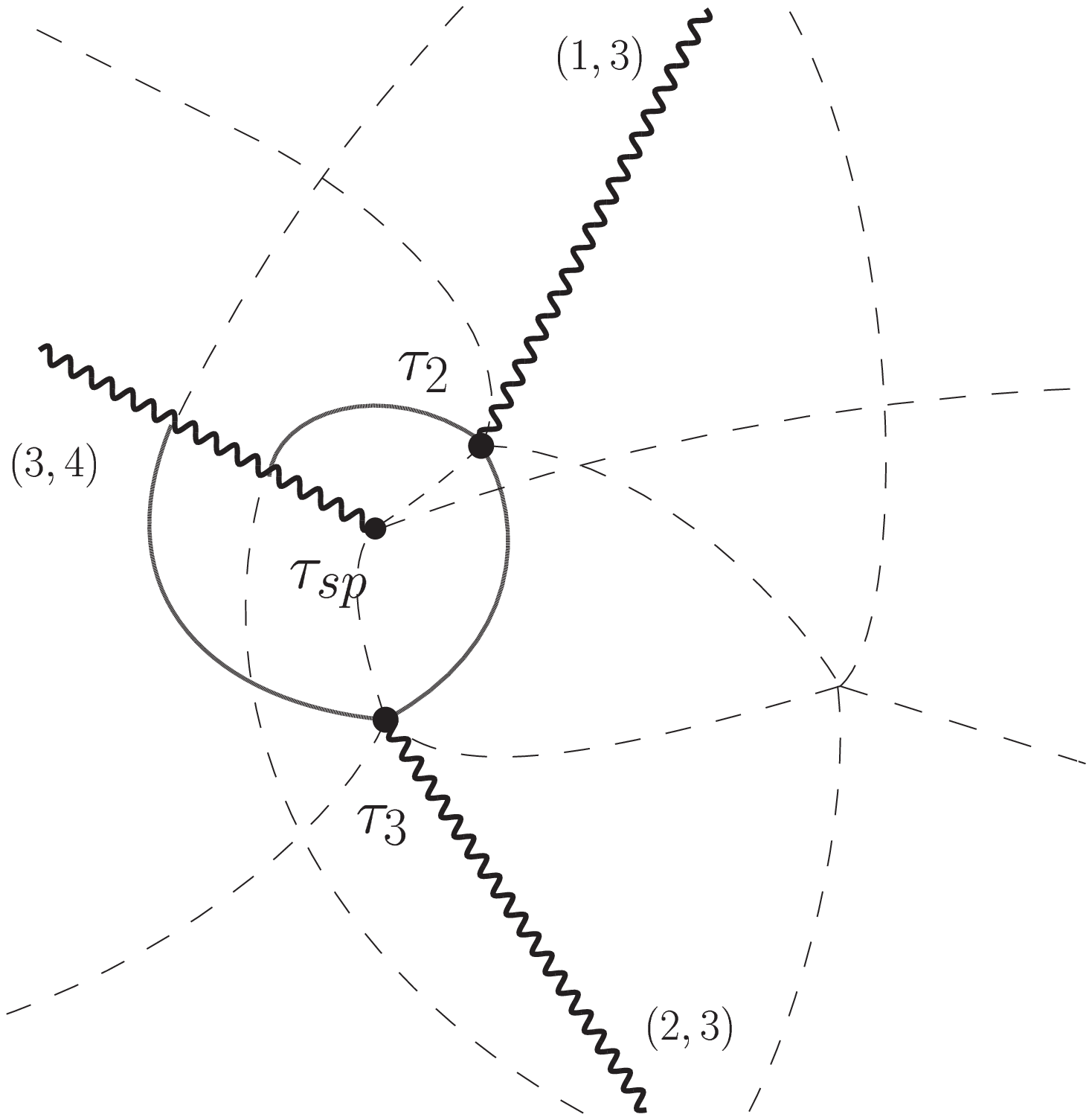}
  \end{center}
  \caption{\small{Sheet 3.}}
  \label{fig:P3D6Stokes-sheet-3}
  \end{minipage}
  \begin{minipage}{0.5\hsize}
  \begin{center}
  \includegraphics[width=60mm]
  {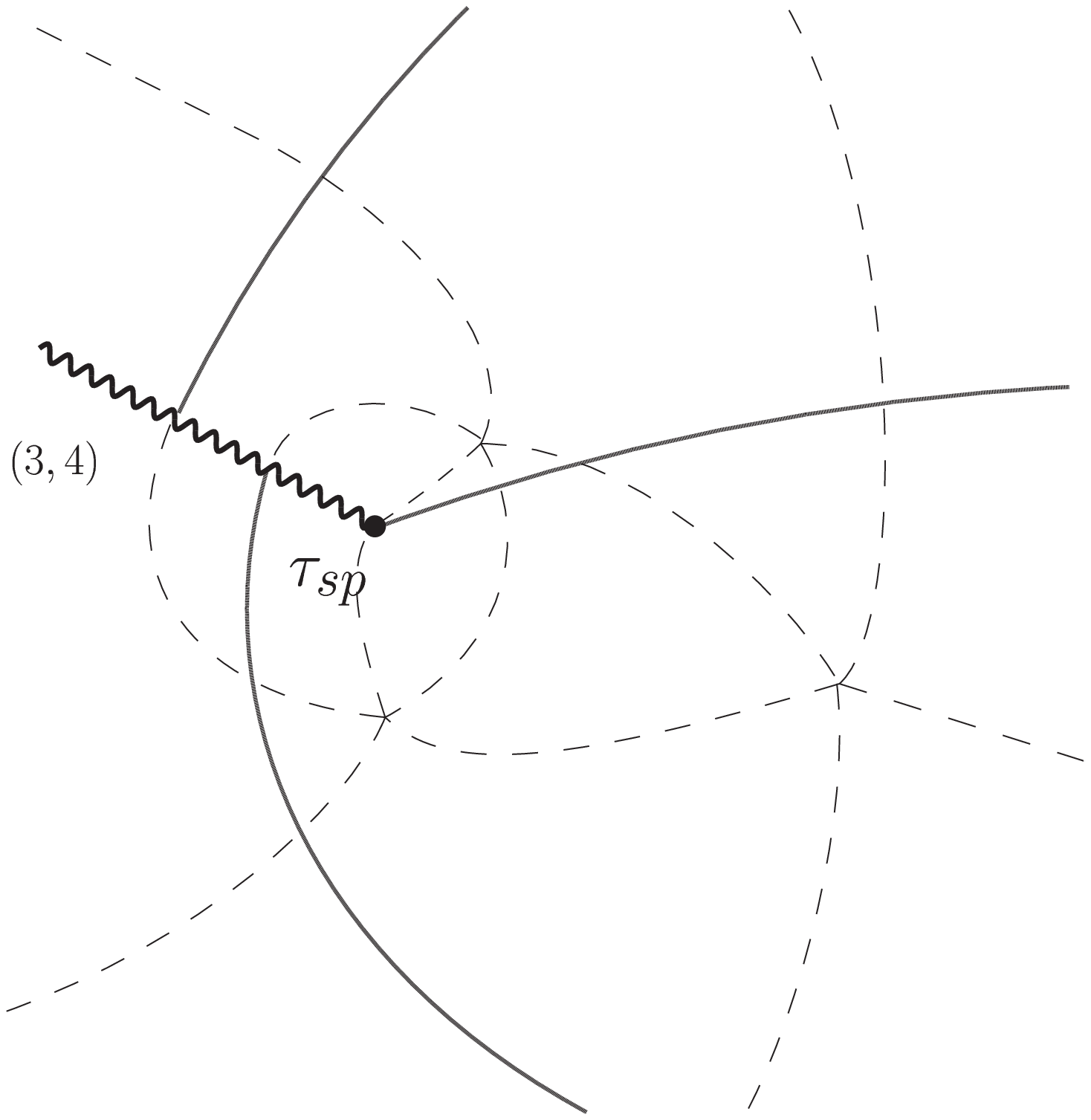}
  \end{center}
  \caption{\small{Sheet 4.}}
  \label{fig:P3D6Stokes-sheet-4}
  \end{minipage}
  \end{figure}

\begin{rem} \label{Remark:uniformization}
\normalfont
Since $\lambda_{0}(t,{\bf c})$ is multivalued function of $t$, 
we should consider the lift of Stokes geometry  
onto the Riemann surface of $\lambda_{0}$ 
(i.e., the Riemann surface defined by \eqref{eq:lambda(0)0}).
Since $\lambda_{0}$ satisfies the algebraic equation 
\eqref{eq:lambda0} of degree 4, we need four sheets 
(Sheet 1 $\sim$ Sheet 4) to consider the lift.
For example, Figure \ref{fig:P3D6Stokes-sheet-1} $\sim$
Figure \ref{fig:P3D6Stokes-sheet-4} describe the lift 
of Stokes geometry when ${\bf c} = (2, 2-i)$.
The wiggly line of the type $(j, k)$ ($1 \le j,k \le 4$),  
solid lines and dotted lines in these figures 
represent the branch cut for between the Sheet $j$ and 
Sheet $k$, Stokes curves on the sheet under consideration
and Stokes curves on the other sheets, respectively. 
The origin of the type \eqref{eq:double-pole type infty} 
and \eqref{eq:double-pole type zero} correspond to 
the origin of the Sheet 1 and Sheet 2 respectively. 
Thus the Riemann surface of $\lambda_{0}$ has genus $0$.
\end{rem}

\begin{figure}[h]
  \begin{minipage}{0.31\hsize}
  \begin{center}
  \includegraphics[width=46mm]
  {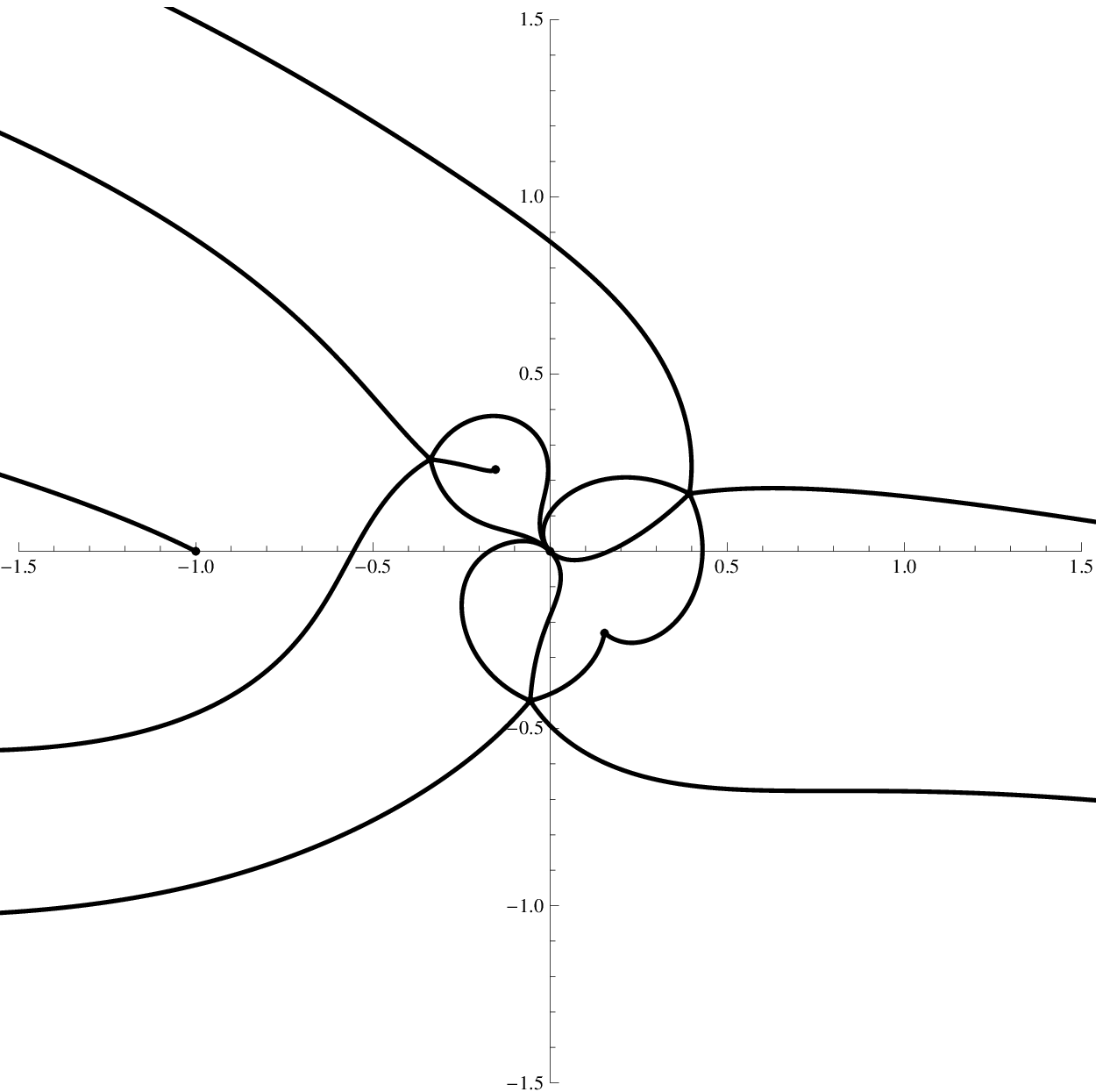}
  \end{center}
  \caption{\small{${\bf c}=(2+i, 3)$.}}
  \label{fig:P3D6-u-plane-generic}
  \end{minipage} 
\begin{minipage}{0.31\hsize}
\begin{center}
\includegraphics[width=46mm]{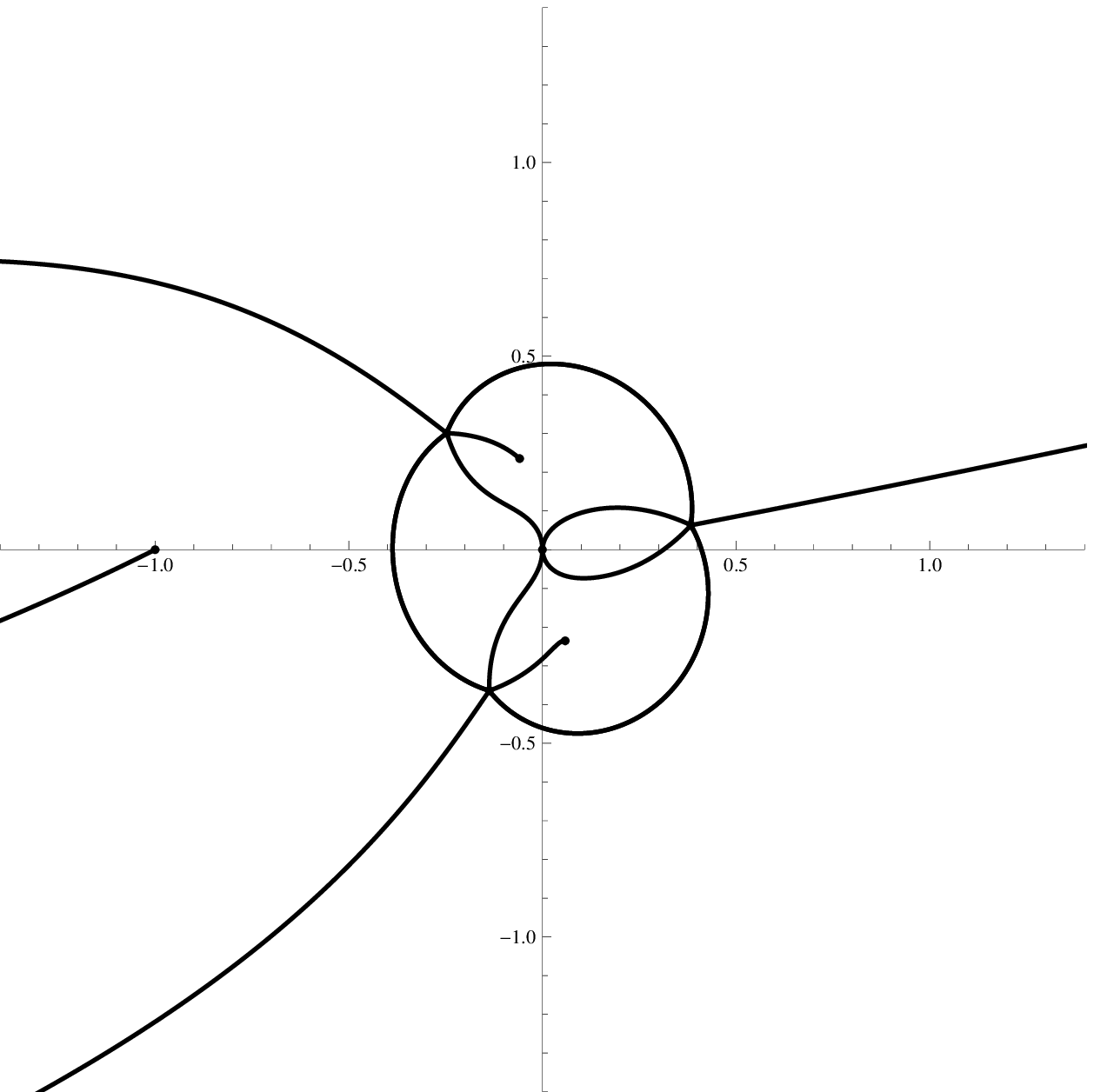}
\end{center}
\caption{\small{${\bf c} = (2, 2-i)$. \hspace{+.2em}}}
\label{fig:P3D6-u-plane-triangle-0}
\end{minipage} \hspace{+.3em}
  \begin{minipage}{0.31\hsize}
  \begin{center}
  \includegraphics[width=46mm]
  {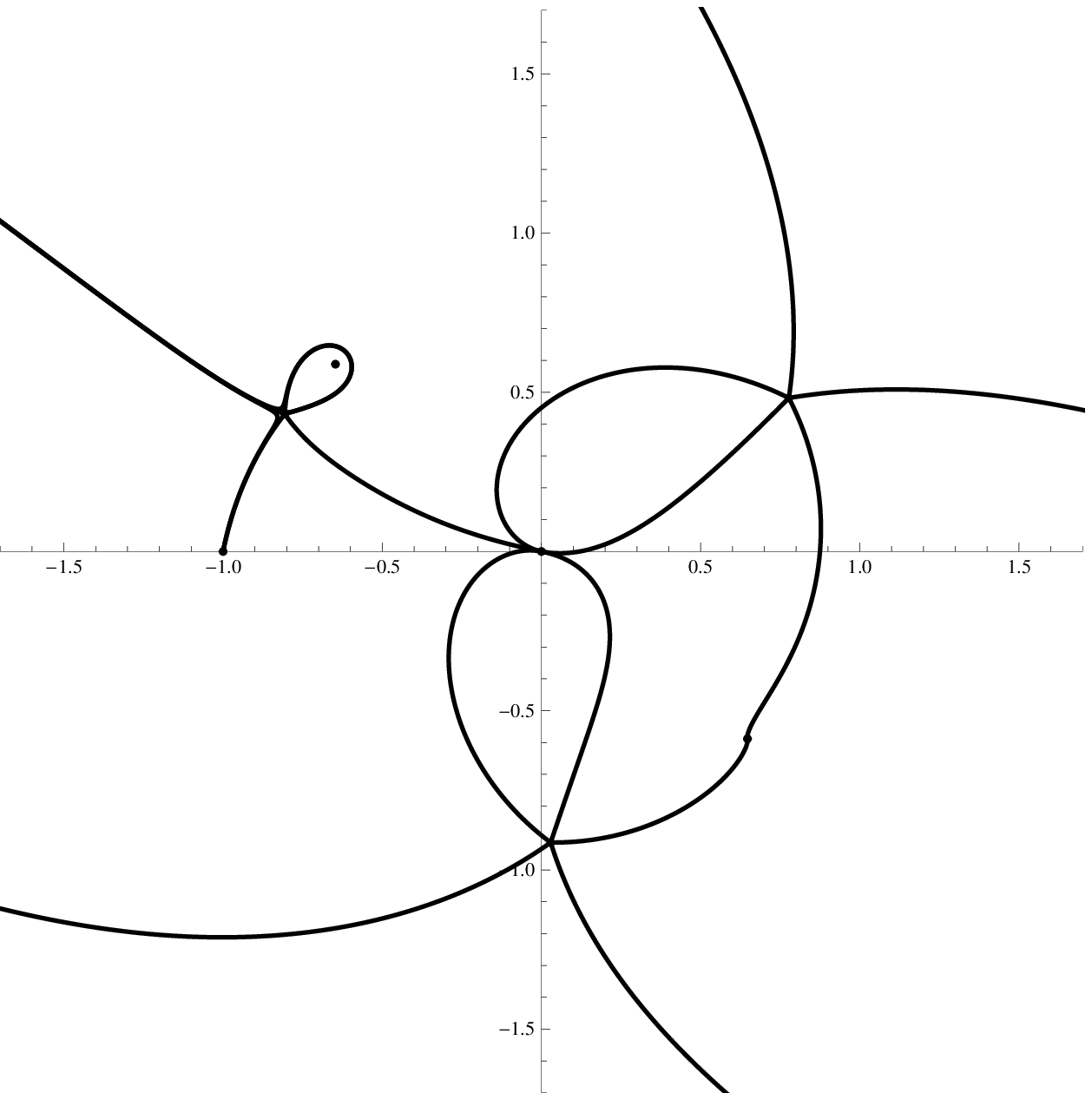}
  \end{center}
  \caption{\small{${\bf c} = (5+i, 2i)$}.}
  \label{fig:P3D6-u-plane-loop}
  \end{minipage}
\end{figure}

It is convenient to change the variable from $t$ 
to $u$ which is given by 
\begin{equation}
u = \frac{1-\mu_{0}}{\mu_{0}}, 
\label{eq:u-coordinate}
\end{equation}
where $\mu_{0} = \mu_{0}(t,{\bf c})$ is given by \eqref{eq:mu0} 
in Section \ref{section:proof of the Main theorems}. 
Both $t$ and $\lambda_{0}(t,{\bf c})$ can be written 
in terms of $u$ as
\begin{eqnarray*}
\lambda_{0} & = & \frac{u + 1}{4u} 
\bigl( (c_{\infty}+c_{0})u - (c_{\infty}-c_{0})  \bigr), 
\\[+.3em] 
t & = & \frac{(u+1)^{2}}{16 u^{2}}
\bigl( (c_{\infty}+c_{0})^{2} u^{2} 
- (c_{\infty}-c_{0})^{2}  \bigr).
\end{eqnarray*}
Stokes geometry can be described without any intersections 
on the $u$-plane. In fact, the quadratic differential 
$\Delta(t,{\bf c}) \hspace{+.1em} dt^{2}$ 
which determine the Stokes geometry is written as 
\[
\Delta(t,{\bf c}) \hspace{+.1em} dt^{2} = 
q(u,{\bf c}) \hspace{+.1em} du^{2},
\]
\begin{equation}
q(u,{\bf c}) = 
\frac{\bigl\{ (c_{\infty} + c_{0})^{2} u^{3} 
+ (c_{\infty} - c_{0})^{2} \bigr\}^{3}}
{(u+1) u^{4} \bigl( 
(c_{\infty} + c_{0})^{2} u^{2} - (c_{\infty} - c_{0})^{2} 
\bigr)^{2} }.
\label{eq:quadratic differential in u-plane}
\end{equation}
The turning points correspond to zeros 
\[
\frac{(c_{\infty} - c_{0})^{2/3}}
{(c_{\infty} + c_{0})^{2/3}} \hspace{+.1em}
(-1)^{1/3}  \omega^{j} \hspace{+1.em}
(\omega = e^{2\pi i/3}, \hspace{+.3em} j=0,1,2)
\] 
of \eqref{eq:quadratic differential in u-plane}, 
and the simple-pole $t = \tau_{sp}$ of \eqref{eq:simple-pole type} 
corresponds to $u = -1$; i.e., the simple-pole of 
\eqref{eq:quadratic differential in u-plane}.
This is the reason why we call 
\eqref{eq:simple-pole type} simple-pole. 
Similarly, we will call the singular points $t = 0$ of 
\eqref{eq:double-pole type infty} 
and \eqref{eq:double-pole type zero} are double-poles 
since they correspond to double-poles 
$u = (c_{\infty}-c_{0})/(c_{\infty}+c_{0})$ 
and $u = -(c_{\infty}-c_{0})/(c_{\infty}+c_{0})$ 
of \eqref{eq:quadratic differential in u-plane}
respectively.
Moreover, \eqref{eq:u-coordinate} gives a coordinate 
on the Riemann surface of $\lambda_{0}$ 
because 
\[
\frac{du}{dt} = \frac{8 u^{3}}{1+u} \hspace{+.2em}
\frac{1}{(c_{\infty}+c_{0})^{2} u^{3} + 
(c_{\infty} - c_{0})^{2}}
\]
never vanish when $t \in \Omega_{D_{6}}$. 
Figure \ref{fig:P3D6-u-plane-generic} 
$\sim$ \ref{fig:P3D6-u-plane-loop} 
are the Stokes geometry on the $u$-plane 
for several values of ${\bf c}$. 
In Appendix \ref{Appendix:examples of P-Stokes geometry} 
we show more examples of the Stokes geometry on the $u$-plane. 
The residues of the 1-form 
$\sqrt{q(u,{\bf c})} \hspace{+.1em} du$
at singular points are summarized as follows
(the sign $\pm$ depends on the choice of the square root
of $q(u,{\bf c})$):
\begin{eqnarray}
{\rm Res}_{u = \infty} \sqrt{q(u,{\bf c})} \hspace{+.1em} du 
& = & \pm (c_{\infty} + c_{0})/2, \\[+.3em]
{\rm Res}_{u = 0} \sqrt{q(u,{\bf c})} \hspace{+.1em} du 
& = & \pm (c_{\infty} - c_{0})/2, \\[+.3em]
{\rm Res}_{u = (c_{\infty}-c_{0})/(c_{\infty}+c_{0})} 
\sqrt{q(u,{\bf c})} \hspace{+.1em} du 
& = & \pm c_{\infty}, 
\label{eq:residue at double-pole infty} \\[+.3em]
{\rm Res}_{u = - (c_{\infty}-c_{0})/(c_{\infty}+c_{0})} 
\sqrt{q(u,{\bf c})} \hspace{+.1em} du  
& = & \pm c_{0} .
\label{eq:residue at double-pole zero}
\end{eqnarray}

In the figures above and in Appendix 
\ref{Appendix:examples of P-Stokes geometry}
we observe some interesting characteristic 
features of the Stokes geometry of $(P_{\rm III'})_{D_{6}}$. 
We find two kinds of degenerations. The first one is 
``{\it triangle-type degeneration}''; that is, there are 
three pairs of turning points connected by Stokes 
curves simultaneously. Figure \ref{fig:P3D6-u-plane-triangle-0} 
shows an example of the triangle-type degeneration. 
This kind of degeneration is also observed 
in the case of the second, forth and sixth Painlev\'e equation 
(see \cite{Iwaki} for the second Painlev\'e equation). 
The second one is ``{\it loop-type degeneration}''; that is, 
a Stokes curve form a loop around the double-pole type 
singular point t = 0 and, at the same time, the turning point 
which is the end point of the loop and the simple-pole 
are connected by a Stokes curve. 
An example the loop-type degeneration is shown in 
Figure \ref{fig:P3D6-u-plane-loop}. 
It is known that appearances of these loops 
are caused by the fact that the residue of the 1-form 
$\sqrt{\Delta(t,{\bf c})} \hspace{+.2em} dt$ at 
the double-pole inside the loop takes a pure imaginary 
number \cite[$\S$7]{Strebel}. 
See \eqref{eq:residue at double-pole infty} and 
\eqref{eq:residue at double-pole zero}.
The same kind of degeneration is also observed for 
the degenerate third Painlev\'e equation of the type $D_{7}$:
\begin{equation}
\frac{d^{2}\lambda}{dt^{2}} = 
\frac{1}{\lambda} \Bigl( \frac{d \lambda}{dt} \Bigr)^{2}
-
\frac{1}{t} \frac{d \lambda}{dt} 
+
\eta^{2} \Bigl( - \frac{2 \lambda^{2}}{t^{2}}
+ \frac{c}{t} - \frac{1}{\lambda}
\Bigr), \label{eq:P3-D7}
\end{equation}
when $c$ is pure imaginary 
(see Figure \ref{fig:P3D7,0} in 
Appendix \ref{Appendix:P3-D7}).
Intriguingly enough, as far as we checked, 
only these two kinds of degeneration can be observed. 
(See Appendix A.) We expect that these are common 
characteristic features of Stokes geomertries of 
all Painlev\'e equations. 
But we have not understood the mechanism of 
these phenomena yet.

\section{Voros coefficients of $(P_{\rm III'})_{D_{6}}$}
\label{section:P-Voros coefficients}

In this section we compute  
{Voros coefficients of $(P_{\rm III'})_{D_{6}}$}. 
Those are formal power series in $\eta^{-1}$ 
which describe differences of normalizations of 
transseries solutions. 
Voros coefficients play a essential role 
when we discuss the parametric Stokes phenomena 
in Section \ref{section:connection formulas}.

\subsection{Voros coefficients $W_{\infty}$}
\label{subsection:P-Voros coefficients}

We recall the definition of Voros coefficients 
of $(P_{\rm III'})_{D_{6}}$ 
(cf. \cite{Iwaki, Iwaki-Bessatsu}).

\begin{defi} \normalfont
For a path $\Gamma(\tau,\infty)$ from a turning point 
(or the simple-pole) $\tau$ to $\infty$, 
{\it the Voros coefficient
for the path $\Gamma(\tau,\infty)$} is 
a formal power series in $\eta^{-1}$ defined by 
the integral 
\begin{equation}
W_{\infty}(\textbf{c},\eta) 
= \int_{\Gamma(\tau,\infty)} 
\bigl( R_{\rm odd}(t,\textbf{c},\eta) - 
\eta R_{-1}(t,\textbf{c}) \bigr) \hspace{+.1em} dt.
\label{eq:P-Voros coeff}
\end{equation}
\end{defi}

\begin{rem} \normalfont
A priori, the Voros coefficients may depend on the 
choice of the path $\Gamma(\tau,\infty)$, 
hence we should use the notation $W_{\Gamma(\tau,\infty)}$. 
However, as is shown in the proof of Theorem \ref{Main Theorem3}, 
Voros coefficients depends only on the asymptotic behavior of 
$\lambda_{0}$ as $t \rightarrow \infty$, 
and the choice of the square root $R_{-1} = \sqrt{\Delta}$. 
They are independent of the lower end point $\tau$.
For this reason, we write them simply as \eqref{eq:P-Voros coeff}.
\end{rem}

The Voros coefficients represent a difference 
between several normalizations of transseries solutions.
To see this, we introduce the following 
two special normalizations. 
Note that, as we mentioned in the end of Section 
\ref{section:1-parameter solutions}, 
giving a normalization of transseries solution 
is equivalent to giving that of its $1$-instanton part 
\eqref{eq:first part of 1-parameter solution}.
(In this section we do not discuss the precise location of 
the independent variable $t$, and the choice of 
the path of the integration of 
\eqref{eq:first part of 1-parameter solution}. It will be 
specified in Section \ref{section:connection formulas} 
when we describe the connection formula concretely.)

The first one is {\it the normalization at $t = \tau$}, 
($\tau$ is a turning point or the simple-pole):
\begin{equation}
\lambda_{\tau}(t,\textbf{c},\eta;\alpha) = 
\lambda^{(0)}(t,\textbf{c},\eta) + 
\alpha \eta^{-{1}/{2}} 
\lambda_{\tau}^{(1)}(t,\textbf{c},\eta) e^{\eta \phi} 
+ (\alpha \eta^{-{1}/{2}})^{2} 
\lambda_{\tau}^{(2)}(t,\textbf{c},\eta) 
e^{2 \eta \phi} + \cdots,
\label{eq:1-parameter solution at tau}
\end{equation}
where its $1$-instanton part is normalized as 
\begin{equation}
\tilde{\lambda}_{\tau}^{(1)} 
(t,\textbf{c},\eta;\alpha) =
\alpha \hspace{+.1em} 
\frac{{\lambda^{(0)}}}
{\sqrt{t \hspace{+.1em} R_{\rm odd}}} 
\hspace{+.1em} {\rm exp} 
\biggl( \int_{\tau}^{t} 
R_{\rm odd}(t,\textbf{c},\eta) dt \biggr).
\label{eq:normalization at tau}
\end{equation}
Since the each coefficient $R_{2n-1}$ 
of $R_{\rm odd}$ has a branch point at $\tau$, the integral 
\eqref{eq:normalization at tau} should be 
considered as a contour integral.
(See Remark \ref{Remark:integral from P-turning points}.)
The second one is {\it the normalization at $t = \infty$}:
\begin{equation}
\lambda_{\infty}(t,\textbf{c},\eta;\alpha) = 
\lambda^{(0)}(t,\textbf{c},\eta) + 
\alpha \eta^{-{1}/{2}} 
\lambda_{\infty}^{(1)}(t,\textbf{c},\eta) e^{\eta \phi} 
+ (\alpha \eta^{-{1}/{2}})^{2} 
\lambda_{\infty}^{(2)}(t,\textbf{c},\eta) 
e^{2 \eta \phi} + \cdots,
\label{eq:1-parameter solution at infinity}
\end{equation}
where its 1-instanton part is normalized as 
\begin{equation}
\tilde{\lambda}_{\infty}^{(1)} 
(t,\textbf{c},\eta;\alpha) =
\alpha \hspace{+.1em} \frac{{\lambda^{(0)}}}
{\sqrt{t \hspace{+.1em} R_{\rm odd}}} 
\hspace{+.1em} {\rm exp} 
\biggl( \eta \int_{\tau}^{t} 
R_{-1}(t,\textbf{c}) \hspace{+.1em} dt + 
\int_{\infty}^{t} 
\bigl(R_{\rm odd}(t,\textbf{c},\eta) 
- \eta R_{-1}(t,\textbf{c}) \bigr) dt \biggr).
\label{eq:normalization at infty}
\end{equation}
Since the all coefficients of $R_{\rm odd}$ 
are integrable at $t = \infty$ 
except for the leading term $R_{-1}$ 
(see Appendix \ref{Appendix:asymptotics}), 
the integral in \eqref{eq:normalization at infty}
is well-defined. These two normalizations 
\eqref{eq:1-parameter solution at tau} and 
\eqref{eq:1-parameter solution at infinity} 
are related as
\begin{eqnarray}
\tilde{\lambda}^{(1)}_{\tau}
(t,\textbf{c},\eta;\alpha) & = & 
e^{W_{\infty}} \hspace{+.2em}
\tilde{\lambda}^{(1)}_{\infty}
(t,\textbf{c},\eta;\alpha), 
\label{eq:definition of the P-Voros coeff} \\[+.3em]
\lambda_{\tau}
(t,\textbf{c},\eta;\alpha) & = & 
\lambda_{\infty}
(t,\textbf{c},\eta;\alpha \hspace{+.1em} 
e^{W_{\infty}}), 
\label{eq:relation of two normalized 1-parameter solutions}
\end{eqnarray}
by the Voros coefficient $W_{\infty}$ for 
a suitable path $\Gamma(\tau,\infty)$ from $\tau$ to $\infty$.

\begin{rem} \normalfont
\label{Remark:integral from P-turning points}

Here we give a remark for integrals of $R_{\rm odd}$ 
from turning points or the simple-pole. 
Since we can easily verify that $R_{2n-1}$ 
has a Puiseux expansion  
\begin{equation}
R_{2n-1} = (t - \tau)^{N/4} 
\sum_{k \ge 0} r_{k} (t - \tau)^{k/2}
\label{eq:Puiseux}
\end{equation}
near $t = \tau$ 
(where $\tau$ is a turning point or the simple-pole)
with some odd integer $N$, we can define the integral 
from $\tau$ in terms of contour integral on the 
Riemann surface of $\sqrt{\Delta}$ 
(i.e., the double cover of the Riemann surface of 
$\lambda_{0}$ branching at turning points 
and the simple-pole). 
In addition to Figure 
\ref{fig:P3D6Stokes-sheet-1} $\sim$ 
\ref{fig:P3D6Stokes-sheet-4}, 
we need further branch cuts to determine 
the square root $\sqrt{\Delta}$ as in 
Figure \ref{fig:P3D6-Gamma-t-sheet-1} 
$\sim$ \ref{fig:P3D6-Gamma-t-sheet-4}, 
which describe the case ${\bf c} = (2, 2-i)$.
In these figures the spiral lines designate 
the new branch cuts, and we fix the square root 
so that the real part of the integral 
\[
\int_{\tau}^{t} \sqrt{\Delta(t,{\bf c})} 
\hspace{+.1em} dt
\]
along a Stokes curve is positive 
if the integral is taken 
in parallel with the arrow on the Stokes curve.
Then, for example, if $t$ is fixed at the point 
in Figure \ref{fig:P3D6-Gamma-t-sheet-2}, 
we define the integral from $\tau_{1}$ to $t$ 
by the following contour integral: 
\begin{equation}
\int_{\tau_{1}}^{t} R_{\rm odd}(t,{\bf c},\eta)
\hspace{+.1em} dt  = \frac{1}{2} 
\int_{{\Gamma}_{t}} R_{\rm odd}(t,{\bf c},\eta) 
\hspace{+.1em} dt, 
\label{eq:definition of contour integral}
\end{equation}
where ${\Gamma}_{t}$ is a path on the Riemann surface 
of $\sqrt{\Delta}$ described in 
Figure \ref{fig:P3D6-Gamma-t-sheet-1} 
and \ref{fig:P3D6-Gamma-t-sheet-2}.  
Here $\check{t}$ in Figure \ref{fig:P3D6-Gamma-t-sheet-2}
represents the point on the other sheet of square root 
$\sqrt{\Delta}$ satisfying 
$\lambda_{0}(\check{t},{\bf c}) = 
\lambda_{0}({t},{\bf c})$ and  
\[
R_{\ell}(\check{t},{\bf c}) = 
(-1)^{\ell} R_{\ell}(t,{\bf c}) 
\hspace{+1.em} (\ell \ge -1)
\] 
holds. The dotted part of $\Gamma_{t}$ 
represents the path on the other sheet of 
square root $\sqrt{\Delta}$. 
We can also define integrals from other turning points 
or the simple-pole in the same manner.
\end{rem}

  \begin{figure}[h]
  \begin{minipage}{0.5\hsize}
  \begin{center}
  \includegraphics[width=55mm]
  {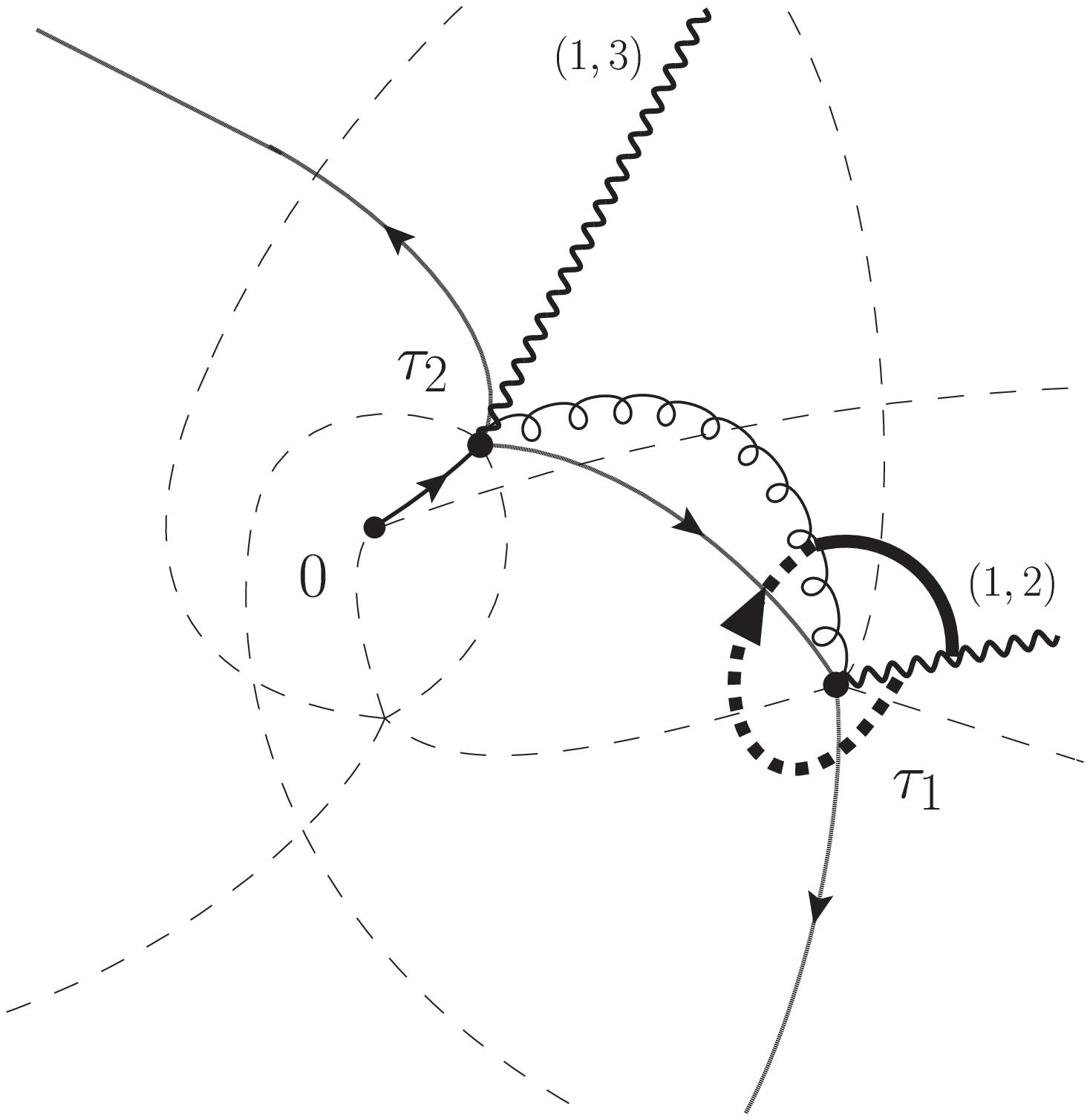}
  \end{center}
  \caption{\small{Sheet 1.}}
  \label{fig:P3D6-Gamma-t-sheet-1}
  \end{minipage}
  \begin{minipage}{0.5\hsize}
  \begin{center}
  \includegraphics[width=55mm]
  {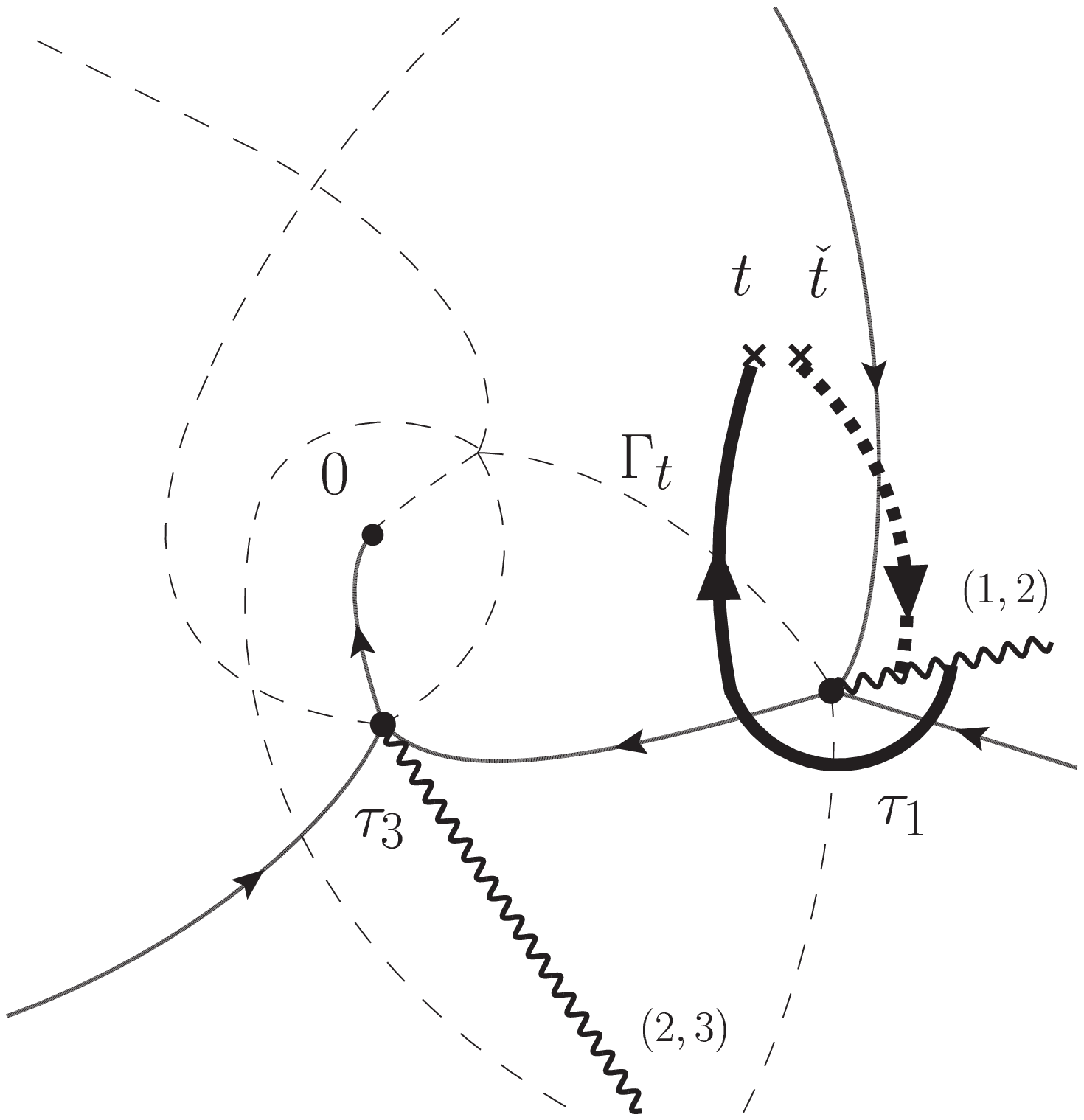}
  \end{center}
  \caption{\small{Sheet 2.}}
  \label{fig:P3D6-Gamma-t-sheet-2}
  \end{minipage}
  \end{figure}
  \begin{figure}[h]
  \begin{minipage}{0.5\hsize}
  \begin{center}
  \includegraphics[width=55mm]
  {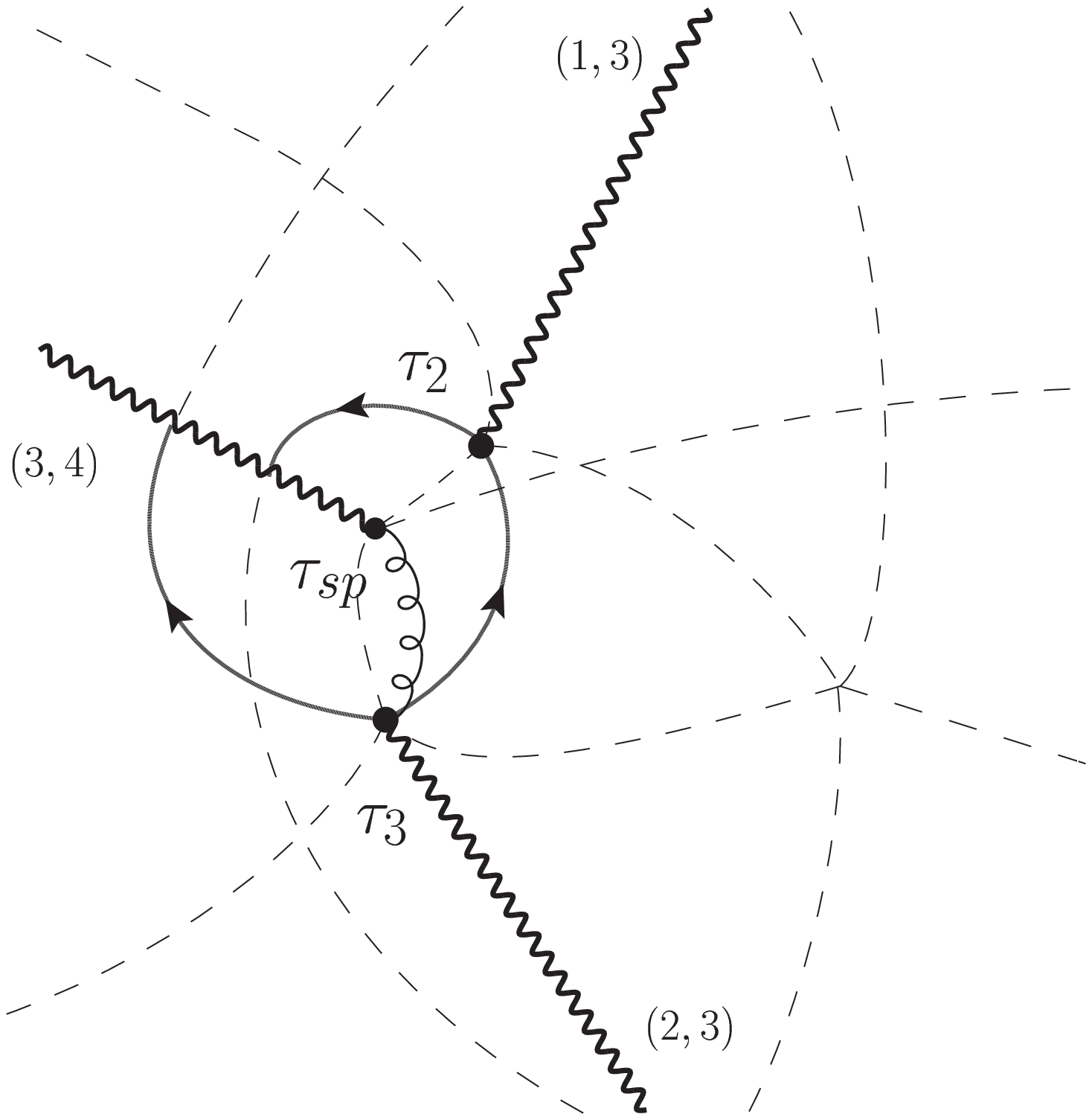}
  \end{center}
  \caption{\small{Sheet 3.}}
  \label{fig:P3D6-Gamma-t-sheet-3}
  \end{minipage}
  \begin{minipage}{0.5\hsize}
  \begin{center}
  \includegraphics[width=55mm]
  {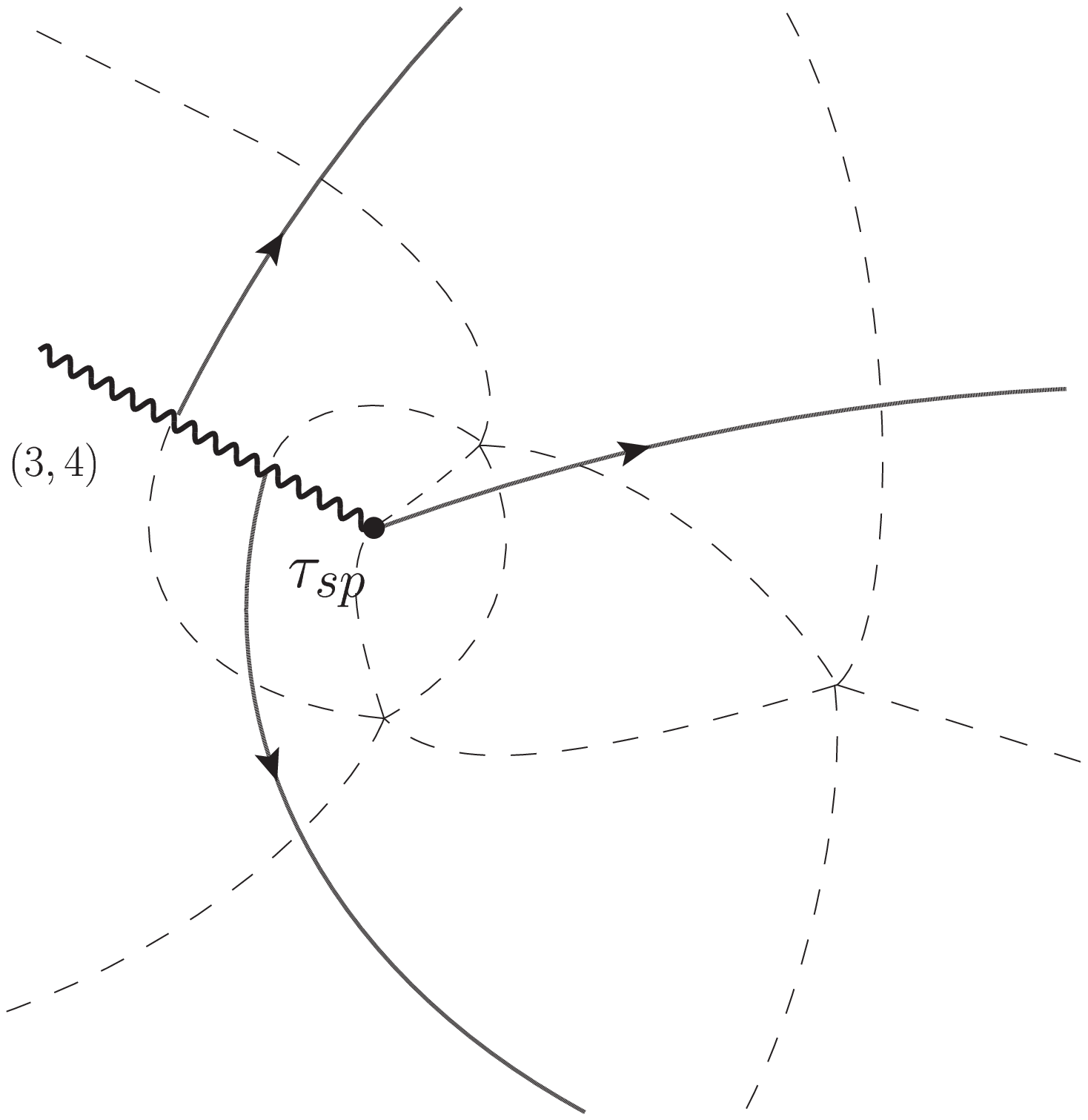}
  \end{center}
  \caption{\small{Sheet 4.}}
  \label{fig:P3D6-Gamma-t-sheet-4}
  \end{minipage}
  \end{figure}

There are several Voros coefficients in 
accordance with choices of a path $\Gamma(\tau,\infty)$ 
of the integration \eqref{eq:P-Voros coeff}, and  we should 
distinguish the branch of $\lambda_{0}$ near $t = \infty$. 
We know from the algebraic equation 
\eqref{eq:lambda(0)0} that $\lambda_{0}$ 
has following four asymptotic behaviors as $t$ 
tends to infinity:
\begin{eqnarray}
\lambda_{0} & = & + t^{1/2} 
\bigl( 1 + O(t^{-1/2}) \bigr)  
\hspace{+1.4em} \text{as $t \rightarrow \infty_{1}$}, 
\label{eq:behavior of lambda0 infinity-1} \\
\lambda_{0} & = & - t^{1/2} 
\bigl( 1 + O(t^{-1/2}) \bigr) 
\hspace{+1.4em} \text{as $t \rightarrow \infty_{2}$}, 
\label{eq:behavior of lambda0 infinity-2} \\
\lambda_{0} & = & + i \hspace{+.1em} t^{1/2} 
\bigl( 1 + O(t^{-1/2}) \bigr) 
\hspace{+1.em} \text{as $t \rightarrow \infty_{3}$}, 
\label{eq:behavior of lambda0 infinity-3} \\
\lambda_{0} & = & - i \hspace{+.1em} t^{1/2} 
\bigl( 1 + O(t^{-1/2}) \bigr) 
\hspace{+1.em} \text{as $t \rightarrow \infty_{4}$}. 
\label{eq:behavior of lambda0 infinity-4}
\end{eqnarray}
Here we use the symbols $\infty_{j}$ 
($1 \le j \le 4$) to distinguish the 
asymptotic behaviors of $\lambda_{0}$. 
Note that both $t = \infty_{1}$ 
and $\infty_{2}$ correspond to 
$u = \infty$, while both $t = \infty_{3}$ 
and $\infty_{4}$ correspond 
to $u = 0$ in the coordinate \eqref{eq:u-coordinate}. 
We use another symbols $\infty_{j, \pm}$ 
($1 \le j \le 4$) to specify the choice of the 
square root $R_{-1} = \sqrt{\Delta(t,{\bf c})}$ 
as follows:

\begin{eqnarray}
\lambda_{0} & = & + t^{1/2} 
\bigl( 1 + O(t^{-1/2}) \bigr), \hspace{+1.em}
R_{-1} = \pm 2 \hspace{+.1em} t^{-1/2} 
\bigl( 1 + O(t^{-1/2}) \bigr) 
\hspace{+1.3em} \text{as $t \rightarrow \infty_{1, \pm}$}, 
\nonumber \\ & & 
\label{eq:behavior of lambda0 infinity-1-pm} \\
\lambda_{0} & = & - t^{1/2} 
\bigl( 1 + O(t^{-1/2}) \bigr), \hspace{+1.em}
R_{-1} = \mp 2 \hspace{+.1em} t^{-1/2} 
\bigl( 1 + O(t^{-1/2}) \bigr) 
\hspace{+1.3em} \text{as $t \rightarrow \infty_{2, \pm}$}, 
\nonumber \\ & & 
\label{eq:behavior of lambda0 infinity-2-pm} \\
\lambda_{0} & = & + i \hspace{+.1em} t^{1/2} 
\bigl( 1 + O(t^{-1/2}) \bigr) , \hspace{+.6em}
R_{-1} = \pm 2 i \hspace{+.1em} t^{-1/2}  
\bigl( 1 + O(t^{-1/2}) \bigr) 
\hspace{+1.em} \text{as $t \rightarrow \infty_{3, \pm}$}, 
\nonumber \\ & & 
\label{eq:behavior of lambda0 infinity-3-pm} \\
\lambda_{0} & = & - i \hspace{+.1em} t^{1/2} 
\bigl( 1 + O(t^{-1/2}) \bigr) , \hspace{+.6em}
R_{-1} = \mp 2 i \hspace{+.1em} t^{-1/2}  
\bigl( 1 + O(t^{-1/2}) \bigr) 
\hspace{+1.em} \text{as $t \rightarrow \infty_{4, \pm}$}. 
\nonumber \\ & & 
\label{eq:behavior of lambda0 infinity-4-pm}
\end{eqnarray}
See Appendix \ref{Appendix:asymptotics} for higher 
order asymptotic behaviors. 
Then, our first main result are stated as follows.

\begin{theo} \label{Main Theorem3}
Let ${\cal F}(c,\eta)$ be a formal power series 
in $\eta^{-1}$ defined by 
\begin{equation}
{\cal F}(c,\eta) = \sum_{n=1}^{\infty} 
\frac{2^{1-2n} - 1}{2n (2n - 1)} \hspace{+.1em} B_{2n} 
\hspace{+.1em} (c \hspace{+.1em} \eta )^{1-2n},
\label{eq:Voros-coeff-F}
\end{equation}
where $B_{2n}$ is the $2n$-th Bernoulli number defined by 
\eqref{eq:Bernoulli number}.
Then, the Voros coefficient $W_{\infty}({\bf c},\eta)$ for 
a path $\Gamma(\tau, \infty)$ 
is represented explicitly as follows: 
\begin{eqnarray}
W_{\infty_{1, \pm}}(\textbf{c},\eta) =
W_{\infty_{2, \pm}}(\textbf{c},\eta) 
& = & \pm \hspace{+.1em} {\cal F}(c_{p},\eta).
\label{eq:Main Theorem1+} \\[+.3em]
W_{\infty_{3,\pm}}(\textbf{c},\eta) = 
W_{\infty_{4,\pm}}(\textbf{c},\eta) 
& = & \pm \hspace{+.1em} {\cal F}(c_{m},\eta) .
\label{eq:Main Theorem1-} 
\end{eqnarray}
Here $c_{p}$ and $c_{m}$ are given by
\begin{equation}
c_{p} = \frac{c_{\infty} + c_{0}}{2} , 
\hspace{+1.em}
c_{m} = \frac{c_{\infty} - c_{0}}{2}.
\label{eq:c-plus-minus}
\end{equation}
\end{theo}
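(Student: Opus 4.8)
The plan is to reduce the computation of $W_{\infty}$ to a one-variable problem and then pin down the resulting series by a difference equation. First I would record that the integrand $R_{\rm odd} - \eta R_{-1}$ is integrable both at the lower endpoint (by the Puiseux expansion \eqref{eq:Puiseux}, so that the contour prescription of Remark \ref{Remark:integral from P-turning points} makes the $\tau$-end contribute a convergent quantity) and at $t=\infty$ (by the asymptotics of Appendix \ref{Appendix:asymptotics}), so $W_{\infty}$ is well defined and, as asserted in the Remark following the definition, depends only on the asymptotic branch of $\lambda_{0}$ and on the choice of $R_{-1}=\sqrt{\Delta}$. Passing to the uniformizing coordinate $u$ of \eqref{eq:u-coordinate}, the four directions split according to \eqref{eq:behavior of lambda0 infinity-1}--\eqref{eq:behavior of lambda0 infinity-4}: both $\infty_{1},\infty_{2}$ go to $u=\infty$ and both $\infty_{3},\infty_{4}$ go to $u=0$. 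Since the leading datum is $\eta R_{-1}\,dt=\eta\sqrt{q(u,{\bf c})}\,du$, the residue computation \eqref{eq:residue at double-pole infty} and \eqref{eq:residue at double-pole zero} shows that the endpoint contribution at $u=\infty$ is governed by $c_{p}$ and that at $u=0$ by $c_{m}$ from \eqref{eq:c-plus-minus}, with the overall sign fixed by the sign of $\sqrt{\Delta}$. This already accounts for the pattern in \eqref{eq:Main Theorem1+}--\eqref{eq:Main Theorem1-} and reduces everything to identifying the single series $\mathcal{F}$ evaluated at the relevant residue.

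Next I would invoke the homogeneity collected in Appendix \ref{Appendix:Homogeneity} to show that $W_{\infty_{1,+}}({\bf c},\eta)$ is a formal power series in the single variable $s=c_{p}\eta$, containing only odd negative powers $s^{1-2n}$ (and likewise with $c_{m}$ for the $\infty_{3},\infty_{4}$ cases). It therefore suffices to determine one formal series $\hat{W}(s)$ and to recognize it as $\mathcal{F}$ of \eqref{eq:Voros-coeff-F}.

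The core step is to produce a difference equation for $\hat{W}(s)$. The B\"acklund transformations of the original Painlev\'e III shift the parameters $\alpha,\beta$ by integers, and under the scaling $\alpha=-4\eta c_{\infty}$, $\beta=4\eta c_{0}$ this descends to a shift of $({c_{\infty}},c_{0})$, hence of $c_{p}$, by a fixed multiple of $\eta^{-1}$, i.e.\ $s\mapsto s+1$ (or $s\mapsto s+\tfrac12$). Such a symmetry sends a transseries solution to another and so relates the two normalizations \eqref{eq:normalization at tau} and \eqref{eq:normalization at infty} through \eqref{eq:definition of the P-Voros coeff}; comparing Voros coefficients before and after the shift yields
\[
\hat{W}(s+1)-\hat{W}(s)=g(s),
\]
where the inhomogeneous term $g(s)$ is elementary, being computed from the change in the leading datum $R_{-1}=\sqrt{\Delta}$ alone. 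Expanding $g$ in powers of $s^{-1}$ and solving the difference equation termwise---using that the Bernoulli generating function \eqref{eq:Bernoulli number} is exactly what inverts the forward-difference operator---produces coefficients proportional to $B_{2n}/(2n(2n-1))$. Matching the explicit constant in $g$ then forces the factor $2^{1-2n}-1$, equivalently $\hat{W}(s)=\mu(2s)-\mu(s)$ with $\mu(z)=\sum_{n\ge1}\tfrac{B_{2n}}{2n(2n-1)}z^{1-2n}$ the Stirling remainder, which is \eqref{eq:Voros-coeff-F}.

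I expect the main obstacle to be this third step: identifying the correct discrete symmetry and, above all, computing its effect on $W_{\infty}$ to all orders in $\eta^{-1}$, so that $g(s)$ is obtained in closed form. Getting $g$ exactly right is what fixes the delicate coefficient $2^{1-2n}-1$ rather than a bare $B_{2n}/(2n(2n-1))$; an error here is invisible at leading order but wrong in the tail. Once the difference equation and $g$ are secured, the remaining matching to the Bernoulli series, together with the localization, path-independence, and the sign and $c_{p}\leftrightarrow c_{m}$ bookkeeping from the first paragraph, is routine.
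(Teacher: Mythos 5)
Your plan has the same skeleton as the paper's proof (B\"acklund shift $\Rightarrow$ difference equation $\Rightarrow$ Bernoulli series pinned down by uniqueness, i.e.\ part (ii) of Lemma \ref{key lemma}), but two of your steps have genuine gaps, and they are exactly the steps that carry the content. First, homogeneity alone cannot show that $W_{\infty_{1,+}}$ is a series in the single variable $s=c_{p}\eta$. By \eqref{eq:degree of W} the Voros coefficient has homogeneous degree $0$, which only forces each coefficient $W_{2n-1}({\bf c})$ of $\eta^{1-2n}$ to be homogeneous of degree $-(2n-1)$ in $(c_{\infty},c_{0})$; functions like $c_{m}^{2n-2}/c_{p}^{4n-3}$ are perfectly allowed, so homogeneity reduces $W$ to a function of the \emph{two} variables $c_{p}\eta$ and $c_{m}\eta$, not one. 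The paper eliminates the second variable with a second, homogeneous difference equation: the shifts induce $T_{1}(c_{p},c_{m})=(c_{p}+\eta^{-1},c_{m})$ and $T_{2}(c_{p},c_{m})=(c_{p},c_{m}+\eta^{-1})$, and Lemma \ref{Lemma:difference eq for W for infinity} shows that for $j=1,2$ the $T_{2}$-difference of $W_{\infty_{j,\pm}}$ vanishes identically (and for $j=3,4$ the $T_{1}$-difference vanishes). Only this vanishing, combined with homogeneity, yields the one-variable ansatz $\sum_{n\ge 1}w_{2n-1}(c_{p}\eta)^{1-2n}$ that part (ii) of Lemma \ref{key lemma} requires. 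Your residue heuristic cannot substitute for it: the residues \eqref{eq:residue at double-pole infty} and \eqref{eq:residue at double-pole zero} see only the leading term $\eta R_{-1}$, which is precisely the term subtracted in the definition \eqref{eq:P-Voros coeff}, so they say nothing about the all-orders dependence of the integrand.

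Second, your claim that the inhomogeneous term $g(s)$ can be "computed from the change in the leading datum $R_{-1}=\sqrt{\Delta}$ alone" is not correct, and you yourself flag this step as the main obstacle without resolving it. In the paper, $g$ is obtained from three ingredients: (a) Lemma \ref{Lemma:difference eq for R}, which shows that the B\"acklund transformation changes the \emph{full} formal Riccati solution $R$ by an exact logarithmic derivative of explicit expressions in $\lambda^{(0)}$, $\mu^{(0)}$ and $X$ (this is the all-orders miracle, and it rests on the uniqueness of the $0$-parameter solution and the WKB form of the $1$-instanton part); (b) the endpoint asymptotics of $\lambda^{(0)}$, $\mu^{(0)}$, $R_{\pm}$ at $\infty_{j,\pm}$ collected in Appendix \ref{Appendix:asymptotics}; and (c) the closed-form primitive \eqref{eq:the integral of R-1} of $R_{-1}$, which comes from the isomonodromy/Schr\"odinger relation \eqref{eq:integral relation}, not from residue data. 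Without (a)--(c), or a substitute for them, neither the single-variable reduction nor the right-hand side of the difference equation can be written down, and the coefficient $2^{1-2n}-1$ that you correctly identify as the delicate point remains undetermined. So as written, your proposal reproduces the paper's strategy in outline but leaves unproved exactly the two lemmas (Lemma \ref{Lemma:difference eq for R} and Lemma \ref{Lemma:difference eq for W for infinity}) that constitute the proof.
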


The proof of Theorem \ref{Main Theorem3} 
will be given in Section \ref{section:proof of the Main theorems}
together with that of Theorem \ref{Main Theorem2} below.

\subsection{Voros coefficients $W_{0_{c_{\infty}}}$ 
and $W_{0_{c_{0}}}$}
\label{section:P-Voros coefficients for t = 0}

In the previous subsection we defined 
the Voros coefficient for $t = \infty$. 
However, since $(P_{\rm III'})_{D_{6}}$ also has double-pole type 
singular points \eqref{eq:double-pole type infty}
and \eqref{eq:double-pole type zero} at $t = 0$
(cf.\ Remark \ref{Remark:uniformization}), 
we can consider Voros coefficients 
relevant to these double-poles.

First, we specify the branch of $\lambda_{0}$ 
near the double poles. 
We use symbols $0_{c_{\infty}}$ and $0_{c_{0}}$ 
depending on the asymptotic behaviors 
\eqref{eq:double-pole type infty} and 
\eqref{eq:double-pole type zero} of $\lambda_{0}$; 
that is, if $t$ tends to $0_{c_{\infty}}$ 
(resp., to $0_{c_{0}}$), then $\lambda_{0}$ behaves as 
\eqref{eq:double-pole type infty} 
(resp., \eqref{eq:double-pole type zero}). 
Then, the Voros coefficients 
for double-poles are defined as follows:

\begin{defi} \normalfont
For a path $\Gamma(\tau, 0_{c_{\ast}})$ from 
a turning point (or the simple-pole) $\tau$ to 
$0_{c_{\ast}}$ ($\ast = \infty$ or $0$), 
{\it the Voros coefficient  
for the path $\Gamma(\tau, 0_{c_{\ast}})$} 
is defined by 
\begin{equation}
W_{0_{c_{\ast}}}(\textbf{c},\eta) = 
\int_{\Gamma(\tau, 0_{c_{\ast}})} 
\bigl( R_{\rm odd}(t,\textbf{c},\eta) - 
\eta R_{-1}(t,\textbf{c}) \bigr) \hspace{+.1em} dt.
\label{eq:P-Voros coeff for 0}
\end{equation}
\end{defi}

It turns out to be that the right-hand side of 
\eqref{eq:P-Voros coeff for 0} is independent 
of the choice of the path $\Gamma(\tau, 0_{c_{\ast}})$
(see Theorem \ref{Main Theorem2}). As well as 
\eqref{eq:relation of two normalized 1-parameter solutions}, 
the Voros coefficients $W_{0_{c_{\ast}}}$ 
also describe a difference between 
$\lambda_{\tau}(t,c,\eta;\alpha)$ and the 
transseries solution 
$\lambda_{0_{c_{\ast}}}(t,c,\eta;\alpha)$
{\it normalized at double-poles}; 
\begin{equation}
\lambda_{0_{c_{\infty}}}(t,\textbf{c},\eta;\alpha) = 
\lambda^{(0)}(t,\textbf{c},\eta) + 
\alpha \eta^{-{1}/{2}} 
\lambda_{0_{c_{\infty}}}^{(1)}(t,\textbf{c},\eta) e^{\eta \phi} 
+ (\alpha \eta^{-{1}/{2}})^{2} 
\lambda_{0_{c_{\infty}}}^{(2)}(t,\textbf{c},\eta) 
e^{2 \eta \phi} + \cdots,
\label{eq:1-parameter solution at 0 : c-infty} \\[+.3em]
\end{equation}
\begin{equation}
\lambda_{0_{c_{0}}}(t,\textbf{c},\eta;\alpha) = 
\lambda^{(0)}(t,\textbf{c},\eta) + 
\alpha \eta^{-{1}/{2}} 
\lambda_{0_{c_{0}}}^{(1)}(t,\textbf{c},\eta) e^{\eta \phi} 
+ (\alpha \eta^{-{1}/{2}})^{2} 
\lambda_{0_{c_{0}}}^{(2)}(t,\textbf{c},\eta) 
e^{2 \eta \phi} + \cdots, 
\label{eq:1-parameter solution at 0 : c-0}
\end{equation}
where the 1-instanton part is normalized as 
\begin{equation}
\tilde{\lambda}_{0_{c_{\infty}}}^{(1)} 
(t,\textbf{c},\eta;\alpha) =
\alpha \hspace{+.1em} \frac{{\lambda^{(0)}}}
{\sqrt{t \hspace{+.1em} R_{\rm odd}}} 
\hspace{+.1em} {\rm exp} 
\biggl( \eta \int_{\tau}^{t} R_{-1}(t,\textbf{c}) 
\hspace{+.1em} dt + 
\int_{0_{c_{\infty}}}^{t} \bigl(R_{\rm odd}(t,\textbf{c},\eta) 
- \eta R_{-1}(t,\textbf{c}) \bigr) dt \biggr),
\label{eq:normalization at 0 : c-infty}
\end{equation}
\begin{equation}
\tilde{\lambda}_{0_{c_{0}}}^{(1)} 
(t,\textbf{c},\eta;\alpha) =
\alpha \hspace{+.1em} \frac{{\lambda^{(0)}}}
{\sqrt{t \hspace{+.1em} R_{\rm odd}}} 
\hspace{+.1em} {\rm exp} 
\biggl( \eta \int_{\tau}^{t} R_{-1}(t,\textbf{c}) 
\hspace{+.1em} dt + 
\int_{0_{c_{0}}}^{t} \bigl(R_{\rm odd}(t,\textbf{c},\eta) 
- \eta R_{-1}(t,\textbf{c}) \bigr) dt \biggr).
\label{eq:normalization at 0 : c-0}
\end{equation}
Since the all coefficient $R_{2n-1}$ of $R_{\rm odd}$ 
are integrable at these double-poles except 
for the leading term $R_{-1}$ 
(see Appendix \ref{Appendix:asymptotics}), 
the above integrals are well-defined.

To state our second main result, we introduce another symbols 
in order to specify the choice of the square root  
$R_{-1} = \sqrt{\Delta}$ at double-poles:
\begin{eqnarray}
\lambda_{0} & = & c_{\infty}
\bigl( 1 + O(t) \bigr), \hspace{+1.em}
R_{-1} = \pm \frac{c_{\infty}}{t} \bigl( 1 + O(t) \bigr) 
\hspace{+1.em} \text{as $t \rightarrow 0_{c_{\infty}, \pm}$},
\label{eq:behavior of lambda0 0-infinity-pm} \\
\lambda_{0} & = & \frac{t}{c_{0}}
\bigl( 1 + O(t) \bigr), \hspace{+1.2em}
R_{-1} = \pm \frac{c_{0}}{t} \bigl( 1 + O(t) \bigr) 
\hspace{+1.3em} \text{as $t \rightarrow 0_{c_{0}, \pm}$}.
\label{eq:behavior of lambda0 0-0-pm} 
\end{eqnarray}
Then our second main result which is shown 
in the next section are stated as follows.

\begin{theo} \label{Main Theorem2}
Let ${\cal F}(c,\eta)$ and ${\cal G}(c,\eta)$ 
be the formal power series in $\eta^{-1}$ 
defined by \eqref{eq:Voros-coeff-F} and 
\begin{equation}
{\cal G}(c,\eta) = \sum_{n=1}^{\infty} 
\frac{B_{2n}}{2n (2n - 1)} \hspace{+.1em}  
\hspace{+.1em} (c \hspace{+.1em} \eta )^{1-2n}, 
\label{eq:Voros-coeff-G}
\end{equation}
where $B_{2n}$ is the $2n$-th Bernoulli number 
defined by \eqref{eq:Bernoulli number}.  
Then, the Voros coefficient $W_{0_{c_{\ast}}}({\bf c},\eta)$
for a path $\Gamma(\tau, 0_{c_{\ast}})$ $(\ast = \infty$, $0)$
is represented explicitly as follows:
\begin{eqnarray}
W_{0_{c_{\infty}, \pm}}(\textbf{c},\eta) 
& = & \pm \Bigl\{
{\cal F}( c_{p},\eta ) + {\cal F}(c_{m} ,\eta ) 
 - 3 \hspace{+.1em} {\cal G} ( c_{\infty},\eta ) \Bigr\}, 
\label{eq:P-Voros for 0-c-infinity} \\[+.5em]
W_{0_{c_{0}, \pm}}(\textbf{c},\eta) 
& = & \pm \Bigl\{
{\cal F}( c_{p},\eta) - {\cal F}(c_{m},\eta) 
 - 3 \hspace{+.1em} {\cal G} ( c_{0},\eta ) \Bigr\}.
\label{eq:P-Voros for 0-c-0}
\end{eqnarray}
Here $c_{p}$ and $c_{m}$ are given by \eqref{eq:c-plus-minus}.
\end{theo}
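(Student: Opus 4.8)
The plan is to prove Theorem \ref{Main Theorem2} in tandem with Theorem \ref{Main Theorem3}, treating the latter as the computational core and extracting the double-pole coefficients from it. The starting observation is that the two building blocks \eqref{eq:Voros-coeff-F} and \eqref{eq:Voros-coeff-G} are not independent: comparing the coefficients of $(c\eta)^{1-2n}$ gives the identity
\begin{equation}
{\cal F}(c,\eta) = {\cal G}(2c,\eta) - {\cal G}(c,\eta),
\end{equation}
so every term in \eqref{eq:P-Voros for 0-c-infinity}--\eqref{eq:P-Voros for 0-c-0} is a $\mathbb{Z}$-linear combination of the single series ${\cal G}(\,\cdot\,,\eta)$ evaluated at the residues \eqref{eq:residue at double-pole infty}, \eqref{eq:residue at double-pole zero} and at $c_p,c_m$ of \eqref{eq:c-plus-minus}. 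Since ${\cal G}(c,\eta)$ is exactly the tail of the Stirling series for $\log\Gamma(c\eta)$, the entire theorem should reduce to the statement that the regularized WKB integral attached to a pole of $\sqrt{q(u,{\bf c})}\,du$ with residue $r$ equals $\pm\,{\cal G}(r,\eta)$, together with a bookkeeping of which poles each path sees.

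First I would pass to the $u$-coordinate \eqref{eq:u-coordinate}, in which $\Delta\,dt^{2}=q(u,{\bf c})\,du^{2}$ with $q$ the explicit rational function \eqref{eq:quadratic differential in u-plane}. Using the Riccati recursion \eqref{eq:Riccati equation} together with the rational parametrizations of $\lambda_{0}$ and $t$, each $R_{2n-1}\,dt$ becomes a rational $1$-form in $u$ whose only singularities sit at the poles of $q$, namely $u=\infty,0,\pm(c_{\infty}-c_{0})/(c_{\infty}+c_{0})$. The homogeneity collected in Appendix \ref{Appendix:Homogeneity} fixes the power of $\eta$ and the homogeneous degree in ${\bf c}$ of each $\int R_{2n-1}\,dt$, so that only a sequence of pure numbers remains. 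The Voros integral \eqref{eq:P-Voros coeff for 0} then reduces, term by term in $\eta^{-1}$, to a contour integral of explicit rational $1$-forms in $u$ evaluated by residues at these poles.

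Next I would identify the residue contributions with the Stirling tail. The double poles of $q$ are the regular singular points of the Fr\'echet/Riccati equation, and near such a point $R_{\rm odd}-\eta R_{-1}$ has a universal local expansion governed only by the residue $r$ of $\sqrt{q}\,du$; comparing with the canonical Weber/Bessel-type local model, whose Voros coefficient is the Stirling tail, yields the local value $\pm\,{\cal G}(r,\eta)$. Theorem \ref{Main Theorem3} is the instance of this computation at the simple poles $u=\infty$ and $u=0$ (residues $\pm c_{p}$, $\pm c_{m}$), which after the displayed identity produces \eqref{eq:Main Theorem1+}--\eqref{eq:Main Theorem1-}. To reach the endpoints $0_{c_{\infty}}$ and $0_{c_{0}}$ one regards $(R_{\rm odd}-\eta R_{-1})\,dt$ as a (formal-series-valued) meromorphic $1$-form on the Riemann surface of $\sqrt{\Delta}$: the values of its regularized integral at the different endpoints differ by periods and by residues at the enclosed singular points, which are elementary and reproduce exactly the combinations ${\cal F}(c_{p})\pm{\cal F}(c_{m})$ together with the endpoint term $-3\,{\cal G}(c_{\ast})$, the integers reflecting the local orders at, and the orientations of the contours around, the relevant poles.

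The main obstacle I expect is the precise identification of the pure numbers with Bernoulli numbers, i.e.\ establishing that the local regularized integral at a singular point is $\pm\,{\cal G}(r,\eta)$ and not merely some series of the correct homogeneity; this requires either an honest solution of the Riccati recursion in which the generating function \eqref{eq:Bernoulli number} is recognized, or a careful reduction to the model-equation Voros coefficient. A second, genuinely delicate, point is the contour bookkeeping on the Riemann surface of $\sqrt{\Delta}$: one must fix the square-root branches consistently with \eqref{eq:behavior of lambda0 0-infinity-pm}--\eqref{eq:behavior of lambda0 0-0-pm}, track the sheet changes as in Remark \ref{Remark:integral from P-turning points}, and check that the cycles around $u=\infty,0$ and the double poles combine with the right signs to give precisely the coefficients $1,\pm1,-3$ in \eqref{eq:P-Voros for 0-c-infinity}--\eqref{eq:P-Voros for 0-c-0}.
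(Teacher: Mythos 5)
Your proposal has a genuine gap, and it sits exactly where you yourself flag "the main obstacle": the identification of the local contribution at a double pole with $\pm\,{\cal G}(r,\eta)$ is not a technical detail to be filled in later --- it \emph{is} the theorem, and neither of the two routes you name for it is carried out. Your mechanism cannot work as described, for two concrete reasons. First, residue calculus evaluates integrals over \emph{closed} cycles, whereas the Voros coefficient \eqref{eq:P-Voros coeff for 0} is a regularized integral over an open path from a turning point to $0_{c_{\ast}}$; moreover the higher coefficients $R_{2n-1}$ ($n\ge 1$) have \emph{vanishing} residue at the double poles --- that is precisely why the integral converges there --- so there is no residue data at the endpoint to harvest, and the Bernoulli numbers cannot emerge from a residue computation at $0_{c_\ast}$. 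Second, your bookkeeping claim that regularized integrals with different endpoints "differ by periods and by residues\dots which are elementary" is unjustified: the Riemann surface of $\sqrt{\Delta}$ is a double cover of the $u$-sphere branched at the three turning points \emph{and} the simple pole, hence has genus one, and periods of the 1-forms $R_{2n-1}\,dt$ on it are not residue-computable; and even granting this step, it only relates Voros coefficients with different endpoints to one another, never producing the absolute value at any single endpoint. The proposed rescue --- reduction of the Fr\'echet-derivative/Riccati equation near the double pole to a Weber/Bessel model whose Voros coefficient is known --- is not available in the setting of this paper; such a transformation theory preserving the regularized integral would itself be a substantial theorem, comparable in weight to what you are trying to prove.

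For comparison, the paper never computes the integrals directly at all. It derives difference equations in the \emph{parameters} ${\bf c}$ for the Voros coefficients (Lemma \ref{Lemma:difference eq for W for 0}), using the B\"acklund transformations $T_{1}, T_{2}$ of Lemma \ref{Lemma:Backlund} applied to transseries solutions: Lemma \ref{Lemma:difference eq for R} expresses $R(t,T_{j}({\bf c}),\eta)-R(t,{\bf c},\eta)$ as an exact derivative, so that only the leading asymptotics of $\lambda^{(0)}$, $\mu^{(0)}$, $R_{\pm}$ at the singular points (Appendix \ref{Appendix:asymptotics}) are needed to evaluate the difference of Voros coefficients. The explicit formulas \eqref{eq:P-Voros for 0-c-infinity}--\eqref{eq:P-Voros for 0-c-0} then follow from the uniqueness statement in Lemma \ref{key lemma}(ii) combined with the homogeneity \eqref{eq:degree of W}, after subtracting the ${\cal F}(c_{p},\eta)\pm{\cal F}(c_{m},\eta)$ parts to isolate a series in $c_{\infty}\eta$ (or $c_{0}\eta$) alone. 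Your observation that ${\cal F}(c,\eta)={\cal G}(2c,\eta)-{\cal G}(c,\eta)$ is correct and tidy, but it only relates the two answer series to each other; it does not substitute for the difference-equation machinery that actually pins the answer down.
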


Theorem \ref{Main Theorem2} is also proved in 
in Section \ref{section:proof of the Main theorems}.

\begin{rem} \normalfont
\label{Remark:F and G in Voros coeff}

As you see in the list of Theorem \ref{Main Theorem3} 
and Theorem \ref{Main Theorem2}, 
the formal power series ${\cal F}(c,\eta)$ and 
${\cal G}(c,\eta)$ are fundamental peaces of 
the Voros coefficients. 
These formal power series also appear as 
Voros coefficients of other equations.
For example, ${\cal F}(c,\eta)$ appears as the 
Voros coefficient of the Weber equation 
(\cite{Takei Sato conjecture}) which is linear, 
the second Painlev\'e equation (\cite{Iwaki}), 
and a fourth order analogue of 
the second Painlev\'e equation (\cite{Iwaki in prep})
\begin{equation}
\frac{d^{4}\lambda}{dt^{4}} = 
 \eta^{2} \Bigl(10 \lambda^{2} \frac{d^{2}\lambda}{dt^{2}}
+ 10 \lambda \Bigl( \frac{d \lambda}{dt} \Bigr)^{2} \Bigr)
+ \eta^{4} ( -6 \lambda^{5} + t \lambda + c) 
\label{eq:higher P2}
\end{equation}
which is considered in \cite{KT WKB higher Painleve}. 
On the other hand, ${\cal G}(c,\eta)$ appears in the Voros 
coefficients of the hypergeometric equation (\cite{Aoki-Tanda}) 
and the Bessel equation (\cite{Iwaki in prep2}). 
Note also that ${\cal G}(c,\eta)$ coincides with 
the Voros coefficient of the degenerate third 
Painlev\'e equation of the type $D_{7}$. 
See Theorem \ref{theorem:D7-Voros} 
in Appendix \ref{Appendix:P3-D7}.
\end{rem}

\begin{rem} \normalfont 
\label{rem:degeneration and summability}
Note that, when $c$ is pure imaginary, 
${\cal F}(c,\eta)$ and ${\cal G}(c,\eta)$ are not 
Borel summable as a formal power series in $\eta^{-1}$ 
(see Proposition \ref{Prop:Borel sum of F and G}). 
Intriguingly, when one of Voros coefficients 
is not Borel summable 
(i.e., one of $c_{p}$, $c_{m}$, $c_{\infty}$  
or $c_{0}$ is pure imaginary), 
then the Stokes geometry degenerates, 
as far as we have checked; 
see Appendix \ref{Appendix:examples of P-Stokes geometry}.
Moreover, the Borel sums of ${\cal F}(c,\eta)$ and 
${\cal G}(c,\eta)$ jump at the imaginary axis of $c$-plane. 
Since the Borel sum of ${\cal F}(c,\eta)$ 
and ${\cal G}(c,\eta)$ are computed 
explicitly as in Proposition \ref{Prop:Borel sum of F and G}, 
we can describe connection formulas for parametric Stokes 
phenomena exactly. See Section 
\ref{section:connection formulas} for details.
\end{rem}

\section{Proof of the Main theorems}
\label{section:proof of the Main theorems}

In this section we give proofs of the main theorems 
about explicit representations of the 
Voros coefficients (Theorem \ref{Main Theorem3} 
and Theorem \ref{Main Theorem2}). 
To compute the Voros coefficients, 
we adopt a similar method used in 
\cite{Takei Sato conjecture} and \cite{Koike-Takei}. 
Especially, here we consider the Voros coefficient
$W_{\Gamma}(\textbf{c},\eta)$ defined by 
the integral \eqref{eq:P-Voros coeff} along 
the Stokes curve $\Gamma$ in  Figure \ref{fig:PIII',0}, 
which emanates from $\tau_{1}$ and flows to $\infty_{3,+}$. 
That is, $W_{\Gamma}$ is defined by the following contour 
integral
\begin{equation}
W_{\Gamma}({\bf c},\eta) = \frac{1}{2} 
\int_{\Gamma_{\rm contour}} 
\bigl( R_{\rm odd}(t,\textbf{c},\eta) - 
\eta R_{-1}(t,\textbf{c}) \bigr) \hspace{+.1em} dt, 
\label{eq:def of P-Voros coeff W-Gamma}
\end{equation}
where the path $\Gamma_{\rm contour}$ is 
a path on the Riemann surface 
of $\sqrt{\Delta}$ taken as in 
Figure \ref{fig:P3D6-Gamma-contour-sheet-1} and 
\ref{fig:P3D6-Gamma-contour-sheet-2}.
For this Voros coefficient, we will show the following.

  \begin{figure}[h]
  \begin{minipage}{0.5\hsize}
  \begin{center}
  \includegraphics[width=55mm]
  {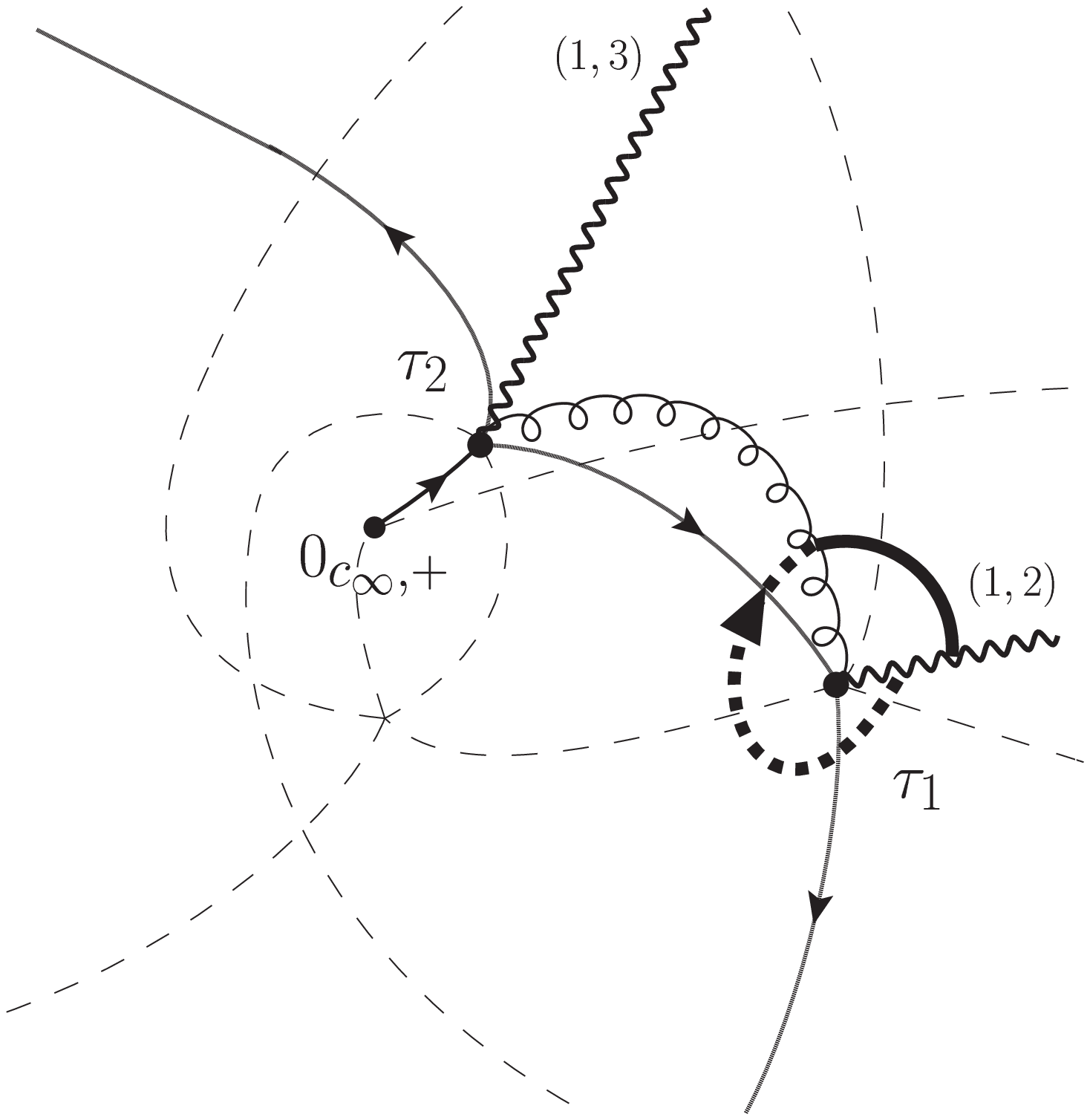}
  \end{center}
  \caption{\small{$\Gamma_{\rm contour}$ on Sheet 1.}}
  \label{fig:P3D6-Gamma-contour-sheet-1}
  \end{minipage}
  \begin{minipage}{0.5\hsize}
  \begin{center}
  \includegraphics[width=55mm]
  {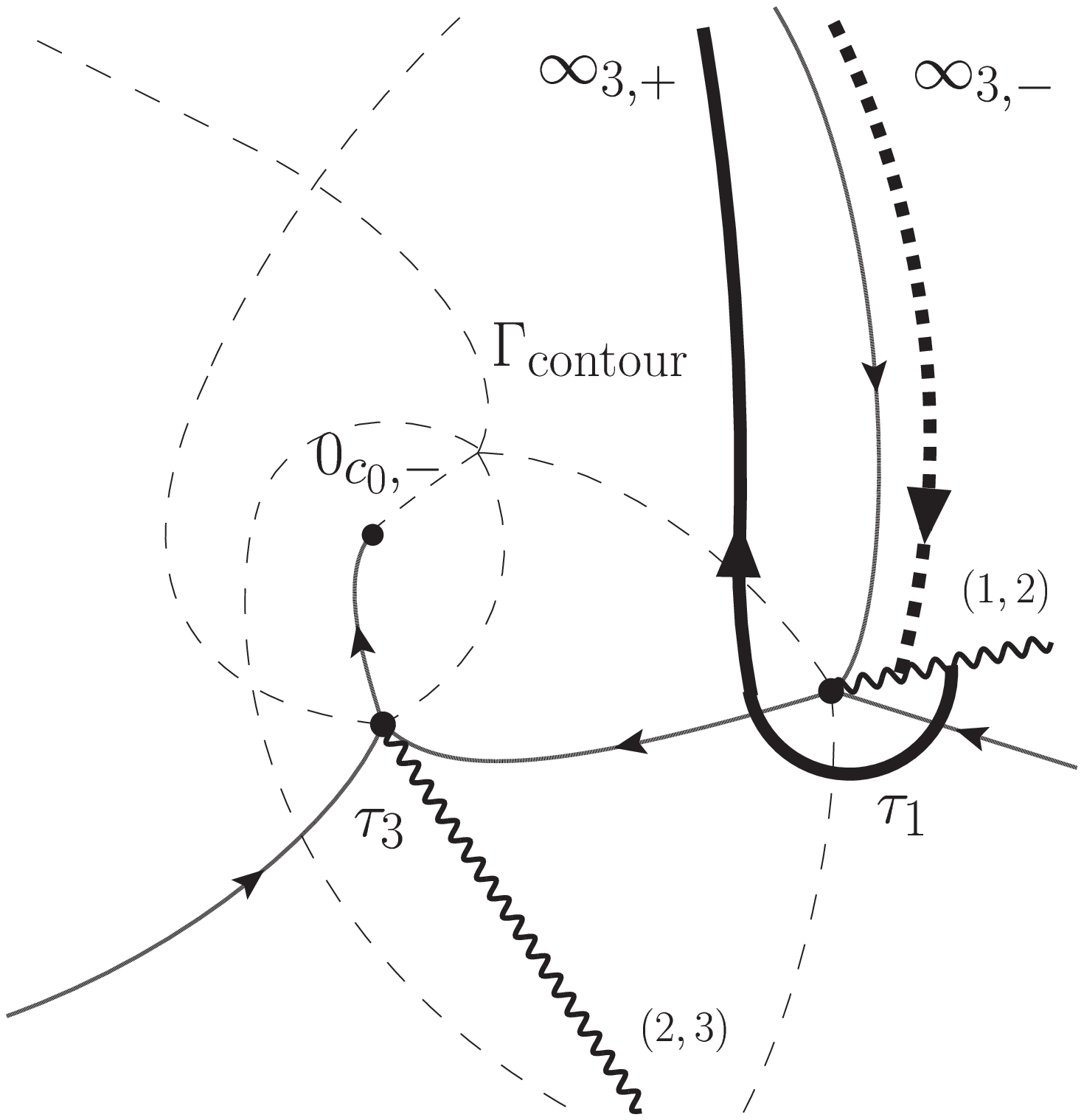}
  \end{center}
  \caption{\small{$\Gamma_{\rm contour}$ on Sheet 2.}}
  \label{fig:P3D6-Gamma-contour-sheet-2}
  \end{minipage}
  \end{figure}

\begin{theo} \label{Main Theorem}
The Voros coefficient $W_{\Gamma}(\textbf{c},\eta)$ 
is represented explicitly as follows:
\begin{equation}
W_{\Gamma}(\textbf{c},\eta) 
= \sum_{n=1}^{\infty} 
\frac{2^{1-2n} - 1}{2n (2n - 1)} \hspace{+.1em} B_{2n} 
\hspace{+.1em} \Bigl( \frac{c_{\infty} - c_{0}}{2} 
\hspace{+.1em} \eta \Bigr)^{1-2n} .
\label{eq:Main Theorem}
\end{equation}
Here $B_{2n}$ is the $2n$-th Bernoulli number defined by 
\eqref{eq:Bernoulli number}.
\end{theo}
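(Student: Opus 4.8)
The plan is to expand the contour integral \eqref{eq:def of P-Voros coeff W-Gamma} into a formal power series in $\eta^{-1}$ and to identify the coefficients. Since $R_{\rm odd}$ has the form \eqref{eq:expression of Rodd} and the term $\eta R_{-1}$ is subtracted off, I would write
\begin{equation}
W_{\Gamma}({\bf c},\eta) = \sum_{n=1}^{\infty} \eta^{1-2n}\, v_{2n-1}, \qquad v_{2n-1} = \frac{1}{2}\int_{\Gamma_{\rm contour}} R_{2n-1}(t,{\bf c})\, dt,
\end{equation}
where each $R_{2n-1}$ is generated recursively from the Riccati equation \eqref{eq:Riccati equation} once the branch $R_{-1} = \sqrt{\Delta}$ is fixed. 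First I would pass to the coordinate $u$ of \eqref{eq:u-coordinate}, in which the quadratic differential \eqref{eq:quadratic differential in u-plane}, and hence every $1$-form $R_{2n-1}\,dt$, becomes rational in $u$. In this coordinate the endpoint $\infty_{3,+}$ of $\Gamma$ corresponds to $u = 0$, where $\sqrt{q}\,du$ has residue $\pm c_m$ with $c_m = (c_\infty - c_0)/2$. This residue is what ultimately controls the answer and explains why only $c_m$, and not $c_p$, appears in \eqref{eq:Main Theorem}.

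Before computing, I would use the homogeneity collected in Appendix \ref{Appendix:Homogeneity}: under the scaling relating $(P_{\rm III'})_{D_6}$ to its unscaled form, $\eta$, $t$ and ${\bf c}$ enter only through fixed weighted combinations, which forces $v_{2n-1}$ to be a numerical constant times $c_m^{\,1-2n}$. Hence $W_{\Gamma}$ is a formal series in the single variable $c_m\eta$, and only the numerical coefficients remain to be pinned down. To obtain them I would follow the method of \cite{Takei Sato conjecture, Koike-Takei}: rather than integrating each $R_{2n-1}$ in isolation, I would derive from \eqref{eq:Riccati equation} a relation satisfied by the whole Voros coefficient, producing a difference equation in $c_m\eta$ with an algebraic right-hand side. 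Equivalently, one computes the Borel transform of $W_{\Gamma}$ in $\eta^{-1}$ and checks that it coincides with the generating function built from $w/(e^w-1)$ in \eqref{eq:Bernoulli number}; the factor $2^{1-2n}-1$ then emerges from a combination of $1/(e^w-1)$ and $1/(e^{2w}-1)$ dictated by the local structure at $u=0$. Solving the difference equation, or reading off the Borel transform, reproduces exactly $\tfrac{2^{1-2n}-1}{2n(2n-1)}B_{2n}(c_m\eta)^{1-2n}$, that is, ${\cal F}(c_m,\eta)$ of \eqref{eq:Voros-coeff-F}.

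The hard part will be passing from the first few explicitly computable coefficients to the closed form valid for all $n$. This demands controlling the recursion for $R_{2n-1}$ tightly enough to establish the exact difference equation (or Borel transform) rather than merely matching low orders, and justifying the term-by-term integration along $\Gamma_{\rm contour}$: the branch behaviour of $\sqrt{\Delta}$ encoded in the Puiseux expansion \eqref{eq:Puiseux} near $\tau_1$ must be handled correctly, and the integrability of the $R_{2n-1}$ at $u=0$ (Appendix \ref{Appendix:asymptotics}) must be verified. Once \eqref{eq:Main Theorem} is proved for this path, the remaining cases of Theorem \ref{Main Theorem3} and Theorem \ref{Main Theorem2} should follow by repeating the same local analysis at the other ends $\infty_{j,\pm}$, $0_{c_\infty,\pm}$ and $0_{c_0,\pm}$, where the governing residues are $\pm c_p$, $\pm c_\infty$ and $\pm c_0$ in place of $c_m$, together with the sign bookkeeping fixed by \eqref{eq:c-plus-minus}.
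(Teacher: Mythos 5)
Your overall strategy --- reduce $W_{\Gamma}$ to a series in $c_m\eta$ with numerical coefficients, derive a difference equation of the type satisfied by ${\cal F}$, and invoke a uniqueness statement (Lemma \ref{key lemma} (ii)) --- matches the skeleton of the paper's proof, but the proposal is missing the one idea that makes this scheme work: where the difference equation comes from. You say you would ``derive from \eqref{eq:Riccati equation} a relation satisfied by the whole Voros coefficient, producing a difference equation in $c_m\eta$,'' and you flag this derivation as ``the hard part,'' to be handled by controlling the recursion for $R_{2n-1}$ tightly enough. That is not a workable mechanism: the Riccati equation \eqref{eq:Riccati equation} is written at fixed ${\bf c}$ and only relates the coefficients $R_{\ell}$ to one another; it contains no structure that shifts $c_{\infty}$ or $c_{0}$ by $\eta^{-1}$, so no amount of control over its recursion yields a difference equation in the parameter. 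What the paper actually uses is the B\"acklund transformations $T_1,T_2$ of the Hamiltonian form $(H_{\rm III'})_{D_6}$ (Lemma \ref{Lemma:Backlund}): applying them to a transseries solution and comparing the two expressions for the transformed $1$-instanton part yields Lemma \ref{Lemma:difference eq for R}, which writes $R(t,T_j({\bf c}),\eta)-R(t,{\bf c},\eta)$ as an explicit logarithmic derivative. Integrating that identity along $\Gamma_{\rm contour}$, taking the limit $t\rightarrow\infty_{3,+}$ with the asymptotics of Appendix \ref{Appendix:asymptotics}, and using the closed-form primitive \eqref{eq:the integral of R-1} of $R_{-1}$ (which comes from the isomonodromic linear problem, not from the Riccati recursion) produces the difference equations of Lemma \ref{Lemma:difference eq for W-Gamma}. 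Your alternative suggestion --- compute the Borel transform of $W_{\Gamma}$ and match it against the generating function $w/(e^{w}-1)$ --- is circular, since the coefficients of $W_{\Gamma}$ are precisely what is unknown.

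A second, related gap: you claim that homogeneity ``forces $v_{2n-1}$ to be a numerical constant times $c_m^{1-2n}$.'' It does not. The homogeneity \eqref{eq:degree of W} only makes $v_{2n-1}$ homogeneous of degree $1-2n$ jointly in $(c_{\infty},c_{0})$; functions such as $c_p^{1-2n}$ or $c_p^{-1}c_m^{2-2n}$ are equally consistent with it. The independence of $c_p$ is a theorem, not a symmetry count: it is the content of the first difference equation \eqref{eq:difference eq for W-Gamma1}, $W_{\Gamma}(T_1({\bf c}),\eta)=W_{\Gamma}({\bf c},\eta)$, which again is obtained from the B\"acklund transformation $T_1$ (note that $T_1$ acts as $(c_p,c_m)\mapsto(c_p+\eta^{-1},c_m)$), and only after this does homogeneity give the form \eqref{eq:W-Gamma pre-expression}. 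Your residue heuristic at $u=0$ correctly predicts which combination of the parameters appears, but it proves nothing. So the proposal identifies the right target and the right uniqueness lemma, yet both pillars it rests on --- the difference equation in $c_m$ and the elimination of $c_p$ --- are left unproved, and the tool that supplies both never appears in your plan.
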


This statement is included in the statement 
of Theorem \ref{Main Theorem3}. The other statements 
in Theorem \ref{Main Theorem3} and \ref{Main Theorem2} 
can be proved in the same manner presented here.

\subsection{Proof of Theorem \ref{Main Theorem}
(and Theorem \ref{Main Theorem3})}
\label{subsection:Proof of Main Theorem1}

The following lemma is a key for the proof 
of our main theorems.

\begin{lemm}[cf.\ 
{\cite[Lemma 1.2]{Takei Sato conjecture}} and 
{\cite[$\S$3]{Koike-Takei}}] 
\label{key lemma}
(i) The formal power series ${\cal F}(c,\eta)$ 
and ${\cal G}(c,\eta)$ defined in \eqref{eq:Voros-coeff-F} 
and \eqref{eq:Voros-coeff-G} satisfy the following 
difference equations:
\begin{eqnarray}
{\cal F}(c + \eta^{-1},\eta) - {\cal F}(c,\eta) & = & 
1 - (c \hspace{+.1em} \eta + 1)\hspace{+.1em} {\rm{log}} 
\Bigl( 1+\frac{1}{c \hspace{+.1em} \eta} \Bigr) 
+ {\rm{log}}
\Bigl( 1+\frac{1}{2 c \hspace{+.1em} \eta} \Bigr), 
\nonumber \\
\label{eq:difference equation for F} \\
{\cal G}(c + \eta^{-1},\eta) - {\cal G}(c,\eta) & = & 
1 - \Bigl( c \hspace{+.1em} \eta + \frac{1}{2} \Bigr) 
\hspace{+.1em} {\rm{log}} 
\Bigl( 1+\frac{1}{c \hspace{+.1em} \eta} \Bigr).
\label{eq:difference equation for G}
\end{eqnarray}

\noindent
(ii) Conversely, if there exist a formal power series solution 
of the equation \eqref{eq:difference equation for F} 
or \eqref{eq:difference equation for G} of the form 
\[
\sum_{\ell \ge 1} a_{\ell} \hspace{+.1em}
(c \hspace{+.1em} \eta)^{-\ell} 
\]
with some $a_{\ell} \in {\mathbb C}$ which is independent 
of both $c$ and $\eta$ $(\ell \ge 1)$, then it coincides 
with ${\cal F}(c,\eta)$ or ${\cal G}(c,\eta)$, respectively.
\end{lemm}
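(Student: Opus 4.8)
The plan is to read both \eqref{eq:difference equation for F} and \eqref{eq:difference equation for G} as identities of formal power series in $(c\eta)^{-1}$; writing $x = c\eta$, the shift $c \mapsto c + \eta^{-1}$ becomes $x \mapsto x+1$. First I would eliminate ${\cal F}$ in favour of ${\cal G}$: comparing \eqref{eq:Voros-coeff-F} with \eqref{eq:Voros-coeff-G} and using $(2x)^{1-2n} - x^{1-2n} = (2^{1-2n}-1)\,x^{1-2n}$ gives the exact relation
\[
{\cal F}(c,\eta) = {\cal G}(2c,\eta) - {\cal G}(c,\eta).
\]
Substituting this into \eqref{eq:difference equation for F} and applying \eqref{eq:difference equation for G} three times — at the arguments $2x$ and $2x+1$ to bridge ${\cal G}(2x)\to{\cal G}(2x+2)$, and at $x$ to handle ${\cal G}(x)\to{\cal G}(x+1)$ — reduces the claim for ${\cal F}$ to \eqref{eq:difference equation for G} after collecting the logarithmic terms. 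This collection is a short and purely elementary computation (the $\log 2$ contributions cancel and the coefficients of $\log x$, $\log(x+1)$, $\log(2x+1)$ match), so the entire burden falls on \eqref{eq:difference equation for G}.

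To prove \eqref{eq:difference equation for G} I would pass to the Borel transform $x^{-k}\mapsto y^{k-1}/(k-1)!$ applied term by term. Using the generating function \eqref{eq:Bernoulli number}, the transform of ${\cal G}$ is the elementary function
\[
\hat{\cal G}(y) = \sum_{n\ge1}\frac{B_{2n}}{(2n)!}\,y^{2n-2} = \frac{1}{y(e^{y}-1)} - \frac{1}{y^{2}} + \frac{1}{2y},
\]
which is holomorphic at $y = 0$. Under this transform the shift $x\mapsto x+1$ corresponds to multiplication by $e^{-y}$, so the left-hand side of \eqref{eq:difference equation for G} becomes $(e^{-y}-1)\hat{\cal G}(y)$, which simplifies using $(e^{-y}-1)/(e^{y}-1) = -e^{-y}$. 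For the right-hand side (which has no constant term), the transform of $\log(1+1/x)$ is $(1-e^{-y})/y$, and multiplication by $x$ corresponds to $d/dy$ up to a constant term; this yields the transform of the right-hand side in closed form. The identity \eqref{eq:difference equation for G} is then equivalent to the single elementary identity
\[
(e^{-y}-1)\,\hat{\cal G}(y) = \frac{1-(y+1)e^{-y}}{y^{2}} + \frac{e^{-y}-1}{2y},
\]
which I would verify by direct manipulation. This Borel-side identity is the heart of the argument, and the main obstacle is conceptual rather than computational: one must recognise that the generating function \eqref{eq:Bernoulli number} collapses the factorially-divergent series ${\cal G}$ into the rational–exponential kernel $\hat{\cal G}$, after which everything is elementary. (A direct coefficient comparison, by contrast, would require an unpleasant Bernoulli-number identity for every power of $x^{-1}$.)

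For part (ii) the argument is short. If $\sum_{\ell\ge1} a_{\ell}(c\eta)^{-\ell}$ is any formal solution of \eqref{eq:difference equation for F} (resp.\ \eqref{eq:difference equation for G}), then its difference $D(x)$ with ${\cal F}$ (resp.\ ${\cal G}$) is a formal power series in $x^{-1}$ without constant term satisfying $D(x+1) = D(x)$. If $D \not\equiv 0$, let $d_{\ell_{0}}\,x^{-\ell_{0}}$ be its lowest-order term; then $D(x+1) - D(x) = -\ell_{0}\,d_{\ell_{0}}\,x^{-\ell_{0}-1} + \cdots$ is nonzero, contradicting $D(x+1)=D(x)$. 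Hence $D\equiv 0$ and the solution coincides with ${\cal F}$ (resp.\ ${\cal G}$), as claimed. This part uses the explicit right-hand sides only through the fact that ${\cal F}$ and ${\cal G}$ are themselves solutions, which is precisely part (i).
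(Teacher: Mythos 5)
Your proof is correct, and it is worth pointing out that the paper itself never proves Lemma \ref{key lemma}: it imports the statement from \cite{Takei Sato conjecture} and \cite{Koike-Takei}, where the difference equations are obtained by recognizing ${\cal G}(c,\eta)$ as the Stirling series of $\log\Gamma(c\eta)$ (and ${\cal F}$ as the analogous series attached to $\Gamma(c\eta+1/2)$, exactly the functions appearing later in Proposition \ref{Prop:Borel sum of F and G}) and then invoking the functional equation $\Gamma(z+1)=z\Gamma(z)$. Your route is genuinely different and fully self-contained. The reduction ${\cal F}(c,\eta)={\cal G}(2c,\eta)-{\cal G}(c,\eta)$ — a formal-series avatar of the Legendre duplication formula — does reduce \eqref{eq:difference equation for F} to \eqref{eq:difference equation for G}: telescoping ${\cal G}(2x)\to{\cal G}(2x+1)\to{\cal G}(2x+2)$ and using $\tfrac{2x+2}{2x+1}\cdot\tfrac{2x+1}{2x}=\tfrac{x+1}{x}$, all logarithmic terms collapse, as you claim. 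The Borel-plane verification of \eqref{eq:difference equation for G} is also sound: both sides are constant-term-free formal series in $(c\eta)^{-1}$, on which the Borel transform is injective; the shift $c\mapsto c+\eta^{-1}$ becomes multiplication by $e^{-y}$; and your key identity follows at once from $e^{-y}-1=-e^{-y}(e^{y}-1)$ applied to $\hat{\cal G}(y)=\frac{1}{y(e^{y}-1)}-\frac{1}{y^{2}}+\frac{1}{2y}$. Your part (ii) (comparing lowest-order terms of a $1$-periodic formal series) is the standard uniqueness argument and matches what the cited sources do. As for what each approach buys: yours needs no input from Gamma-function theory and stays inside formal power-series algebra, which makes the lemma independent of the analytic facts used later; the classical route gets, for the same effort, the closed-form Borel sums that the paper needs anyway in Proposition \ref{Prop:Borel sum of F and G}. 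The two are in fact the same computation organized differently, since your kernel $\hat{\cal G}$ is precisely the integrand in Binet's integral representation of $\log\Gamma$.
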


To apply Lemma \ref{key lemma} for the proof 
of our main theorems, 
we derive difference equations satisfied by 
Voros coefficients. For the purpose, 
we use the following two {\it B\"acklund transformations} 
$T_{1}$ and $T_{2}$, where $T_{1}$ induces the shift of 
the parameters of $(P_{\rm III'})_{D_{6}}$ as 
$(c_{\infty},c_{0}) \mapsto 
(c_{\infty} + \eta^{-1}, c_{0} + \eta^{-1})$, 
while $T_{2}$ induces $(c_{\infty},c_{0}) \mapsto 
(c_{\infty} + \eta^{-1}, c_{0} - \eta^{-1})$.
It is convenient to use the following 
Hamiltonian system $(H_{\rm III'})_{D_{6}}$ 
which is equivalent to $(P_{\rm III'})_{D_{6}}$ 
(cf.\ \cite{Okamoto Painleve}): 
\begin{equation}
(H_{\rm III'})_{D_{6}}  : \hspace{+.5em}
\frac{d\lambda}{dt} = \eta 
\frac{\partial {\cal H}}{\partial \mu}, 
\hspace{+1.em}
\frac{d\mu}{dt} = - \eta 
\frac{\partial {\cal H}}{\partial \lambda}, 
\label{eq:H3D6}
\end{equation}
where the Hamiltonian ${\cal H} = {\cal H}(\lambda,\mu,t)$ 
is defined by 
\begin{equation}
t \hspace{+.1em} {\cal H} = \lambda^{2} \mu^{2} 
- \bigl( \lambda^{2} + (c_{0} - \eta^{-1}) \lambda - t \bigr) \mu 
+ \frac{1}{2} (c_{\infty} + c_{0} - \eta^{-1}) \lambda. 
\label{eq:hamiltonian of H3D6}
\end{equation}
Then the explicit form of B\"acklund transformations 
$T_{1}$ and $T_{2}$ are given by the following:

\begin{lemm}[e.g., \cite{Jimbo-Miwa, Okamoto Painleve}] 
\label{Lemma:Backlund}
Let $(\lambda,\nu)$ be a solution of 
$(H_{\rm III'})_{D_{6}}$. 
Then, $(\Lambda, M) = 
(\Lambda_{j}(\lambda,\nu), M_{j}(\lambda,\nu))$
$(j = 1,2)$ defined by 
\begin{eqnarray}
\hspace{-.5em} &  & 
\begin{cases}
\displaystyle \Lambda_{1} = - \frac{t}{\lambda} 
+ \frac{(c_{\infty} + c_{0} + \eta^{-1}) t}{2\lambda^{2}(\mu - 1) 
+ (c_{\infty} - c_{0} + \eta^{-1}) \lambda + 2t}, \\[+1.5em]
\displaystyle M_{1} = \frac{\lambda^{2}(\mu - 1)}{t} 
+ \frac{(c_{\infty} - c_{0} + \eta^{-1}) \lambda }{2t} 
+ 1 ,
\end{cases} \label{eq:Backlund1} \\[+.7em]
\hspace{-.5em} &  &
\begin{cases}
\displaystyle \Lambda_{2} = \frac{2t(\mu - 1)}
{2\lambda(\mu - 1) + (c_{\infty} - c_{0} + \eta^{-1})}, 
\\[+1.5em]
\displaystyle M_{2} = \frac{1}{t} \Bigl\{ 
\frac{c_{\infty} + c_{0} - \eta^{-1}}{2} 
\Bigl( \lambda + \frac{c_{\infty} - c_{0} 
+ \eta^{-1}}{2(\mu - 1)} \Bigr) 
 - \Bigl( \lambda + \frac{c_{\infty} 
 - c_{0} + \eta^{-1}}
 {2(\mu - 1)} \Bigr)^{2} \mu  \Bigr\}, 
 \label{eq:Backlund2}
\end{cases}
\end{eqnarray}
is a solution of $(H_{\rm III'})_{D_{6}}$ 
with the parameter ${\bf c}$ is shifted by 
$T_{j}$ $(j = 1, 2)$; that is, 
it is a solution of 
\begin{eqnarray}
\frac{d\Lambda}{dt} = 
\eta \frac{\partial {\cal H}_{j}}{\partial M}, 
\hspace{+1.em} 
\frac{dM}{dt} = 
- \eta \frac{\partial {\cal H}_{j}}{\partial \Lambda}
\hspace{+1.em} (j = 1,2),
\label{eq:Hamiltonian}  
\end{eqnarray}
where ${\cal H}_{j} = {\cal H}_{j}(\Lambda,M,t)$ 
$(j = 1,2)$ is given by 
\begin{eqnarray}
t {\cal H}_{1} & = & \Lambda^{2} M^{2} 
- \bigl( \Lambda^{2} + c_{0} \Lambda - t \bigr) M 
+ \frac{1}{2} (c_{\infty} + c_{0} + \eta^{-1}) \Lambda,
\\[+1.em]
t {\cal H}_{2} & = & \Lambda^{2} M^{2} 
- \bigl( \Lambda^{2} + (c_{0} - 2 \eta^{-1}) \Lambda - t \bigr) M 
+ \frac{1}{2} (c_{\infty} + c_{0} - \eta^{-1}) \Lambda .
\end{eqnarray}
\end{lemm}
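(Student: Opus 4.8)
The plan is to prove Lemma~\ref{Lemma:Backlund} by direct verification: I substitute the explicit formulas \eqref{eq:Backlund1} and \eqref{eq:Backlund2} into the transformed Hamiltonian system \eqref{eq:Hamiltonian} and check that those equations hold as a consequence of the original system \eqref{eq:H3D6}. Since these B\"acklund transformations are classical and already recorded in \cite{Jimbo-Miwa, Okamoto Painleve}, the content here is a verification rather than a derivation; nevertheless I would carry out the substitution explicitly to keep the argument self-contained and, above all, to pin down the precise normalization of the parameter shifts $(c_{\infty},c_{0}) \mapsto (c_{\infty}+\eta^{-1}, c_{0}\pm\eta^{-1})$ induced by $T_{1}$ and $T_{2}$.

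For each $j \in \{1,2\}$ I regard $\Lambda_{j}=\Lambda_{j}(\lambda,\mu,t)$ and $M_{j}=M_{j}(\lambda,\mu,t)$ as explicit rational functions and compute the total $t$-derivatives by the chain rule,
\[
\frac{d\Lambda_{j}}{dt} =
\frac{\partial \Lambda_{j}}{\partial \lambda} \frac{d\lambda}{dt} +
\frac{\partial \Lambda_{j}}{\partial \mu} \frac{d\mu}{dt} +
\frac{\partial \Lambda_{j}}{\partial t},
\]
and likewise for $dM_{j}/dt$, substituting $d\lambda/dt = \eta\,\partial {\cal H}/\partial \mu$ and $d\mu/dt = -\eta\,\partial {\cal H}/\partial \lambda$ from \eqref{eq:H3D6} with ${\cal H}$ given by \eqref{eq:hamiltonian of H3D6}. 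On the other side I compute $\eta\,\partial {\cal H}_{j}/\partial M$ and $-\eta\,\partial {\cal H}_{j}/\partial \Lambda$ and re-express the outcome in terms of $(\lambda,\mu,t)$ via \eqref{eq:Backlund1} or \eqref{eq:Backlund2}. Matching the two expressions as rational functions of $(\lambda,\mu,t)$ finishes the check for that $j$.

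A useful organizing principle that I would exploit to cut down the labor is the canonical (symplectic) nature of the transformation: verifying $d\Lambda_{j}\wedge dM_{j} = d\lambda\wedge d\mu$ first guarantees that the new variables obey Hamilton's equations for \emph{some} Hamiltonian, after which one only needs to identify that Hamiltonian as ${\cal H}_{j}$ up to the correction $\partial_{t}S$ arising from the explicit $t$-dependence of the transformation. The main obstacle is purely algebraic bulk: the denominators such as $2\lambda^{2}(\mu-1)+(c_{\infty}-c_{0}+\eta^{-1})\lambda+2t$ must be cleared and the resulting polynomial identities confirmed, and the bookkeeping of the $\eta^{-1}$-shifts of the parameters between ${\cal H}$ and ${\cal H}_{j}$ is exactly where sign and normalization errors are most likely to arise. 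In practice I would verify one transformation (say $T_{1}$) in full and then obtain $T_{2}$ either by the analogous computation or, more economically, by composing $T_{1}$ with a simpler elementary symmetry of $(H_{\rm III'})_{D_{6}}$ that reverses the sign of the shift in $c_{0}$.
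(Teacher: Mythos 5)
Your proposal is correct and matches the paper's approach: the paper itself dispenses with this lemma by remarking only that it ``can be shown by straightforward computations,'' i.e.\ exactly the direct substitution-and-verification you describe (with the classical references \cite{Jimbo-Miwa, Okamoto Painleve} standing behind the formulas). Your additional organizational devices --- checking canonicity $d\Lambda_{j}\wedge dM_{j}=d\lambda\wedge d\mu$ first, and obtaining $T_{2}$ from $T_{1}$ via an elementary symmetry --- are sensible labor-saving refinements but do not constitute a different method.
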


Lemma \ref{Lemma:Backlund} can be shown by 
straightforward computations. 
Next we consider a transseries solution 
\begin{equation}
\bigl( \lambda(t,{\bf c},\eta;\alpha), 
\mu(t,{\bf c},\eta;\alpha) \bigr) = 
\Bigl( \sum_{k \ge 0} (\alpha \eta^{-{1}/{2}})^{k} 
\lambda^{(k)}(t,\textbf{c},\eta) 
e^{k \eta \phi}, 
\sum_{k \ge 0} (\alpha \eta^{-{1}/{2}})^{k} 
\mu^{(k)}(t,\textbf{c},\eta) 
e^{k \eta \phi} \Bigr)
\label{eq:transseries solution of Hamiltonian system}
\end{equation}
of $(H_{\rm III'})_{D_{6}}$. 
Here the above transseries expansion of 
$\mu(t,{\bf c},\eta;\alpha)$ is obtained from 
that of $\lambda(t,{\bf c},\eta;\alpha)$ by the equality 
\begin{equation}
\mu = \frac{1}{2 \lambda^{2}} \Bigl( \eta^{-1} 
t \hspace{+.1em} \frac{d\lambda}{dt} + \lambda^{2} 
+ (c_{0} - \eta^{-1}) \hspace{+.1em} \lambda - t \Bigr).
\label{eq:mu}
\end{equation}
Especially, the formal power series 
$\mu^{(0)}(t,{\bf c},\eta)$ and its leading term 
$\mu_{0}(t,{\bf c})$ is given by 
\begin{eqnarray}
\mu^{(0)}(t,{\bf c},\eta) & = & \frac{1}{2 {\lambda^{(0)}}^{2}} 
\Bigl( \eta^{-1} t \hspace{+.1em} 
\frac{d\lambda^{(0)}}{dt} + {\lambda^{(0)}}^{2} 
+ (c_{0} - \eta^{-1}) \hspace{+.1em} \lambda^{(0)} 
- t \Bigr),  \label{eq:mu(0)} \\[+.3em]
\mu_{0}(t,{\bf c}) & = & \frac{1}{2} 
+ \frac{c_{0}}{2 \lambda_{0}} 
- \frac{t}{2\lambda_{0}^{2}}.
\label{eq:mu0}
\end{eqnarray}
Applying the above B\"acklund transformations 
to this transseries solution, we have the following:

\begin{lemm} \label{Lemma:difference eq for R}
Let $T_{1}({\bf c}) = (c_{\infty} + \eta^{-1}, c_{0} + \eta^{-1})$ 
and $T_{2}({\bf c}) = (c_{\infty} + \eta^{-1}, c_{0} - \eta^{-1})$, 
and $R = R(t,{\bf c},\eta)$ be a formal solution 
$R = R(t,{\bf c},\eta)$ of the Riccati equation 
\eqref{eq:Riccati equation}; that is, 
$R = R_{+}$ or $R_{-}$ in Remark \ref{Remark:odd part}.
Then we have the following:
\begin{eqnarray}
\displaystyle (i) \hspace{+.3em}
R(t,T_{1}({\bf c}),\eta) - R(t,{\bf c},\eta) 
& = & \frac{d}{dt} {\rm log} \hspace{+.2em} t 
\nonumber \\[+.3em]
+ \frac{d}{dt} {\rm log} \biggl( 
\frac{1}{{\lambda^{(0)}}^{2}} 
 \hspace{+3.3em} & \hspace{-7.3em} - & \hspace{-4.em}
\frac{(c_{\infty} + c_{0} + \eta^{-1}) 
\bigl( 4 \lambda^{(0)} (\mu^{(0)} - 1)  
+ (c_{\infty} - c_{0} + \eta^{-1}) +
2{\lambda^{(0)}}^{2} X \bigr)}
{(2{\lambda^{(0)}}^{2}(\mu^{(0)}-1) 
 + (c_{\infty} - c_{0} + \eta^{-1}) \lambda^{(0)}
 + 2t )^{2}}
\biggr) . \nonumber \\[+1.em]
&  &  \label{eq:difference eq for R1} \\[+.5em]
\displaystyle (ii) \hspace{+.3em}
R(t,T_{2}({\bf c}),\eta) - R(t,{\bf c},\eta) 
& = & \frac{d}{dt} {\rm log} \hspace{+.2em} t + 
\frac{d}{dt} {\rm log} \bigl( -2(\mu^{(0)}-1)^{2} 
+ (c_{\infty} - c_{0} + \eta^{-1}) X \bigr) 
\nonumber \\[+.5em]
&   & \hspace{+.5em}
- 2 \frac{d}{dt} {\rm log} 
\bigl(  2\lambda^{(0)}(\mu^{(0)} - 1)
+ (c_{\infty} - c_{0} + \eta^{-1})  \bigr).
\label{eq:difference eq for R2}
\end{eqnarray}
Here $X = X(t,{\bf c},\eta)$ is a formal power 
series given by 
\begin{equation}
X(t,{\bf c},\eta) = 
\frac{\eta^{-1} t \hspace{+.1em} R}
{2{\lambda^{(0)}}^{2}} 
- \frac{\eta^{-1} t}{{\lambda^{(0)}}^{3}} 
\frac{d\lambda^{(0)}}{dt}
- \frac{c_{0} - \eta^{-1}}{2{\lambda^{(0)}}^{2}}
+ \frac{t}{{\lambda^{(0)}}^{3}}.
\label{eq:X}
\end{equation}
\end{lemm}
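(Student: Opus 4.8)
The plan is to exploit the fact that the B\"acklund transformations of Lemma~\ref{Lemma:Backlund} act on the transseries solution \eqref{eq:transseries solution of Hamiltonian system} and, at the level of $1$-instanton parts, are governed by the Jacobian of the algebraic maps $(\lambda,\mu)\mapsto(\Lambda_{j},M_{j})$. Recall from \eqref{eq:tilde lambda (1)} that the $1$-instanton part $\tilde\lambda^{(1)}$ of $\lambda(t,{\bf c},\eta;\alpha)$ satisfies $\frac{d}{dt}\log\tilde\lambda^{(1)} = R(t,{\bf c},\eta)$ for the chosen solution $R=R_{\pm}$ of \eqref{eq:Riccati equation}. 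Writing $\tilde\Lambda_{j}^{(1)}$ for the $1$-instanton part of $\Lambda_{j}$ regarded as a transseries solution with the shifted parameter $T_{j}({\bf c})$, I will argue below that $\frac{d}{dt}\log\tilde\Lambda_{j}^{(1)} = R(t,T_{j}({\bf c}),\eta)$. Granting this, both difference equations follow at once by subtraction,
\[
R(t,T_{j}({\bf c}),\eta) - R(t,{\bf c},\eta)
= \frac{d}{dt}\log\frac{\tilde\Lambda_{j}^{(1)}}{\tilde\lambda^{(1)}},
\]
so the whole problem reduces to computing the ratio $\tilde\Lambda_{j}^{(1)}/\tilde\lambda^{(1)}$ explicitly.

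To compute this ratio I first linearize \eqref{eq:mu} at the $0$-parameter solution: regarding $\mu$ as a function of $\lambda$ and $\lambda'=d\lambda/dt$ and using $d\tilde\lambda^{(1)}/dt = R\,\tilde\lambda^{(1)}$, a direct differentiation gives $\tilde\mu^{(1)} = \big(\partial_{\lambda}\mu + (\partial_{\lambda'}\mu)\,R\big)\,\tilde\lambda^{(1)}$, and one checks that the bracket, evaluated at $\lambda^{(0)}$, is exactly $X(t,{\bf c},\eta)$ of \eqref{eq:X}; thus $X = \tilde\mu^{(1)}/\tilde\lambda^{(1)}$. Expanding $\Lambda_{j}(\lambda,\mu)$ about $(\lambda^{(0)},\mu^{(0)})$, the $1$-instanton part is $\tilde\Lambda_{j}^{(1)} = \big(\partial_{\lambda}\Lambda_{j} + (\partial_{\mu}\Lambda_{j})\,X\big)\tilde\lambda^{(1)}$, whence $\tilde\Lambda_{j}^{(1)}/\tilde\lambda^{(1)} = \partial_{\lambda}\Lambda_{j} + (\partial_{\mu}\Lambda_{j})\,X$, all evaluated at the $0$-parameter solution. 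For $j=1$, differentiating \eqref{eq:Backlund1} and collecting terms over the common denominator $D = 2(\lambda^{(0)})^{2}(\mu^{(0)}-1)+(c_{\infty}-c_{0}+\eta^{-1})\lambda^{(0)}+2t$ produces exactly $t$ times the expression inside the logarithm in \eqref{eq:difference eq for R1}. For $j=2$, differentiating \eqref{eq:Backlund2} (using that $\partial_{\mu}$ of its denominator is $2\lambda$, and that the resulting numerator simplifies through the identity $E-2\lambda(\mu-1)=c_{\infty}-c_{0}+\eta^{-1}$ where $E=2\lambda^{(0)}(\mu^{(0)}-1)+(c_{\infty}-c_{0}+\eta^{-1})$) produces $2t$ times the argument of the logarithm in \eqref{eq:difference eq for R2} divided by $E^{2}$. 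Taking $\frac{d}{dt}\log$ of these two expressions, the multiplicative constants drop out and one recovers precisely \eqref{eq:difference eq for R1} and \eqref{eq:difference eq for R2}.

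The step that requires genuine care, and which I regard as the main obstacle, is the identification $\frac{d}{dt}\log\tilde\Lambda_{j}^{(1)} = R(t,T_{j}({\bf c}),\eta)$, i.e.\ that the B\"acklund transform of the $1$-instanton part is again a WKB solution of the \emph{same} sign branch. This holds because $\tilde\Lambda_{j}^{(1)}$ solves the Fr\'echet derivative \eqref{eq:Frechet bibun} of the transformed equation at its $0$-parameter solution $\Lambda_{j}(\lambda^{(0)},\mu^{(0)})$, and is therefore a linear combination $c_{+}\psi_{+}+c_{-}\psi_{-}$ of the two WKB solutions $\psi_{\pm}=\exp\!\big(\int R_{\pm}(t,T_{j}({\bf c}),\eta)\,dt\big)$. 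Since $T_{j}({\bf c})-{\bf c}=O(\eta^{-1})$ one has $\eta\,\phi(t,T_{j}({\bf c})) = \eta\,\phi(t,{\bf c}) + O(1)$, so the leading exponential of $\tilde\Lambda_{j}^{(1)}$, which equals that of $\tilde\lambda^{(1)}$ up to a power series in $\eta^{-1}$, matches that of $\psi_{+}$ and is incompatible with that of $\psi_{-}$; this forces $c_{-}=0$, and the remaining constant $c_{+}$ disappears under the logarithmic derivative. Once this identification is secured, the rest is the routine Jacobian computation sketched above, verifiable by straightforward differentiation.
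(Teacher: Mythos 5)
Your proposal is correct and takes essentially the same approach as the paper's proof: both apply the B\"acklund transformations of Lemma \ref{Lemma:Backlund} to the transseries solution \eqref{eq:transseries solution of Hamiltonian system}, identify the transformed $1$-instanton part as a WKB solution $C(\eta)\exp\bigl(\int^{t}R(t,T_{j}({\bf c}),\eta)\,dt\bigr)$ of the shifted equation, rewrite the same quantity via $\mu^{(1)}=X\lambda^{(1)}$ in terms of $\exp\bigl(\int^{t}R(t,{\bf c},\eta)\,dt\bigr)$, and conclude by taking logarithmic derivatives. Your linearization $\tilde\Lambda_{j}^{(1)}=(\partial_{\lambda}\Lambda_{j}+(\partial_{\mu}\Lambda_{j})X)\tilde\lambda^{(1)}$ is exactly the paper's formula \eqref{eq:Backlund transformed first part}; the only cosmetic difference is that the paper secures the key identification through the identity $2t(\mu_{0}-1)/\bigl(2\lambda_{0}(\mu_{0}-1)+c_{\infty}-c_{0}\bigr)=\lambda_{0}$ and uniqueness of the $0$-parameter solution, whereas you secure it by matching leading exponentials to fix the sign branch, which amounts to the same thing.
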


\begin{proof}
Here we only show the equality \eqref{eq:difference eq for R2}.
Applying the B\"aclund transformation 
\eqref{eq:Backlund2} to the transseries solution, 
we have the following transseries solution of the 
$(P_{\rm III'})_{D_{6}}$ with the parameter ${\bf c}$ 
is shifted by $T_{2}$
(the equation is denoted by $T_{2}(P_{\rm III'})_{D_{6}}$ 
in what follows):
\begin{equation}
\Lambda_{2}(\lambda,\mu) = 
\Lambda^{(0)}(t,{\bf c},\eta) 
+ \alpha \eta^{-1/2} \Lambda^{(1)}(t,{\bf c},\eta) e^{\eta \phi}
+ (\alpha \eta^{-1/2})^{2} \Lambda^{(2)}(t,{\bf c},\eta) 
e^{2 \eta \phi} + \cdots, 
\label{eq:Backlund transformed transseries}
\end{equation}
where $\Lambda^{(k)}(t,{\bf c},\eta)$ is a 
formal power series in $\eta^{-1}$ ($k \ge 0$).
Especially, $\Lambda^{(0)}(t,{\bf c},\eta) = 
\Lambda_{2}(\lambda^{(0)},\mu^{(0)})$ 
is a 0-parameter solution of 
$T_{2}(P_{\rm III'})_{D_{6}}$, and 
\begin{equation}
\Lambda^{(1)}(t,{\bf c},\eta) = 
\frac{-4t(\mu^{(0)}-1)^{2} \lambda^{(1)}
+ 2t (c_{\infty} - c_{0} + \eta^{-1}) \mu^{(1)}}
{\bigl( 2\lambda^{(0)}(\mu^{(0)} - 1) 
+ (c_{\infty} - c_{0} + \eta^{-1}) \bigr)^{2}}.
\label{eq:Backlund transformed first part}
\end{equation}
Note that, since we can easily check that  
\begin{equation}
\frac{2t(\mu_{0} - 1)}{2\lambda_{0}
(\mu_{0} - 1) + (c_{\infty} - c_{0})} 
 = \lambda_{0}
\end{equation}
holds, we have 
$ \Lambda^{(0)}(t,{\bf c},\eta) = 
\lambda^{(0)}(t,T_{2}({\bf c}),\eta) $
due to the uniqueness of the 0-parameter solution 
of $T_{2}(P_{\rm III'})_{D_{6}}$. Hence, as explained in 
Section \ref{section:1-parameter solutions}, 
the formal power series 
\eqref{eq:Backlund transformed first part} 
is expressed as 
\begin{equation}
\Lambda^{(1)}(t,{\bf c},\eta) e^{\eta \phi}
= C(\eta) {\rm exp} \Bigl( 
\int^{t} R(t,T_{2}({\bf c}),\eta) 
\hspace{+.1em} dt \Bigr)
\label{eq:expression sono1}
\end{equation}
with a formal power series $C(\eta)$ whose coefficients 
are independent of $t$. On the other hand, 
since $\mu^{(1)}$ can be written as 
\begin{equation}
\mu^{(1)}(t,{\bf c},\eta) = 
X(t,{\bf c},\eta) \lambda^{(1)}(t,{\bf c},\eta)
\label{eq:mu(1)}
\end{equation}
by \eqref{eq:mu} and \eqref{eq:tilde lambda (1)},  
the formal power series 
\eqref{eq:Backlund transformed first part}
also has the following expression:
\begin{equation}
\Lambda^{(1)}(t,{\bf c},\eta) e^{\eta \phi} =
C'(\eta) \hspace{+.1em}
\frac{t \hspace{+.1em} \big( -2(\mu^{(0)}-1)^{2} 
+ (c_{\infty} - c_{0} + \eta^{-1}) X \bigr)}
{\bigl( 2\lambda^{(0)}(\mu^{(0)} - 1)
+ (c_{\infty} - c_{0} + \eta^{-1}) \bigr)^{2}}
\hspace{+.1em} {\rm exp} \Bigl( 
\int^{t} R(t,{\bf c},\eta) 
\hspace{+.1em} dt \Bigr).
\label{eq:expression sono2}
\end{equation}
Here $C'(\eta)$ is a formal power series with 
constant (with respect to $t$) coefficients.
Comparing \eqref{eq:expression sono1} and 
\eqref{eq:expression sono2}, and taking logarithmic 
derivatives, we obtain \eqref{eq:difference eq for R2}. 
The equality \eqref{eq:difference eq for R1} can 
be derived in the completely same manner.
\end{proof}

Lemma \ref{Lemma:difference eq for R} arrows us to 
compute integrals of 
$R(t,T_{j}({\bf c}),\eta) - R(t,{\bf c},\eta)$ 
($j = 1,2$) explicitly. 
Using this lemma, the difference equation 
satisfied by the Voros coefficient 
$W_{\Gamma}({\bf c},\eta)$ can be derived.
We can derive difference equations 
for all other Voros coefficients in the same manner.

\begin{lemm} \label{Lemma:difference eq for W-Gamma}
The Voros coefficient $W_{\Gamma}({\bf c},\eta)$
defined by \eqref{eq:def of P-Voros coeff W-Gamma}
satisfies the following difference equations:
\begin{eqnarray}
(i) \hspace{+.2em} 
W_{\Gamma}(T_{1}({\bf c}), \eta) - 
W_{\Gamma}({\bf c},\eta) & = & 0, 
\label{eq:difference eq for W-Gamma1} \\[+.3em]
(ii) \hspace{+.2em} 
W_{\Gamma}(T_{2}({\bf c}), \eta) - 
W_{\Gamma}({\bf c},\eta) & = & 
1 - (c_{m} \eta + 1 ) \hspace{+.1em} {\rm log} 
\Bigl( 1 + \frac{1}{c_{m} \eta} \Bigr) 
+ {\rm log} \Bigl( 1 + \frac{1}{2c_{m}\eta} \Bigr) ,
\nonumber \\
&  & 
\label{eq:difference eq for W-Gamma} 
\end{eqnarray}
where $c_{m}$ is given by \eqref{eq:c-plus-minus}.
\end{lemm}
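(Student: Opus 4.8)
The plan is to integrate the two difference equations of Lemma~\ref{Lemma:difference eq for R} along the contour $\Gamma_{\rm contour}$ and to read off the corresponding difference equations for $W_{\Gamma}$. Since $R_{\rm odd} = \frac{1}{2}(R_{+} - R_{-})$ by \eqref{eq:R-odd}, I would first subtract the two instances of \eqref{eq:difference eq for R1} (resp.\ \eqref{eq:difference eq for R2}) obtained by taking $R = R_{+}$ and $R = R_{-}$. The point is that the B\"acklund transformations preserve the sign normalization $R_{\pm} = \pm \eta \sqrt{\Delta} + \cdots$, so $R_{\pm}(t,T_{j}({\bf c}),\eta) - R_{\pm}(t,{\bf c},\eta)$ is given by the right-hand side of Lemma~\ref{Lemma:difference eq for R} with $X = X_{\pm}$ (i.e.\ $X$ in \eqref{eq:X} evaluated at $R = R_{\pm}$), while the background series $\lambda^{(0)}, \mu^{(0)}$ and hence every $R$-independent factor are common to the two signs.

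Taking the odd part then annihilates all terms not involving $X$: the summand $\frac{d}{dt}\log t$ and the $R$-independent factor $2\lambda^{(0)}(\mu^{(0)}-1) + (c_{\infty} - c_{0} + \eta^{-1})$ cancel between the $+$ and $-$ instances. What survives, in the case $T_{2}$, is
\[
R_{\rm odd}(t,T_{2}({\bf c}),\eta) - R_{\rm odd}(t,{\bf c},\eta)
= \frac{1}{2}\frac{d}{dt}\log
\frac{-2(\mu^{(0)}-1)^{2} + (c_{\infty} - c_{0} + \eta^{-1})X_{+}}
{-2(\mu^{(0)}-1)^{2} + (c_{\infty} - c_{0} + \eta^{-1})X_{-}},
\]
a total $t$-derivative of the logarithm of an explicit algebraic function; the case $T_{1}$ is handled analogously from \eqref{eq:difference eq for R1}, where again only the term carrying $X$ survives. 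Writing $X_{\pm} = X_{\rm e} \pm X_{\rm o}$ with $X_{\rm o} \propto R_{\rm odd}$ the odd part of \eqref{eq:X}, this ratio has the form $(A+B)/(A-B)$ with $A$ even and $B$ odd under the involution $t \mapsto \check{t}$; it is therefore inverted by the involution, so its logarithm is odd, consistently with the oddness of the left-hand side.

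Integrating this total derivative over $\Gamma_{\rm contour}$ and using the contour convention \eqref{eq:definition of contour integral} collapses the computation to a boundary term. Because $\Gamma_{\rm contour}$ pairs the two sheets over the path joining $\tau_{1}$ to $\infty_{3,+}$ and the primitive $\frac{1}{2}\log[(A+B)/(A-B)]$ is odd under $t \mapsto \check{t}$, the turning-point endpoint contributes nothing and the integral reduces to the asymptotic value of the primitive at $\infty_{3,+}$. Finally I would substitute the higher-order asymptotics of $\lambda^{(0)}, \mu^{(0)}$ and $X_{\pm}$ at $t \to \infty_{3,+}$ from Appendix~\ref{Appendix:asymptotics}, together with the regularizing subtraction $-\eta R_{-1}$ built into \eqref{eq:def of P-Voros coeff W-Gamma} (which cancels the divergent $\eta$-part of the primitive at infinity). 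For $T_{2}$ this is expected to reproduce exactly $1 - (c_{m}\eta + 1)\log(1 + 1/(c_{m}\eta)) + \log(1 + 1/(2c_{m}\eta))$, while for $T_{1}$ the $\eta^{-1}$-shift leaves $c_{m} = (c_{\infty}-c_{0})/2$ invariant and the boundary term vanishes, giving \eqref{eq:difference eq for W-Gamma1}. Note that, once (ii) is established, it is precisely the right-hand side of \eqref{eq:difference equation for F}, so Lemma~\ref{key lemma}(ii) will identify $W_{\Gamma}$ with $\mathcal{F}(c_{m},\eta)$.

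The main obstacle will be this last step: extracting the asymptotic expansion of $(A+B)/(A-B)$ at $\infty_{3,+}$ to all orders in $\eta^{-1}$ while tracking the branch and sign conventions on the Riemann surface of $\sqrt{\Delta}$, and checking that the $-\eta R_{-1}$ regularization exactly matches the divergent part so that the coefficients $(c_{m}\eta + 1)$ and $1/(2c_{m}\eta)$ appear with the correct weights. This is where the residue data \eqref{eq:residue at double-pole infty}--\eqref{eq:residue at double-pole zero} and the precise subleading asymptotics of $\mu^{(0)}$ and $X_{\pm}$ enter, and it is the only genuinely computational part of the argument.
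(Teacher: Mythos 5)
Your treatment of the $R_{\rm odd}$-part is exactly the paper's: subtract the $R=R_{+}$ and $R=R_{-}$ instances of Lemma \ref{Lemma:difference eq for R}, observe that $\frac{d}{dt}\log t$ and all $X$-independent factors cancel, integrate the surviving total derivative $\frac{1}{2}\frac{d}{dt}\log\bigl[(A+BX_{+})/(A+BX_{-})\bigr]$ over the contour, use oddness under $t\mapsto\check t$ to reduce to the value of the primitive at $\infty_{3,+}$, and feed in the Appendix \ref{Appendix:asymptotics} asymptotics. However, there is a genuine gap in how you handle the subtraction $-\eta R_{-1}$. The difference $W_{\Gamma}(T_{2}({\bf c}),\eta)-W_{\Gamma}({\bf c},\eta)$ splits into the $R_{\rm odd}$-difference (your boundary term) \emph{minus} $\eta\int_{\Gamma_{t}}\bigl(R_{-1}(t,T_{2}({\bf c}))-R_{-1}(t,{\bf c})\bigr)\,dt$, and this second piece is a \emph{global} integral of an algebraic $1$-form along the whole contour; it is not determined by local asymptotics at $\infty_{3,+}$, and it does far more than ``cancel the divergent $\eta$-part of the primitive at infinity.'' In the paper's computation this integral $I_{-1}$ contributes
\[
1-(c_{m}\eta+1)\,{\rm log}\Bigl(1+\frac{1}{c_{m}\eta}\Bigr)
\]
— i.e.\ most of the right-hand side of \eqref{eq:difference eq for W-Gamma} — while the log-ratio boundary term contributes only ${\rm log}\bigl(1+1/(2c_{m}\eta)\bigr)$ plus the matching $\log t$ divergence. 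To evaluate $I_{-1}$ the paper needs the closed-form primitive \eqref{eq:the integral of R-1} of $R_{-1}=\sqrt{\Delta}$, which it obtains from the isomonodromic-deformation identity \eqref{eq:integral relation} relating $\int_{\tau}^{t}R_{-1}\,dt$ to the integral of $\sqrt{Q_{0}}$ for the associated Schr\"odinger equation. Your proposal provides no mechanism for this evaluation, so the final step as you describe it (``substitute asymptotics of $\lambda^{(0)}$, $\mu^{(0)}$, $X_{\pm}$ and match divergences'') cannot produce the coefficients $(c_{m}\eta+1)$; this is the missing idea, not merely the ``genuinely computational part.''

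Two smaller points. First, your argument for (i) is circular as stated: the invariance of $c_{m}$ under $T_{1}$ does not by itself force $W_{\Gamma}(T_{1}({\bf c}),\eta)=W_{\Gamma}({\bf c},\eta)$, because a priori $W_{\Gamma}$ could depend on $c_{p}$ as well — indeed, it is precisely the difference equation \eqref{eq:difference eq for W-Gamma1}, combined with the homogeneity \eqref{eq:degree of W}, that later proves $W_{\Gamma}$ depends only on $c_{m}$; so (i) requires the same honest computation (log-ratio boundary term from \eqref{eq:difference eq for R1} plus the $T_{1}$-shifted $I_{-1}$, with a nontrivial cancellation between them). Second, the paper pins down the otherwise ambiguous branches of the logarithms in \eqref{eq:behavior of I} and \eqref{eq:behavior of I_-1} by noting that a Voros coefficient has no $\eta^{0}$-term, so any branch mismatch would produce a contradiction; your sign-and-branch bookkeeping on the Riemann surface of $\sqrt{\Delta}$ would need an argument of this kind to close.
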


\begin{proof}
We only derive the difference equation 
\eqref{eq:difference eq for W-Gamma} here. 
We introduce the following formal power series 
defined by
\begin{eqnarray}
I_{\pm}(t,{\bf c},\eta) & = & 
\int_{\Gamma_{t}} R_{\pm}(t,T_{2}({\bf c}),\eta) \hspace{+.2em} dt  
- \int_{\Gamma_{t}}R_{\pm}(t,{\bf c},\eta) 
 \hspace{+.2em} dt  \hspace{+.2em},  \label{eq:I}  \\[+.3em]
I_{k}(t,{\bf c},\eta) & = & 
\int_{\Gamma_{t}}R_{k}(t,T_{2}({\bf c}) )\hspace{+.2em} dt  
- \int_{\Gamma_{t}}R_{k}(t,{\bf c}) \hspace{+.2em} dt   
\hspace{+1.em}(k \ge -1)  \hspace{+.2em}, 
\label{eq:I_k}
\end{eqnarray}
where $\Gamma_{t}$ is a path shown in Figure 
\ref{fig:P3D6-Gamma-t-sheet-1} and 
\ref{fig:P3D6-Gamma-t-sheet-2}.
Then, it follows from the definition \eqref{eq:R-odd} 
of $R_{\rm odd}$ that  
\begin{equation}
W_{\Gamma}(T_{2}({\bf c}),\eta) - 
W_{\Gamma}({\bf c},\eta) 
 = \frac{1}{2} 
\lim_{t \rightarrow \infty_{3,+}} 
\biggl( \frac{I_{+}(t,{\bf c},\eta) 
- I_{-}(t,{\bf c},\eta) }{2} 
 - \eta \hspace{+.1em} I_{-1}(t,{\bf c},\eta) \biggr),
\label{eq:difference limit}
\end{equation}
where the limit is taken along the 
Stokes curve $\Gamma$  in Figure \ref{fig:PIII',0}. 
Using Lemma \ref{Lemma:difference eq for R}, 
we have 
\begin{equation}
 \frac{I_{+}(t,{\bf c},\eta) 
- I_{-}(t,{\bf c},\eta) }{2} 
 = \hspace{+.2em} {\rm log} \biggl( 
\frac{-2(\mu^{(0)} - 1)^{2} 
+ (c_{\infty} - c_{0} + \eta^{-1}) X_{+}}
{-2(\mu^{(0)} - 1)^{2} 
+ (c_{\infty} - c_{0} + \eta^{-1}) X_{-}} \biggr), 
\label{eq:I}
\end{equation}
where $X_{\pm}$ is a formal power series 
defined by taking $R = R_{\pm}$ 
in \eqref{eq:X}. Then, it follows from 
\eqref{eq:behavior of lambda(0) infinity-minus},
\eqref{eq:behavior of mu(0) infinity-minus} 
and \eqref{eq:behavior of R infinity-minus}
in Appendix \ref{Appendix:asymptotics} that 
\begin{equation}
 \frac{I_{+}(t,{\bf c},\eta) 
- I_{-}(t,{\bf c},\eta) }{2} 
 = 2 \hspace{+.2em} {\rm log} \biggl( 1 + 
\frac{1}{(c_{\infty}-c_{0})\eta}
\biggr)
+ {\rm log} \biggl( 
\frac{-(c_{\infty} - c_{0})^{2}}
{256 t} \biggr) 
+ O(t^{-1/2})
\label{eq:behavior of I}
\end{equation}
as $t \rightarrow \infty_{3,+}$, 
with suitable branches of the logarithm.
Moreover, we can compute the integral of 
$R_{-1}$ explicitly as 
\begin{eqnarray}
\int_{\Gamma_{t}} R_{-1}(t,{\bf c}) \hspace{+.1em} dt 
& = & 
4 t R_{-1} - c_{\infty} \hspace{+.2em} {\rm log} 
\biggl( 
\frac{2\lambda_{0} - c_{\infty} + t R_{-1}}
{2\lambda_{0} - c_{\infty} - t R_{-1}}  
\biggr)
- c_{0} \hspace{+.2em} {\rm log}
\biggl( \frac{ 2t^{2} - c_{0} t \lambda_{0} 
+ t^{2} \lambda_{0} R_{-1} }
{ 2t^{2} - c_{0} t \lambda_{0} 
- t^{2} \lambda_{0} R_{-1} } 
\biggr). \nonumber \\ & & 
\label{eq:the integral of R-1}
\end{eqnarray}
The equality \eqref{eq:the integral of R-1}
follows from the relationship 
between the Painlev\'e equations 
and associated isomonodromic deformation 
(cf.\ \cite{Jimbo-Miwa}) of linear differential equations. 
Let $Q_{0}(x,t,{\bf c})$ be a rational function 
\[
Q_{0}(x,t,{\bf c}) = \frac{(x-\lambda_{0})^{2}}{4x^{4}} 
P(x,t,{\bf c}), \hspace{+1.em}
P(x,t,{\bf c}) = x^{2} + 2(\lambda_{0}-c_{\infty})x 
+ \frac{t^{2}}{\lambda_{0}^{2}}, 
\]
which is the leading term of the potential function 
of a Schr\"odinger equation relevant to 
$(P_{\rm III'})_{D_{6}}$ (cf.\ \cite{KT iwanami}), 
and $x = a(t,{\bf c})$ be a zero of $P(x,t,{\bf c})$.
Then, it is shown in \cite{KT iwanami} that 
\begin{equation}
\int_{a(t,{\bf c})}^{\lambda_{0}(t,{\bf c})}
\sqrt{Q_{0}(x,t,{\bf c})} = \frac{1}{2}
\int_{\tau}^{t} R_{-1}(t,{\bf c}) dt. 
\label{eq:integral relation}
\end{equation}
Therefore, the equality \eqref{eq:the integral of R-1} 
follows from the equalities \eqref{eq:integral relation}
and the explicit computation of the integral of 
$\sqrt{Q_{0}(x,t,{\bf c})}$. Thus we have 
\begin{eqnarray}
I_{-1}(t,{\bf c},\eta) & = & - 2 \eta^{-1}
+ (c_{\infty} - c_{0} + 2 \eta^{-1}) \hspace{+.1em} 
{\rm log} \biggl( 1 + \frac{2}{(c_{\infty} - c_{0})\eta}
\biggr)  \nonumber \\[+.3em] 
&  & 
+ \eta^{-1} {\rm log} \biggl( 
\frac{-(c_{\infty} - c_{0})^{2}}
{256 \hspace{+.1em} t} \biggr)
+ O(t^{-1/2}), \label{eq:behavior of I_-1} 
\end{eqnarray}
as $t \rightarrow \infty_{3,+}$, 
with suitable branchs of the logarithm.
As a result, the difference equation 
\eqref{eq:difference eq for W-Gamma}
follows from \eqref{eq:difference limit}, 
\eqref{eq:behavior of I} and 
\eqref{eq:behavior of I_-1} directly. 
Here we note that, since any Voros coefficients 
has the form 
\[
\sum_{n \ge 1} \eta^{1-2n} W_{2n-1}({\bf c}), 
\hspace{+1.em} \text{$W_{2n-1}(\bf c) \in {\mathbb C}$}
\]
(i.e., a formal power series in $\eta^{-1}$ 
without constant terms), the branches of logarithms in 
\eqref{eq:behavior of I} and \eqref{eq:behavior of I_-1} 
must coincide, otherwise we have a contradiction; 
there appears a term of $\eta^{0}$ in the 
right-hand side of the difference equation
\eqref{eq:difference eq for W-Gamma}.
The difference equation \eqref{eq:difference eq for W-Gamma1} 
can be derived by the completely same manner.
\end{proof}

The shift operators $T_{1}$ and $T_{2}$ 
induce $T_{1}(c_{p}, c_{m}) = (c_{p} + \eta^{-1}, c_{m})$ 
and $T_{2}(c_{p}, c_{m}) = (c_{p}, c_{m} + \eta^{-1})$.
Therefore, the first difference equation 
\eqref{eq:difference eq for W-Gamma1} 
implies that $W_{\Gamma}$ does not depend on $c_{p}$; 
that is, $W_{\Gamma}$ is a formal power series of $\eta^{-1}$
whose coefficients depend only on $c_{m}$.
Moreover, according to the homogeneity property 
\eqref{eq:degree of W}, $W_{\Gamma}$ can be expressed as 
\begin{equation}
W_{\Gamma}({\bf c},\eta) = \sum_{n \ge 1} 
w_{2n - 1} \hspace{+.1em}
(c_{m} \hspace{+.1em} \eta)^{1-2n}, 
\label{eq:W-Gamma pre-expression}
\end{equation}
with some $w_{2n-1} \in {\mathbb C}$ which is independent 
of both $\eta$ and $c_{m}$ ($n \ge 1$). 
Then, \eqref{eq:difference eq for W-Gamma} 
and \eqref{eq:W-Gamma pre-expression} implies 
that $W_{\Gamma}$ has the explicit representation 
\eqref{eq:Main Theorem} by (ii) of Lemma \ref{key lemma}.
Thus we have proved Theorem \ref{Main Theorem}. 
$\Box$  \vspace{+.5em}

As well as the above proof, we can obtain an explicit 
representations of Voros coefficients from difference 
equations satisfied by them, and these difference equations 
can be derived in the same manner. 
The list of asymptotic behaviors of $\lambda^{(0)}$, 
$\mu^{(0)}$ and $R_{\pm}$ when $t$ tends to infinity, 
which are necessary for derivations of 
the difference equations, are summarized 
in Appendix \ref{Appendix:asymptotics}. 
Here we show the difference equations 
satisfied by the Voros coefficients 
$W_{\infty}$ for any choice of $\infty$. 

\begin{lemm} \label{Lemma:difference eq for W for infinity}
(i) The Voros coefficient 
$W_{\pm}({\bf c}, \eta) = 
W_{\infty_{j, \pm}}({\bf c},\eta)$
(for $j = 1,2$) satisfies the following 
difference equations: 
\begin{eqnarray}
W_{\pm}(T_{1}({\bf c}), \eta) - W_{\pm}({\bf c},\eta) 
& = & 
\pm \Bigl\{ 1 
- \bigl( c_{p} \eta + 1 \bigr) {\rm log} 
\Bigl( 1 + \frac{1}{c_{p} \eta} \Bigr) 
+ {\rm log} \Bigl( 1 + \frac{1}{2 c_{p} \eta} \Bigr) 
\Bigr\}, \nonumber  \\[+.3em] \\[+.3em]
W_{\pm}(T_{2}({\bf c}), \eta) 
- W_{\pm}({\bf c},\eta) & = & 0.
\end{eqnarray}

\vspace{+.3em}
\noindent 
(ii) The Voros coefficient 
$W_{\pm}({\bf c}, \eta) = 
W_{\infty_{j, \pm}}({\bf c},\eta)$
(for $j = 3,4$) satisfies the following 
difference equations: 
\begin{eqnarray}
W_{\pm}(T_{1}({\bf c}), \eta) 
- W_{\pm}({\bf c},\eta) & = & 0, \\[+.3em]
W_{\pm}(T_{2}({\bf c}), \eta) 
- W_{\pm}({\bf c},\eta) & = & 
\pm \Bigl\{ 1 
- \bigl( c_{m} \eta + 1 \bigr) {\rm log} 
\Bigl( 1 + \frac{1}{c_{m} \eta} \Bigr) 
+ {\rm log} \Bigl( 1 + \frac{1}{2 c_{m} \eta} \Bigr) 
\Bigl\}. \nonumber  \\[+.2em] 
\end{eqnarray}
Here $c_{p}$ and $c_{m}$ are given by 
\eqref{eq:c-plus-minus}.
\end{lemm}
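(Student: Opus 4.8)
The plan is to follow the proof of Lemma~\ref{Lemma:difference eq for W-Gamma} almost line for line, changing only the upper endpoint of the integral \eqref{eq:P-Voros coeff} and the asymptotic data attached to it. The engine is the same: by Lemma~\ref{Lemma:difference eq for R} the difference $R(t,T_{i}({\bf c}),\eta)-R(t,{\bf c},\eta)$ (for $i=1,2$) is an exact $t$-derivative of an explicit logarithm, so each difference $W_{\infty_{j,\pm}}(T_{i}({\bf c}),\eta)-W_{\infty_{j,\pm}}({\bf c},\eta)$ of Voros coefficients telescopes to a boundary term. First I would write the analogue of \eqref{eq:difference limit}, i.e.\ the odd combination $\tfrac{1}{2}(I_{+}-I_{-})$ minus the correction $\eta\,I_{-1}$, but with the limit taken along a path ending at $\infty_{j,\pm}$ instead of at $\infty_{3,+}$. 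Because the $R$-difference is a total derivative, the contribution of the lower (turning-point) endpoint drops out of the difference, and only the behaviour at $\infty_{j,\pm}$ survives.

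Second, I would evaluate that boundary term using the asymptotic expansions of $\lambda^{(0)}$, $\mu^{(0)}$ and $R_{\pm}$ collected in Appendix~\ref{Appendix:asymptotics}, together with the closed-form primitive \eqref{eq:the integral of R-1} for the $R_{-1}$ part. The decisive structural point is that $\infty_{1}$ and $\infty_{2}$ both lie over $u=\infty$, at which the residue of $\sqrt{q(u,{\bf c})}\,du$ equals $\pm c_{p}$, whereas $\infty_{3}$ and $\infty_{4}$ both lie over $u=0$, at which the residue equals $\pm c_{m}$ (see the residue list following \eqref{eq:quadratic differential in u-plane} and the notation \eqref{eq:c-plus-minus}). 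Hence the logarithmic boundary terms are governed by $c_{p}$ for $j=1,2$ and by $c_{m}$ for $j=3,4$, which is why the two endpoints in each pair produce identical difference equations. Since $T_{1}$ shifts $c_{p}\mapsto c_{p}+\eta^{-1}$ while fixing $c_{m}$, and $T_{2}$ does the reverse, the shift that leaves the governing parameter unchanged will produce a vanishing boundary term, while the other shift reproduces exactly the right-hand side of \eqref{eq:difference equation for F} with argument $c_{p}$ (for $j=1,2$) or $c_{m}$ (for $j=3,4$). This yields all four displayed difference equations.

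Third, the overall sign $\pm$ is accounted for by the choice of square root $R_{-1}=\sqrt{\Delta}$ encoded in the symbol $\infty_{j,\pm}$: replacing $\sqrt{\Delta}$ by $-\sqrt{\Delta}$ interchanges $R_{+}$ and $R_{-}$ and hence flips $R_{\rm odd}$ by \eqref{eq:R-odd}, so the entire integrand $R_{\rm odd}-\eta R_{-1}$ changes sign and $W_{\infty_{j,-}}=-W_{\infty_{j,+}}$. Thus it suffices to carry out the computation for one sign and transport the result to the other, which explains the $\pm$ in front of every right-hand side.

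The hard part will be the consistent branch bookkeeping for the logarithms at each of the eight endpoints. As in the closing paragraph of the proof of Lemma~\ref{Lemma:difference eq for W-Gamma}, the pieces $\tfrac{1}{2}(I_{+}-I_{-})$ and $\eta\,I_{-1}$ individually contain logarithms whose branches are a priori ambiguous, but their sum must be a formal power series in $\eta^{-1}$ with no constant term, since every Voros coefficient has that shape; this constraint pins down the branches, for a mismatch would illegitimately produce an $\eta^{0}$ term. I would run this cancellation argument once per endpoint, using the residues to track which of $c_{p}$, $c_{m}$ controls the given case. Once each difference equation is established, Lemma~\ref{key lemma}(ii) together with the homogeneity of the Voros coefficients will in turn deliver the closed forms of Theorem~\ref{Main Theorem3}, though that final step lies beyond the present lemma.
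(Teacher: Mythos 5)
Your proposal is correct and follows essentially the same route as the paper: the paper proves Lemma \ref{Lemma:difference eq for W-Gamma} in detail and then simply asserts that the present lemma ``can be derived in the same manner'' from Lemma \ref{Lemma:difference eq for R}, the primitive \eqref{eq:the integral of R-1}, and the asymptotic expansions at $\infty_{j,\pm}$ collected in Appendix \ref{Appendix:asymptotics}, which is precisely your plan. Your supplementary points --- the residue dichotomy ($c_{p}$ at $u=\infty$ versus $c_{m}$ at $u=0$) explaining which of $T_{1},T_{2}$ yields the trivial equation, the sign flip $W_{\infty_{j,-}}=-W_{\infty_{j,+}}$ from exchanging $R_{+}\leftrightarrow R_{-}$, and the no-constant-term argument fixing the branches of the logarithms --- are all correct refinements of that same computation.
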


The explicit representations of Voros coefficients 
in Theorem \ref{Main Theorem3} are obtained from these 
difference equations and Lemma \ref{key lemma} directly. 
Thus we have proved Theorem \ref{Main Theorem3}. $\Box$

\subsection{Proof of Theorem \ref{Main Theorem2}}
\label{subsection:Proof of Main Theorem2}

Next we show Theorem \ref{Main Theorem2} about the explicit 
representations of the Voros coefficients 
$W_{0_{c_{\infty},\pm}}$ and 
$W_{0_{c_{0},\pm}}$ for double-poles 
defined by \eqref{eq:P-Voros coeff for 0}.
Using Lemma \ref{Lemma:difference eq for R}
and the list of asymptotic behaviors when $t$ tends to 
double-poles which are summarized in 
Appendix \ref{Appendix:asymptotics},
we can derive the following difference equations in the 
completely same method in 
Section \ref{subsection:Proof of Main Theorem1}.

\begin{lemm} \label{Lemma:difference eq for W for 0}
(i) The Voros coefficient 
$W_{c_{\infty}, \pm}({\bf c}, \eta) = 
W_{0_{c_{\infty},\pm}}({\bf c},\eta)$
satisfies the following difference equations:
\begin{eqnarray}
W_{c_{\infty},\pm}(T_{1}({\bf c}), \eta) 
- W_{c_{\infty},\pm}({\bf c},\eta) & = & \pm \Bigl\{  -2 
- \bigl(c_{p} \eta + 1\bigr) {\rm log} 
\Bigl( 1 + \frac{1}{c_{p} \eta} \Bigr) 
+ {\rm log} \Bigl( 1 + \frac{1}{2 c_{p} \eta} \Bigr) 
\nonumber  \\[+.3em] 
&  & 
+ 3 \Bigl( c_{\infty}\eta + \frac{1}{2} \Bigr) 
{\rm log} \Bigl( 1 + \frac{1}{c_{\infty} \eta} \Bigr) 
\Bigr\},  \\[+.5em]
W_{c_{\infty},\pm}(T_{2}({\bf c}), \eta) 
- W_{c_{\infty},\pm}({\bf c},\eta) & = & \pm \Bigl\{ - 2 
- \bigl(c_{m} \eta + 1\bigr) {\rm log} 
\Bigl( 1 + \frac{1}{c_{m} \eta} \Bigr) 
+ {\rm log} \Bigl( 1 + \frac{1}{2 c_{m} \eta} \Bigr) 
\nonumber  \\[+.3em] 
&  & 
+ 3 \Bigl( c_{\infty}\eta + \frac{1}{2} \Bigr) 
{\rm log} \Bigl( 1 + \frac{1}{c_{\infty} \eta} \Bigr) 
\Bigr\}.
\end{eqnarray}

\noindent
(ii) The Voros coefficient 
$W_{c_{0},\pm}({\bf c}, \eta) = 
W_{0_{c_{0},\pm}}({\bf c},\eta)$
satisfies the following difference equations:
\begin{eqnarray}
W_{c_{0},\pm}(T_{1}({\bf c}), \eta) 
- W_{c_{0},\pm}({\bf c},\eta) & = & \pm \Bigl\{ -2 
- \bigl(c_{p} \eta + 1\bigr) {\rm log} 
\Bigl( 1 + \frac{1}{c_{p} \eta} \Bigr) 
+ {\rm log} \Bigl( 1 + \frac{1}{2 c_{p} \eta} \Bigr) 
\nonumber  \\[+.3em] 
&  & 
+ 3 \Bigl( c_{0}\eta + \frac{1}{2} \Bigr) 
{\rm log} \Bigl( 1 + \frac{1}{c_{0} \eta} \Bigr)
\Bigr\}, \\[+.5em]
W_{c_{0},\pm}(T_{2}({\bf c}), \eta) 
- W_{c_{0},\pm}({\bf c},\eta) & = & \pm \Bigl\{ 2 
+ \bigl(c_{m} \eta + 1\bigr) {\rm log} 
\Bigl( 1 + \frac{1}{c_{m} \eta} \Bigr) 
- {\rm log} \Bigl( 1 + \frac{1}{2 c_{m} \eta} \Bigr) 
\nonumber  \\[+.3em] 
&  & 
+ 3 \Bigl( c_{0}\eta - \frac{1}{2} \Bigr) 
{\rm log} \Bigl( 1 - \frac{1}{c_{0} \eta} \Bigr) 
\Bigr\}. 
\end{eqnarray}
Here $c_{p}$ and $c_{m}$ are given by 
\eqref{eq:c-plus-minus}.
\end{lemm}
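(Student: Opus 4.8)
The plan is to reproduce, mutatis mutandis, the argument that establishes Lemma~\ref{Lemma:difference eq for W-Gamma}, the only changes being that the upper endpoint of the contour is now one of the double-poles $0_{c_\infty,\pm}$ or $0_{c_0,\pm}$, and that \emph{both} B\"acklund shifts $T_1$ and $T_2$ must be fed in (for the $\infty_{3,+}$ path only $T_2$ produced a nonzero jump, whereas here each shift contributes). Fixing a shift $T_j$ and a square root, I would set up the regularized difference integrals
\begin{equation*}
I_\pm(t,{\bf c},\eta)=\int_{\Gamma_t}\!\bigl(R_\pm(t,T_j({\bf c}),\eta)-R_\pm(t,{\bf c},\eta)\bigr)\,dt,\qquad I_{-1}(t,{\bf c},\eta)=\int_{\Gamma_t}\!\bigl(R_{-1}(t,T_j({\bf c}))-R_{-1}(t,{\bf c})\bigr)\,dt
\end{equation*}
as in \eqref{eq:I}--\eqref{eq:I_k}, and express the desired jump as the limit
\begin{equation*}
W_{c_\ast,\pm}(T_j({\bf c}),\eta)-W_{c_\ast,\pm}({\bf c},\eta)=\frac12\lim_{t\to 0_{c_\ast,\pm}}\Bigl(\tfrac12\bigl(I_+-I_-\bigr)-\eta\,I_{-1}\Bigr),
\end{equation*}
the exact analogue of \eqref{eq:difference limit}.

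Two subcomputations then feed this limit. First, because Lemma~\ref{Lemma:difference eq for R} writes $R(t,T_j({\bf c}),\eta)-R(t,{\bf c},\eta)$ as a total $t$-derivative of a logarithm, the piece $\tfrac12(I_+-I_-)$ collapses to the logarithm of an explicit rational function of $\lambda^{(0)}$, $\mu^{(0)}$ and $X_\pm$; its value at the double-pole I would read off from the asymptotic expansions of $\lambda^{(0)}$, $\mu^{(0)}$ and $R_\pm$ near $0_{c_\ast,\pm}$ collected in Appendix~\ref{Appendix:asymptotics}, with leading behaviour fixed by \eqref{eq:behavior of lambda0 0-infinity-pm}--\eqref{eq:behavior of lambda0 0-0-pm}. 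This part is expected to yield the ${\cal F}$-type logarithms $-(c_p\eta+1)\log(1+\tfrac1{c_p\eta})+\log(1+\tfrac1{2c_p\eta})$ (resp.\ their $c_m$ versions), precisely as in the $\infty$ computation.

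The genuinely new ingredient, and the step I expect to be the main obstacle, is the limit of $\eta\,I_{-1}$. In contrast with $t\to\infty_{3,+}$, here $R_{-1}=\sqrt{\Delta}$ has a \emph{simple pole} at the endpoint, with $R_{-1}\sim\pm c_\infty/t$ or $\pm c_0/t$ by \eqref{eq:behavior of lambda0 0-infinity-pm}--\eqref{eq:behavior of lambda0 0-0-pm}, so the primitive $\int^t R_{-1}\,dt$ diverges logarithmically and the difference $I_{-1}$ picks up a finite residue contribution that was absent before. I would evaluate the primitive in closed form through the isomonodromy identity \eqref{eq:integral relation}, which gives the explicit antiderivative \eqref{eq:the integral of R-1}; substituting the two parameter values ${\bf c}$ and $T_j({\bf c})$, expanding near the double pole, and tracking the residues \eqref{eq:residue at double-pole infty}--\eqref{eq:residue at double-pole zero} should reproduce the extra ${\cal G}$-type term $3\bigl(c_\ast\eta\pm\tfrac12\bigr)\log(1\pm\tfrac1{c_\ast\eta})$, whose factor $3$ is the structural novelty of the double-pole case. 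The delicate points are the cancellation of the $t$-dependent logarithms between $\tfrac12(I_+-I_-)$ and $\eta\,I_{-1}$, and keeping the correct branch of $\sqrt{\Delta}$ near the double poles $u=\pm(c_\infty-c_0)/(c_\infty+c_0)$ and near the simple-pole $u=-1$.

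Finally, I would pin down every remaining logarithm branch by the rigidity argument used at the close of the proof of Lemma~\ref{Lemma:difference eq for W-Gamma}: each Voros coefficient is a formal power series in $\eta^{-1}$ with vanishing constant term, so any stray $\eta^0$ contribution is forbidden and the branches are forced. Assembling the four resulting difference equations and comparing them, via the shifts $T_1(c_p,c_m)=(c_p+\eta^{-1},c_m)$, $T_2(c_p,c_m)=(c_p,c_m+\eta^{-1})$, $T_1c_\ast=c_\ast+\eta^{-1}$ and $T_2c_0=c_0-\eta^{-1}$, with the difference equations \eqref{eq:difference equation for F}--\eqref{eq:difference equation for G} for ${\cal F}$ and ${\cal G}$, gives the stated right-hand sides; the explicit formulas of Theorem~\ref{Main Theorem2} then follow from part~(ii) of Lemma~\ref{key lemma}.
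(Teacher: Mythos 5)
Your proposal is correct and is essentially the paper's own proof: the paper disposes of this lemma by saying it follows ``in the completely same method'' as Section \ref{subsection:Proof of Main Theorem1}, i.e.\ the regularized limit of $\tfrac{1}{2}\bigl(\tfrac{1}{2}(I_{+}-I_{-})-\eta\,I_{-1}\bigr)$ taken now at the double-pole, Lemma \ref{Lemma:difference eq for R} for the differences of $R$, the double-pole asymptotics of Appendix \ref{Appendix:asymptotics}, the explicit primitive \eqref{eq:the integral of R-1} of $R_{-1}$, and the no-constant-term argument that fixes the branches of the logarithms --- all of which you reproduce, including the cancellation of the $\log t$ divergences which is indeed the point requiring care here. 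One minor caveat: the terms do not split quite as you anticipate (already at $\infty_{3,+}$ the piece $-(c_{m}\eta+1)\log(1+1/(c_{m}\eta))$ comes from $\eta\,I_{-1}$ via \eqref{eq:behavior of I_-1}, while $\tfrac{1}{2}(I_{+}-I_{-})$ contributes only $\log(1+1/(2c_{m}\eta))$ by \eqref{eq:behavior of I}), but since your method computes both limits and adds them, this misattribution does not affect the outcome.
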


It follows from Lemma \ref{key lemma} and 
Lemma \ref{Lemma:difference eq for W for 0} 
that, the formal power series 
\begin{eqnarray*}
\tilde{W}_{c_{\infty}, \pm}({\bf c},\eta) & = & 
W_{c_{\infty},\pm}({\bf c},\eta) 
\mp \bigl( {\cal F}(c_{p},\eta) 
+ {\cal F}(c_{m},\eta) \bigr), \\[+.3em]
\tilde{W}_{c_{0},\pm}({\bf c},\eta) & = &  
W_{c_{0},\pm}({\bf c},\eta) 
\mp \bigl( {\cal F}(c_{p},\eta) 
- {\cal F}(c_{m},\eta) \bigr), 
\end{eqnarray*}
satisfies the following equations:
\begin{eqnarray}
\hspace{-1.em}
\tilde{W}_{c_{\infty},\pm}(T_{1}({\bf c}), \eta) 
- \tilde{W}_{c_{\infty},\pm}({\bf c},\eta) & = & 
\pm \Bigl\{ - 3 
+ 3 \Bigl( c_{\infty}\eta + \frac{1}{2} \Bigr) 
{\rm log} \Bigl( 1 + \frac{1}{c_{\infty} \eta} \Bigr) 
\Bigr\}, \label{eq:tilde-diff-1} \\[+.5em]
\hspace{-1.em}
\tilde{W}_{c_{\infty},\pm}(T_{2}({\bf c}), \eta) 
- \tilde{W}_{c_{\infty},\pm}({\bf c},\eta) & = &
\pm \Bigl\{ - 3 
+ 3 \Bigl( c_{\infty}\eta + \frac{1}{2} \Bigr) 
{\rm log} \biggl( 1 + \frac{1}{c_{\infty} \eta} \biggr)
\Bigr\}, \label{eq:tilde-diff-2} \\[+.5em]
\hspace{-1.em}
\tilde{W}_{c_{0},\pm}(T_{1}({\bf c}), \eta) 
- \tilde{W}_{c_{0},\pm}({\bf c},\eta) & = & 
\pm \Bigl\{ - 3 
+ 3 \Bigl( c_{0}\eta + \frac{1}{2} \Bigr) 
{\rm log} \biggl( 1 + \frac{1}{c_{0} \eta} \biggr) 
\Bigr\}, \label{eq:tilde-diff-3} \\[+.5em]
\hspace{-1.em}
\tilde{W}_{c_{0}, \pm}(T_{2}({\bf c}), \eta) 
- \tilde{W}_{c_{0}, \pm}({\bf c},\eta) & = & 
\pm \Bigl\{ 3 + 3 \Bigl( c_{0}\eta - \frac{1}{2} \Bigr) 
{\rm log} \biggl( 1 - \frac{1}{c_{0} \eta} \biggr) \Bigr\}. 
\label{eq:tilde-diff-4}
\end{eqnarray}
We can confirm that $\tilde{W}_{c_{\infty}, \pm}$ 
and $\tilde{W}_{c_{0}, \pm}$ satisfy
\[
\tilde{W}_{c_{\infty},\pm}(c_{\infty}, c_{0}+\eta^{-1})
- \tilde{W}_{c_{\infty},\pm}(c_{\infty},c_{0}-\eta^{-1}) = 0
\]
\[
\tilde{W}_{c_{0},\pm}(c_{\infty}+\eta^{-1},c_{0})
- \tilde{W}_{c_{0},\pm}(c_{\infty}-\eta^{-1},c_{0}) = 0
\]
by combinig the equations 
\eqref{eq:tilde-diff-1} $\sim$ \eqref{eq:tilde-diff-4}. 
This implies that $\tilde{W}_{c_{\infty}, \pm}$ 
(resp., $\tilde{W}_{c_{0}, \pm}$)
does not depend on $c_{0}$ (resp., $c_{\infty}$). 
As a result, we have  
\begin{eqnarray}
\tilde{W}_{c_{\infty}\pm}({\bf c},\eta) & = & 
\mp 3 \hspace{+.1em} {\cal G}(c_{\infty},\eta), \\[+.3em]
\tilde{W}_{c_{0},\pm}({\bf c},\eta) & = & 
\mp 3 \hspace{+.1em} {\cal G}(c_{0},\eta),
\end{eqnarray}
due to (ii) of Lemma \ref{key lemma} and \eqref{eq:degree of W}. 
Thus we have proved Theorem \ref{Main Theorem2}.
$\Box$

\section{Connection formulas for parametric Stokes phenomena}
\label{section:connection formulas}

In this section we analyze parametric Stokes phenomena 
relevant to the degeneration observed in 
Section \ref{section:P-Stokes geometry},
and derive connection formulas in some cases. 

\subsection{Borel sum of Voros coefficients and 
their jump property}

First we note that the Borel sum 
(as a formal power series in $\eta^{-1}$)
of Voros coefficients can be computed explicitly. 
(See \cite[$\S$1]{KT iwanami} for the definition 
of Borel sums of formal power series.)

\begin{prop}[cf.\ {\cite[$\S2$]{Takei Sato conjecture}}, 
\cite{Koike-Takei}]
\label{Prop:Borel sum of F and G}
Let ${\cal F}(c,\eta)$ and ${\cal G}(c,\eta)$ 
be the formal power series in $\eta^{-1}$
given by \eqref{eq:Voros-coeff-F} and \eqref{eq:Voros-coeff-G}.
They are not Borel summable when $c \in i {\mathbb R}$, 
and Borel summable otherwise. 
Moreover, the Borel sum 
${\cal S}_{\pm}[{\cal F}(c,\eta)]$ and 
${\cal S}_{\pm}[{\cal G}(c,\eta)]$ of 
the formal power series ${\cal F}(c,\eta)$
and ${\cal G}(c,\eta)$ when 
$\arg c = {\pi}/{2} \pm \delta$ 
for a sufficiently small positive number $\delta$
are given explicitly by the followings:
\begin{eqnarray}
{\mathcal S}_{-} \bigl[{\cal F}(c,\eta) \bigr]
& = & 
{\rm log} \hspace{+.1em}
\frac{\Gamma (c
\hspace{+.1em} \eta + 1/2)}{\sqrt{2 \pi}} 
- c \hspace{+.1em} \eta \hspace{+.2em} 
\Bigl( {\rm log} \bigl( c
\hspace{+.1em} \eta \bigr) - 1 \Bigr). 
\label{eq:Borel sum of F minus} \\[+1.em]
{\mathcal S}_{+} \bigl[{\cal F}(c,\eta) \bigr]
& = & 
- {\rm log} \frac{\Gamma (- c \hspace{+.1em} \eta + 1/2)}
{\sqrt{2 \pi}} - c \hspace{+.1em} \eta \hspace{+.2em} 
\Bigl( {\rm log} \bigl( c \hspace{+.1em} \eta \bigr) - 1 \Bigr) 
+ \pi i \hspace{+.1em} 
c \hspace{+.1em} \eta.
\label{eq:Borel sum of F plus}  \\[+1.em]
{\mathcal S}_{-} \bigl[{\cal G}(c,\eta) \bigr]
& = & 
{\rm log} \frac{\Gamma(c \hspace{+.1em} \eta)}{\sqrt{2 \pi}}
- c \hspace{+.1em} \eta \hspace{+.1em} \Bigl(
{\rm log}(c \hspace{+.1em} \eta) - 1 \Bigr) 
+ \frac{1}{2} \hspace{+.1em} {\rm log} (c \hspace{+.1em} \eta).
\label{eq:Borel sum of G minus}
\\[+1.em]
{\mathcal S}_{+} \bigl[{\cal G}(c,\eta) \bigr]
& = &  
- {\rm log} \frac{\Gamma(- c \hspace{+.1em} \eta)}{\sqrt{2 \pi}}
- c \hspace{+.1em} \eta \hspace{+.1em} \Bigl(
{\rm log}(c \hspace{+.1em} \eta) - 1 \Bigr) 
- \frac{1}{2} \hspace{+.1em} {\rm log} (c \hspace{+.1em} \eta) 
+ \pi i (c \eta + 1/2). \nonumber \\
\label{eq:Borel sum of G plus}
\end{eqnarray}
\end{prop}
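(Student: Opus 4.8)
The plan is to identify both series with the classical Stirling remainder of the $\Gamma$-function and then to read off Borel summability and the explicit Borel sums from Binet's integral representation, exactly in the spirit of \cite{Takei Sato conjecture,Koike-Takei}. First I would observe, using \eqref{eq:Bernoulli number} in the form $\frac{1}{e^{w}-1}-\frac{1}{w}+\frac{1}{2}=\sum_{n\ge1}\frac{B_{2n}}{(2n)!}w^{2n-1}$ together with $\int_{0}^{\infty}w^{2n-2}e^{-zw}\,dw=(2n-2)!/z^{2n-1}$, that with $z=c\eta$ the series ${\cal G}(c,\eta)$ is precisely the asymptotic expansion of Binet's function $\mu(z):=\log\Gamma(z)-(z-\tfrac{1}{2})\log z+z-\tfrac{1}{2}\log(2\pi)$. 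For ${\cal F}$ I would use $2^{1-2n}(c\eta)^{1-2n}=(2c\eta)^{1-2n}$ to get the linear relation ${\cal F}(c,\eta)={\cal G}(2c,\eta)-{\cal G}(c,\eta)$, and then collapse the two Binet functions by the duplication formula $\Gamma(2z)=\pi^{-1/2}\,2^{2z-1}\Gamma(z)\Gamma(z+\tfrac{1}{2})$; this is exactly what converts the pair of $\log\Gamma$'s into the single $\log\Gamma(c\eta+\tfrac{1}{2})$ appearing in \eqref{eq:Borel sum of F minus}. Hence it suffices to treat ${\cal G}$, and the ${\cal F}$-statements follow formally.

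Next I would pass to the Borel plane. The Borel transform of ${\cal G}(c,\eta)$ in $\eta$ is, after matching Binet's kernel, the function $\xi\mapsto \xi^{-1}\bigl(\tfrac{1}{e^{\xi/c}-1}-\tfrac{c}{\xi}+\tfrac{1}{2}\bigr)$ (equivalently, for ${\cal F}$, the kernel $\tfrac{1}{2\sinh(\xi/2c)}-\tfrac{c}{\xi}$ up to the factor $\xi^{-1}$). Its only singularities are simple poles at $\xi=2\pi i k c$ with $k\in\mathbb{Z}\setminus\{0\}$ (the pole at $\xi=0$ being cancelled by the correction terms). These poles meet the positive real Borel axis $\{\arg\xi=0\}$ precisely when $c\in i\mathbb{R}$, which establishes the summability dichotomy of part (i); for $c\notin i\mathbb{R}$ the Laplace integral along $\arg\xi=0$ converges and reproduces Binet's formula, so for $\arg c=\pi/2-\delta$ the Borel sum ${\cal S}_{-}[{\cal G}(c,\eta)]$ equals $\mu(c\eta)$, which rearranges to \eqref{eq:Borel sum of G minus}; via the duplication step this yields \eqref{eq:Borel sum of F minus}.

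For the direction $\arg c=\pi/2+\delta$ I would track the pole at $\xi=-2\pi i c$ (the $k=-1$ pole, the unique one crossing $\{\arg\xi=0\}$ as $\arg c$ passes $\pi/2$ for small $\delta$) and deform the contour past it. Equivalently, I would continue the closed form in $c$ and rewrite $\log\Gamma(c\eta)$ through the reflection formula $\Gamma(z)\Gamma(-z)=-\pi/\bigl(z\sin(\pi z)\bigr)$. Writing $\sin(\pi c\eta)=\tfrac{-e^{-i\pi c\eta}}{2i}\bigl(1-e^{2\pi i c\eta}\bigr)$ supplies both the linear-in-$\eta$ exponent responsible for the $\pi i\,c\eta$ term in \eqref{eq:Borel sum of F plus} and the $\pi i(c\eta+\tfrac{1}{2})$ term in \eqref{eq:Borel sum of G plus}, and it replaces $\Gamma(c\eta)$ by $\Gamma(-c\eta)$ while flipping the sign of the $\tfrac{1}{2}\log(c\eta)$ correction. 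The net discontinuity this computation produces is ${\cal S}_{+}[{\cal G}]-{\cal S}_{-}[{\cal G}]=\log\bigl(1-e^{2\pi i c\eta}\bigr)$, and for ${\cal F}$ the relation ${\cal F}={\cal G}(2c,\cdot)-{\cal G}(c,\cdot)$ together with $1-e^{4\pi i c\eta}=(1-e^{2\pi i c\eta})(1+e^{2\pi i c\eta})$ gives the jump $\log\bigl(1+e^{2\pi i c\eta}\bigr)$, consistent with the connection formula in the introduction.

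The main obstacle is bookkeeping rather than conceptual. One must fix the branches of $\log(c\eta)$ and of the several logarithms consistently on the two rays $\arg c=\pi/2\mp\delta$, confirm that only the $k=\pm1$ poles contribute to the jump (the remaining poles stay off the contour for $\delta$ small), and verify that the reflection-formula rewriting lands exactly on the stated additive constants. A second point needing care is the justification that Binet's integral genuinely computes the Borel sum and is not merely an asymptotic match: here I would invoke that the Borel transform above is holomorphic in a full neighborhood of $\arg\xi=0$ (away from $c\in i\mathbb{R}$) with at most exponential growth, so the standard Borel--Laplace theory applies and the Laplace integral is indeed ${\cal S}_{\pm}$.
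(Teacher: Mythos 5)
Your overall strategy is correct and is, in substance, the proof the paper relies on: the paper states Proposition \ref{Prop:Borel sum of F and G} without proof, deferring to \cite[\S 2]{Takei Sato conjecture} and \cite{Koike-Takei}, and those references proceed exactly as you propose --- closed-form Borel transform via the Bernoulli generating function \eqref{eq:Bernoulli number}, identification of ${\cal G}$ with Binet's function $\mu(z)$, the relation ${\cal F}(c,\eta)={\cal G}(2c,\eta)-{\cal G}(c,\eta)$ combined with the Legendre duplication formula to get \eqref{eq:Borel sum of F minus}, and the reflection formula on the $+$ side; your $\sin$/$\cos$ factorizations do land precisely on the constants $\pi i\,c\eta$ in \eqref{eq:Borel sum of F plus} and $\pi i(c\eta+1/2)$ in \eqref{eq:Borel sum of G plus}.

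There is, however, one concrete misstatement in your plan, and the verification step you set yourself (``confirm that only the $k=\pm1$ poles contribute to the jump'') would fail rather than succeed. The poles of the Borel transform sit at $\xi_{k}=2\pi i k c$, $k\in{\mathbb Z}\setminus\{0\}$, and these are \emph{not} spread out in angle: all poles with $k<0$ lie on the single ray $\arg\xi=\arg c-\pi/2$, and all poles with $k>0$ on the ray $\arg c+\pi/2$. Hence, as $\arg c$ crosses $\pi/2$, the entire half-lattice $\{-2\pi i m c\}_{m\ge 1}$ sweeps across ${\mathbb R}_{>0}$ simultaneously, and the discontinuity is the sum of residues over \emph{all} of them. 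Since ${\rm Res}_{\xi=-2\pi imc}\,{\cal G}_{B}=-1/(2\pi i m)$, this sum is $\sum_{m\ge1}m^{-1}e^{2\pi i m c\eta}=-\log\bigl(1-e^{2\pi i c\eta}\bigr)$; a single pole would produce only the lone exponential $e^{2\pi ic\eta}$, which is inconsistent with the logarithmic jump $\log(1-e^{2\pi ic\eta})$ that you yourself (correctly) state and that Corollary \ref{corollary:jump property} encodes. The same remark applies to ${\cal F}$, where the alternating residues $(-1)^{k}/(2\pi i k)$ of $\xi^{-1}\bigl(\tfrac{1}{2\sinh(\xi/2c)}-\tfrac{c}{\xi}\bigr)$ sum to $\log(1+e^{2\pi ic\eta})$. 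None of this damages your final formulas, because your alternative derivation through $\Gamma(z)\Gamma(-z)=-\pi/\bigl(z\sin(\pi z)\bigr)$ is self-contained and correct; simply replace the ``only $k=\pm1$'' check by the residue sum over the whole ray of poles.
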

As a corollary of Proposition \ref{Prop:Borel sum of F and G}, 
we have the following equalities.
\begin{cor} \label{corollary:jump property}
After the analytic continuation across 
the imaginary axis \{$\arg c = \pi/2$\}, 
the following holds:
\begin{eqnarray}
{\mathcal S}_{+} \bigl[ e^{{\cal F}(c,\eta)} \bigr] 
& = & (1 + e^{2\pi i c \eta}) \hspace{+.2em}
{\mathcal S}_{-} \bigl[ e^{{\cal F}(c,\eta)} \bigr].
\label{eq:PSP for F}  \\[+.2em]
{\mathcal S}_{+} \bigl[ e^{{\cal G}(c,\eta)} \bigr] 
& = & (1 - e^{2\pi i c \eta}) \hspace{+.2em}
{\mathcal S}_{-} \bigl[ e^{{\cal G}(c,\eta)} \bigr].
\label{eq:PSP for G} 
\end{eqnarray}
\end{cor}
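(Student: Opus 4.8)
The plan is to reduce both identities to the explicit Borel sums already recorded in Proposition \ref{Prop:Borel sum of F and G}, together with the reflection formula for the Gamma function. Since ${\cal F}(c,\eta)$ and ${\cal G}(c,\eta)$ are formal power series in $\eta^{-1}$ with no constant term, their exponentials are again well-defined formal power series, and Borel summation commutes with the exponential map on such series; hence ${\cal S}_{\pm}[e^{{\cal F}(c,\eta)}] = \exp\bigl({\cal S}_{\pm}[{\cal F}(c,\eta)]\bigr)$, and likewise for ${\cal G}$, whenever the series is Borel summable, i.e.\ when $c \notin i{\mathbb R}$. Thus the two displayed relations are equivalent to computing the multiplicative jumps $\exp\bigl({\cal S}_{+}[{\cal F}] - {\cal S}_{-}[{\cal F}]\bigr)$ and $\exp\bigl({\cal S}_{+}[{\cal G}] - {\cal S}_{-}[{\cal G}]\bigr)$ after the analytic continuation across $\{\arg c = \pi/2\}$.

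First I would treat ${\cal F}$. Subtracting \eqref{eq:Borel sum of F minus} from \eqref{eq:Borel sum of F plus}, the common term $-c\eta(\log(c\eta) - 1)$ cancels, leaving
\[
{\cal S}_{+}[{\cal F}] - {\cal S}_{-}[{\cal F}] = -\log\frac{\Gamma(c\eta+1/2)\,\Gamma(-c\eta+1/2)}{2\pi} + \pi i\,c\eta.
\]
Applying the reflection formula $\Gamma(1/2+z)\Gamma(1/2-z) = \pi/\cos(\pi z)$ with $z = c\eta$ turns the Gamma factor into $2\cos(\pi c\eta)$, so the exponential of the difference equals $2\cos(\pi c\eta)\,e^{\pi i c\eta} = (e^{\pi i c\eta} + e^{-\pi i c\eta})\,e^{\pi i c\eta} = 1 + e^{2\pi i c\eta}$, which is \eqref{eq:PSP for F}.

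The computation for ${\cal G}$ is entirely parallel. Subtracting \eqref{eq:Borel sum of G minus} from \eqref{eq:Borel sum of G plus} again cancels $-c\eta(\log(c\eta)-1)$ and yields
\[
{\cal S}_{+}[{\cal G}] - {\cal S}_{-}[{\cal G}] = -\log\frac{\Gamma(c\eta)\,\Gamma(-c\eta)}{2\pi} - \log(c\eta) + \pi i\Bigl(c\eta + \frac{1}{2}\Bigr).
\]
Here I would use $\Gamma(z)\Gamma(-z) = -\pi/\bigl(z\sin(\pi z)\bigr)$, so that $\exp\bigl(-\log(\Gamma(c\eta)\Gamma(-c\eta)/2\pi)\bigr) = -2c\eta\sin(\pi c\eta)$; combined with $e^{-\log(c\eta)} = (c\eta)^{-1}$ and $e^{\pi i(c\eta + 1/2)} = i\,e^{\pi i c\eta}$, this gives $-2i\sin(\pi c\eta)\,e^{\pi i c\eta} = -(e^{\pi i c\eta} - e^{-\pi i c\eta})\,e^{\pi i c\eta} = 1 - e^{2\pi i c\eta}$, which is \eqref{eq:PSP for G}.

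The only genuinely delicate point is the bookkeeping of the branches of the logarithms appearing in \eqref{eq:Borel sum of F plus} and \eqref{eq:Borel sum of G plus}: the sign in the Gamma-reflection identity and the phase $\pi i/2$ must combine exactly so that the jump is a single-valued factor with no spurious phase. Working with the exponentiated difference rather than with the logarithms themselves sidesteps most of this, since the final expressions $1 \pm e^{2\pi i c\eta}$ are manifestly single-valued; the remaining check is merely that the explicit branches fixed in Proposition \ref{Prop:Borel sum of F and G} stay consistent across the continuation, which follows from the derivation of those formulas.
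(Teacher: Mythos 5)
Your proof is correct and is precisely the argument the paper leaves implicit: the corollary follows from Proposition \ref{Prop:Borel sum of F and G} by exponentiating the differences ${\cal S}_{+}-{\cal S}_{-}$ of the explicit Borel sums and applying the reflection formulas $\Gamma(1/2+z)\Gamma(1/2-z)=\pi/\cos(\pi z)$ and $\Gamma(z)\Gamma(-z)=-\pi/\bigl(z\sin(\pi z)\bigr)$, exactly as you do. Both computations, including the phase bookkeeping via $e^{\pi i(c\eta+1/2)}=i\,e^{\pi i c\eta}$, check out.
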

These jump properties of the Voros coefficients 
are essential in the derivation of connection formulas 
describing the parametric Stokes phenomena. 

  \begin{figure}[h]
  \begin{center}
  \includegraphics[width=75mm]
  {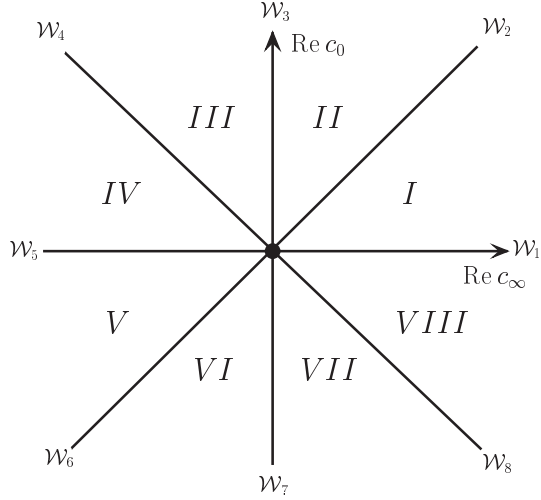}
  \end{center}
  \caption{\small{Projection of the walls and chambers in 
  the parameter space of ${\bf c} = (c_{\infty},c_{0})$.}}
  \label{fig:walls and chambers}
  \end{figure}

It follows from Proposition \ref{Prop:Borel sum of F and G}
and the explicit representations of Voros coefficients 
obtained by Theorem \ref{Main Theorem3} and 
Theorem \ref{Main Theorem2} that Voros coefficients 
are not Borel summable when the parameter 
${\bf c} = (c_{\infty},c_{0})$ lies on the ``walls'' 
${\cal W}_{1}, \cdots , {\cal W}_{8}$ in the parameter 
space defined by 
\begin{eqnarray*}
{\cal W}_{1} = \{ {\bf c} \in {\bf S}; 
{\rm Re} \hspace{+.2em} c_{0} \hspace{+.1em} = 0, 
{\rm Re} \hspace{+.2em} c_{\infty} > 0 \}, &  &
{\cal W}_{2}  =  \{ {\bf c} \in {\bf S}; 
{\rm Re} \hspace{+.2em} c_{m} = 0, 
{\rm Re} \hspace{+.2em} c_{p} > 0 \}, \\[+.2em]
{\cal W}_{3} = \{ {\bf c} \in {\bf S};  
{\rm Re} \hspace{+.2em} c_{\infty} = 0, 
{\rm Re} \hspace{+.2em} c_{0} > 0 \}, &  &
{\cal W}_{4}  =  \{ {\bf c} \in {\bf S}; 
{\rm Re} \hspace{+.2em} c_{p} = 0, 
{\rm Re} \hspace{+.2em} c_{m} < 0 \}, \\[+.2em]
{\cal W}_{5} = \{ {\bf c} \in {\bf S}; 
{\rm Re} \hspace{+.2em} c_{0} = 0, 
{\rm Re} \hspace{+.2em} c_{\infty} < 0 \}, &  &
{\cal W}_{6}  =  \{ {\bf c} \in {\bf S}; 
{\rm Re} \hspace{+.2em} c_{m} = 0, 
{\rm Re} \hspace{+.2em} c_{p} < 0  \}, \\[+.2em]
{\cal W}_{7} = \{ {\bf c} \in {\bf S}; 
{\rm Re} \hspace{+.2em} c_{0} = 0, 
{\rm Re} \hspace{+.2em} c_{\infty} < 0 \}, &  &
{\cal W}_{8}  =  \{ {\bf c} \in {\bf S}; 
{\rm Re} \hspace{+.2em} c_{p} = 0, 
{\rm Re} \hspace{+.2em} c_{m} > 0 \},
\end{eqnarray*}
where ${\bf S}$ is given by \eqref{eq:THE condition}, 
$c_{p}$ and $c_{m}$ are given by \eqref{eq:c-plus-minus}.
These walls divide the parameter space into 
eight chambers $I, \cdots, VIII$. 
Figure \ref{fig:walls and chambers} describes 
the projections of these walls and chambers to 
$({\rm Re} \hspace{+.2em} c_{\infty}, 
{\rm Re} \hspace{+.2em} c_{0})$-plane. 
The Voros coefficients are Borel summable in each chambers, 
and jump when the parameter cross these walls.
This jump property causes parametric Stokes phenomena. 
We show some examples of connection formulas 
on these walls in subsequent discussions.

\subsection{Connection problem for parametric 
Stokes phenomena relevant to the triangel-type degeneration}
\label{subsection:connection of triangle-type}

Here we discuss the connection problem 
for the parametric Stokes phenomenon 
relevant to the triangle-type degeneration of 
Stokes geometry observed when $\textbf{c} = (2, 2-i)$; 
that is, the connection problem 
on the wall ${\cal W}_{2}$ in Figure \ref{fig:walls and chambers}.
To be more specific, we discuss the connection problem 
for the transseries solution 
$\lambda_{\tau_{1}}(t,{\bf c},\eta;\alpha)$
normalized at the turning point $\tau_{1}$,  
when the independent variable $t$ is in a 
sufficiently small neighborhood of the point 
$t_{0}$ in Figure \ref{fig:normalization at infinity0} 
and $t_{1}$ in Figure \ref{fig:normalization at infinity1}.
These figures describe only Sheet 2 of the Riemann surface 
of $\lambda_{0}$; see Figure \ref{fig:P3D6-Gamma-t-sheet-1} 
$\sim$ \ref{fig:P3D6-Gamma-t-sheet-4}.  Note that $t_{0}$ 
(or the corresponding point on the $u$-plane) 
lies inside of the ``triangle'' in Figure 
\ref{fig:P3D6-u-plane-triangle-0} formed by three 
bounded Stokes curves, and $t_{1}$ lies outside 
of the triangle. Here we mean the triangle by 
``the triangle {\it on the $u$-plane} described in 
in Figure \ref{fig:P3D6-u-plane-triangle-0}'', which is not 
the triangle on the $t$-plane in Figure \ref{fig:PIII',0}.
(The inside of the triangle in Figure \ref{fig:PIII',0}
are not mapped to the inside of the triangle in 
Figure \ref{fig:P3D6-u-plane-triangle-0} by \eqref{eq:u-coordinate}.)

Before the discussion of connection problems, 
we impose the following assumptions 
(A-1) and (A-2) for a neighborhood $U_{t_{\ast}}$ 
of the point $t = t_{\ast}$ ($\ast = 0$ or $1$) 
and $\varepsilon > 0$:
\begin{itemize}
\item[(A-1)] 
There exists a neighborhood $U_{t_{\ast}}$
of the point $t_{\ast}$ such that 
\[
t \in U_{t_{\ast}} \hspace{+.3em} 
\Rightarrow \hspace{+.3em}
{\rm Re} \hspace{+.2em} \phi(t) < 0.
\]
\item[(A-2)]
The small number $\varepsilon > 0$ satisfies that, 
for any $0 \le \varepsilon' \le \varepsilon$, 
any Stokes curves never touch with 
any points in $U_{t_{\ast}}$ when 
${\bf c} = (2 \pm \varepsilon', 2 - i)$.
\end{itemize}
These assumptions are expected to be 
essential for the Borel summability 
of transseries solutions.
According to a recent result obtained by Kamimoto
(for the Borel summability of transseries solutions), 
we can expect the following.

\begin{conj} \label{conjecture:Kamimoto}
Assume that the integral of $R_{\rm odd}$ in 
\eqref{eq:first part of 1-parameter solution} 
is taken along a path which never 
touches with any turning points, 
the simple-pole and Stokes curves, 
and the real part of $\phi$ is negative. 
Then, the corresponding transseries solution 
$\lambda(t,{\bf c},\eta;\alpha)$ is Borel summable 
(in general sense of \cite{Costin}); that is, 
the $k$-th formal power series $\lambda^{(k)}(t,{\bf c},\eta)$ 
in $\lambda(t,{\bf c},\eta;\alpha)$ is Borel summable 
for each $k$, and the (generalized) Borel sum of 
$\lambda(t,{\bf c},\eta;\alpha)$ defined by 
the infinite sum
\[
{\cal S}[\lambda(t,{\bf c},\eta;\alpha)] = 
\sum_{k \ge 0} (\alpha \eta^{-1/2})^{k} 
{\cal S}[\lambda^{(k)}(t,{\bf c},\eta)] e^{k \eta \phi}
\]
converges for sufficiently large $\eta > 0$ and 
represents an analytic solution of $(P_{\rm III'})_{D_{6}}$.
Here ${\cal S}[\lambda^{(k)}(t,{\bf c},\eta)]$ 
is the Borel sum of the formal power series 
$\lambda^{(k)}(t,{\bf c},\eta)$.
\end{conj}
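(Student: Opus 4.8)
The plan is to adapt Costin's general Borel summability theory for transseries solutions of nonlinear ODEs \cite{Costin} to the present large-parameter, exact-WKB setting, reducing the conjecture to local analytic data controlled by the assumptions (A-1) and (A-2). I would proceed in three stages: first establish Borel summability of each individual instanton part, then obtain uniform-in-$k$ estimates on their Borel sums, and finally deduce convergence of the resummed infinite sum.

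First I would treat the instanton parts one by one. The $0$-instanton part $\lambda^{(0)}({\bf c},\eta)$ is the $0$-parameter solution, whose Borel summability in the stated region is exactly the result of Kamimoto--Koike \cite{Kamimoto-Koike}, so I would invoke it directly. The $1$-instanton part $\tilde{\lambda}^{(1)}$ is a WKB solution of the Fr\'echet derivative \eqref{eq:Frechet bibun}, a linear second-order equation with large parameter; by the exact WKB theory (cf.\ \cite{KT iwanami}) its Borel summability holds precisely when the integration path of $R_{\rm odd}$ remains in a single Stokes region and ${\rm Re}\,\phi < 0$, which is guaranteed on $U_{t_{\ast}}$ by (A-1) and (A-2). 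For $k \ge 2$, each $\lambda^{(k)}$ solves the same Fr\'echet operator equation with an inhomogeneous source that is a polynomial in $\lambda^{(0)},\dots,\lambda^{(k-1)}$ and their derivatives. Since Borel summability is preserved under products (convolution of Borel transforms along a common summation direction) and under the particular solution built from the two WKB solutions of the homogeneous Fr\'echet equation (those associated with $R_{+}$ and $R_{-}$), an induction on $k$ yields the Borel sum ${\cal S}[\lambda^{(k)}]$ for every $k$.

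Next I would establish convergence of the resummed sum ${\cal S}[\lambda({\bf c},\eta;\alpha)] = \sum_{k \ge 0} (\alpha \eta^{-1/2})^{k} {\cal S}[\lambda^{(k)}] \hspace{+.1em} e^{k \eta \phi}$. The essential step is to show that ${\cal S}[\lambda^{(k)}]$ grows at most geometrically, like $C^{k}$ for some constant $C$ uniform on $U_{t_{\ast}}$. Combined with the factor $(\alpha \eta^{-1/2})^{k}$ and the exponentially small factor $e^{k \eta \phi}$, which decays since ${\rm Re}\,\phi < 0$, this forces geometric convergence for $\eta$ large. The natural mechanism for such a bound is to recast the entire transseries as a single convolution equation in the Borel plane, following Costin's framework, and to control its solution by a contraction argument that simultaneously produces the per-$k$ summability and the geometric growth rate.

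The hard part will be controlling the singularity structure of the Borel transforms $\lambda^{(k),B}$ uniformly in $k$. One must verify that the Laplace integration ray fixed by $\arg \eta$ avoids the singularities of every $\lambda^{(k),B}$ simultaneously, with the distance to the nearest singularity bounded below independently of $k$, and that the singularities accumulating as $k \to \infty$ do not pinch the integration contour. This is exactly where the geometry enters: (A-2) prevents any Stokes curve from reaching $U_{t_{\ast}}$, keeping the relevant Borel singularities off the summation axis, while (A-1) places $U_{t_{\ast}}$ in the region where the instanton factor decays. Making this uniform singularity-avoidance rigorous in the presence of the large parameter is the central technical obstacle, and is precisely the content of the recent result of Kamimoto invoked just before the conjecture.
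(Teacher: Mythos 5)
You should first be aware that the statement you were given is not a theorem of the paper at all: it is stated as Conjecture \ref{conjecture:Kamimoto}, and the author explicitly does \emph{not} prove it. The paper assumes it (noting only that Kamimoto proved the analogous statement for the first and second Painlev\'e equations in work then in preparation, \cite{Kamimoto}) and uses it as a hypothesis for the connection formulas of Section \ref{section:connection formulas}. So there is no proof in the paper to compare yours against, and the relevant question is whether your proposal would actually close the gap. It would not, because it reduces the conjecture to itself: your second and third stages assert precisely the two statements that constitute the conjecture --- Borel summability of each $\lambda^{(k)}$ together with bounds of the type $C^{k}$ uniform on $U_{t_{\ast}}$, and non-pinching of the Laplace contour by the singularities of the Borel transforms accumulating as $k \to \infty$ --- and your closing sentence then concedes that making this rigorous ``is precisely the content of the recent result of Kamimoto,'' i.e.\ the unpublished result whose absence is the very reason the statement is a conjecture. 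Naming the hard step and deferring it to an unavailable reference is a research program, not a proof.

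There are also concrete technical flaws in the stages you do spell out. For the $1$-instanton part you invoke linear exact WKB theory for the Fr\'echet derivative \eqref{eq:Frechet bibun}, but that equation is not a linear ODE with analytic coefficients: its coefficients involve the formal power series $\lambda^{(0)}(t,{\bf c},\eta)$ itself, so its ``potential'' is only a (conjecturally Borel-summable) formal series, and no off-the-shelf linear summability theorem applies; handling this coupling is part of what \cite{Kamimoto-Koike} had to do already at the $0$-parameter level. Similarly, for $k \ge 2$ the statement that ``Borel summability is preserved under products'' gives the summability of the inhomogeneous source, but the solution of the inhomogeneous Fr\'echet equation is produced by variation of constants, i.e.\ by integrating the source against the two WKB solutions from a chosen base point; one must show that this integration preserves summability \emph{and} yields estimates that do not deteriorate in $k$, which is exactly the uniform control you postpone to stage three. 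Absent a genuine fixed-point argument in a Borel-plane convolution algebra with exponential weights (the mechanism of \cite{Costin}), carried out for the specific nonlinearity of $(P_{\rm III'})_{D_{6}}$ uniformly in the large parameter $\eta$, the induction on $k$ does not produce the geometric bound, and the final resummation step remains unjustified.
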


Kamimoto proved the same statement for the first and 
second Painlev\'e equation (\cite{Kamimoto}). 
In this paper we assume that the 
Conjecture \ref{conjecture:Kamimoto}
is true, and discuss connection problems 
for parametric Stokes phenomena. 

  \begin{figure}[h]
  \begin{minipage}{0.5\hsize}
  \begin{center}
  \includegraphics[width=52mm]
  {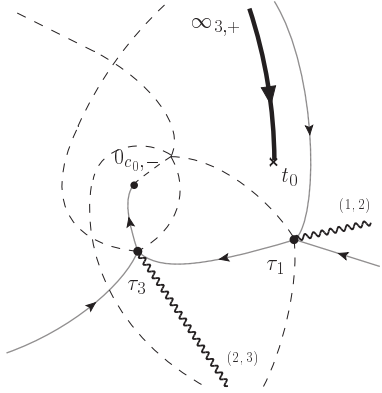}
  \end{center}
  \caption{\small{The path from $\infty_{3,+}$ 
  for $t \in U_{t_{0}}$.}}
  \label{fig:normalization at infinity0}
  \end{minipage} \hspace{+.5em}
  \begin{minipage}{0.5\hsize}
  \begin{center}
  \includegraphics[width=52mm]
  {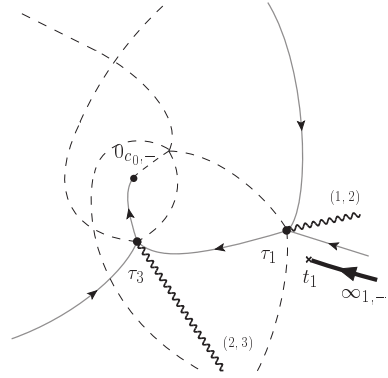}
  \end{center}
  \caption{\small{A path from $\infty_{1,+}$
  for $t \in U_{t_{1}}$.}}
  \label{fig:normalization at infinity1}
  \end{minipage}
  \end{figure}

First we discuss the connection problem 
for the parametric Stokes phenomenon 
in the case $t \in U_{t_{0}}$. Let 
\begin{eqnarray}
\lambda_{\infty}(t,{\bf c},\eta;\alpha) & = & 
\sum_{k \ge 0} (\alpha \eta^{-1/2})^{k} 
\lambda_{\infty}^{(k)}(t,{\bf c},\eta) e^{k \eta \phi}
\label{eq:normalization at infinity near t0}
\\[+.5em]
\biggl( {\rm resp}., \hspace{+.5em}
\lambda_{\tau_{1}}(t,{\bf c},\eta;\alpha) & = & 
\sum_{k \ge 0} (\alpha \eta^{-1/2})^{k} 
\lambda_{\tau_{1}}^{(k)}(t,{\bf c},\eta) e^{k \eta \phi}
\biggr) 
\label{eq:normalization at tau1 near t0}
\end{eqnarray}
be the transseries solution whose $1$-instanton part 
is normalized at infinity along the path shown in 
Figure \ref{fig:normalization at infinity0}
(resp., normalized at the turning point $\tau_{1}$ 
along the path $\Gamma_{t}$ in 
Figure \ref{fig:P3D6-Gamma-t-sheet-1} 
and \ref{fig:P3D6-Gamma-t-sheet-2}):
\begin{eqnarray*}
\alpha \eta^{-1/2} 
{\lambda}^{(1)}_{\infty}(t,{\bf c},\eta) e^{\eta \phi} 
& = & \alpha \frac{\lambda^{(0)}}{\sqrt{t R_{\rm odd}}} 
{\rm exp} \Bigl( \eta \int_{\tau_{1}}^{t} R_{-1} dt 
+ \int_{\infty_{3,+}}^{t} 
\bigl( R_{\rm odd} - \eta R_{-1} \bigr) dt \Bigr) 
\\[+.5em]
\biggl( {\rm resp}., \hspace{+.5em}
\alpha \eta^{-1/2} 
{\lambda}^{(1)}_{\tau_{1}}(t,{\bf c},\eta) e^{\eta \phi}
& = & \alpha \frac{\lambda^{(0)}}{\sqrt{t R_{\rm odd}}} 
{\rm exp} \Bigl( \int_{\tau_{1}}^{t} 
R_{\rm odd} \hspace{+.2em} dt \Bigr)  
\hspace{+.5em} \biggr).
\end{eqnarray*}
If Conjecture \ref{conjecture:Kamimoto} holds, 
for any $0 \le \varepsilon' \le \varepsilon$
the transseries solution 
$\lambda_{\infty}(t,{\bf c},\eta;\alpha)$ 
is Borel summable when $t \in U_{t_{0}}$ and  
${\bf c} = (2 \pm \varepsilon', 2 -i)$ 
since the real part of 
\[
\phi(t, {\bf c}) = \int_{\tau_{1}}^{t} R_{-1}(t,{\bf c}) dt 
= \int_{\tau_{1}}^{t} \sqrt{\Delta(t,{\bf c})} dt
\]
is negative by the assumption (A-1) and 
the normalization path in 
Figure \ref{fig:normalization at infinity0} 
can be deformed homotopically such that 
it does not touch with any turning points 
and Stokes curves by the assumptions (A-2).
On the other hand, using the relations
\begin{equation}
\lambda^{(k)}_{\tau_{1}}(t,{\bf c},\eta) = 
e^{k W_{\Gamma}({\bf c},\eta)} 
\lambda^{(k)}_{\infty}(t,{\bf c},\eta) 
\hspace{+1.em} (k \ge 0)
\label{eq:comparison of instanton}
\end{equation}
and 
\begin{equation}
W_{\Gamma}({\bf c},\eta) = {\cal F}(c_{m},\eta)
\label{eq:W-Gamma = F}
\end{equation}
(cf.\ Theorem \ref{Main Theorem})  
and Proposition \ref{Prop:Borel sum of F and G}, 
we can conclude that  
$\lambda_{\tau_{1}}(t,{\bf c},\eta;\alpha)$ is 
Borel summable 
when ${\bf c} = (2 \pm \varepsilon', 2 - i)$ for any 
$0 < \varepsilon' \le \varepsilon$, 
and the Borel sum has the analytic continuation 
with respect to the parameter ${\bf c}$ 
across the wall ${\cal W}_{2}$.

Now we derive the connection formula 
for the transseries solution $\lambda_{\tau_{1}}$. 
Let ${\cal S}_{II}
[\lambda_{\tau_{1}}(t,{\bf c},\eta; \alpha)]$ 
(resp., ${\cal S}_{I}
[\lambda_{\tau_{1}}(t,{\bf c},\eta; \tilde{\alpha})]$)
be the Borel sum of 
$\lambda_{\tau_{1}}(t,{\bf c},\eta; \alpha)$
when ${\bf c} = (2 - \varepsilon, 2 -i)$
(resp., of $\lambda_{\tau_{1}}(t,{\bf c},\eta; \tilde{\alpha})$ 
when ${\bf c} = (2 + \varepsilon, 2 -i)$) 
for a sufficiently small $\varepsilon > 0$. 
Moreover, we assume that, after the analytic continuation 
with respect to the parameter ${\bf c}$ across 
the wall ${\cal W}_{2}$, ${\cal S}_{II}[\lambda_{\tau_{1}}
(t,{\bf c},\eta; \alpha)]$ and ${\cal S}_{I}[\lambda_{\tau_{1}}
(t,{\bf c},\eta; \tilde{\alpha})]$ represent 
the same analytic solution of $(P_{\rm III'})_{D_{6}}$
defined on $U_{t_{0}}$. That is, we assume 
\begin{equation}
{\cal S}_{II}[\lambda_{\tau_{1}}(t,{\bf c},\eta; \alpha)] = 
{\cal S}_{I}[\lambda_{\tau_{1}}(t,{\bf c},\eta; \tilde{\alpha})].  
\label{eq:connection problem on B2-1}
\end{equation}
Taking into account the relation 
\eqref{eq:comparison of instanton}
and comparing the coefficients of 
${\rm exp}(k \eta \phi)$ $(k \ge 0)$ 
in \eqref{eq:connection problem on B2-1}, 
we have 
\begin{equation}
\alpha^{k} \hspace{+.1em}
{\mathcal S}_{II}\bigl[ e^{k W_{\Gamma}(\textbf{c},\eta)} 
\hspace{+.1em} {\lambda}_{\infty}^{(k)}(t,\textbf{c},\eta) \bigr]
= 
\tilde{\alpha}^{k} \hspace{+.1em} 
{\mathcal S}_{I}\bigl[ e^{k W_{\Gamma}(\textbf{c},\eta)} 
\hspace{+.1em} {\lambda}_{\infty}^{(k)}(t,\textbf{c},\eta) \bigr].
\label{eq:relation of first parts}
\end{equation}
The Borel summability of 
$\lambda_{\infty}^{(k)}(t,{\bf c},\eta)$ 
when ${\bf c} = (2, 2-i)$ implies that 
\begin{equation}
{\mathcal S}_{II}\bigl[{\lambda}_{\infty}^{(k)}
(t,\textbf{c},\eta)\bigr] 
= {\mathcal S}_{I}\bigl[{\lambda}_{\infty}^{(k)}
(t,\textbf{c},\eta)\bigr] 
\label{eq:no parametric Stokes phenomena for this normalization}
\end{equation}
holds for all $k \ge 0$ after the analytic continuation 
across ${\cal W}_{2}$. Therefore it follows from 
\eqref{eq:relation of first parts} and 
\eqref{eq:no parametric Stokes phenomena for this normalization} 
that 
\begin{equation} 
\alpha^{k}  \hspace{+.1em} 
{\mathcal S}_{II}\bigl[e^{k W_{\Gamma}(\textbf{c},\eta)} \bigr]
= \tilde{\alpha}^{k} \hspace{+.1em}
{\mathcal S}_{I}\bigl[e^{k W_{\Gamma}(\textbf{c},\eta)} \bigr]
\label{eq:relation of parameters}
\end{equation}
holds for all $k \ge 0$. 
Moreover, \eqref{eq:W-Gamma = F} and 
Corollary \ref{corollary:jump property} implies that 
\begin{eqnarray}
{\mathcal S}_{II}\bigl[e^{W_{\Gamma}(\textbf{c},\eta)} \bigr]
& = & (1 + e^{2 \pi i c_{m} \eta}) \hspace{+.2em}
{\mathcal S}_{I}\bigl[e^{W_{\Gamma}(\textbf{c},\eta)} \bigr] 
\nonumber \\[+.3em]
& = & 
(1 + e^{\pi i (c_{\infty} - c_{0}) \eta}) \hspace{+.2em}
{\mathcal S}_{I}\bigl[e^{W_{\Gamma}(\textbf{c},\eta)} \bigr].
\label{eq:connection formula for W-Gamma}
\end{eqnarray}
Thus, \eqref{eq:relation of parameters} 
is true if and only if the parameters satisfy 
the relation 
\begin{equation}
\tilde{\alpha} = (1 + e^{\pi i (c_{\infty} - c_{0}) \eta}) 
\hspace{+.1em} \alpha. 
\label{eq:connection formula at t = t0 for parameters}
\end{equation}
Thus we obtain the following connection formula.\\[-.4em]

\noindent
\textbf{Connection formula on ${\cal W}_{2}$ near $t = t_{0}$.} 
\hspace{+.5em}
\textit{Assume that a neighborhood $U_{t_{0}}$ of the point 
$t_{0}$ and a small number $\varepsilon > 0$ 
satisfies the assumptions {\rm (A-1)} and {\rm (A-2)}.
If the Borel sum of 
$\lambda_{\tau_{1}}(t,{\bf c},\eta;\alpha)$ 
when ${\bf c} = (2 - \varepsilon, 2 -i)$ and 
that of $\lambda_{\tau_{1}}(t,{\bf c},\eta;\tilde{\alpha})$ 
when ${\bf c} = (2 + \varepsilon, 2 -i)$ represent 
the same analytic solution of 
$(P_{\rm III'})_{D_{6}}$ defined on 
on $t \in U_{t_{0}}$ after the analytic continuation 
with respect to the parameter ${\bf c}$ across ${\cal W}_{2}$, 
then the parameters $\alpha$ and $\tilde{\alpha}$ satisfy 
\eqref{eq:connection formula at t = t0 for parameters}. 
That is, we have the following connection formula:
\begin{equation}
{\mathcal S}_{II}\bigl[\lambda_{\tau_{1}}
(t,\textbf{c},\eta;\alpha) \bigr]
= {\mathcal S}_{I}\bigl[\lambda_{\tau_{1}}
(t,\textbf{c},\eta;\tilde{\alpha}) 
\bigr]\Bigl|_{\tilde{\alpha} = (1 + e^{\pi i (c_{\infty} - c_{0}) \eta}) 
\hspace{+.1em} \alpha}. 
\label{eq:THE connection formula at t0}
\end{equation}
}

The formula \eqref{eq:THE connection formula at t0} 
describes the parametric Stokes phenomenon; 
that is, it gives an explicit relationship between 
the Borel sums of the transseries solution 
in different regions in the parameter space of ${\bf c}$. 
Note that the difference 
${\rm exp}({\pi i (c_{\infty} - c_{0})\eta)}$
of the Borel sums is exponentially small for large $\eta$ 
near ${\bf c} = (2,2-i)$.

\begin{rem} \normalfont
As we see, the formula \eqref{eq:THE connection formula at t0} 
is derived by comparing $\lambda_{\tau_{1}}$ with 
$\lambda_{\infty}$, and the point is that 
the latter one is (conjectured to be) Borel summable 
even if the Stokes geometry degenerates.
We note that, in deriving the formula for 
$\lambda_{\tau_{1}}$ we may compare it with 
another transseries solution $\lambda_{0_{c_{0,-}}}$
which is normalized at the double-pole $t = 0_{c_{0,-}}$ 
in Figure \ref{fig:normalization at infinity0} 
instead of $\lambda_{\infty}$, since $\lambda_{0_{c_{0,-}}}$ 
is also Borel summable when the Stokes geometry 
degenerates under the assumption that Conjecture 
\ref{conjecture:Kamimoto} holds, as well as $\lambda_{\infty}$. 
These transseries solutions are related as 
\begin{equation}
\lambda_{\tau_{1}}(t,{\bf c},\eta;\alpha) = 
\lambda_{0_{c_{0,-}}}(t,{\bf c},\eta;\alpha e^{W_{0_{c_{0,-}}}})
\end{equation}
with 
\begin{equation}
W_{0_{c_{0,-}}} = W_{0_{c_{0,-}}}({\bf c},\eta) = 
- {\cal F}(c_{p}, \eta) 
+ {\cal F}(c_{m}, \eta) + 3 {\cal G}(c_{0}, \eta),
\label{eq:Voros coeff of tau1 and 0c0-}
\end{equation}
due to Theorem \ref{Main Theorem2}.
Since the formal power series 
${\cal F}(c_{p}, \eta)$ and ${\cal G}(c_{0}, \eta)$ 
in \eqref{eq:Voros coeff of tau1 and 0c0-} 
are Borel summable when ${\bf c} = (2, 2-i)$ 
by Proposition \ref{Prop:Borel sum of F and G}, 
they never jump at the wall ${\cal W}_{2}$. 
Thus we have 
\[
{\mathcal S}_{II}\bigl[e^{W_{0_{c_{0,-}}}(\textbf{c},\eta)} \bigr]
= (1 + e^{\pi i (c_{\infty} - c_{0}) \eta}) \hspace{+.2em}
{\mathcal S}_{I}\bigl[e^{W_{0_{c_{0,-}}}(\textbf{c},\eta)} \bigr]. 
\]
Therefore, we obtain the same conclusion as
\eqref{eq:THE connection formula at t0}. 
\end{rem}

Next we discuss the connection problem when 
$t \in U_{t_{1}}$. In this case we should 
compare $\lambda_{\tau_{1}}$ with 
$\lambda_{\infty}$ which is normalized along the 
path from $\infty_{1, -}$ in 
Figure \ref{fig:normalization at infinity1} 
so that $\lambda_{\infty}$ is Borel summable 
when ${\bf c} = (2, 2 - i)$. 
Then, we have the following relation 
between these two transseries 
instead of \eqref{eq:comparison of instanton}:
\begin{equation}
\lambda^{(k)}_{\tau_{1}}(t,{\bf c},\eta) = 
e^{k W} \lambda^{(k)}_{\infty}(t,{\bf c},\eta) 
\hspace{+1.em} (k \ge 0), 
\label{eq:comparison of instanton at t1}
\end{equation}
with $W = W({\bf c},\eta) = - {\cal F}(c_{p},\eta)$ 
because of Theorem \ref{Main Theorem2}.
Since the formal power series ${\cal F}(c_{p},\eta)$
is Borel summable when ${\bf c} = (2, 2-i)$ 
by Proposition \ref{Prop:Borel sum of F and G}, 
we have  
\begin{equation}
{\mathcal S}_{II}\bigl[e^{W(\textbf{c},\eta)} \bigr]
= {\mathcal S}_{I}\bigl[e^{W(\textbf{c},\eta)} \bigr] 
\label{eq:connection formula for W at t1}
\end{equation}
instead of \eqref{eq:connection formula for W-Gamma}. 
As a result, we can conclude the following. \\[-.5em]

\noindent
\textbf{Connection formula on ${\cal W}_{2}$ near $t = t_{1}$.} 
\hspace{+.5em}
\textit{Assume that a neighborhood $U_{t_{1}}$ of the point 
$t_{1}$ and a small number $\varepsilon > 0$ 
satisfies the assumptions {\rm (A-1)} and {\rm (A-2)}.
If the Borel sum of 
$\lambda_{\tau_{1}}(t,{\bf c},\eta;\alpha)$ 
when ${\bf c} = (2 - \varepsilon, 2 -i)$ and 
that of $\lambda_{\tau_{1}}(t,{\bf c},\eta;\tilde{\alpha})$ 
when ${\bf c} = (2 + \varepsilon, 2 -i)$ represent 
the same analytic solution of 
$(P_{\rm III'})_{D_{6}}$ defined on 
on $t \in U_{t_{1}}$ after the analytic continuation 
with respect to the parameter ${\bf c}$ across ${\cal W}_{2}$, 
then the parameters 
$\alpha$ and $\tilde{\alpha}$ satisfy 
$\tilde{\alpha} = \alpha$. 
That is, no parametric Stokes phenomenon occurs 
on ${\cal W}_{2}$ when $t \in U_{t_{1}}$:
\begin{equation}
{\mathcal S}_{II}\bigl[\lambda_{\tau_{1}}
(t,\textbf{c},\eta;\alpha) \bigr] = 
{\mathcal S}_{I}\bigl[\lambda_{\tau_{1}}
(t,\textbf{c},\eta;{\alpha}) \bigr]. 
\label{eq:THE connection formula at t1}
\end{equation}
} 
\begin{rem} \normalfont
From the comparison of two formulas 
\eqref{eq:THE connection formula at t0} and 
\eqref{eq:THE connection formula at t1}, 
we can see that the connection formulas describing 
parametric Stokes phenomena are different depending 
on the location of the independent variable $t$. 
More precisely, the connection formula when $t$ lies inside
of the triangle formed by three bounded Stokes curves 
in Figure \ref{fig:P3D6-u-plane-triangle-0} 
is different from that when $t$ lies outside of the triangle. 
This is also observed for the second Painlev\'e 
equation in \cite{Iwaki-Bessatsu}. We expect that the same 
phenomena also happen to other Painlev\'e equations 
and higher order analogues of them.
\end{rem}

\begin{figure}[h]
\begin{minipage}{0.31\hsize}
\begin{center}
\includegraphics[width=47mm]{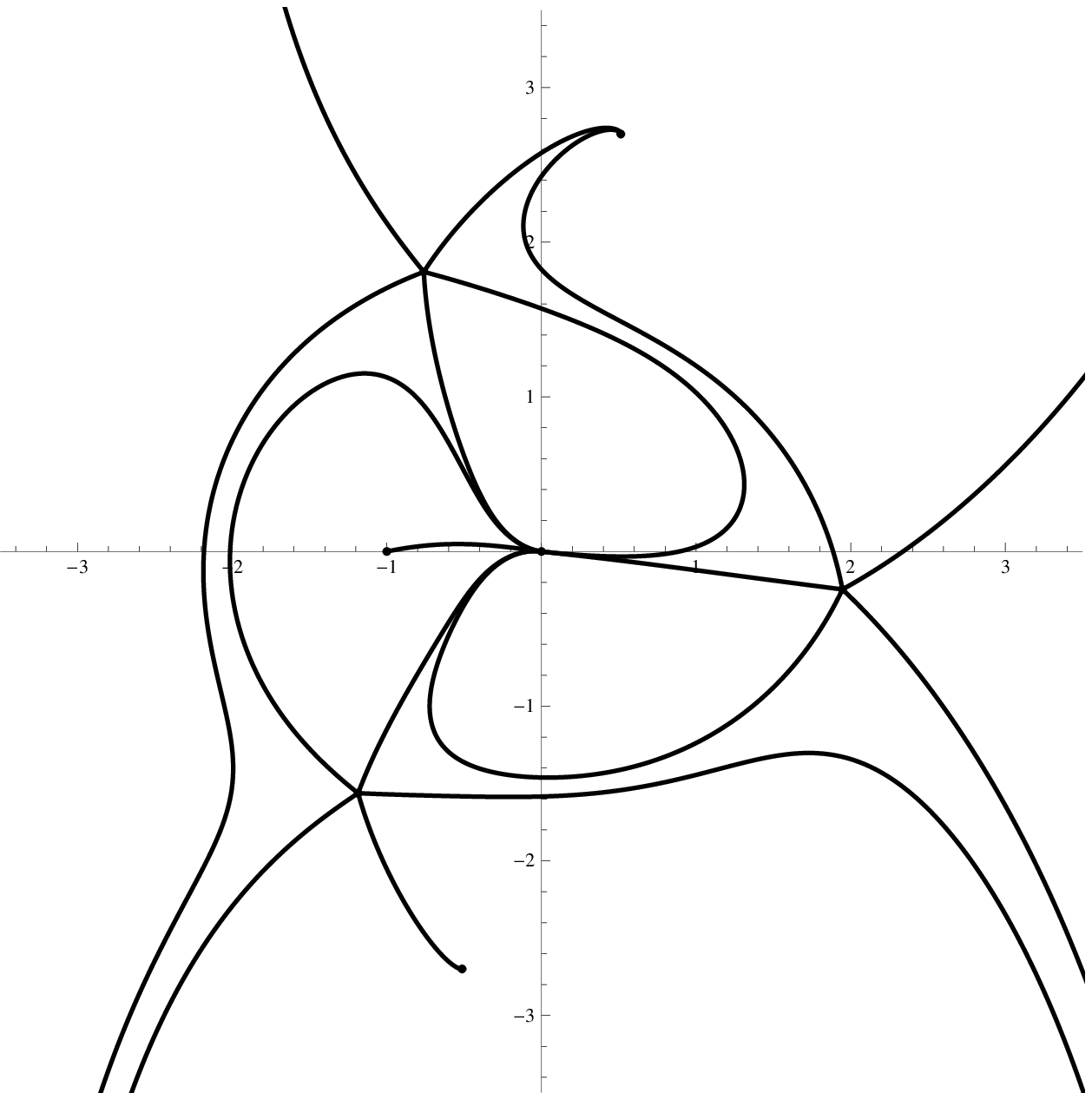}
\end{center}
\caption{\small{${\bf c}=(-2-0.1+i,2+i/2)$.}}
\end{minipage} \hspace{+.5em}
\begin{minipage}{0.31\hsize}
\begin{center}
\includegraphics[width=47mm]{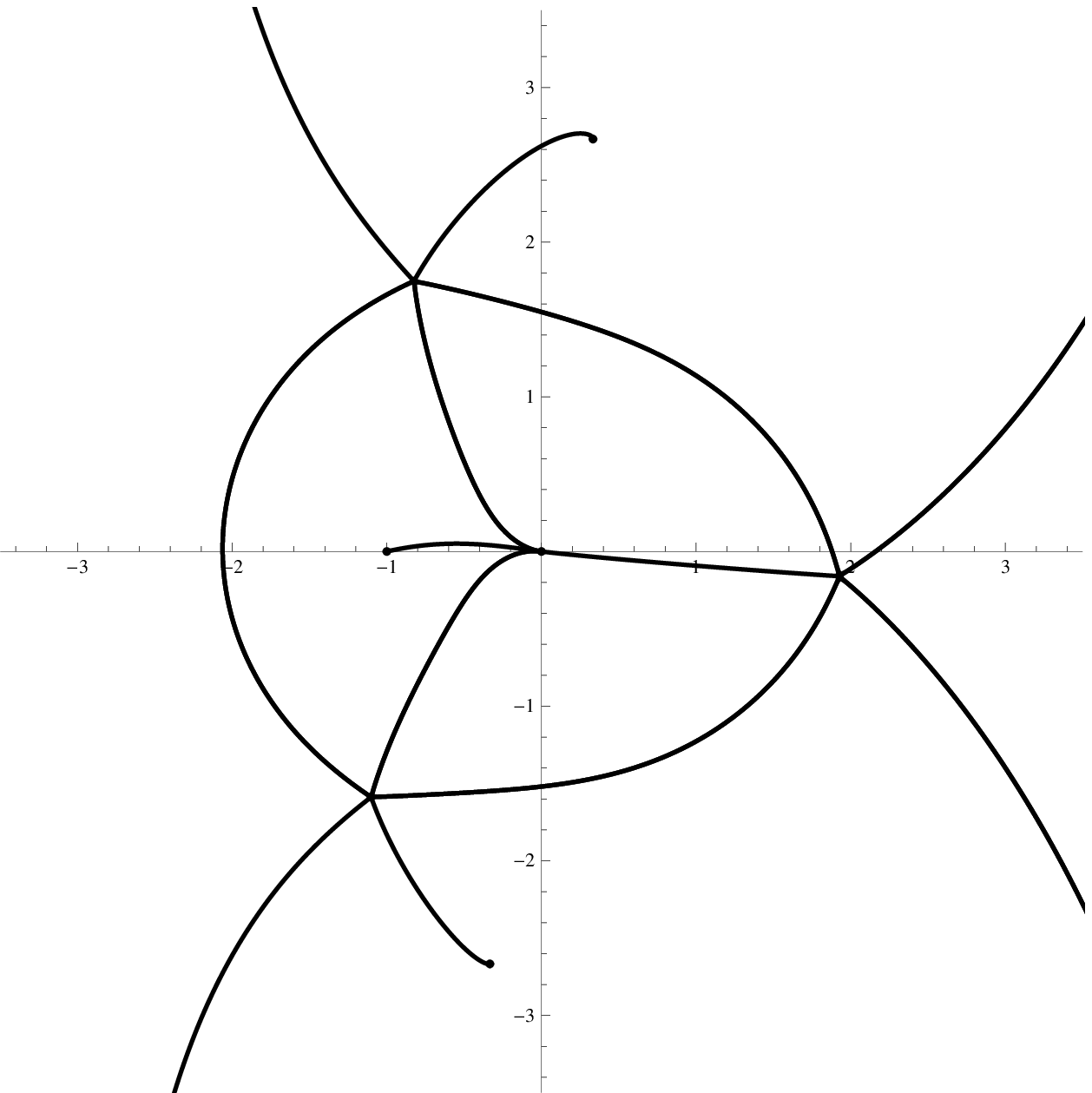}
\end{center}
\caption{\small{${\bf c} = (-2+i,2+i/2)$. \hspace{+.2em}}}
\label{fig:on W4} 
\end{minipage} \hspace{+.3em}
\begin{minipage}{0.31\hsize}
\begin{center}
\includegraphics[width=47mm]{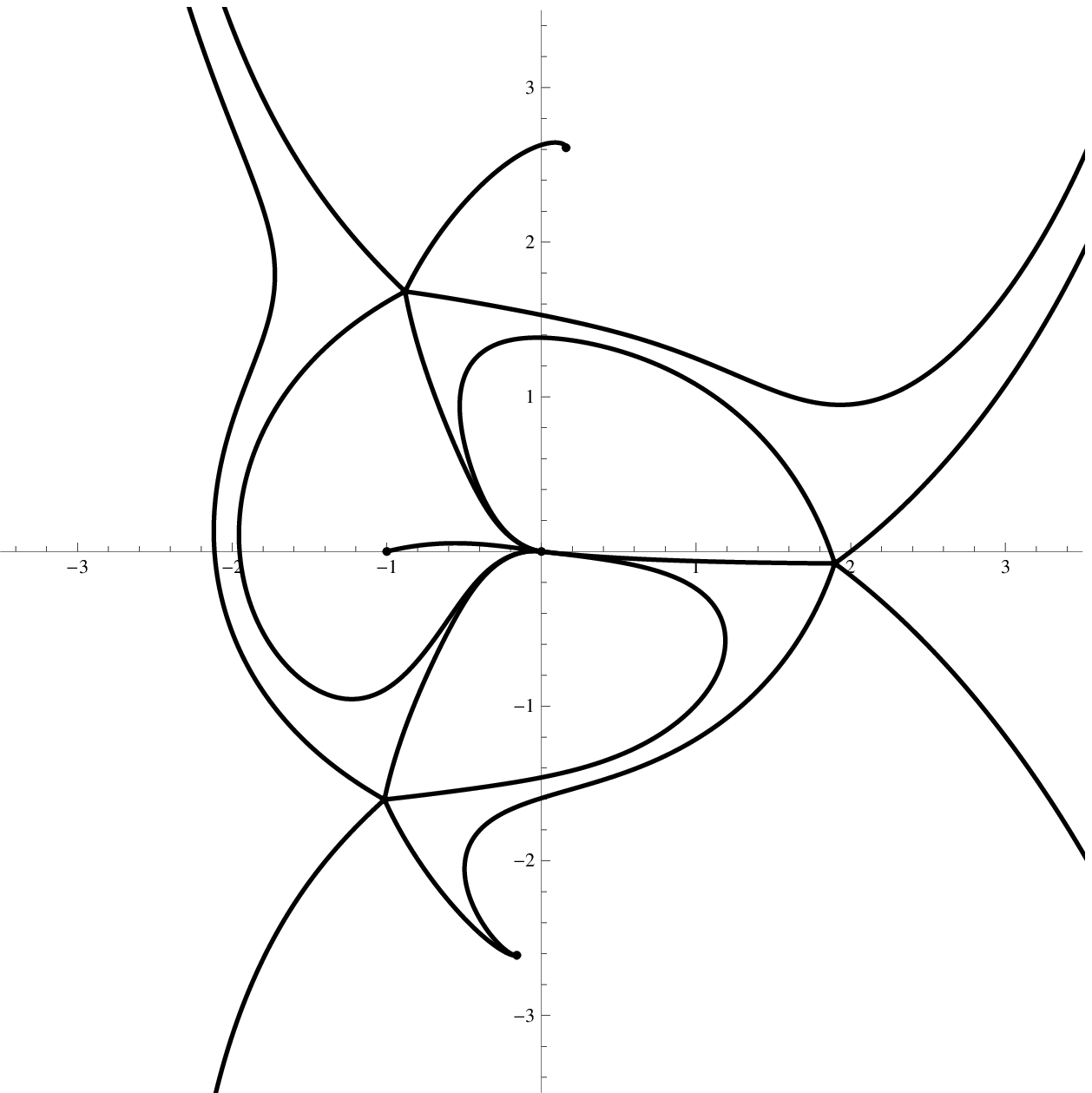}
\end{center}
\caption{\small{${\bf c} = (-2+0.1+i,2+i/2)$}.}
\end{minipage}
\end{figure}

Since we have computed all Voros coefficients of 
$(P_{\rm III'})_{D_{6}}$, we can derive 
connection formulas for parametric Stokes phenomenon 
at any point $t_{\ast} \in \Omega_{D_{6}}$ 
if the assumptions (A-1) and (A-2) are satisfied 
and Conjecture \ref{conjecture:Kamimoto} holds. 
Here we give the statement of connection formulas 
for parametric Stokes phenomena relevant to 
the other triangle-type degeneration in 
Figure \ref{fig:on W4}, which describes the Stokes geometry 
on the $u$-plane when ${\bf c} = (-2+i,2+i/2)$; 
that is ${\bf c}$ lies on the wall ${\cal W}_{4}$. 
In this case we impose the following assumption 
(A-2)' for $\varepsilon > 0$ instead of (A-2):
\begin{itemize}
\item[(A-2)']
The small number $\varepsilon > 0$ satisfies that, 
for any $0 \le \varepsilon' \le \varepsilon$, 
any Stokes curves never touch with 
any points in $U_{t_{\ast}}$ when 
${\bf c} = (-2 \pm \varepsilon' + i, 2 - i)$.
\end{itemize}
\vspace{+.5em}

\noindent
\textbf{Connection formula on ${\cal W}_{4}$.} \hspace{+.5em}
Let $\lambda(t,{\bf c},\eta;\alpha)$ be a transseries solution 
normalized at a turning point or the simple-pole, 
and ${\cal S}_{IV}[\lambda(t,{\bf c},\eta;\alpha)]$ 
(resp., ${\cal S}_{III}[\lambda(t,{\bf c},\eta;\tilde{\alpha})]$)
be the Borel sum of $\lambda(t,{\bf c},\eta;\alpha)$ 
when ${\bf c} = (- 2 - \varepsilon + i, 2 + i/2)$ 
(resp., of $\lambda(t,{\bf c},\eta;\tilde{\alpha})$
when ${\bf c} = (- 2 + \varepsilon + i, 2 + i/2)$) 
for a sufficiently small $\varepsilon > 0$.
\\[+.2em]
\textit{(i) Fix $u_{*}$ {\bf outside} of the 
triangle formed by bounded Stokes curves in 
Figure \ref{fig:on W4}. Assume that a neighborhood $U_{t_{*}}$ 
of the point $t_{*}$ which corresponds to $u_{*}$ 
and a small number $\varepsilon > 0$ 
satisfies the assumptions {\rm (A-1)} and {\rm (A-2)'}.
If ${\cal S}_{IV}[\lambda(t,{\bf c},\eta;\alpha)]$ and 
${\cal S}_{III}[\lambda(t,{\bf c},\eta;\tilde{\alpha})]$
represent the same analytic solution of $(P_{\rm III'})_{D_{6}}$ 
defined on $t \in U_{t_{*}}$ after the analytic continuation 
with respect to the parameter ${\bf c}$ across ${\cal W}_{4}$, 
then the parameters $\alpha$ and $\tilde{\alpha}$ satisfy 
\begin{equation}
\tilde{\alpha} = (1 + e^{\pi i (c_{\infty} + c_{0}) \eta})^{\pm1} 
\hspace{+.2em} \alpha, 
\end{equation}
where the sign $\pm 1$ depends on the choice 
of square root $R_{-1}(t,{\bf c}) = \sqrt{\Delta(t,{\bf c})}$.
That is, we have the following connection formula:
\begin{equation}
{\mathcal S}_{IV}\bigl[\lambda(t,\textbf{c},\eta;\alpha) \bigr] 
= {\mathcal S}_{III}\bigl[\lambda(t,\textbf{c},\eta;\tilde{\alpha}) 
\bigr]\Bigl|_{\tilde{\alpha} = 
(1 + e^{\pi i (c_{\infty} + c_{0}) \eta})^{\pm1} \hspace{+.2em} \alpha}. 
\label{eq:THE connection formula on W4 sono1}
\end{equation}
\vspace{+.2em} \noindent
(ii) Fix $u_{*}$ {\bf inside} of the 
triangle formed by bounded Stokes curves in 
Figure \ref{fig:on W4}. Assume that a neighborhood $U_{t_{*}}$ 
of the point $t_{*}$ which corresponds to $u_{*}$ 
and a small number $\varepsilon > 0$ 
satisfies the assumptions {\rm (A-1)} and {\rm (A-2)'}.
If ${\cal S}_{IV}[\lambda(t,{\bf c},\eta;\alpha)]$ and 
${\cal S}_{III}[\lambda(t,{\bf c},\eta;\tilde{\alpha})]$
represent the same analytic solution of $(P_{\rm III'})_{D_{6}}$ 
defined on $t \in U_{t_{*}}$ after the analytic continuation 
with respect to the parameter ${\bf c}$ across ${\cal W}_{4}$, 
then the parameters $\alpha$ and $\tilde{\alpha}$ satisfy 
$\tilde{\alpha} = \alpha$. That is, no parametric Stokes phenomenon 
occurs on the wall ${\cal W}_{4}$ when $t \in U_{t_{*}}$:
\begin{equation}
{\mathcal S}_{IV}\bigl[\lambda(t,\textbf{c},\eta;\alpha) \bigr] = 
{\mathcal S}_{III}\bigl[\lambda(t,\textbf{c},\eta;{\alpha}) \bigr]. 
\label{eq:THE connection formula on W4 sono2}
\end{equation}
} 

\subsection{Connection problem for parametric 
Stokes phenomena relevant to the loop-type degeneration}
\label{subsection:connection of loop-type}

Next we discuss the connection problem relevant to 
the loop-type degeneration of Stokes geometry observed 
when ${\bf c} = (i,3+i/2)$; that is, the connection problem 
on the wall ${\cal W}_{3}$. Figure \ref{fig:P3D6-u-plane-loop2-minus} 
$\sim$ \ref{fig:P3D6-u-plane-loop2-plus} 
describes the Stokes geometry on the $u$-plane 
near ${\bf c} = (i,3+i/2)$. 

\begin{figure}[h]
\begin{minipage}{0.31\hsize}
\begin{center}
\includegraphics[width=48mm]{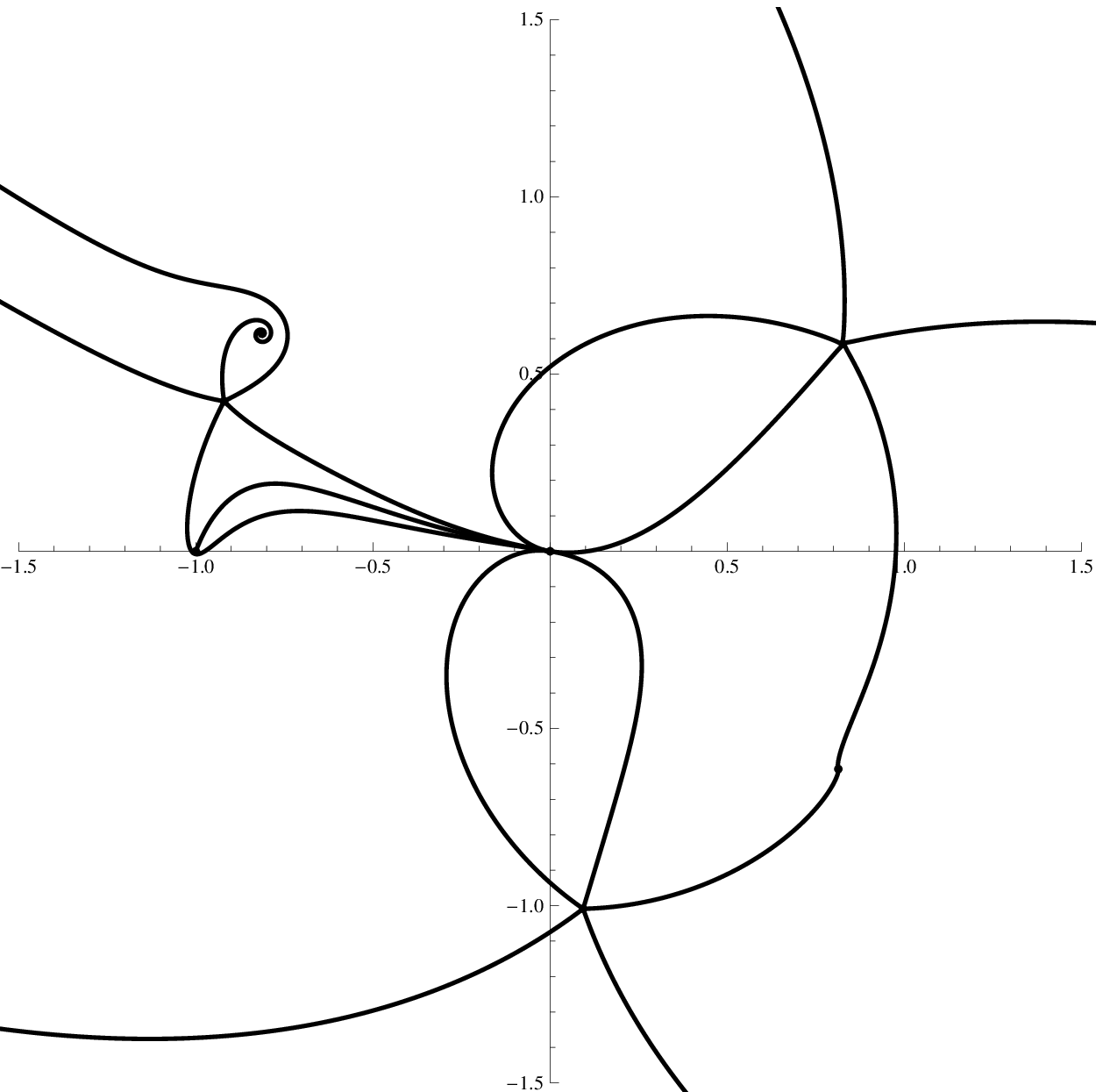}
\end{center}
\caption{\small{${\bf c}=(-0.2+i,3+i/2)$.}}
\label{fig:P3D6-u-plane-loop2-minus}
\end{minipage} \hspace{+.5em}
\begin{minipage}{0.31\hsize}
\begin{center}
\includegraphics[width=48mm]{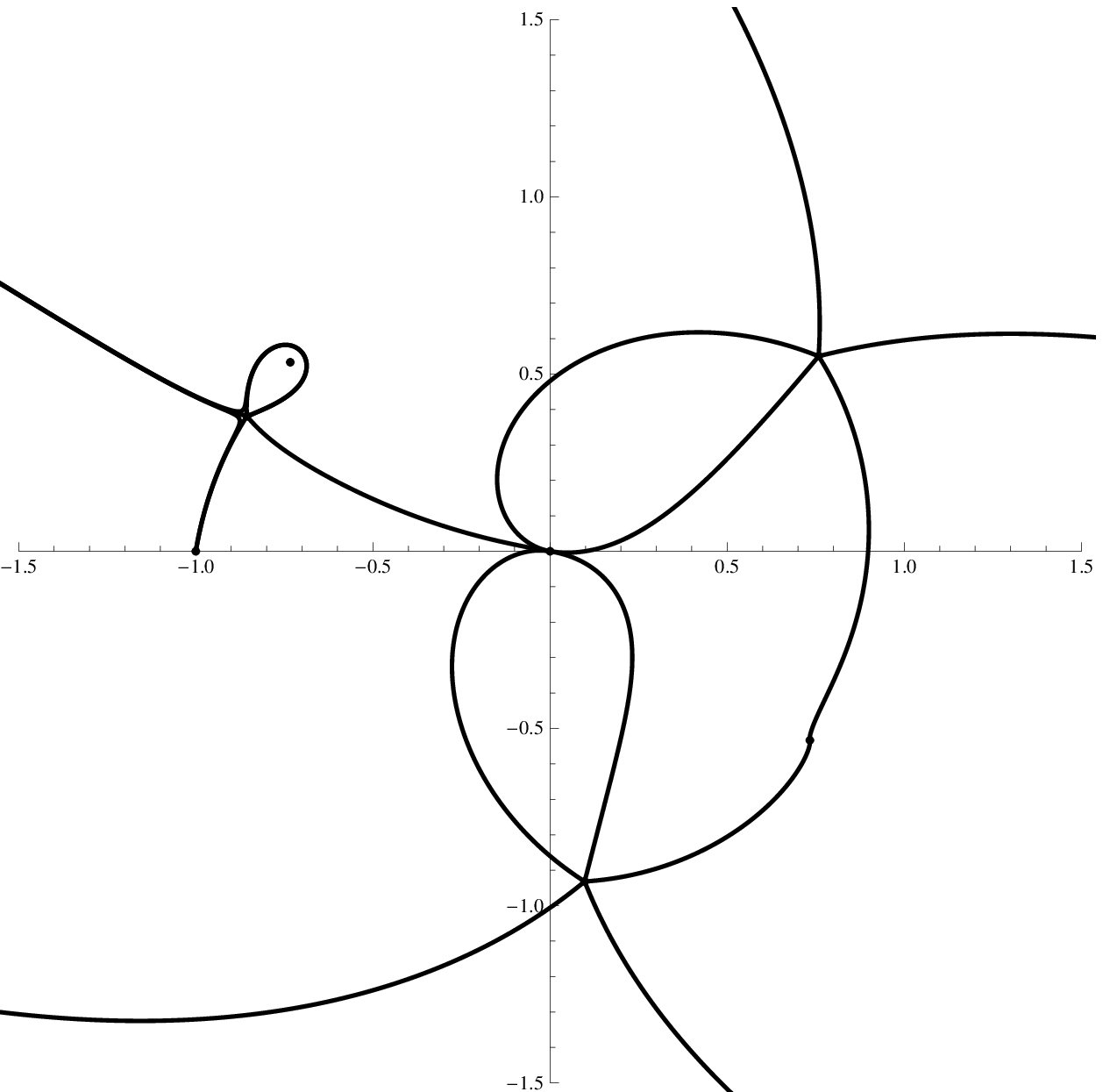}
\end{center}
\caption{\small{${\bf c} = (i,3+i/2)$. \hspace{+.2em}}}
\label{fig:on the wall W3} 
\end{minipage} \hspace{+.3em}
\begin{minipage}{0.31\hsize}
\begin{center}
\includegraphics[width=48mm]{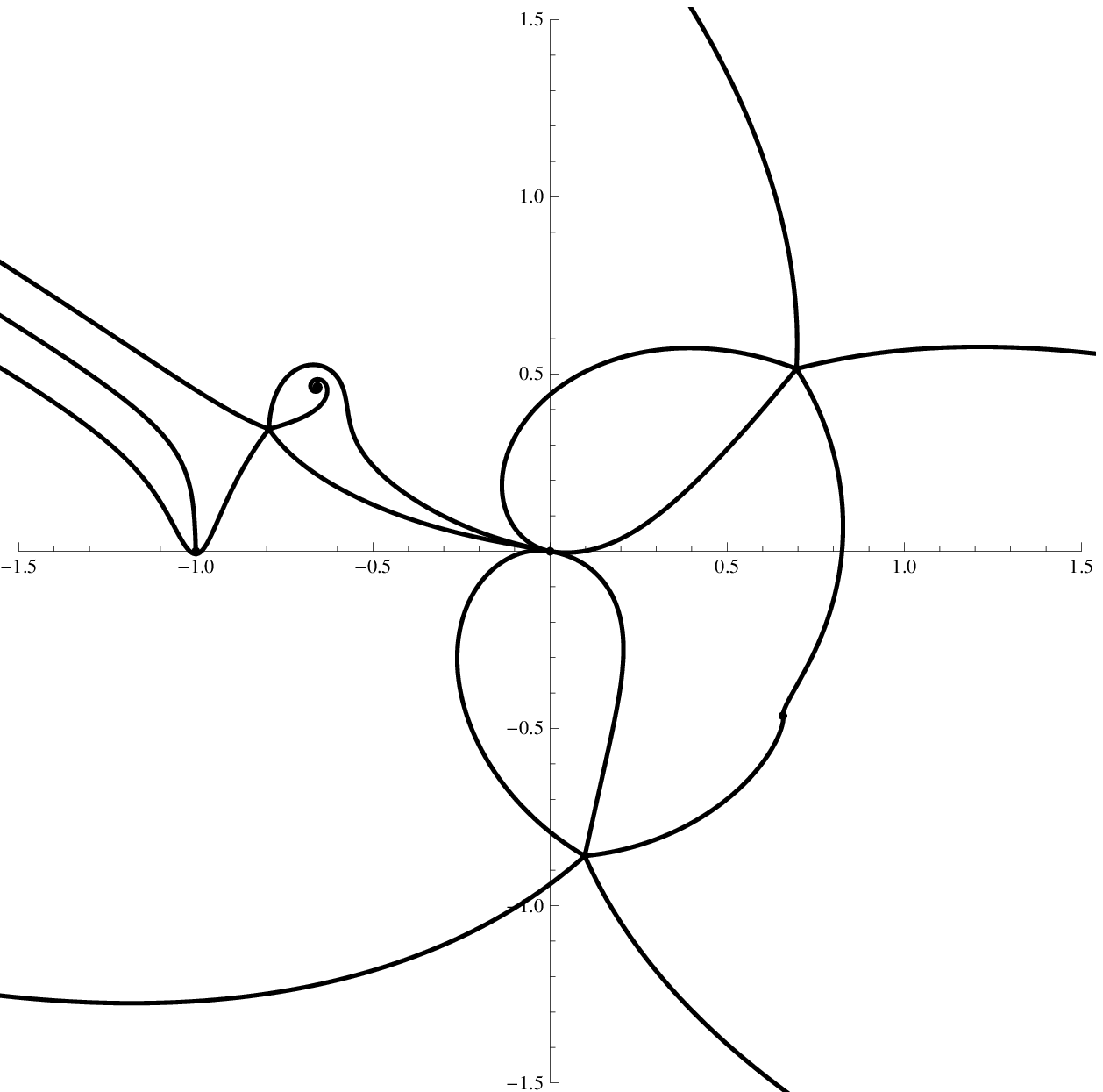}
\end{center}
\caption{\small{${\bf c} = (+0.2+i,3+i/2)$}.}
\label{fig:P3D6-u-plane-loop2-plus}
\end{minipage}
\end{figure}

First we consider the case that the independent variable 
$t$ (or $u$) lies outside of the ``loop''. 
Here we impose the following assumption 
(A-2)'' instead of (A-2):
\begin{itemize}
\item[(A-2)'']
The small number $\varepsilon > 0$ satisfies that, 
for any $0 \le \varepsilon' \le \varepsilon$, 
any Stokes curves never touch with 
any points in $U_{t_{\ast}}$ when 
${\bf c} = (\pm \varepsilon' + i, 3 + i/2)$.
\end{itemize}
In this case the discussion given in Section 
\ref{subsection:connection of triangle-type}
can be applicable, and the relevant Voros coefficient 
to be considered to derive the connection formula 
are one of $W_{\infty_{j,\pm}}({\bf c}, \eta)$
($1 \le j \le 4$) or $W_{0_{c_{0,\pm}}}({\bf c},\eta)$. 
These Voros coefficients are all Borel summable 
when ${\bf c}$ lies on the wall ${\cal W}_{3}$, 
and hence they never jump on the wall.
Therefore, we have the following 
conclusion: \\[-.5em]

\noindent
\textbf{Connection formula on ${\cal W}_{3}$.} \hspace{+.5em}
{\it Let $\lambda(t,{\bf c},\eta;\alpha)$ be a transseries solution 
normalized at a turning point or the simple-pole, 
and ${\cal S}_{III}[\lambda(t,{\bf c},\eta;\alpha)]$ 
(resp., ${\cal S}_{II}[\lambda(t,{\bf c},\eta;\tilde{\alpha})]$)
be the Borel sum of $\lambda(t,{\bf c},\eta;\alpha)$ 
when ${\bf c} = (- \varepsilon + i, 3 + i/2)$ 
(resp., of $\lambda(t,{\bf c},\eta; \tilde{\alpha})$
when ${\bf c} = (+ \varepsilon + i, 3 + i/2)$) 
for a sufficiently small $\varepsilon > 0$. 
Fix $u_{*}$ {\bf outside} of the loop 
formed by a bounded Stokes curve in 
Figure \ref{fig:on the wall W3}. 
Assume that a neighborhood $U_{t_{*}}$ 
of the point $t_{*}$ which corresponds to $u_{*}$ 
and a small number $\varepsilon > 0$ 
satisfies the assumptions {\rm (A-1)} and {\rm (A-2)''}.
If ${\cal S}_{III}[\lambda(t,{\bf c},\eta;\alpha)]$ and 
${\cal S}_{II}[\lambda(t,{\bf c},\eta;\tilde{\alpha})]$
represent the same analytic solution of $(P_{\rm III'})_{D_{6}}$ 
defined on $t \in U_{t_{*}}$ after the analytic continuation 
with respect to the parameter ${\bf c}$ across ${\cal W}_{3}$, 
then the parameters $\alpha$ and $\tilde{\alpha}$ satisfy 
$\tilde{\alpha} = \alpha$. That is, no parametric Stokes phenomenon 
occurs on the wall ${\cal W}_{3}$ when $t \in U_{t_{*}}$:
\begin{equation}
{\mathcal S}_{III}\bigl[\lambda(t,\textbf{c},\eta;\alpha) \bigr] = 
{\mathcal S}_{II}\bigl[\lambda(t,\textbf{c},\eta;{\alpha}) \bigr]. 
\label{eq:THE connection formula on W3}
\end{equation}
}

On the other hand, the cases when $t$ (or $u$) lies 
inside the loop are quite different from the above case. 
When the loop-type degeneration appears 
and $t_{*}$ lies inside the loop, 
the assumption (A-2)'' is not satisfied 
for any $\varepsilon > 0$ because of  
``infinitely many spirals''. 
Actually, if $t_{\ast}$ is fixed inside of the loop, 
then Stokes curves touch with $t_{\ast}$
infinitely many times as ${\bf c}$ 
tends to $\varepsilon'$ 
varies $0 \le \varepsilon' \le \varepsilon$, 
and hence usual Stokes phenomena occur to 
transseries solutions infinitely many times. 
These loop-type degenerations have not been analyzed 
even in the case of linear differential equations, 
due to the same difficulty. 
In order to describe connection formulas, 
we need some modification of the Borel resummation method, 
but we do not have an appropriate way at this time. 
This is a future issue to be discussed.

\appendix

\section{Examples of Stokes geometry of 
$(P_{\rm III'})_{D_{6}}$}
\label{Appendix:examples of P-Stokes geometry}

Here we show the examples of Stokes geometry of 
$(P_{\rm III'})_{D_{6}}$ on the $u$-plane 
($u$ is defined by \eqref{eq:u-coordinate}) 
when the parameter ${\bf c}$ 
is in the chambers $I$ $\sim$ $VIII$ and on the walls 
${\cal W}_{1}$ $\sim$ ${\cal W}_{8}$ in 
Figure \ref{fig:walls and chambers}.
Note that, since the quadratic differential 
\eqref{eq:quadratic differential in u-plane} is 
invariant under the exchange of the parameters 
$c_{\infty} \leftrightarrow c_{0}$, 
so is the Stokes geometry. 
Therefore it is enough to show the Stokes geometries 
when ${\bf c}$ is in the chambers $II, III, IV, V$ 
and on the boundaries of them.
We conjecture that, as long as the parameter 
${\bf c}$ does not across the walls, 
the topological type of configuration 
of the Stokes curves never change, 
and degeneration of Stokes geometry occur 
only on these walls. This is true as far as we 
checked by numerical experiments, 
but we can not confirm it analytically. 

  \begin{figure}[h]
  \begin{minipage}{0.31\hsize}
  \begin{center}
  \includegraphics[width=50mm]
  {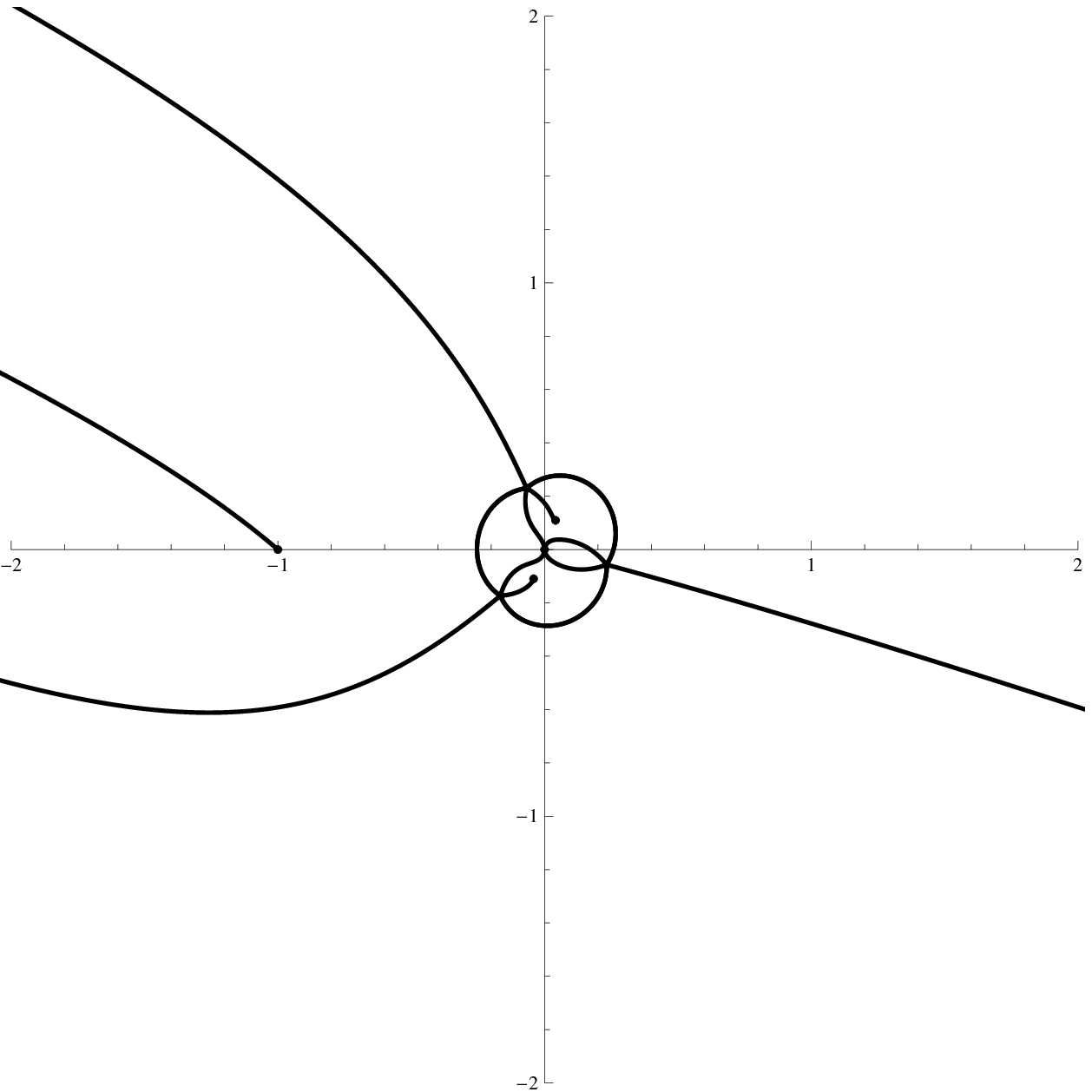}
  \end{center}
  \caption{\small{On ${\cal W}_{2}$ : ${\bf c} = (2+i,2+i/2)$.}}
  \label{fig:A-1}
  \end{minipage} \hspace{+.5em}
  \begin{minipage}{0.31\hsize}
  \begin{center}
  \includegraphics[width=50mm]
  {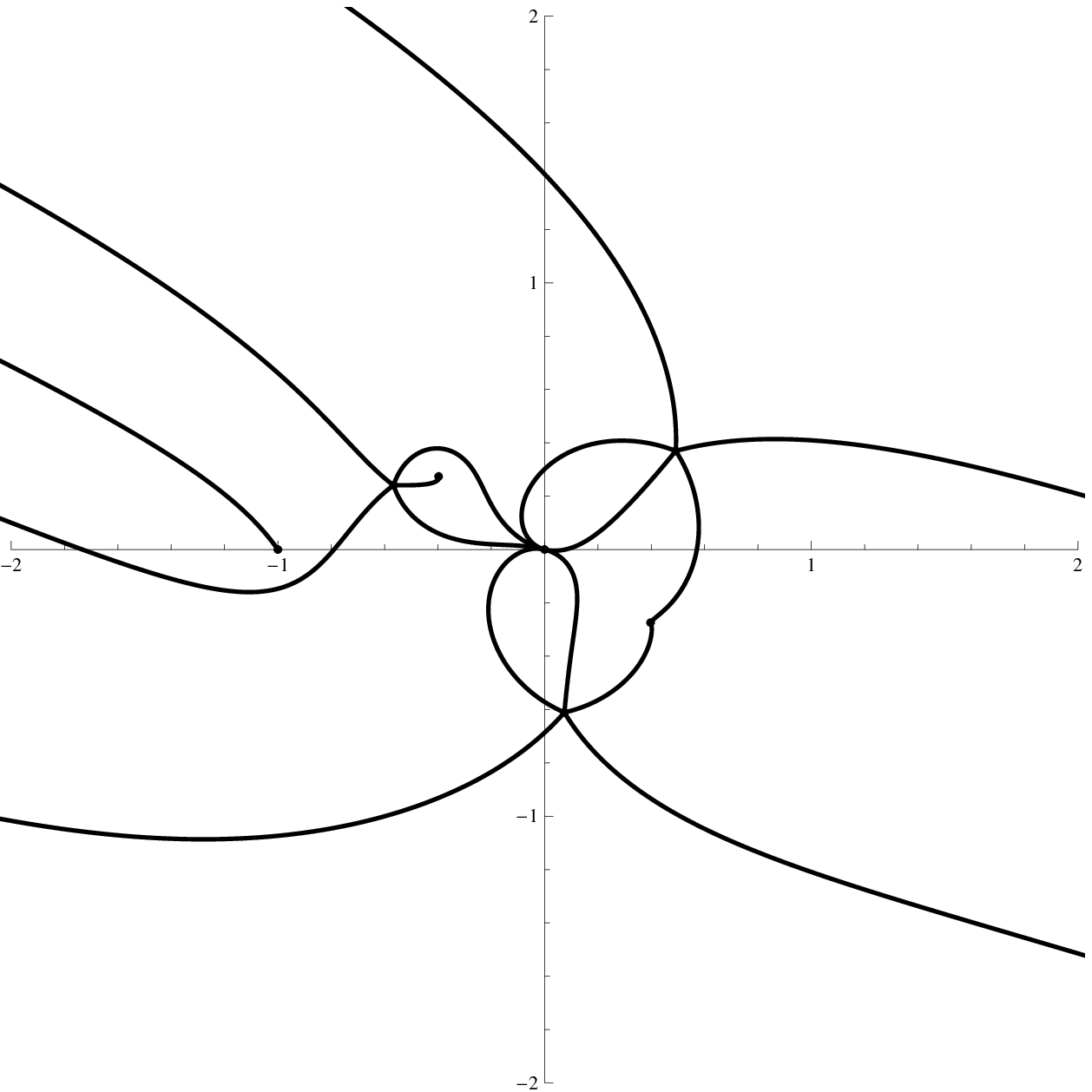}
  \end{center}
  \caption{\small{In $II$ : ${\bf c} = (1+i,3+i/2)$.}}
  \label{fig:A-2}
  \end{minipage} \hspace{+.3em}
  \begin{minipage}{0.31\hsize}
  \begin{center}
  \includegraphics[width=50mm]
  {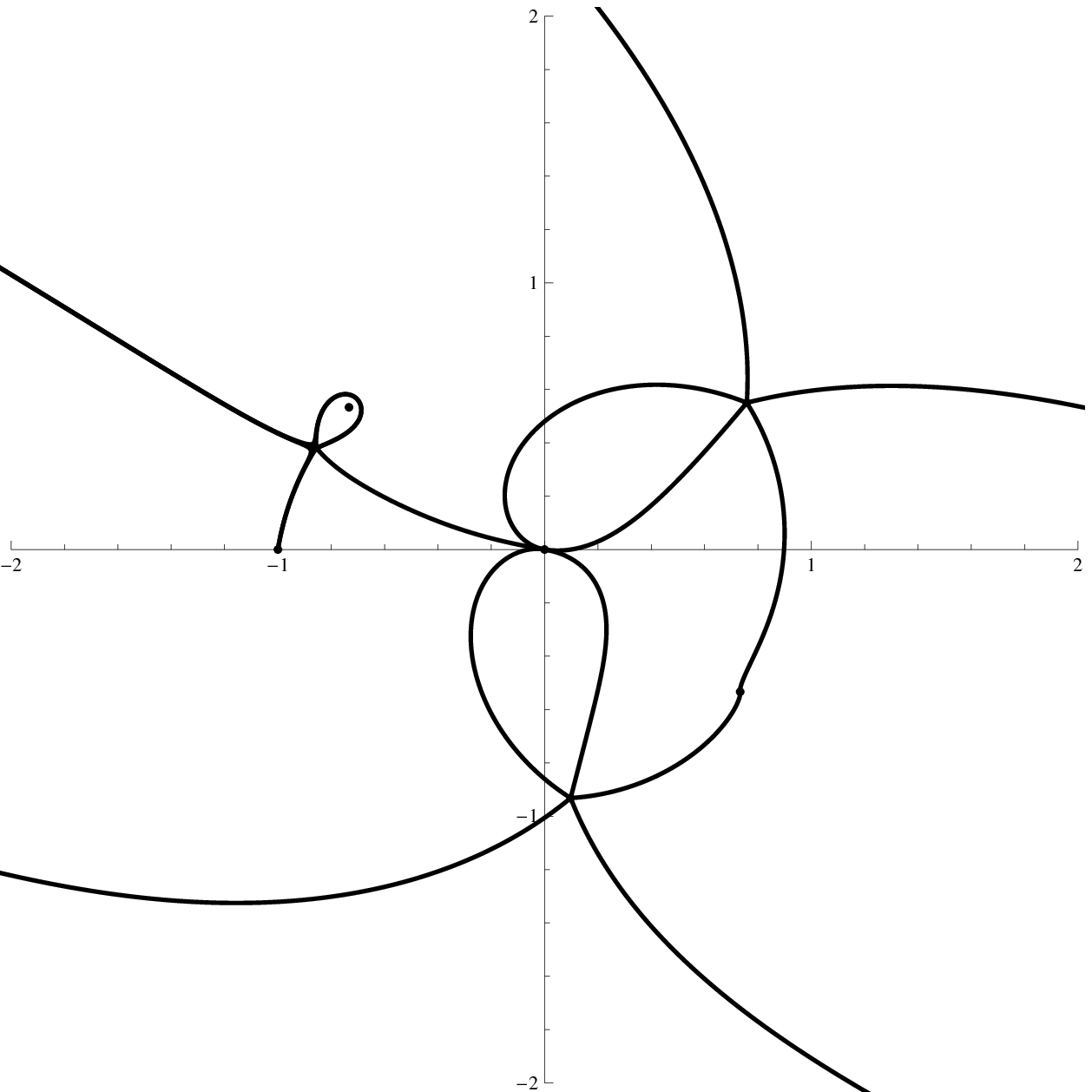}
  \end{center}
  \caption{\small{On ${\cal W}_{3}$ : ${\bf c} = (0+i,3+i/2)$}.}
  \label{fig:A-3}
  \end{minipage}
  \end{figure}
  \begin{figure}[h]
  \begin{minipage}{0.31\hsize}
  \begin{center}
  \includegraphics[width=50mm]
  {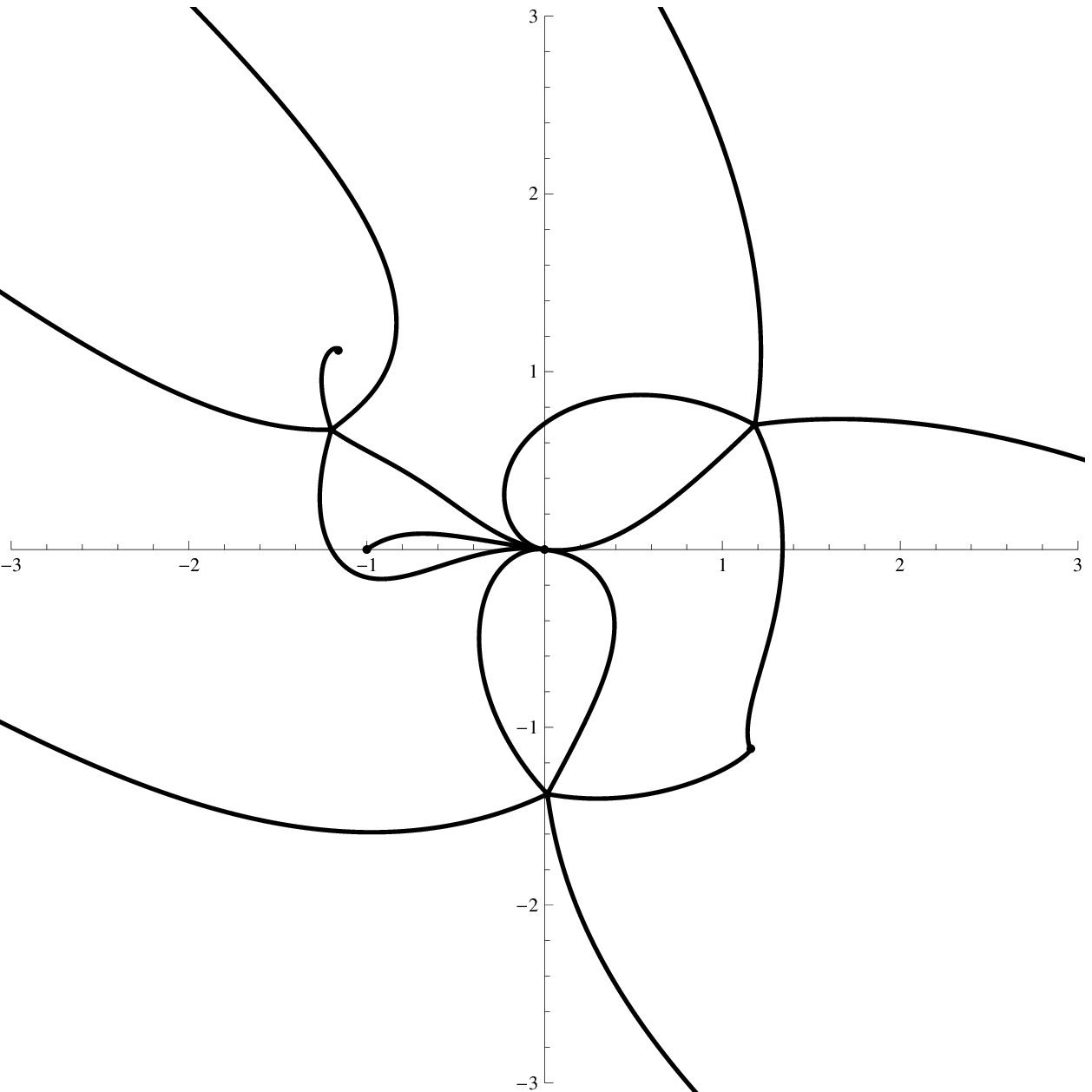}
  \end{center}
  \caption{\small{In $III$ : ${\bf c} = (-1+i,3+i/2)$.}}
  \label{fig:A-4}
  \end{minipage} \hspace{+.5em}
  \begin{minipage}{0.31\hsize}
  \begin{center}
  \includegraphics[width=50mm]
  {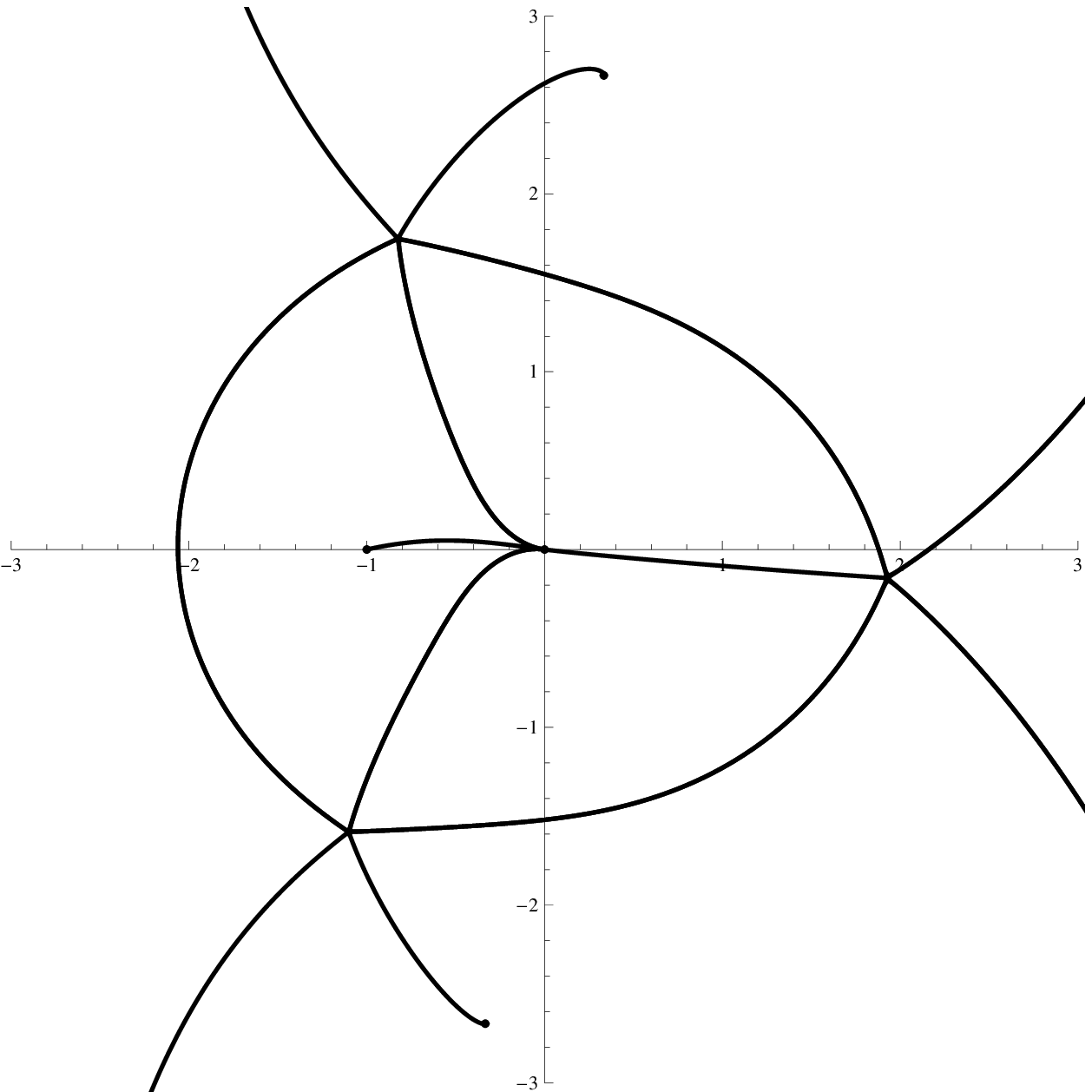}
  \end{center}
  \caption{\small{On ${\cal W}_{4}$ : ${\bf c} = (-2+i,2+i/2)$.}}
  \label{fig:A-5}
  \end{minipage} \hspace{+.3em}
  \begin{minipage}{0.31\hsize}
  \begin{center}
  \includegraphics[width=50mm]
  {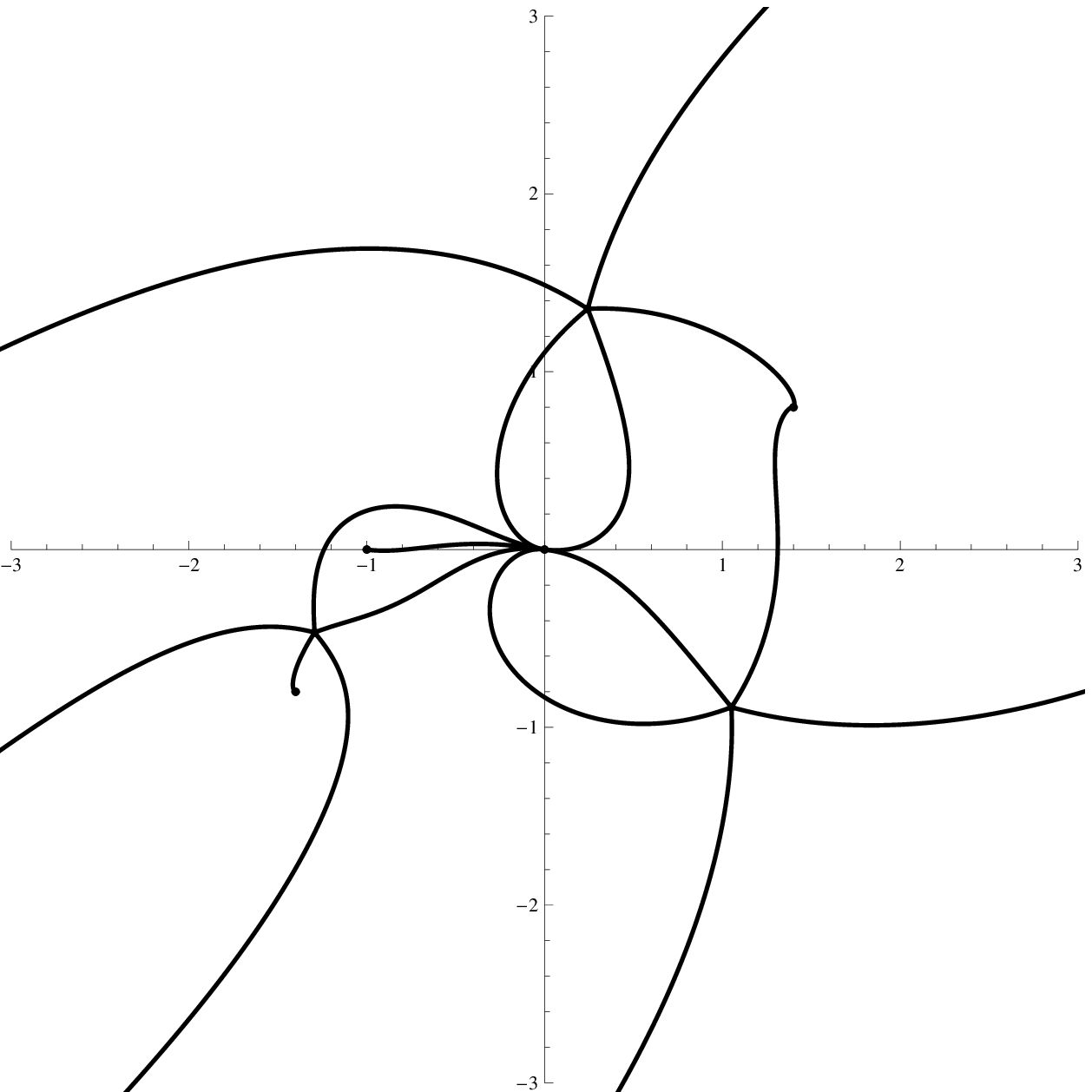}
  \end{center}
  \caption{\small{In $IV$ : ${\bf c} = (-3+i,1+i/2)$}.}
  \label{fig:A-6}
  \end{minipage}
  \end{figure}
  \begin{figure}[h]
  \begin{minipage}{0.31\hsize}
  \begin{center}
  \includegraphics[width=50mm]
  {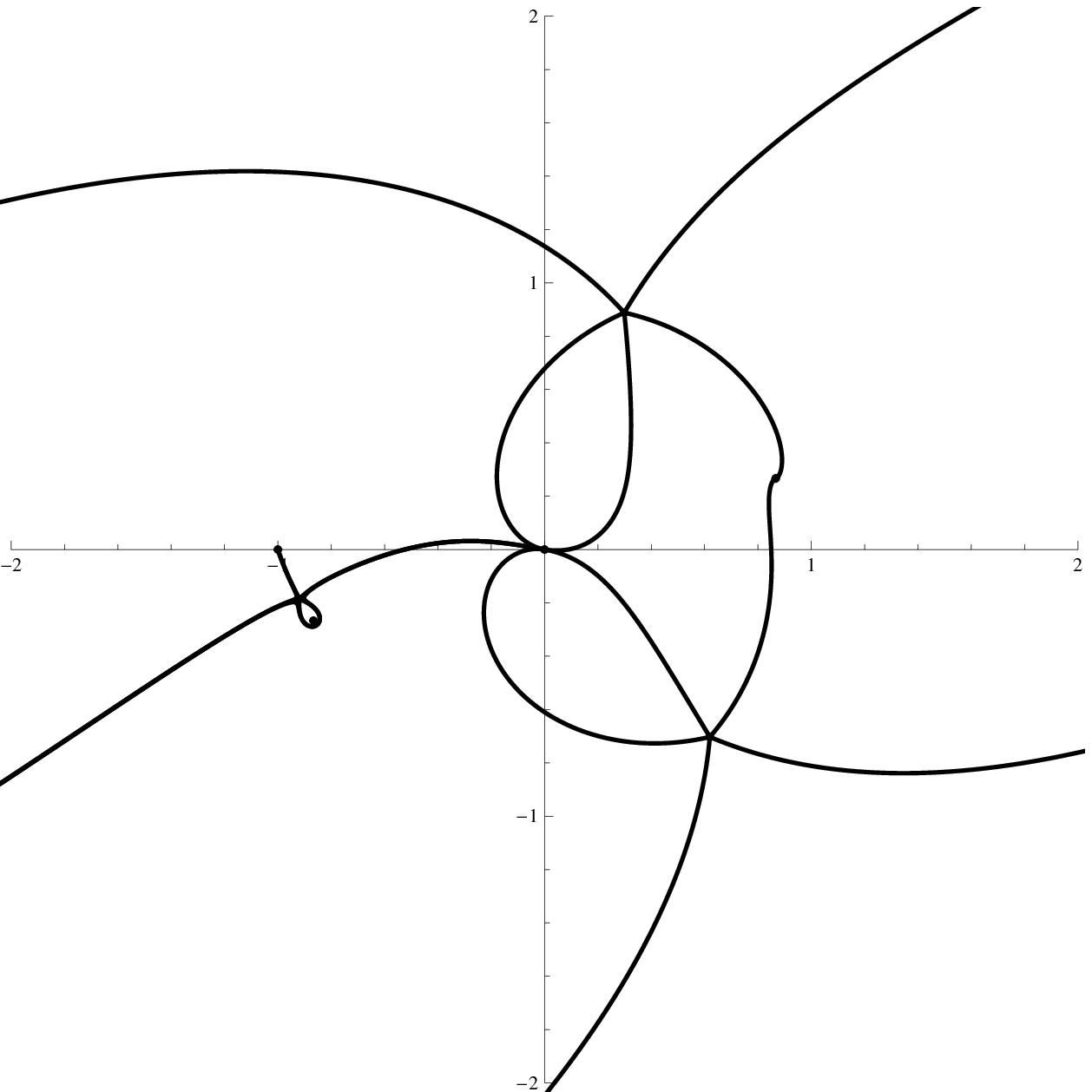}
  \end{center}
  \caption{\small{On ${\cal W}_{5}$ : ${\bf c} = (-3+i,0+i/2)$.}}
  \label{fig:A-7}
  \end{minipage} \hspace{+.5em}
  \begin{minipage}{0.31\hsize}
  \begin{center}
  \includegraphics[width=50mm]
  {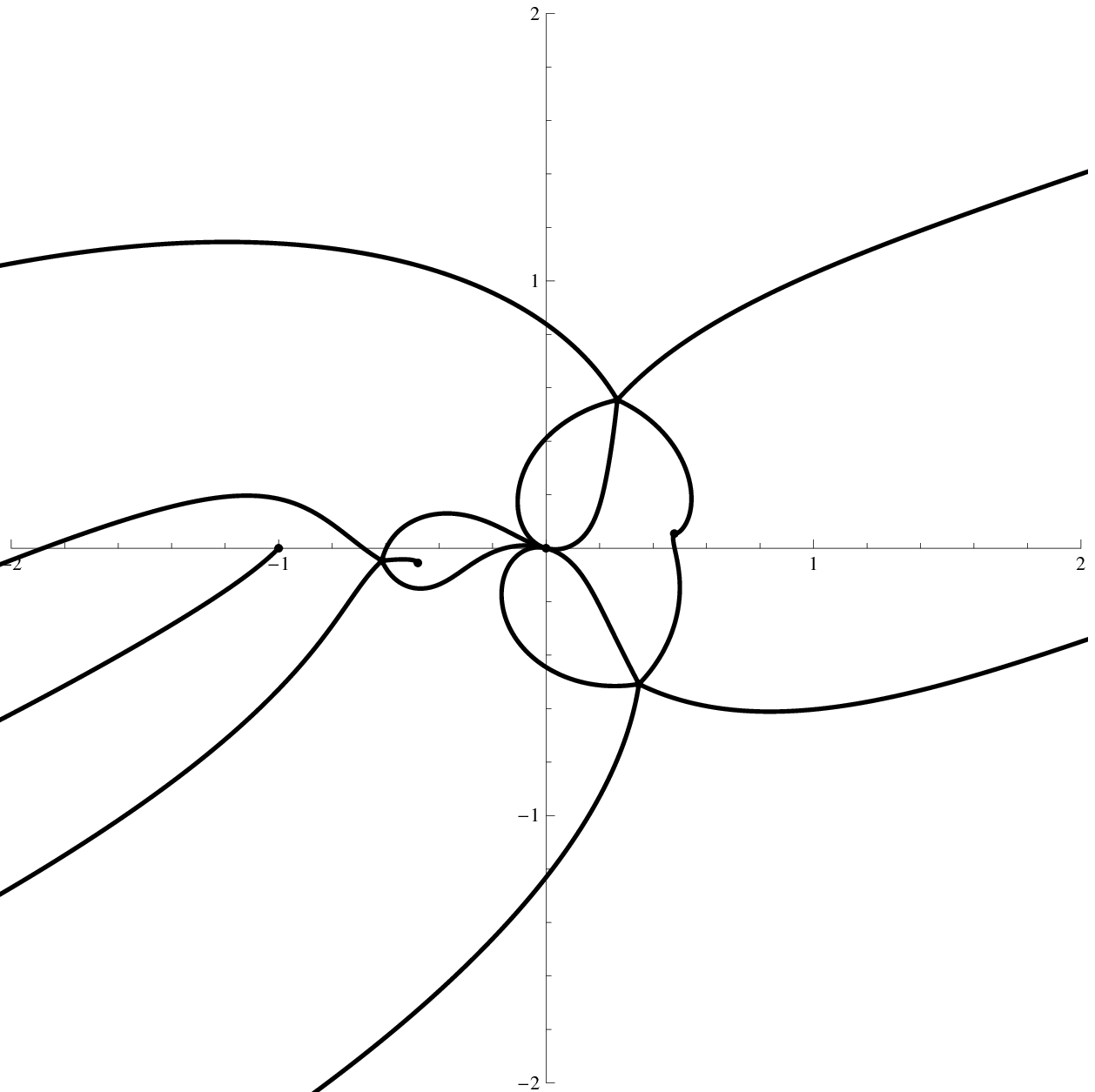}
  \end{center}
  \caption{\small{In $V$ : ${\bf c} = (-3+i,-1+i/2)$.}}
  \label{fig:A-8}
  \end{minipage} \hspace{+.3em}
  \begin{minipage}{0.31\hsize}
  \begin{center}
  \includegraphics[width=50mm]
  {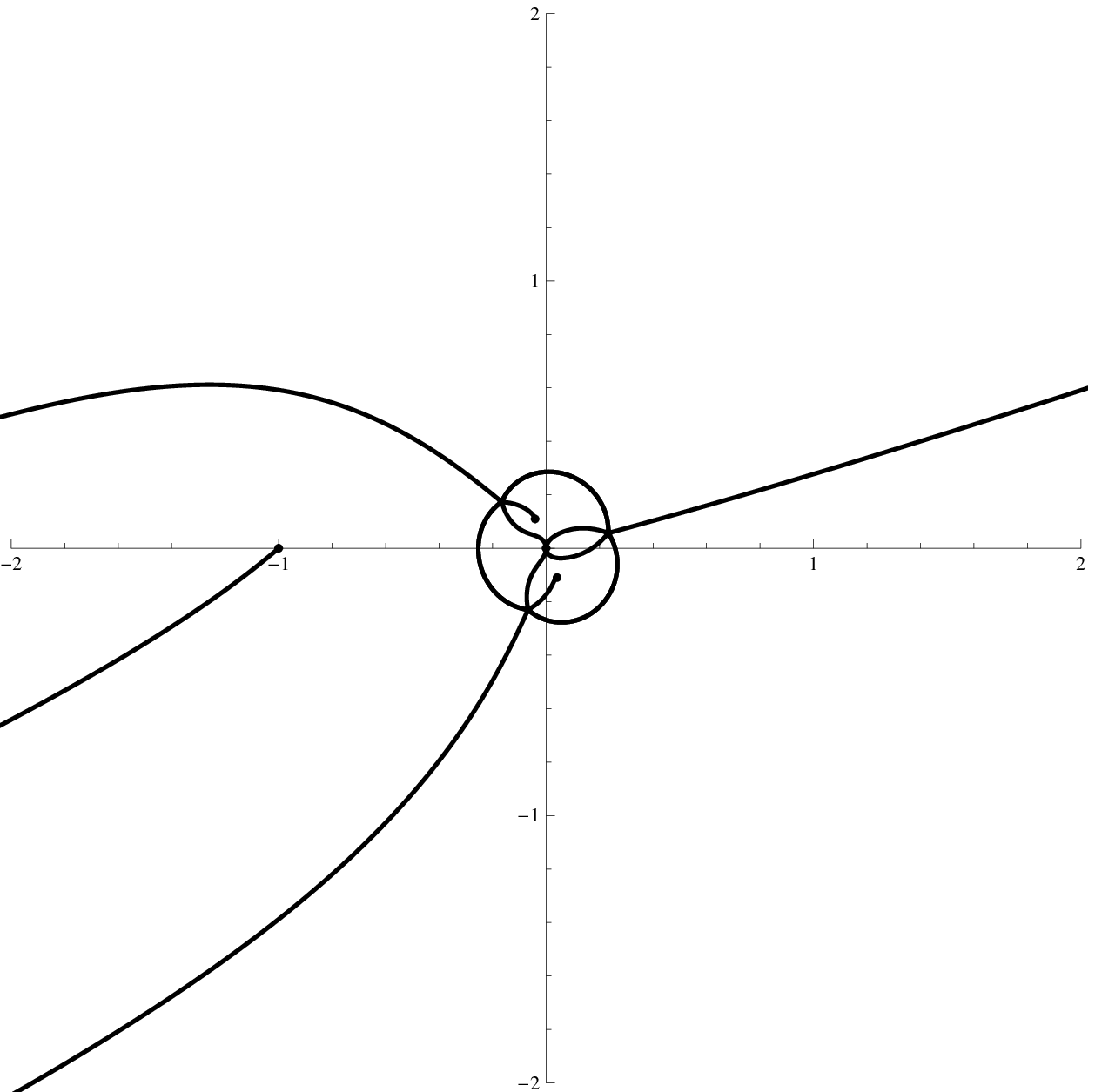}
  \end{center}
  \caption{\small{On ${\cal W}_{6}$ : ${\bf c} = (-2+i,-2+i/2)$}.}
  \label{fig:A-9}
  \end{minipage}
  \end{figure}
  \begin{figure}[h]
  \begin{center}
  \includegraphics[width=50mm]
  {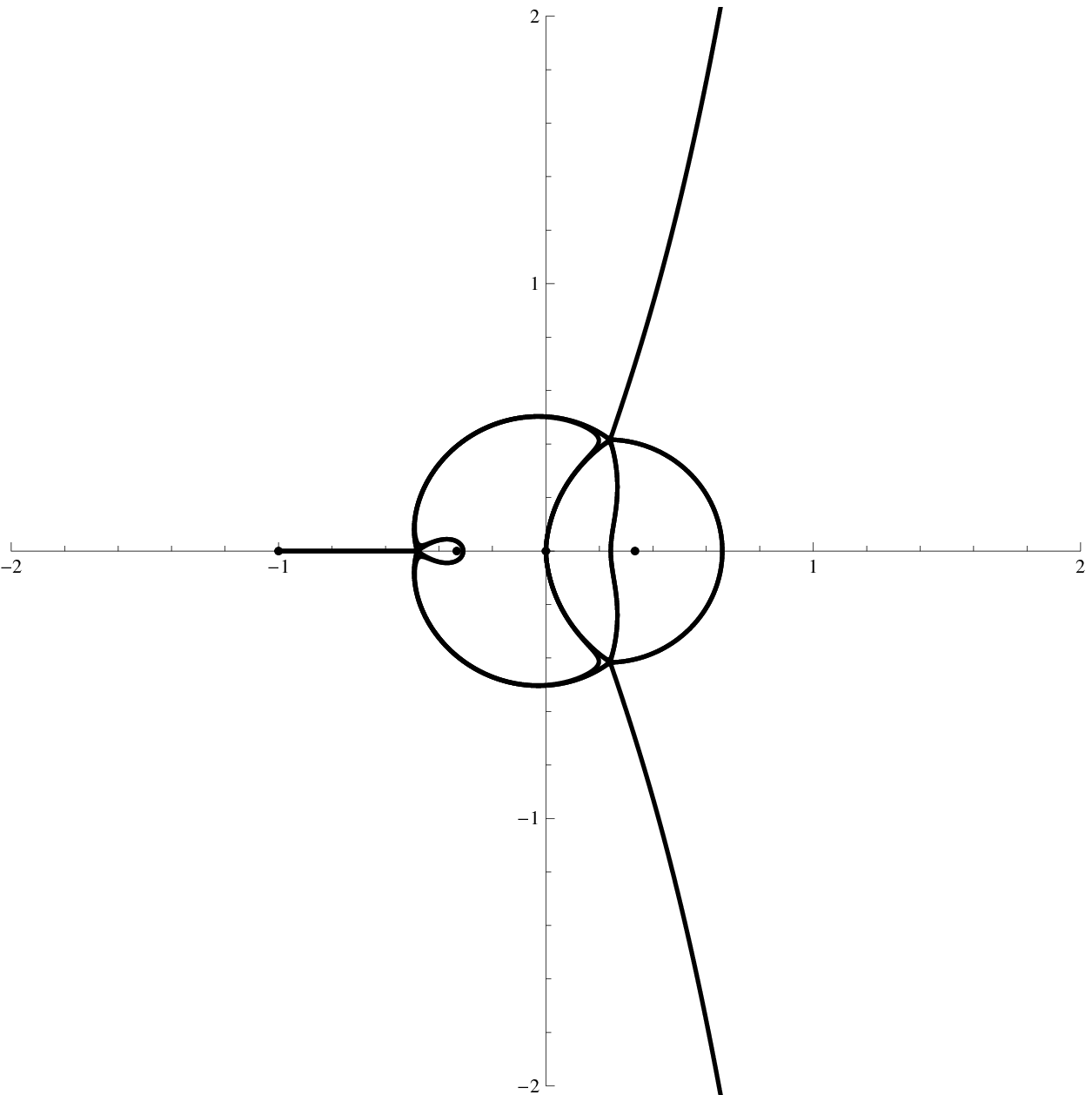}
  \end{center}
  \caption{\small{${\bf c} = (0+i,0+i/2)$.}}
  \label{fig:A-10}
  \end{figure}

\section{Homogeneity}
\label{Appendix:Homogeneity}

As is explained in the beginning of Section 
\ref{section:transseries solutions and P-Stokes geometry}, 
quantities appearing in this paper have a homogeneity 
with respect to the following scaling operation: 
\[
(t, c_{\infty}, c_{0}, \eta) \mapsto 
(r^{-2}t, r^{-1}c_{\infty}, r^{-1}c_{0}, r \eta) 
\hspace{+1.em} (r > 0).
\] 
For example, the homogenious degree of 
$\lambda_0(t,{\bf c}) = \lambda_{0}(t, c_{\infty}, c_{0})$ 
which is the algebraic function defined by 
$F(\lambda_{0},t, {\bf c}) = 0$ is $-1$; 
that is, 
\[
\lambda_0(r^{-2}t, r^{-1}c_{\infty}, r^{-1}c_{0}) 
= 
r^{-1} \lambda_0(t, c_{\infty}, c_{0}).
\]

We list the homogenious degrees of quantities below. 
We regard the free parameter $\alpha$ as 
a constant with homogenous degree $0$.

\begin{eqnarray}
\lambda_{0}(t,{\bf c}) 
& : & -1. \hspace{+7.em}
\label{eq:degree of lambda0}  
\\[+.3em]
\mu_{0}(t,{\bf c}) 
& : & 0.
\label{eq:degree of mu0}    
\\[+.3em]
\Delta(t,{\bf c}) 
& : &  + 2. 
\label{eq:degree of Delta}    
\\[+.3em]
\text{a turning point} 
\hspace{+1.em} \tau({\bf c}) 
& : & - 2. 
\label{eq:degree of turning points}
\\[+.3em]
R(t,{\bf c},\eta) 
& : & +2.
\label{eq:degree of R}    
\\[+.3em]
R_{\rm odd}(t,{\bf c},\eta) 
& : & +2.
\label{eq:degree of Rodd}    
\\[+.3em]
\phi(t,{\bf c}) 
& : &  -1.
\label{eq:degree of phi}    
\\[+.3em]
\lambda(t,{\bf c},\eta;\alpha) 
& : & -1.   
\label{eq:degree of lambda}  
\\[+.3em]
\mu(t,{\bf c},\eta;\alpha) 
& : & 0 . 
\label{eq:degree of mu}    
\\[+.3em]
W_{\tau,\infty}({\bf c},\eta) 
& : & 0 .
\\[+.3em]
\label{eq:degree of W}
W_{\tau,0_{c_{\ast}}}({\bf c},\eta) 
& : & 0 \hspace{+.7em} (\ast = \infty, 0) . 
\label{eq:degree of W-0}
\end{eqnarray}

\section{Asymptotics of coefficients}
\label{Appendix:asymptotics}

As is noted in Section \ref{section:proof of the Main theorems}, 
asymptotic behaviors of $\lambda^{(0)}(t,{\bf c},\eta)$, 
$\mu^{(0)}(t,{\bf c},\eta)$ and $R_{\pm}(t,{\bf c},\eta)$
when $t$ tends to singular points $\infty$ and $0$
is important in derivations of difference equations 
satisfied by Voros coefficients.
Here we summarize all asymptotic behaviors of them. 
(We only show the case $t \rightarrow \infty_{j,+}$ 
($1 \le j \le 4$) and $t \rightarrow 0_{c_{\ast,+}}$ 
($\ast = \infty$ or $0$). 
Behaviors when 
$t \rightarrow \infty_{j,-}$ and 
$t \rightarrow 0_{c_{\ast,-}}$ 
can be obtained 
from these list by replacing $R_{-1} \mapsto - R_{-1}$  
and $R_{\pm} \mapsto R_{\mp}$.)

\vspace{+.5em}
\noindent
$\bullet$ \underline{The case $t \rightarrow \infty_{1,+}$:}
\begin{eqnarray}
\lambda_{0}(t,{\bf c}) & = & t^{1/2} 
+ \frac{c_{\infty} - c_{0}}{4} 
+ \frac{(c_{\infty} - c_{0})(3c_{\infty} + c_{0})}{32} t^{-1/2}
+ O(t^{-1}),
\label{eq:behavior of lambda0 infinity-plus} \\[+.5em]
\mu_{0}(t,{\bf c}) & = & 
\frac{c_{\infty}+c_{0}}{4}t^{-1/2}
+ O(t^{-3/2}),
\label{eq:behavior of mu0 infinity-plus} \\[+.5em]
\lambda^{(0)}(t,{\bf c},\eta) & = & t^{1/2} 
+ \frac{c_{\infty} - c_{0}}{4} 
+ \frac{(c_{\infty} - c_{0})(3c_{\infty} + c_{0})}{32} t^{-1/2}
+ O(t^{-1}),
\label{eq:behavior of lambda(0) infinity-plus} \\[+.5em]
\mu^{(0)}(t,{\bf c},\eta) & = & 
\frac{c_{\infty}+c_{0}-\eta^{-1}}{4}t^{-1/2}
+ O(t^{-3/2}),
\label{eq:behavior of mu(0) infinity-plus} \\[+.5em]
R_{-1}(t,{\bf c}) & = & 2 t^{-1/2} 
- \frac{c_{\infty}+c_{0}}{4} t^{-1}
+ \frac{(3c_{\infty}-c_{0})(c_{\infty}-3c_{0})}{64}
t^{-3/2} + O(t^{-2}), \nonumber \\
\label{eq:behavior of R-1 infinity-plus} \\[+.5em]
R_{\pm}(t,{\bf c},\eta) & = & \pm 2 \eta t^{-1/2} 
- \frac{(\pm c_{\infty}\eta \pm c_{0}\eta - 1)}{4} t^{-1} 
\nonumber \\
&  & 
+ \frac{\pm 3c_{\infty}^{2} \eta^{2}
\mp 10 c_{\infty} c_{0} \eta^{2} \pm 3c_{0}^{2} \eta^{2}  
- 10 c_{\infty} \eta + 6 c_{0} \eta \mp 1 }{64\eta}
t^{-3/2} + O(t^{-2}). \nonumber \\
&  &
\label{eq:behavior of R infinity-plus} 
\end{eqnarray}

\vspace{+.5em}
\noindent
$\bullet$ \underline{The case $t \rightarrow \infty_{2,+}$:}
\begin{eqnarray}
\lambda_{0}(t,{\bf c}) & = & - t^{1/2} 
+ \frac{c_{\infty} - c_{0}}{4} 
- \frac{(c_{\infty} - c_{0})(3c_{\infty} + c_{0})}{32} t^{-1/2}
+ O(t^{-1}),
\label{eq:behavior of lambda0 infinity-plus2} \\[+.5em]
\mu_{0}(t,{\bf c}) & = & 
- \frac{c_{\infty}+c_{0}}{4}t^{-1/2}
+ O(t^{-3/2}),
\label{eq:behavior of mu0 infinity-plus2} \\[+.5em]
\lambda^{(0)}(t,{\bf c},\eta) & = & - t^{1/2} 
+ \frac{c_{\infty} - c_{0}}{4} 
- \frac{(c_{\infty} - c_{0})(3c_{\infty} + c_{0})}{32} t^{-1/2}
+ O(t^{-1}),
\label{eq:behavior of lambda(0) infinity-plus2} \\[+.5em]
\mu^{(0)}(t,{\bf c},\eta) & = &  
- \frac{c_{\infty}+c_{0}-\eta^{-1}}{4}t^{-1/2}
+ O(t^{-3/2}),
\label{eq:behavior of mu(0) infinity-plus2} \\[+.5em]
R_{-1}(t,{\bf c}) & = & - 2 t^{-1/2} 
- \frac{c_{\infty}+c_{0}}{4} t^{-1}
- \frac{(3c_{\infty}-c_{0})(c_{\infty}-3c_{0})}{64}
t^{-3/2} + O(t^{-2}), \nonumber \\
\label{eq:behavior of R-1 infinity-plus2} \\[+.5em]
R_{\pm}(t,{\bf c},\eta) & = & \mp 2 \eta t^{-1/2} 
- \frac{(\pm c_{\infty}\eta \mp c_{0}\eta - 1)}{4} 
t^{-1} \nonumber \\
&  & 
- \frac{( \pm 3c_{\infty}^{2} \eta^{2}
\mp 10 c_{\infty} c_{0} \eta^{2}  \pm 3c_{0}^{2} \eta^{2}  
- 10 c_{\infty} \eta + 6 c_{0} \eta  \mp 1 )}{64\eta}
t^{-3/2} + O(t^{-2}). \nonumber \\
&  &
\label{eq:behavior of R infinity-plus2} 
\end{eqnarray}

\vspace{+.5em}
\noindent
$\bullet$ \underline{The case $t \rightarrow \infty_{3,+}$:}
\begin{eqnarray}
\lambda_{0}(t,{\bf c}) & = & i \hspace{+.1em} t^{1/2} 
+ \frac{c_{\infty} + c_{0}}{4} 
- \frac{i(c_{\infty} + c_{0})(3c_{\infty} - c_{0})}{32} t^{-1/2}
+ O(t^{-1}),
\label{eq:behavior of lambda0 infinity-minus} \\[+.5em]
\mu_{0}(t,{\bf c}) & = & 1 
+ \frac{i(c_{\infty}-c_{0})}{4}t^{-1/2}
+ O(t^{-3/2}),
\label{eq:behavior of mu0 infinity-minus} \\[+.5em]
\lambda^{(0)}(t,{\bf c},\eta) & = & i \hspace{+.1em} t^{1/2} 
+ \frac{c_{\infty} + c_{0}}{4} 
- \frac{i(c_{\infty} + c_{0})(3c_{\infty} - c_{0})}{32} t^{-1/2}
+ O(t^{-1}),
\label{eq:behavior of lambda(0) infinity-minus} \\[+.5em]
\mu^{(0)}(t,{\bf c},\eta) & = & 1 
+ \frac{i(c_{\infty}-c_{0}+\eta^{-1})}{4}t^{-1/2}
+ O(t^{-3/2}),
\label{eq:behavior of mu(0) infinity-minus} \\[+.5em]
R_{-1}(t,{\bf c}) & = & 2i t^{-1/2} 
- \frac{c_{\infty}-c_{0}}{4} t^{-1}
- \frac{i(3c_{\infty}+c_{0})(c_{\infty}+3c_{0})}{64}
t^{-3/2} + O(t^{-2}), \nonumber \\
\label{eq:behavior of R-1 infinity-minus} \\[+.5em]
R_{\pm}(t,{\bf c},\eta) & = & \pm 2i \eta t^{-1/2} 
+ \frac{(\mp c_{\infty}\eta \pm c_{0}\eta+1)}{4} t^{-1} \nonumber \\
&  & 
+ \frac{i(\mp 3c_{\infty}^{2} \eta^{2}
\mp 10 c_{\infty} c_{0} \eta^{2} \mp 3c_{0}^{2} \eta^{2}  
+ 10 c_{\infty} \eta - 6 c_{0} \eta \pm 1 )}{64\eta}
t^{-3/2} + O(t^{-2}). \nonumber \\
&  &
\label{eq:behavior of R infinity-minus} 
\end{eqnarray}

\vspace{+.5em}
\noindent
$\bullet$ \underline{The case $t \rightarrow \infty_{4,+}$:}
\begin{eqnarray}
\lambda_{0}(t,{\bf c}) & = & - i \hspace{+.1em} t^{1/2} 
+ \frac{c_{\infty} + c_{0}}{4} 
+ \frac{i(c_{\infty} + c_{0})(3c_{\infty} - c_{0})}{32} t^{-1/2}
+ O(t^{-1}),
\label{eq:behavior of lambda0 infinity-minus2} \\[+.5em]
\mu_{0}(t,{\bf c}) & = & 1 
- \frac{i(c_{\infty}-c_{0})}{4}t^{-1/2}
+ O(t^{-3/2}),
\label{eq:behavior of mu0 infinity-minus2} \\[+.5em]
\lambda^{(0)}(t,{\bf c},\eta) & = & - i \hspace{+.1em} t^{1/2} 
+ \frac{c_{\infty} + c_{0}}{4} 
+ \frac{i(c_{\infty} + c_{0})(3c_{\infty} - c_{0})}{32} t^{-1/2}
+ O(t^{-1}),
\label{eq:behavior of lambda(0) infinity-minus2} \\[+.5em]
\mu^{(0)}(t,{\bf c},\eta) & = & 1 
- \frac{i(c_{\infty}-c_{0}+\eta^{-1})}{4}t^{-1/2}
+ O(t^{-3/2}),
\label{eq:behavior of mu(0) infinity-minus2} \\[+.5em]
R_{-1}(t,{\bf c}) & = & - 2i t^{-1/2} 
- \frac{c_{\infty}-c_{0}}{4} t^{-1}
+ \frac{i(3c_{\infty}+c_{0})(c_{\infty}+3c_{0})}{64}
t^{-3/2} + O(t^{-2}), \nonumber \\
\label{eq:behavior of R-1 infinity-minus2} \\[+.5em]
R_{\pm}(t,{\bf c},\eta) & = & \mp 2i \eta t^{-1/2} 
+ \frac{(\mp c_{\infty}\eta \pm c_{0}\eta+1)}{4} t^{-1} \nonumber \\
&  & 
- \frac{i( \mp 3c_{\infty}^{2} \eta^{2}
\mp 10 c_{\infty} c_{0} \eta^{2}  \mp 3c_{0}^{2} \eta^{2}  
+ 10 c_{\infty} \eta + 6 c_{0} \eta  \pm 1 )}{64\eta}
t^{-3/2} + O(t^{-2}). \nonumber \\
&  &
\label{eq:behavior of R infinity-minus2} 
\end{eqnarray}

Next we summarize asymptotic behaviors when 
$t$ tends to double-poles.

\vspace{+.5em}
\noindent
$\bullet$ 
\underline{The case $t \rightarrow 0_{c_{\infty},+}$:}
\begin{eqnarray}
\lambda_{0}(t,{\bf c}) & = & 
c_{\infty} - \frac{c_{0}}{c_{\infty}^{2}} \hspace{+.1em} t 
+ \frac{c_{\infty}^{2} - 2c_{0}^{2}}{c_{\infty}^{5}} 
\hspace{+.1em} t^{2} + O(t^{3}),
\label{eq:behavior of lambda0 0-infinity} \\[+.5em]
\mu_{0}(t,{\bf c}) & = & 
\frac{c_{\infty}+c_{0}}{2c_{\infty}}
- \frac{c_{\infty}^{2} - c_{0}^{2}}{2c_{\infty}^{4}}
\hspace{+.1em} t 
- \frac{3c_{0}(c_{\infty}^{2} - c_{0}^{2})} 
{2 c_{\infty}^{7}} \hspace{+.1em} t^{2}
+ O(t^{3}),
\label{eq:behavior of mu0 0-infinity} \\[+.5em]
\lambda^{(0)}(t,{\bf c},\eta) & = & c_{\infty} 
- \frac{c_{0}}{c_{\infty}^{2} - \eta^{-2}} 
\hspace{+.1em} t 
+ \frac{c_{\infty}^{4} - 2 c_{\infty}^{2} c_{0}^{2} 
-2 c_{\infty}^{2} \eta^{-2} 
+ c_{0}^{2} \eta^{-2} + \eta^{-4}}
{c_{\infty}(c_{\infty}^{2}-4\eta^{-2})
(c_{\infty}^{2}-\eta^{-2})^{2}} 
\hspace{+.1em} t^{2}
+ O(t^{3}),
\label{eq:behavior of lambda(0) 0-infinity} \\[+.5em]
\mu^{(0)}(t,{\bf c},\eta) & = & 
\frac{c_{\infty}+c_{0}-\eta^{-1}}{2c_{\infty}}
- \frac{c_{\infty}^{2} - (c_{0} - \eta^{-1})^{2}}
{2 c_{\infty}^{2} (c_{\infty}^{2} - \eta^{-2})} 
\hspace{+.1em} t \nonumber \\[+.5em]
&   & 
- \frac{3 ( c_{\infty}^{2} c_{0} - c_{0}^{3} 
- c_{\infty}^{2} \eta^{-1} + 3 c_{0}^{2} \eta^{-1}   
- 3c_{0}\eta^{-2} + \eta^{-3})}
{2c_{\infty}^{3}(c_{\infty}^{2} - 4\eta^{-2})
(c_{\infty}^{2} - \eta^{-2})}
\hspace{+.1em} t^{2} + O(t^{3}),
\label{eq:behavior of mu(0) 0-infinity} \\[+.5em]
R_{-1}(t,{\bf c}) & = & c_{\infty} t^{-1} 
- \frac{2c_{0}}{c_{\infty}^{2}} 
+ \frac{5 c_{\infty}^{2} - 9c_{0}^{2}}
{2 c_{\infty}^{5}} \hspace{+.1em} t + O(t^{2}), \nonumber \\
\label{eq:behavior of R-1 0-infinity} \\[+.5em]
R_{\pm}(t,{\bf c},\eta) & = & \pm c_{\infty} \eta t^{-1} 
\mp \frac{2 c_{0} \eta}{c_{\infty}^{2} - \eta^{-2}} 
+ \frac{r_{\pm}({\bf c},\eta)}
{2c_{\infty}^{2}(c_{\infty} - \eta^{-1})^{3}
(c_{\infty} + \eta^{-1})^{2}(c_{\infty}^{2} - 4 \eta^{-2})}
\hspace{+.1em} t + O(t^{2}). \nonumber \\[+.3em]
&  & 
\label{eq:behavior of R 0-infinity} \\[+.3em]
r_{\pm}({\bf c},\eta) & = & 
\pm \eta (- 5 c_{\infty}^{6} + 9 c_{\infty}^{4} c_{0}^{2})
+ ( 4 c_{\infty}^{5} - 6 c_{\infty}^{3} c_{0}^{2} )
\pm \eta^{-1} (14 c_{\infty}^{4} + c_{\infty}^{2} c_{0}^{2})
\nonumber \\
&  & 
+ \eta^{-2} (- 8 c_{\infty}^{3} -12 c_{\infty} c_{0}^{2})
\pm \eta^{-3}(-13 c_{\infty}^{2} - 4 c_{0}^{2}) 
+ 4 \eta^{-4} c_{\infty}
\pm 4 \eta^{-5} . \nonumber
\end{eqnarray}

\vspace{+.5em}
\noindent
$\bullet$ \underline{The case $t \rightarrow 0_{c_{0},+}$:}
\begin{eqnarray}
\lambda_{0}(t,{\bf c}) & = & 
\frac{1}{c_{0}} \hspace{+.1em} t 
+ \frac{c_{\infty}}{c_{0}^{4}} \hspace{+.1em} t^{2} 
+ \frac{3c_{\infty}^{2} - c_{0}^{2}}{c_{0}^{7}} 
\hspace{+.1em} t^{3} + O(t^{4}),
\label{eq:behavior of lambda0 0-0} \\[+.5em]
\mu_{0}(t,{\bf c}) & = & 
\frac{c_{\infty}+c_{0}}{2c_{0}}
+ \frac{c_{\infty}^{2} - c_{0}^{2}}{2c_{0}^{4}}
\hspace{+.1em} t 
+ O(t^{2}),
\label{eq:behavior of mu0 0-0} \\[+.5em]
\lambda^{(0)}(t,{\bf c},\eta) & = & 
\frac{1}{c_{0}} t
+ \frac{c_{\infty}}{c_{0}^{2}(c_{0}^{2} - \eta^{-2})} 
\hspace{+.1em} t^{2} 
+ \frac{3c_{\infty}^{2} - c_{0}^{2} + \eta^{-2}}
{c_{0}^{3}(c_{0}^{2} - 4\eta^{-2})
(c_{0}^{2} - \eta^{-2})} 
\hspace{+.1em} t^{3}
+ O(t^{4}),
\label{eq:behavior of lambda(0) 0-0} \\[+.5em]
\mu^{(0)}(t,{\bf c},\eta) & = & 
\frac{c_{\infty}+c_{0}-\eta^{-1}}{2(c_{0} - \eta^{-1})}
+ \frac{c_{\infty}^{2} - (c_{0} - \eta^{-1})^{2}}
{2 c_{0} (c_{0} - 2 \eta^{-1}) (c_{0} - \eta^{-1})^{2}} 
\hspace{+.1em} t + O(t^{2}),
\label{eq:behavior of mu(0) 0-0} \\[+.5em]
R_{-1}(t,{\bf c}) & = & c_{0} t^{-1} 
- \frac{2c_{\infty}}{c_{0}^{2}} 
+ \frac{5 c_{0}^{2} - 9c_{\infty}^{2}}
{2 c_{0}^{5}} \hspace{+.1em} t + O(t^{2}), 
\label{eq:behavior of R-1 0-0} \\[+.5em]
R_{\pm}(t,{\bf c},\eta) & = & (\pm c_{0} \eta + 1) t^{-1} 
\mp \frac{2 c_{\infty} \eta}{c_{0}(c_{0} + \eta^{-1})} 
+ \frac{r_{\pm}({\bf c},\eta)}
{2c_{0}^{2}(c_{\infty} - \eta^{-1})^{3}
(c_{\infty} + \eta^{-1})(c_{\infty}^{2} - 2 \eta^{-2})}
\hspace{+.1em} t + O(t^{2}). \nonumber \\[+.3em]
&  & 
\label{eq:behavior of R 0-0} \\[+.3em]
r_{\pm}({\bf c},\eta) & = & 
\pm \eta (5 c_{0}^{4} - 9 c_{\infty}^{2} c_{0}^{2})
+ ( 11 c_{0}^{3} - 13 c_{\infty}^{2} c_{0} )
\pm \eta^{-1} (-2c_{\infty}^{2} + c_{0}^{2})
- 11 \eta^{-2} c_{0} \mp 6 \eta^{-3} . \nonumber
\end{eqnarray}

\section{The Voros coefficients of the third Painlev\'e
equation of the type $D_{7}$}
\label{Appendix:P3-D7}

We also compute the Voros coefficients of
the degenerate third Painlev\'e equation 
of the type $D_{7}$:
\begin{equation}
(P_{\rm III'})_{D_{7}} : 
\frac{d^{2}\lambda}{dt^{2}} = 
\frac{1}{\lambda} \Bigl( \frac{d \lambda}{dt} \Bigr)^{2}
-
\frac{1}{t} \frac{d \lambda}{dt} 
+
\eta^{2} \Bigl( - \frac{2 \lambda^{2}}{t^{2}}
+ \frac{c}{t} - \frac{1}{\lambda}
\Bigr). \label{eq:P3-D7-appendix}
\end{equation}
Here we assume that 
\begin{equation}
c \ne 0.
\label{eq:condition for D7}
\end{equation} 
Let $F_{D_{7}}(\lambda,t,c)$ be the coefficient 
of $\eta^{2}$ in the right-hand side of 
$(P_{\rm III'})_{D_{7}}$, and $\lambda_{0} = \lambda_{0}(t,c)$
be an algebraic function defined by 
\begin{equation}
F_{D_{7}}(\lambda_{0},t,c) = 
- \frac{2 \lambda_{0}^{2}}{t^{2}}
+ \frac{c}{t} - \frac{1}{\lambda_{0}} = 0.
\end{equation}
Turning points, simple-poles and Stokes curves 
of $(P_{\rm III'})_{D_{7}}$ are defined 
in the same way in Section \ref{section:P-Stokes geometry}
in terms of $\lambda_{0}$. Note that $\lambda_{0}$ has 
the following asymptotic behaviors
\begin{equation}
\lambda_{0}(t,c) = (-2)^{-1/3} \omega^{j} 
t^{2/3} (1 + O(t^{-1/3}))  \hspace{+1.em} 
\text{as $t \rightarrow \infty_{j}$ 
($j = 1,2,3$)}, 
\label{eq:asymptotics of infinity D7}
\end{equation}
where $\omega = e^{2\pi i /3}$, when $t$ tends to infinity, and 
\begin{eqnarray}
\lambda_{0}(t,c) & = & \pm (c/2)^{1/2} t^{1/2} 
(1 + t^{1/2}) \hspace{+1.em} 
\text{as $t \rightarrow \tau_{sp}$}, 
\label{eq:D7 simple-pole} \\[+.5em]
\lambda_{0}(t,c) & = & t/c + O(t^{2}) \hspace{+1.em}
\text{as $t \rightarrow 0_{c}$}, 
\label{eq:D7 double-pole}
\end{eqnarray}
when $t$ tends to 0.
Here we used the symbol $\infty_{j}$ ($j = 1,2,3$), 
$\tau_{sp}$ (which corresponds to the simple-pole 
$u = 0$ of \eqref{eq:quadratic differential P3D7})
and $0_{c}$ (which corresponds to the double-pole
$u = c$ of \eqref{eq:quadratic differential P3D7})
to distinguish the branch of $\lambda_{0}$.
We consider the lift of them onto the Riemann surface of 
$\lambda_{0}$ by taking a new variable $u$ given by 
\begin{equation}
u = 1/\mu_{0}, \hspace{+1.em}
\mu_{0} = \frac{c \lambda_{0} - t}{2\lambda_{0}^{2}}, 
\end{equation}
and the quadratic differential determining the 
Stokes geometry is written as 
\begin{equation}
\partial_{\lambda} F_{D_{7}}(\lambda_{0},t,c) dt^{2} = 
\frac{(3u - 2c)^{3}}{u(u-c)^{2}} du^{2}. 
\label{eq:quadratic differential P3D7}
\end{equation}
On the $u$-plane we have one turning point 
at $u = 2c/3$, and one simple-pole 
at $u = 0$ (which corresponds to \eqref{eq:D7 simple-pole}), 
and a double pole at $u = c$ 
(which corresponds to \eqref{eq:D7 double-pole}).
The following figures (Figure \ref{fig:P3D7,-epsilon} 
$\sim$ \ref{fig:P3D7,+epsilon}) describe 
the $P$-Stokes geometries lifted on the $u$-plane 
near $\arg c = \pi/2$. We can observe that 
Stokes geometry admit a loop-type degeneration 
when $\arg c = \pi/2$. The loop turn around the 
double-pole $u = c$.

  \begin{figure}[h]
  \begin{minipage}{0.31\hsize}
  \begin{center}
  \includegraphics[width=50mm]
  {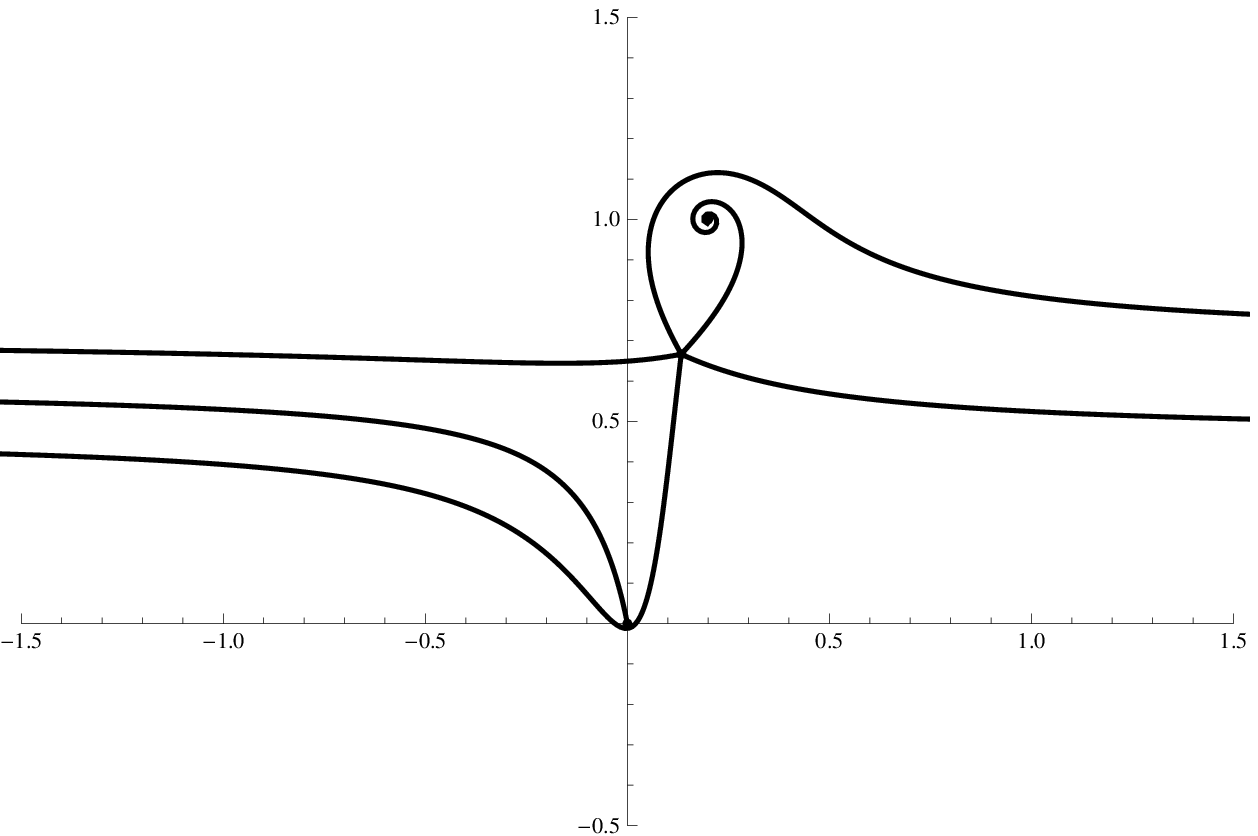}
  \end{center}
  \caption{\small{$c = +0.2+i$.}}
  \label{fig:P3D7,-epsilon}
  \end{minipage} \hspace{+.5em}
  \begin{minipage}{0.31\hsize}
  \begin{center}
  \includegraphics[width=50mm]
  {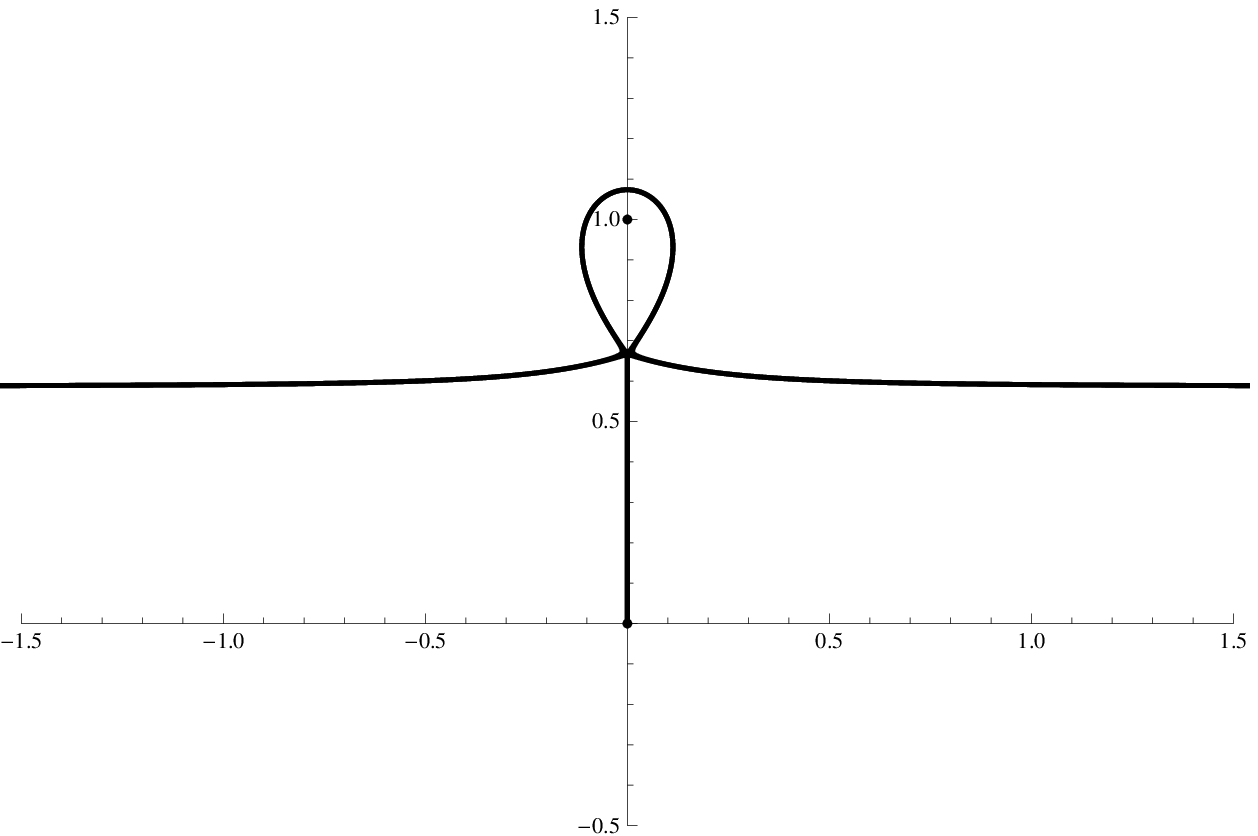}
  \end{center}
  \caption{\small{$c = i$.}}
  \label{fig:P3D7,0}
  \end{minipage} \hspace{+.3em}
  \begin{minipage}{0.31\hsize}
  \begin{center}
  \includegraphics[width=50mm]
  {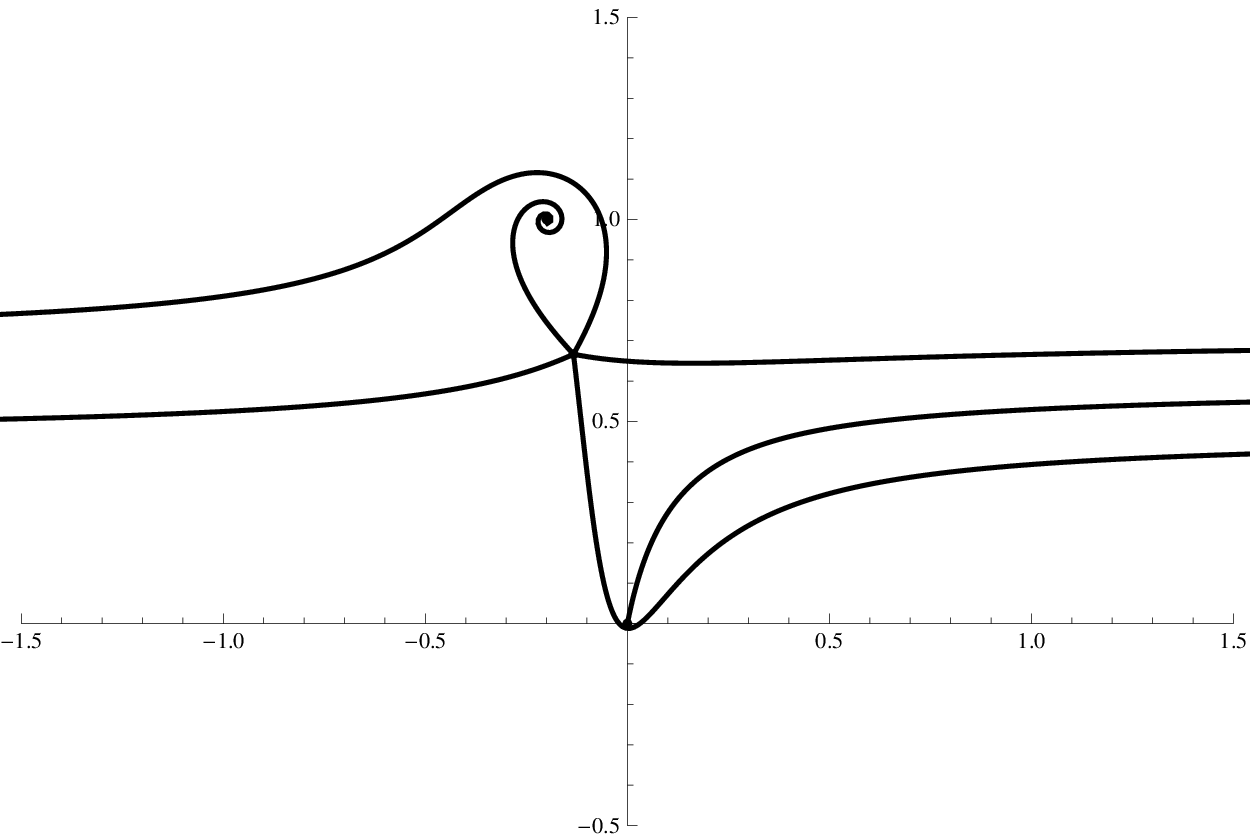}
  \end{center}
  \caption{\small{$c = -0.2+i$}.}
  \label{fig:P3D7,+epsilon}
  \end{minipage}
  \end{figure}

Here we define Voros coefficients for 
$(P_{\rm III'})_{D_{7}}$ as follows. 
Let 
\[
\lambda^{(0)}(t,c,\eta) = \sum_{\ell \ge 0}
\eta^{-\ell} \lambda^{(0)}_{\ell}(t,c)
\] 
be a 0-parameter solution of 
$(P_{\rm III'})_{D_{7}}$, and 
\[
R_{\rm odd}(t,c,\eta) = \sum_{\ell \ge 0} 
\eta^{1-2\ell} R_{2\ell - 1}(t,c)
\]
be the odd part of the formal power series solution 
$R = R(t,c,\eta)$ of the following Riccati equation:
\begin{equation}
R^{2} + \frac{dR}{dt} = 
\Bigl( \frac{2}{\lambda^{(0)}} 
\frac{d \lambda^{(0)}}{dt} - \frac{1}{t} 
\Bigr) R + \eta^{2} \biggl\{ 
\frac{\partial F_{D_{7}}}{\partial \lambda}
\bigl( \lambda^{(0)}, t, c \bigr) 
- 
\eta^{-2} \Bigl( \frac{1}{\lambda^{(0)}}
\frac{d \lambda^{(0)}}{dt} \Bigr)^{2} \biggr\}.
\end{equation}
In order to fix the square root 
$R_{-1}(t,c) = 
\sqrt{\partial_{\lambda}F_{D_{7}}(\lambda_{0},t,c)}$ 
near the infinity and the double-pole,
we use further symbol $\infty_{j, \pm}$ ($j = 1,2,3$)
and $0_{c, \pm}$ such that the following holds:
\begin{equation}
\lambda_{0} = (-2)^{-1/3} \omega^{j} 
t^{2/3} \bigl( 1 + O(t^{-1/3}) \bigr), \hspace{+.5em}
R_{-1} = \pm \bigl\{ 6^{1/2} (-2)^{-1/6} 
\omega^{j/2} t^{1/3} \bigl( 1 + O(t^{-1/3}) \bigr)
\bigr\} \nonumber 
\end{equation}
\begin{equation} 
\text{as $t \rightarrow \infty_{j, \pm}$ ($j = 1,2,3$)}, 
\end{equation}
and 
\begin{equation}
\lambda_{0} = t/c + O(t^{2}), \hspace{+.5em} 
R_{-1} = \pm \bigl\{ c \hspace{+.1em} t^{-1} 
\bigl( 1 + O(t) \bigr) \bigr\} \hspace{+1.em}
\text{as $t \rightarrow 0_{c, \pm}$}.
\end{equation}
%

\begin{defi}\normalfont
Let $\tau$ be the turning point or 
the simple-pole of $(P_{\rm III'})_{D_{7}}$, 
and $\ast = \infty_{j,\pm}$ or $0_{c, \pm}$.
For a path $\Gamma(\tau, \ast)$ from $\tau$ 
to $\ast$ on the $t$-plane,
{\it the Voros coefficient of $(P_{\rm III'})_{D_{7}}$
for the path $\Gamma(\tau, \ast)$} is defined by 
\begin{equation}
W_{\ast}(c,\eta) = 
\int_{\Gamma(\tau, \ast)} \bigl( 
R_{\rm odd}(t,c,\eta) - \eta R_{-1}(t,c)
\bigr)dt.
\end{equation}
\end{defi}

Then we have the following list of Voros coefficients.
\begin{theo} \label{theorem:D7-Voros}
The Voros coefficients are represented 
explicitly as follows: 
\begin{eqnarray}
W_{\infty_{j,\pm}}(c,\eta) & = & 0 
\hspace{+.5em}(j = 1,2,3), \\[+.3em]
W_{0_{c,\pm}}(c,\eta) & = & 
\mp 3 \hspace{+.1em} {\cal G}(c,\eta).
\end{eqnarray}
Here ${\cal G}(c,\eta)$ is the formal power series 
given by \eqref{eq:Voros-coeff-G}.
\end{theo}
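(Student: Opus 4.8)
The plan is to follow verbatim the strategy of Section \ref{section:proof of the Main theorems}. Since $(P_{\rm III'})_{D_{7}}$ carries only the single parameter $c$, I expect a single relevant B\"acklund transformation $T$ inducing the shift $c \mapsto c + \eta^{-1}$. First I would rewrite \eqref{eq:P3-D7-appendix} as an equivalent Hamiltonian system, record its transseries solution $(\lambda,\mu)$ with $\mu$ expressed through $\lambda$ as in \eqref{eq:mu}, and determine the explicit rational form of $T$ on $(\lambda,\mu)$, the analogue of Lemma \ref{Lemma:Backlund}. Applying $T$ to the $1$-instanton part and comparing the two resulting expressions for the transformed WKB solution, exactly as in the proof of Lemma \ref{Lemma:difference eq for R}, produces a difference equation for $R(t,T(c),\eta) - R(t,c,\eta)$ written as a total derivative $\frac{d}{dt}\log(\cdots)$ of explicit rational functions of $\lambda^{(0)}$ and $\mu^{(0)}$.

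Next I would integrate this relation along the contour $\Gamma_{t}$ on the Riemann surface of $\sqrt{\partial_{\lambda}F_{D_{7}}}$ and let $t$ tend to the chosen singular point, using the $D_{7}$-specific asymptotics \eqref{eq:asymptotics of infinity D7}, \eqref{eq:D7 simple-pole}, \eqref{eq:D7 double-pole} of $\lambda_{0}$ together with the corresponding expansions of $\mu_{0}$ and $R_{\pm}$. This yields the difference equations satisfied by $W_{\infty_{j,\pm}}(c,\eta)$ and by $W_{0_{c,\pm}}(c,\eta)$. I expect the right-hand side for $W_{\infty_{j,\pm}}$ to vanish identically, and the one for $W_{0_{c,\pm}}$ to reproduce $\mp 3$ times the right-hand side of \eqref{eq:difference equation for G}, namely $\mp 3\{1 - (c\eta + \tfrac{1}{2})\log(1 + \tfrac{1}{c\eta})\}$.

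Finally I would invoke homogeneity: under the scaling $(t,c,\eta)\mapsto(r^{-2}t, r^{-1}c, r\eta)$ the Voros coefficients have homogeneous degree $0$ (as in \eqref{eq:degree of W}), so each is a constant-term-free series $\sum_{n\ge1} w_{2n-1}(c\eta)^{1-2n}$. For $W_{\infty_{j,\pm}}$ the homogeneous difference equation admits only the zero solution among such series (the same injectivity used in part (ii) of Lemma \ref{key lemma}), giving $W_{\infty_{j,\pm}} = 0$; for $W_{0_{c,\pm}}$ part (ii) of Lemma \ref{key lemma} identifies the series with $\mp 3\,{\cal G}(c,\eta)$, with ${\cal G}$ as in \eqref{eq:Voros-coeff-G}. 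This proves Theorem \ref{theorem:D7-Voros}.

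The main obstacle will be the $D_{7}$ asymptotics at infinity, where $\lambda_{0}\sim (-2)^{-1/3}\omega^{j}t^{2/3}$ exhibits cube-root branching rather than the square-root branching of the $D_{6}$ case. Handling these fractional powers inside the integrals of $R_{\pm}$ and $R_{-1}$, in particular reproducing a closed form for $\int_{\Gamma_{t}} R_{-1}\,dt$ via the associated isomonodromic/Schr\"odinger problem as in \eqref{eq:integral relation}--\eqref{eq:the integral of R-1}, and verifying that the logarithmic branches match so that no spurious $\eta^{0}$ term survives on the right-hand side, will require care. The vanishing of $W_{\infty_{j,\pm}}$ is moreover consistent with the fact that $\sqrt{q(u,c)}\,du$ from \eqref{eq:quadratic differential P3D7} has vanishing residue at $u=\infty$, in contrast to the double-pole $u=c$ where the residue is $\pm c$ and feeds the ${\cal G}(c,\eta)$ contribution.
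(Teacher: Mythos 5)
Your proposal follows essentially the same route as the paper's own proof: a single B\"acklund transformation inducing $c \mapsto c + \eta^{-1}$ (the paper exhibits exactly $(\Lambda, M) = (-t\mu + ct/\lambda - t^{2}/\lambda^{2},\, \lambda/t)$), leading to the difference equations $W_{\infty_{j,\pm}}(c+\eta^{-1},\eta) - W_{\infty_{j,\pm}}(c,\eta) = 0$ and $\mp W_{0_{c,\pm}}/3$ satisfying \eqref{eq:difference equation for G}, and then concluding via homogeneity and part (ii) of Lemma \ref{key lemma}. Your added consistency check via the residues of $\sqrt{q(u,c)}\,du$ is correct and a nice sanity test, but the argument itself coincides with the paper's.
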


This theorem can be proved by the completely same manner 
used in Section \ref{section:proof of the Main theorems}. 
Here we only show the B\"acklund transformation 
for the Hamiltonian system $(H_{\rm III'})_{D_{7}}$
which is equivalent to $(P_{\rm III'})_{D_{7}}$.
\begin{equation}
(H_{\rm III'})_{D_{7}} : \hspace{+.3em}
\frac{d \lambda}{dt} = \eta 
\frac{\partial {\cal H}}{\partial \mu}, \hspace{+1.em}
\frac{d \mu}{dt} = - \eta 
\frac{\partial {\cal H}}{\partial \lambda}, 
\end{equation}
\begin{equation}
t \hspace{+.1em} {\cal H} = 
\lambda^{2} \mu^{2} - (c - \eta^{-1}) \lambda \mu 
+ t \mu + \lambda.
\end{equation}
%
\begin{prop}[e.g., \cite{OKSO}]
Let $(\lambda, \mu)$ be a solution of 
$(H_{\rm III'})_{D_{7}}$. Then, 
$(\Lambda, M) = (\Lambda(\lambda,\mu), M(\lambda,\mu))$ 
defined by 
\begin{eqnarray}
\begin{cases}
\displaystyle \Lambda = - t \mu  
+ \frac{c \hspace{+.1em} t}{\lambda} 
- \frac{t^{2}}{\lambda^{2}}, \\[+.5em]
\displaystyle M = \frac{\lambda}{t},
\end{cases} 
\end{eqnarray}
is a solution of the following equation:
\begin{equation}
\frac{d \Lambda}{dt} = \eta 
\frac{\partial {H}}{\partial M}, \hspace{+1.em}
\frac{d M}{dt} = - \eta 
\frac{\partial {H}}{\partial \Lambda},
\end{equation}
where the Hamiltonian $H$ is given by 
\begin{equation}
t H = \Lambda^{2} M^{2} - c \Lambda M 
+ t M + \Lambda.
\end{equation}
\end{prop}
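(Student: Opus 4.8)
The statement is a B\"acklund (canonical) transformation identity, and as with its $D_{6}$ counterpart \eqref{eq:Backlund1}--\eqref{eq:Backlund2} (which the text notes is verified by straightforward computation), the natural plan is a direct verification: assuming $(\lambda,\mu)$ satisfies $(H_{\rm III'})_{D_{7}}$, substitute the proposed $(\Lambda, M)$ into the two target Hamilton equations and reduce each to an identity using the original equations of motion. The only inputs are the explicit forms of the original Hamiltonian $t{\cal H} = \lambda^{2}\mu^{2} - (c-\eta^{-1})\lambda\mu + t\mu + \lambda$ and of the target Hamiltonian $tH = \Lambda^{2}M^{2} - c\Lambda M + tM + \Lambda$.

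First I would record the original equations of motion in explicit form,
\[
\frac{d\lambda}{dt} = \frac{\eta}{t}\bigl(2\lambda^{2}\mu - (c-\eta^{-1})\lambda + t\bigr), \qquad
\frac{d\mu}{dt} = -\frac{\eta}{t}\bigl(2\lambda\mu^{2} - (c-\eta^{-1})\mu + 1\bigr),
\]
together with $\partial H/\partial M = t^{-1}(2\Lambda^{2}M - c\Lambda + t)$ and $\partial H/\partial\Lambda = t^{-1}(2\Lambda M^{2} - cM + 1)$. The equation $dM/dt = -\eta\,\partial H/\partial\Lambda$ is the short one: differentiating $M = \lambda/t$ gives $dM/dt = t^{-1}\,d\lambda/dt - \lambda/t^{2}$, while substituting $M = \lambda/t$ and $\Lambda = -t\mu + ct/\lambda - t^{2}/\lambda^{2}$ into $2\Lambda M^{2} - cM + 1$ collapses it to $-2\lambda^{2}\mu/t + c\lambda/t - 1$. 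Both sides then reduce to $2\eta\lambda^{2}\mu/t^{2} - \eta c\lambda/t^{2} + \eta/t$, so this equation holds identically.

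For the remaining equation $d\Lambda/dt = \eta\,\partial H/\partial M$ I would differentiate $\Lambda = -t\mu + ct/\lambda - t^{2}/\lambda^{2}$ term by term, eliminate $d\lambda/dt$ and $d\mu/dt$ by the original equations of motion above, and compare the resulting rational expression in $(\lambda,\mu,t)$ with $\eta t^{-1}(2\Lambda^{2}M - c\Lambda + t)$ after the same substitution $M=\lambda/t$. This is the heavier computation, since $\Lambda^{2}M$ produces a degree-four rational function of $\lambda$; the expected cancellations are organized most cleanly by grouping terms according to their powers of $\lambda$ and of $\mu$.

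The main, and essentially only, obstacle is this last algebraic simplification, which is routine but lengthy. As an independent cross-check I would verify the transformation conceptually as a canonical map at fixed $t$: a one-line wedge computation gives $dM\wedge d\Lambda = t^{-1}d\lambda\wedge\bigl(-t\,d\mu + (\cdots)d\lambda\bigr) = d\mu\wedge d\lambda$, so the change of variables is symplectic, and it then suffices to check that it carries the effective Hamiltonian $\eta{\cal H}$ to $\eta H$ modulo a function of $t$ alone, which reproduces both equations of motion simultaneously via a generating function. Finally I would record that, since ${\cal H}$ carries the parameter combination $c-\eta^{-1}$ whereas $H$ carries $c$, this transformation realizes the shift $T:c\mapsto c+\eta^{-1}$; this is exactly the ingredient needed to run the difference-equation argument of Section \ref{section:proof of the Main theorems} and thereby establish the explicit Voros coefficients of $(P_{\rm III'})_{D_{7}}$ in Theorem \ref{theorem:D7-Voros}.
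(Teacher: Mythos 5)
Your main route is exactly the paper's: the paper offers no proof beyond citing \cite{OKSO} and, as with the analogous $D_{6}$ transformation \eqref{eq:Backlund1}--\eqref{eq:Backlund2}, leaves the verification as a straightforward computation; your direct substitution of $(\Lambda,M)$ into the new Hamilton equations, eliminating $d\lambda/dt$ and $d\mu/dt$ via the old ones, is that computation, and both equations do reduce to identities (the short one as you wrote it, the long one with both sides equal to $\eta\bigl(2\lambda\mu^{2}-3c\mu+4t\mu\lambda^{-1}+c^{2}\lambda^{-1}-3ct\lambda^{-2}+2t^{2}\lambda^{-3}+1\bigr)$). Your closing remark that the map realizes the shift $c\mapsto c+\eta^{-1}$ is also precisely how the paper uses the proposition to derive the difference equation for the Voros coefficients in Theorem \ref{theorem:D7-Voros}.

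One caveat: the proposed symplectic cross-check is wrong as stated. The transformation depends explicitly on $t$, so preservation of $d\mu\wedge d\lambda$ at fixed $t$ together with ``$\eta{\cal H}$ goes to $\eta H$ modulo a function of $t$ alone'' is not the correct criterion; indeed, substituting $\lambda = tM$ and $\mu = -\Lambda/t + c/(tM) - 1/(tM^{2})$ gives
\[
\eta\,{\cal H} - \eta\,H \;=\; -\,\frac{1}{t}\Bigl(\Lambda M - c + \frac{1}{M}\Bigr),
\]
which is not a function of $t$ alone, so the check in your literal formulation would appear to refute the proposition. The correct statement must include the time-derivative of a generating function: taking $F_{1}(\lambda,\Lambda,t) = -\lambda\Lambda/t + c\log\lambda + t/\lambda$, so that $\mu = \partial F_{1}/\partial\lambda$ and $M = -\partial F_{1}/\partial\Lambda$, the new Hamiltonian is $\eta{\cal H} + \partial F_{1}/\partial t = \eta H + c/t$, and only this combination agrees with $\eta H$ up to a function of $t$. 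Your primary verification is untouched by this; just do not rely on the cross-check in the form you wrote it.
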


Using the B\"acklund transformation, we can confirm that 
$W_{\infty_{j, \pm}}(c,\eta)$ satisfies 
\[
W_{\infty_{j, \pm}}(c + \eta^{-1},\eta) 
- W_{\infty_{j, \pm}}(c,\eta) = 0,  
\]
and $\mp W_{0_{c, \pm}}(c,\eta)/3$ satisfies
\eqref{eq:difference equation for G}. 
Thus we can prove the Theorem \ref{theorem:D7-Voros}.

\begin{rem} \normalfont
In parallel with the discussion presented in  
Section \ref{subsection:connection of loop-type}, 
we can conclude that, if the independent variable $t$ 
lies outside of the loop in Figure \ref{fig:P3D7,0}, 
then the parametric Stokes phenomena never occur to 
trannsseries solutions of $(P_{\rm III'})_{D_{7}}$ 
because the Voros coefficients for $\infty$ 
are trivial in this case. However, due to the same 
reason as the case of $(P_{\rm III'})_{D_{6}}$, 
connection formula when $t$ lies inside the loop 
is remains to be analyzed.
\end{rem}



\begin{thebibliography}{99}

\bibitem[AKT1]{AKT Painleve WKB} 
 T.Aoki, T.Kawai and Y.Takei : 
 WKB analysis of Painlev$\acute{{\rm e}}$ 
 transcendents with a large parameter.II. 
 --- Multiple-scale analysis of Painlev\'e transcendents,  
 Structure of Solutions of Differential Equations, 
 World Scientific, 1996, pp.1-49.

\bibitem[AT]{Aoki-Tanda}
 T.Aoki and M.Tanda : Characterization of Stokes graphs 
 and Voros coefficients of hypergeometric differential 
 equations with a large parameter, 
 {to appear in RIMS K\^oky\^uroku Bessatsu}.

\bibitem[C]{Costin}
 O.Costin :
 Asymptotics and Borel Summability, 
 Monographs and Surveys in Pure and Applied Mathematics 141, 
 Chapmann \verb|&| Hall/CRC, New York, 2009.

\bibitem[DDP]{DDP}
 E.Delabaere, H.Dillinger and F.Pham :  
 R\'esurgence de Voros et p\'eriodes des courbes hyperelliptiques,
 { Annales de l'institut Fourier}, {\bf 43} (1993), 163-199.

\bibitem[JM]{Jimbo-Miwa}
 M.Jimbo and T.Miwa :
 Monodromy preserving deformation of 
 linear ordinary differential equations 
 with rational coefficientns. II, 
 Physica, \textbf{2D}(1981), 407-448.

\bibitem[I1]{Iwaki}
 K.Iwaki : 
 Parametric Stokes phenomenon for the second 
 Painlev\'e equation with a large parameter, 
 arXiv:1106.0612[math CA](2011), submitted.
 
\bibitem[I2]{Iwaki-Bessatsu}
 K.Iwaki : 
 Parametric Stokes phenomenon and the Voros coefficients
 of the second Painlev\'e equation, 
 to appear in RIMS K\^oky\^uroku Bessatsu. 
 
\bibitem[I3]{Iwaki in prep} 
 K.Iwaki : in preparation. 
 
\bibitem[I4]{Iwaki in prep2} 
 K.Iwaki : in preparation. 

\bibitem[Kam]{Kamimoto}
 S.Kamimoto : in preparation.  

\bibitem[KamKo]{Kamimoto-Koike}
 S.Kamimoto and T.Koike : On the Borel summability of 
 0-parameter solutions of nonlinear ordinary 
 differential equations, preprint of RIMS-1747 (2012).

\bibitem[KT1]{KT WKB Painleve I}
 T.Kawai and Y.Takei : 
 WKB analysis of Painlev\'e transcendents 
 with a large parameter.I, 
 Adv.in Math., \textbf{118}(1996),1-33.

\bibitem[KT2]{KT WKB Painleve III}
 T.Kawai and Y.Takei : 
 WKB analysis of Painlev\'e transcendents 
 with a large parameter.III, 
 Adv.in Math., \textbf{134}(1998), 178-218.

\bibitem[KT3]{KT iwanami}
 T.Kawai and Y.Takei : 
 Algebraic Analysis of Singular Perturbation Theory, 
 Translations of Mathematical Monographs, 
 volume 227, American Mathematical Society, 2005.

\bibitem[KT4]{KT WKB higher Painleve}
 T.Kawai and Y.Takei : 
 WKB analysis of higher order Painlev\'e equations 
 with a large parameter
 --- Local reduction of 0-parameter solutions 
 for Painlev\'e hierarchies 
 ($P_J$) ($J$=I,II-1 or II-2), 
 Adv.in Math., \textbf{203}(2006), 636-672.

\bibitem[Ko]{Koike}
 T.Koike : On the exact WKB analysis of second order 
 linear ordinary differential equations with  
 simple poles, Publ. RIMS, Kyoto Univ., 
 {\bf 36}(2000), 297-319.
 
\bibitem[KoT]{Koike-Takei}
 T.Koike and Y.Takei :  
 On the Voros coefficient for the Whittaker equation 
 with a large parameter --- Some progress around 
 Sato's conjecture in exact WKB analysis, 
 {Publ. RIMS, Kyoto Univ}., 
 {\bf 47}(2011), 375-395.

\bibitem[Ok]{Okamoto Painleve}
 K.Okamoto : 
 Studies on the Painlev\'e Equations IV. 
 Third Painlev\'e Equation $\rm P_{III}$,
 Funkcialaj Ekvacioj, \textbf{30}(1987), 305-332.
 
\bibitem[OKSO]{OKSO}
 Y.Ohyama, H.Kawamuko, H.Sakai and K.Okamoto : 
 Studies on the Painlev\'e equation V, 
 Third Painlev\'e Equations of Special Type 
 ${\rm P_{III}}(D_{7})$ and ${\rm P_{III}}(D_{8})$, 
 J. Math. Sci. Univ. Tokyo, \textbf{13}(2006), 145-204.
  
\bibitem[SS]{SS}
 H.Shen and H.J.Silverstone :  
 Observations on the JWKB treatment of the quadratic barrier, 
 { Algebraic Analysis of Differential Equations}, 
 Springer-Verlag, 2008, pp.237-250.
 
\bibitem[St]{Strebel} 
 K.Strebel : Quadratic Differentials, 
 Ergebnisse der Mathematik und ihrer Gren- zgebiete, 
 Springer-Verlag, Berlin, 1984.
 
\bibitem[T1]{Takei Painleve}
 Y.Takei : 
 An explicit description of the connection formula 
 for the first Painlev$\acute{{\rm e}}$ equation, 
 Toward the Exact WKB Analysis of Differential Equations, 
 Linear or Non-Linear, Kyoto Univ. Press, 
 2000, pp.271-296. 

\bibitem[T2]{Takei Sato conjecture}
 Y.Takei : 
 Sato's conjecture for the Weber equation and 
 transformation theory for Schr$\ddot{\rm{o}}$dinger
 equations with a merging pair of turning points, 
 RIMS $\rm K\hat{o}ky\hat{u}roku$ Bessatsu, 
 \textbf{B10}(2008), 205-224.

\bibitem[T3]{Takei turning point problem}
 Y.Takei : 
 On the turning point problem for 
 instanton-type solutions of Painlev\'e equations, 
 preprint (RIMS-1693), 2010. 

\bibitem[TW]{Takei-Wakako}
 Y.Takei and H.Wakako : Exact WKB analysis for the degenerate 
 third Painlev\'e equation of type ($D_{8}$), 
 Proc. Japan Acad., Ser. A,{\bf 83}(2007), 63-68.
 
\bibitem[V]{Voros}
 A.Voros : 
 The return of the quartic oscillator. The complex WKB method, 
 Ann. Inst. Henri Poincar$\acute{\rm e}$, \textbf{39}(1983), 211-388.

\end{thebibliography}
\end{document}